\newcommand{\mc}{\mathcal{C}}
\numberwithin{equation}{section}
\numberwithin{equation}{section}
\numberwithin{equation}{section}
\let\nc\newcommand
\let\renc\renewcommand
\numberwithin{equation}{section}
\theoremstyle{plain}
\newtheorem{thm}{Theorem}[section]
\newtheorem{prop}{Proposition}[section]
\newtheorem{cor}{Corollary}[section]
\newtheorem{lem}{Lemma}[section]
\theoremstyle{definition}
\newtheorem{defn}{Definition}[section]
\newtheorem{example}{Example}[section]
\newtheorem{remark}{Remark}[section]
\numberwithin{equation}{section}
\renewcommand{\subsection}{\@startsection{subsection}{2}{0pt}{-3ex
plus -1ex minus -0.2ex}{-2mm plus -0pt minus
-2pt}{\normalfont\bfseries}} \makeatother
\DeclareMathOperator{\supp}{\mathrm{Supp}}
\DeclareMathOperator{\End}{\mathrm{End}}
\DeclareMathOperator{\gr}{\mathrm{gr}}
\newcommand{\beq}{\begin{equation}\label}
\newcommand{\eeq}{\end{equation}}
\nc{\Z}{\mathbb{Z}}
\newcommand{\C}{\mathbb{C}}
\newcommand{\h}{\mathfrak{h}}
\newcommand{\spec}{\mathrm{Spec}}
\newcommand{\Fun}{\mbox{\mathrm{Fun}}\,}
\nc{\rank}{\textrm{rank} \,}
\nc{\ds}{\dots}
\let\mf\mathfrak
\nc{\HW}{\bar{H}_{\mathbf{c}}(W)}
\nc{\HK}{\bar{H}_{\mathbf{c}}(K)}
\nc{\HtK}{\widetilde{H}_{\mathbf{c}}(K)}
\nc{\CMW}{\textsf{CM}_{\mbf{c}}(W)}
\nc{\CMK}{\textsf{CM}_{\mbf{c}}(K)}
\nc{\mbf}{\mathbf}
\nc{\LK}{\textsf{Irr}(K)}
\nc{\LW}{\textsf{Irr}(W)}
\nc{\Res}{\mathsf{Res} \, }
\nc{\Ind}{\mathsf{Ind} \, }
\nc{\cont}{\textrm{cont}}
\nc{\eWb}{\mathbf{e}_{W_b}}
\nc{\eW}{\mathbf{e}_{W}}
\nc{\msf}{\mathsf}
\nc{\Ui}{\mc{U}_{i,+}}
\nc{\Uone}{\mc{U}_{1,+}}
\nc{\Utwo}{\mc{U}_{2,+}}
\nc{\minusone}{-1}
\nc{\minustwo}{-2}
\nc{\Mod}{\mathrm{Mod} \,}
\nc{\ms}{\mathscr}
\nc{\Frac}{\mathrm{Frac} \,}
\nc{\ra}{\rightarrow}
\nc{\hra}{\hookrightarrow}
\nc{\lab}{\label}
\renc{\O}{\mc{O}}
\nc{\Tan}{\mc{T}}
\nc{\ul}{\underline}
\nc{\s}{\mathfrak{S}}
\nc{\g}{\mf{g}}
\nc{\pa}{\partial}
\nc{\tit}{\textit}
\nc{\Maxspec}{\mathrm{Maxspec} \, }
\nc{\gldim}{\mathrm{gl.dim}}
\nc{\rkm}{\mathrm{rk} \, (\mf{m})}
\nc{\sm}{\mathrm{sm}}
\nc{\PD}{\mathbb{PD}}
\nc{\hilb}{\textrm{Hilb}}
\nc{\T}{\mathbb{T}}
\nc{\X}{\mathbb{X}}
\nc{\W}{\mathscr{W}}
\nc{\kt}{\mbf{k}}
\nc{\ko}{\mbf{k}(0)}
\nc{\Ok}{\mc{O}_G \boxtimes \kt_X}
\nc{\Oko}{\mc{O}_G \boxtimes \ko_X}
\nc{\OYk}{\mc{O}_Y \boxtimes \kt_X}
\nc{\id}{\msf{id}}
\nc{\A}{\mathbb{A}}
\nc{\Grel}{\mc{Grel}}
\nc{\Grat}{\mc{Grat}}
\nc{\Squo}[1]{\A^{(#1)}}
\nc{\twist}{\mathrm{twist}}
\nc{\Cd}{\mc{C}}
\nc{\Span}{\mathrm{Span}}
\nc{\Grass}{\mathrm{Gr}}
\nc{\Fr}{\mathrm{Fr}}
\nc{\pco}[1]{k[V]^{p\mathrm{co} #1}}
\renc{\o}{\otimes}
\renc{\gr}{\mathsf{gr}}
\nc{\U}{\mathsf{U}}
\nc{\algD}{\mf{D}}
\nc{\hr}{\mf{h}_{\textrm{reg}}}
\nc{\D}{\mathscr{D}}
\nc{\PIdeg}{\mathrm{P.I.-degree}}
\nc{\ch}{\mathrm{ch}}
\nc{\ev}{\mathsf{ev}}
\nc{\Stab}{\mathrm{Stab}}
\nc{\Der}{\mathrm{Der}}
\nc{\rightsim}{\stackrel{\sim}{\longrightarrow}}
\nc{\HZ}{H_{\mbf{h},\Z}(\Z_m)}
\nc{\sing}{\mathrm{sing}}
\nc{\dd}{\mathscr{D}}
\nc{\GKdim}{\mathrm{G.K. dim}}
\nc{\PIdegree}{\mathrm{P.I. degree}}
\renc{\H}{\mathsf{H}}
\nc{\rH}{\overline{\mathsf{H}}}
\renc{\Fun}{\mathrm{Fun}}
\nc{\bc}{\mathbf{c}}
\nc{\vc}{\underline{\mathbf{c}}}
\nc{\ba}{\mathbf{a}}
\begin{document}

\title{An explicit presentation of the centre of the restricted rational Cherednik algebra}

\author{Niall Hird}

\email{hirdniall@gmail.com}

\begin{abstract}
In this paper we give an explicit presentation of the centre of the restricted rational Cherednik algebra $\overline{H}_c(S_n\wr\mathbb{Z}/\ell\mathbb{Z})$ for a particular choice of parameter $c$. More precisely, we describe the centre of the indecomposable blocks of $\overline{H}_c(S_n\wr\mathbb{Z}/\ell\mathbb{Z})$ in terms of generators and relations. The parameters $c$ for which this presentation is valid are such that the Calogero-Moser space is smooth. Furthermore, we demonstrate how the explicit presentation of the centre of $\overline{H}_c(S_n\wr\mathbb{Z}/\ell\mathbb{Z})$ can be directly derived from the set of $\ell$-multipartitions of $n$.
\end{abstract}
\maketitle

\section{Introduction}\label{sec1}

Restricted rational Cherednik algebras are objects of interest due to their connections with rational Cherednik algebras and hence integrable systems, algebraic symplectic geometry and algebraic combinatorics. Rational Cherednik algebras are defined for any complex reflection group $(W,\mathfrak{h})$ and parametrised by two values $t$ and $c$. When $t\neq 0$ their centre is trivial, but in the case $t=0$ the centre is large. To be precise, the rational Cherednik algebra is a finite dimensional module over its centre. As a consequence, much of the representation theory of the rational Cherednik algebra can be understood via its centre. The restricted rational Cherednik algebra is a quotient by a subalgebra contained within the centre and it too has a rich structure. The main result of this paper (Theorem~\ref{thm32}) gives an explicit presentation of the centre of the restricted rational Cherednik algebra for the wreath product groups $S_n\wr\mathbb{Z}/\ell\mathbb{Z}$ for a particular parameter $\overline{c}$. 

We consider the restricted rational Cherednik algebra as a direct sum of its indecomposable blocks and find the centre of each of these. These blocks have the important property that they are isomorphic to a tensor product of two graded rings $A(\lambda)^-$ and $A(\lambda)^+$, where $\lambda\in\mathrm{Irr}\,W$. The ring $A(\lambda)^+$ is the endomorphism ring of the baby Verma module for $\lambda$ and in the symmetric group case is isomorphic to $A(\lambda)^-$ with the opposite grading. In the more general case of wreath products these rings are still closely related, for precise details see Theorem~\ref{thm4}. This reduces the problem of understanding the centre of the restricted rational Cherednik algebra to understanding a family of endomorphism rings. To achieve our main result we first find the centre of the restricted rational Cherednik algebra in the symmetric group case $S_n$. 
\begin{thm}(\ref{thm31})
There is an isomorphism of the centre of $\overline{H}_c(S_n)$ for $c\neq 0$  
\[
Z(\overline{H}_c(S_n))\cong \bigoplus_{\lambda\in\mathrm{Irr} S_n} A_c(\lambda)^-\otimes A_c(\lambda)^+.
\]
A presentation of the algebra $A_c(\lambda)^+$ is given by Theorem~\ref{thm25} and $A_c(\lambda)^-$ is isomorphic to $A_c(\lambda)^+$ with the opposite grading.
\end{thm}

In Section $4$ we prove Corollary~\ref{cor2} that in the symmetric case there is an isomorphism between $A(\lambda)^+$ and the ring of functions on the scheme theoretic fibre of a special map denoted $\pi$. This will allow us to exploit a connection to Schubert cells via the Wronski map. Schubert cells are well understood and crucially the fibres of the Wronskian map can be explicitly written in terms of generators and relations. This is explained in more detail in Section $6$. Unfortunately this connection only exists for the symmetric group case and not the more general wreath product groups. This is enough to completely describe the endomorphism rings of the baby Verma modules and hence the entire centre in the symmetric group case. By labelling the irreducible representations of $S_n$ by partitions of $\lambda\vdash n$ we derive Theorem~\ref{thm24} which explicitly describes $A(\lambda)^+$ in terms of generators and relations. Using elementary results concerning partitions we can show that $A(\lambda)^+$ can be directly computed from the Young diagram $D_\lambda$ of $\lambda$.
\begin{thm}(\ref{thm25})
Let $\lambda\vdash n$ be a partition. The algebra $A(\lambda)^+$ is the quotient 
\[
A(\lambda)^+\cong \mathbb{C}[D_\lambda]/I
\]
by the ideal $I$ that is generated by $n$ homogeneous elements $r_1,\dots,r_n$. The $r_s$ are ordered so that $\mathrm{deg}(r_s)=s$. The monomials appearing in $r_i$ are products of cells which share neither a row or column in $D_\lambda$. In other words if $\square_{i,j}\square_{k,\ell}$ is a factor of some monomial in the $r_s$ we must have that $i\neq k$ and $j\neq \ell$. The coefficients of the generators of $I$ are given by Proposition~\ref{prop8}. 
\end{thm}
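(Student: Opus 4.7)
The plan is to derive Theorem~\ref{thm25} from Theorem~\ref{thm24}. By Corollary~\ref{cor2}, $A(\lambda)^+$ is isomorphic to the coordinate ring of the scheme-theoretic fibre of the map $\pi$, and Theorem~\ref{thm24} gives an explicit presentation of this fibre via the Wronski map and the associated Schubert cell. What remains is a combinatorial translation: I want to reindex the variables and the defining relations coming from Theorem~\ref{thm24} in terms of the cells $\square_{i,j}$ of the Young diagram $D_\lambda$, and then verify that the resulting ideal has the shape stated.

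First I would fix the bijection between the generators of the polynomial ring appearing in Theorem~\ref{thm24} and the boxes of $D_\lambda$. Since the Schubert/Wronski data attached to a partition $\lambda$ is naturally indexed by the pairs $(i,j)$ with $1\le j\le \lambda_i$, this bijection is canonical and turns the ambient polynomial ring into $\mathbb{C}[D_\lambda]$. Next I would check that the defining ideal in Theorem~\ref{thm24} is generated by $n$ homogeneous elements $r_1,\dots,r_n$ with $\deg(r_s)=s$; this is essentially a degree count for the Wronskian relations in the fibre and should be immediate from the presentation in Theorem~\ref{thm24}.

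The key step, and the main obstacle, is the verification that the monomials surviving in each $r_s$ correspond to non-attacking rook placements on $D_\lambda$, i.e.\ that a factor $\square_{i,j}\square_{k,\ell}$ of a monomial in $r_s$ must satisfy $i\ne k$ and $j\ne\ell$. I expect this to follow from the alternating/Pl\"ucker structure intrinsic to the Wronskian relations: any monomial that repeats a row index or a column index of $D_\lambda$ is killed by an antisymmetrisation inside the defining equations, leaving only the rook-configuration monomials. To make this rigorous I would translate the shared row/column condition into a statement about repeated indices in the Pl\"ucker-type expansion of the relations, and then invoke the elementary partition identities alluded to in the introduction (manipulations of the hook lengths and arm/leg data of $\lambda$) to confirm the cancellation.

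Finally, with the monomial support identified, I would read off the coefficients of the surviving monomials directly from the Wronskian expansion. This reduces to a bookkeeping calculation once the bijection with cells of $D_\lambda$ is in place, and produces exactly the coefficients recorded in Proposition~\ref{prop8}. I expect the coefficient step to be routine but technical; the genuinely nontrivial content is the combinatorial vanishing described in the previous paragraph.
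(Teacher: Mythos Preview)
Your overall plan---derive Theorem~\ref{thm25} from Theorem~\ref{thm24} via the bijection between generators $f_{i,h(i,j)}$ and cells $\square_{i,j}$---is exactly what the paper does, and the paper's proof is literally one sentence. However, you have misread the content of Theorem~\ref{thm24} and are planning far more work than is needed.

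Look again at the statement of Theorem~\ref{thm24}: it already asserts that the ideal is generated by $n$ homogeneous elements $r_1,\dots,r_n$ with $\deg(r_s)=s$, that the monomials in each $r_s$ are products $f_{i_k,h(i_k,j_k)}\cdots f_{i_m,h(i_m,j_m)}$ with the constraint $u\neq v$ and $w\neq x$ for any two factors $f_{u,h(u,w)}$, $f_{v,h(v,x)}$, and that the coefficients are those of Proposition~\ref{prop8}. Under the bijection $f_{i,h(i,j)}\leftrightarrow\square_{i,j}$, the condition ``$u\neq v$ and $w\neq x$'' translates verbatim to ``$i\neq k$ and $j\neq\ell$'' for $\square_{i,j}\square_{k,\ell}$. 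So there is nothing left to verify: the non-attacking rook condition, the degree count, and the coefficient formula are all part of the \emph{statement} of Theorem~\ref{thm24}, not something you need to re-establish.

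The work you describe under ``the key step, and the main obstacle''---invoking the alternating/Pl\"ucker structure of the Wronskian to kill monomials with repeated row or column indices---is the content of Theorem~\ref{thm23} (together with Proposition~\ref{prop5} and Proposition~\ref{prop8}), which is what the paper uses to prove Theorem~\ref{thm24}. You are effectively proposing to re-prove Theorem~\ref{thm24} rather than use it. Once you take Theorem~\ref{thm24} as given, Theorem~\ref{thm25} is a pure relabeling with no further argument required.
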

Since we can describe the rings $A(\lambda)^+$ explicitly in terms of generators and relations it follows we can do the same for the indecomposable blocks of $\overline{H}_c(S_n)$ and hence the entire centre.

The main result of this paper is a generalisation of the first theorem to the wreath product case. As previously mentioned the rings $A_{\overline{c}}(\underline{\lambda})^-$ are closely related to $ A_{\overline{c}}(\underline{\lambda})^+$. In the case of the wreath product group we have that $A_{\overline{c}}(\underline{\lambda})^-$ is isomorphic to $A_{\overline{c}}(\underline{\lambda}^\star)^+$ but for a different simple module $\lambda^\star$. 
\begin{thm}(\ref{thm32})
There is an isomorphism of the centre of $\overline{H}_{\overline{c}}(S_n\wr\mathbb{Z}/\ell\mathbb{Z})$ for the parameters $\overline{c}$,  
\[
Z(\overline{H}_{\overline{c}}(S_n\wr\mathbb{Z}/\ell\mathbb{Z}))\cong \bigoplus_{\underline{\lambda}\in\mathrm{Irr} S_n\wr\mathbb{Z}/\ell\mathbb{Z}} A_{\overline{c}}(\underline{\lambda})^-\otimes A_{\overline{c}}(\underline{\lambda})^+.
\]
The algebra $A_{\overline{c}}(\underline{\lambda})^+$ is given by Theorem~\ref{thm30} and $A_{\overline{c}}(\underline{\lambda})^-$ is isomorphic to $A_{\overline{c}}(\underline{\lambda}^\star)^+$ with the opposite grading.
\end{thm}
Similarly to the symmetric case we need only describe the endomorphism rings of the baby Verma modules to understand the centre. It is important for our purposes to fix a convention for labeling the irreducible representations of $S_n\wr\mathbb{Z}/\ell\mathbb{Z}$. The irreducible representations of $S_n\wr\mathbb{Z}/\ell\mathbb{Z}$ are indexed by $\ell$-multipartitions of $n$. Further, the $\ell$-multipartitions of $n$ are in bijection with the partitions of $n\ell$ with trivial $\ell$-core. For precise definitions of these terms, see Section $2$.

The wreath product case is more difficult as there is no direct connection to Schubert cells via the Wronski map. Instead we use an isomorphism due to Bonnaf\'e and Maksimau \cite[Theorem 4.21]{bonnafe2021fixed} between the Calogero-Moser space of $S_n\wr\mathbb{Z}/\ell\mathbb{Z}$ for our special parameter $\overline{c}$ to an irreducible component of the fixed point subspace of the Calogero-Moser space of $S_{n\ell}$. Section $5$ is then dedicated to using this isomorphism to show that the endomorphism rings of the baby Verma modules for the wreath product $S_n\wr\mathbb{Z}/\ell\mathbb{Z}$ are quotients of $A(\lambda)^+$ and $A(\lambda)^-$ for $S_{n\ell}$. This is the content of Theorem~\ref{thm18} which states that there are isomorphisms $A(\mathrm{quo}_\ell(\lambda))^+\cong A(\lambda)^+_{\mathbb{Z}/\ell\mathbb{Z}}$ and $A_{\overline{c}}(\mathrm{quo}_\ell(\lambda)^\sharp)^-\cong A_c(\lambda)^-_{\mathbb{Z}/\ell\mathbb{Z}}$. It is then possible to rewrite the negative case solely in terms of the positive case using Corollary~\ref{cor30}. This reduces understanding the centre to simply understanding $A(\mathrm{quo}_\ell(\lambda))^+$, similarly to the symmetric case.

Theorem~\ref{thm18} is a powerful result, which easily generalises Theorem~\ref{thm25} to Theorem~\ref{thm30} below. Note that $h(i,j)$ means the hook length of the cell $(i,j)$ in the Young diagram.
\begin{thm}(\ref{thm30})
Let $\lambda\vdash n\ell$ be a partition with trivial $\ell$-core and $\mathrm{quo}_\ell(\lambda)$ its $\ell$-quotient. The algebra $A_{\overline{c}}(\mathrm{quo}_\ell(\lambda))^+$ is the quotient
\[
A_{\overline{c}}(\mathrm{quo}_\ell(\lambda))^+\cong \mathbb{C}[D^\ell_\lambda]/I
\]
where $D_\lambda^\ell$ is the subdiagram of $D_\lambda$ (the younger diagram) excluding the cells $(i,j)$ such that $h(i,j)$ is not divisible by $\ell$. The ideal $I$ is generated by $n$ homogeneous elements $r_\ell, r_{2\ell},\dots,r_{n\ell}$. The $r_{s\ell}$ are ordered so that $\mathrm{deg}(r_{s\ell})=s\ell$. The monomials appearing in $r_{s\ell}$ are products of cells which share neither a row or column in $D_\lambda^\ell$. In other words if $\square_{i,j}\square_{k,m}$ is a factor of some monomial appearing in the $r_{s\ell}$, we must have that $i\neq k$ and $j\neq m$. The coefficients of the generators of $I$ are given by Proposition~\ref{prop8}. 
\end{thm}
As in the symmetric group case, if we can describe the endomorphism rings of the baby Verma modules explicitly then we can do the same for the entire centre. In Section $9$, after proving Theorem~\ref{thm32} we provide two examples by calculating the centres of $\overline{H}_c(S_2\wr\mathbb{Z}/2\mathbb{Z})$ and $\overline{H}_c(S_3\wr\mathbb{Z}/2\mathbb{Z})$. The main results contained within this paper first appeared in the doctoral thesis of the author \cite{hird2022representation}.

\section{Combinatorics and partitions}
This section provides the basic definitions and results we require for the combinatorics in this paper. Of particular importance is the notion of $\ell$-cores and $\ell$-quotients. We will begin with the definition of a partition before defining $\ell$-multipartitions and bead diagrams.
\begin{defn}
Let $n$ be a positive integer. A $partition$ of $n$ is a tuple $(\lambda_1,\dots,\lambda_n)$ of non-negative integers such that $\lambda_{i}\geq \lambda_{i+1}$ for all $1\leq i\leq n-1$, and 
\[
|\lambda|:=\sum_{i=1}^n\lambda_i=n.
\]
The $length$ of $\lambda$ is the positive integer $t$ such that $\lambda_t\neq 0$ and $\lambda_{t+1}=0$.
\end{defn}
Young diagrams are a common way to represent partitions. The Young diagram for the partition $(\lambda_1,\dots, \lambda_n)\vdash n$ consists of left aligned rows, with the $i^{th}$ row having $\lambda_i$ cells. We count the columns from left to right and the rows top to bottom. This means that the cell $(2,3)$ is the second row down and the third column along to the right. The hook length is a function that assigns an integer to each cell in the Young diagram. The hook length is calculated for each cell by summing the cells to the right and the cells directly below, then adding one, for the cell itself. Below are the Young diagrams of the partitions $(4,0,0,0)$, $(3,1,0,0)$, $(2,2,0,0)$, $(2,1,1,0)$ and $(1,1,1,1)$, with the hook lengths of each cell written inside.
\begin{center}
\begin{ytableau}
       4 & 3 & 2 & 1
\end{ytableau}
\quad
\begin{ytableau}
       4 & 2 & 1  \\
       1
\end{ytableau}
\quad
\begin{ytableau}
       3 & 2  \\
       2 & 1
\end{ytableau}
\quad\begin{ytableau}
     4 & 1\\
     2 \\
     1
\end{ytableau}
\quad\begin{ytableau}
     4 \\
     3 \\
     2 \\
     1
\end{ytableau}
\end{center}
Let us now give a formula for the hook length of a cell. For a given partition $\lambda$ we denote by $\lambda^T$ its transpose. Then $\lambda^T_j$ denotes the number of cells in the $j^{th}$ column of $\lambda$. The hook length $h(i,j)$ of the cell $(i,j)$ is then
\begin{equation}\label{equation1}
h(i,j)=\lambda_i-j+\lambda^T_j-i+1.
\end{equation}
\begin{lem}\label{lem1}
Let $\lambda\vdash n$ and consider the set $P=\{d_1,\dots,d_n\}$, where $d_i=\lambda_i+n-i$. Then 
\[
|\{j\,|\,d_i-j\not\in P \textnormal{ for } 1\leq j\leq d_i\}|=\lambda_i.
\]
\end{lem}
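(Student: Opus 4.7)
The plan is to count the complement instead: I will show that
\[
\bigl|\{j \in \{1,\dots,d_i\} : d_i - j \in P\}\bigr| = n - i,
\]
from which the lemma follows immediately, since then the set in the statement has cardinality $d_i - (n - i) = (\lambda_i + n - i) - (n - i) = \lambda_i$.

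The first step is a sanity check on $P$: because $\lambda_i \geq \lambda_{i+1}$, the differences satisfy $d_i - d_{i+1} = (\lambda_i - \lambda_{i+1}) + 1 \geq 1$, so $d_1 > d_2 > \cdots > d_n \geq 0$. In particular the $d_i$ are pairwise distinct, and every $d_k$ lies in $\{0,1,\dots,d_1\}$.

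The second step is to identify $\{j \in \{1,\dots,d_i\} : d_i - j \in P\}$ with the set of elements of $P$ that are strictly smaller than $d_i$. The map $j \mapsto d_i - j$ is a bijection from $\{1,\dots,d_i\}$ onto $\{0,1,\dots,d_i-1\}$, and under this bijection the condition $d_i - j \in P$ becomes $d_i - j \in P \cap \{0,\dots,d_i-1\}$. By the strict monotonicity from step one, the elements of $P$ that are strictly less than $d_i$ are exactly $d_{i+1}, d_{i+2}, \dots, d_n$; there are $n - i$ of them, and each is nonnegative (hence automatically lies in $\{0,\dots,d_i-1\}$). This yields the count $n - i$.

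I do not anticipate any real obstacle here: the only thing to be careful about is the endpoint $j = d_i$, which corresponds to $d_i - j = 0$. Since $0$ is an element of $P$ if and only if $d_n = 0$, i.e.\ if and only if $\lambda_n = 0$, one might worry about a case distinction, but the argument above handles both possibilities uniformly because $d_{i+1}, \dots, d_n$ are always the elements of $P$ below $d_i$ regardless of whether $d_n$ equals $0$ or is positive.
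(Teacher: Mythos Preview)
Your proof is correct and follows essentially the same approach as the paper's own proof: both count the complement by observing that the elements of $P$ strictly below $d_i$ are exactly $d_{i+1},\dots,d_n$, giving $n-i$ values of $j$ with $d_i-j\in P$, and hence $d_i-(n-i)=\lambda_i$ values with $d_i-j\notin P$. Your write-up is in fact more careful, making the strict monotonicity of the $d_k$ explicit and addressing the endpoint $j=d_i$, whereas the paper's version contains minor typos (``$d_k\le d_i$'' for ``$d_k<d_i$'' and ``$d_i-(n+i)$'' for ``$d_i-(n-i)$'').
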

\begin{proof}
Note that $d_i>d_j$ if $i<j$. Hence there are $n-i$ elements $d_k\in P$ such that $d_k<d_i$. The number of elements $d_i-j\not\in P$ is therefore
\[
d_i-(n-i)=\lambda_i+n-i-n+i= \lambda_i.
\]
\end{proof}
If we let $P$ and $d_i$ be the same as in Lemma~\ref{lem1} we can prove the following.
\begin{lem}\label{lem2}
The set of hook lengths of the row $i$ equals the set $\{j\,|\,d_i-j\not\in P\}$.
\end{lem}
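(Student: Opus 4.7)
The plan is to establish the equality of two sets of size $\lambda_i$ by producing an explicit injection from the cells of row $i$ into $\{j : 1 \le j \le d_i,\ d_i - j \notin P\}$; combined with Lemma~\ref{lem1}, this forces equality. The key input is the hook length formula \eqref{equation1}, namely $h(i,j) = \lambda_i - j + L - i + 1$ where $L = \lambda'_j$ is the length of column $j$.

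First, I would show that the hook lengths $h(i,1), h(i,2), \dots, h(i,\lambda_i)$ of row $i$ are pairwise distinct and all lie in $\{1, \dots, d_i\}$. Distinctness follows because as $j$ increases by one, the term $\lambda_i - j$ decreases by one while $\lambda'_j$ is weakly decreasing, so $h(i,j)$ strictly decreases in $j$. The bound $h(i,j) \le d_i = \lambda_i + n - i$ follows from $L \le n$, and positivity of hook lengths is immediate. Thus $j \mapsto h(i,j)$ is an injection from $\{1,\dots,\lambda_i\}$ into $\{1, \dots, d_i\}$.

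Next, the main step is to verify that $h(i,j)$ lies in $\{m : d_i - m \notin P\}$, i.e.\ that $d_i - h(i,j) \notin P$ for every cell $(i,j)$. A direct computation gives
\[
d_i - h(i,j) = n + j - \lambda'_j - 1.
\]
Assume for contradiction that this equals $d_k = \lambda_k + n - k$ for some $k$, so that $\lambda_k - k = j - \lambda'_j - 1$. I would then split into two cases according to whether $k \le \lambda'_j$ or $k > \lambda'_j$. Recall $\lambda'_j$ is the number of rows $r$ with $\lambda_r \ge j$. In the first case, $\lambda_k \ge j$ and $-k \ge -\lambda'_j$ force $\lambda_k - k \ge j - \lambda'_j$, contradicting $\lambda_k - k = j - \lambda'_j - 1$. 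In the second case, $\lambda_k < j$ gives $\lambda_k - k \le j - 1 - k \le j - \lambda'_j - 2$, again a contradiction. So the injection lands in the target set as required.

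Finally, by Lemma~\ref{lem1} the target set $\{j : 1 \le j \le d_i,\ d_i - j \notin P\}$ has cardinality exactly $\lambda_i$, matching the size of the row-$i$ hook length set; since we have an injection between two finite sets of the same size, it is a bijection, and the two sets coincide. The only mildly delicate point is the case analysis above, where one must remember that $\lambda'_j$ is defined as the number of rows of length at least $j$, so the inequalities $k \le \lambda'_j \Leftrightarrow \lambda_k \ge j$ hold; everything else is elementary.
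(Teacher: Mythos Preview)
Your proof is correct and follows essentially the same approach as the paper: both compute $d_i - h(i,j) = n + j - L - 1$ (with $L = \lambda'_j$), assume this equals some $d_k$, and derive a contradiction via a case split on $k$ versus $L$, then invoke Lemma~\ref{lem1} for the cardinality match. The only differences are cosmetic: you merge the paper's three cases ($L>m$, $L=m$, $L<m$) into two ($k\le L$, $k>L$), and you explicitly verify that the hook lengths in a fixed row are distinct, which the paper tacitly assumes.
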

\begin{proof}
Let us show that the set of hook lengths of row $i$ is contained in the set $\{j\,|\,d_i-j\not\in P\}$. The result will then follow from Lemma~\ref{lem1}. Fix $i\in\{1,2,\dots ,n\}$. If there exists $d_m$ such that $d_i-h(i,j)=d_m$ for some $j$ then 
\[
h(i,j)=d_i-d_m.
\]
Hence
\[
\lambda_i-j+\lambda^T_j-i+1=\lambda_i+n-i-(\lambda_m+n-m),
\]
which simplifies to
\begin{equation}\label{eq:hooky}
 \lambda^T_j-j+1=m-\lambda_m.
\end{equation}
We now have two cases to consider $\lambda^T_j\geq m$, and $\lambda^T_j<m$. 

First let $\lambda^T_j\geq m$. Then $\lambda^T_j=m+k$ for some non-negative integer $k$ and so equation~\eqref{eq:hooky} becomes $j=k+\lambda_m+1$. Therefore $j>\lambda_m$. Since $\lambda^T_j>m$ we have that $\lambda_m\geq\lambda_{\lambda^T_j}$, hence $j>\lambda_{\lambda^T_j}$. But this contradicts the definition of $\lambda^T_j$. 

The second case is when $\lambda^T_j<m$. Then $\lambda^T_j=m-k\geq i$ for some positive integer $k$. Equation \eqref{eq:hooky} $\lambda_m +1=j+k$ now since $k$ is a positive integer we have $j\leq \lambda_m$. But since $m>\lambda^T_j$ we have that $j>\lambda_m$ and so we have a contradiction. Therefore there is no $d_m$ such that $d_i-h(i,j)=d_m$ and so $h(i,j)\in \{j\,|\,d_i-j\not\in P\}$
\end{proof}
\begin{lem}\label{lem3}
Given a partition $\lambda\vdash n$, we have $d_i-h(i,j)=d_k-h(k,j)$. 
\end{lem}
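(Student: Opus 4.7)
The statement is really a direct computation, so the plan is simply to unwind the two definitions on each side and observe that the dependence on the row index cancels.

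First I would substitute $d_i=\lambda_i+n-i$ and the hook length formula \eqref{equation1}, namely $h(i,j)=\lambda_i-j+L-i+1$, where $L$ is the number of cells in column $j$ (the same $L$ appears for every cell in that column, by definition). This gives
\[
d_i-h(i,j)=(\lambda_i+n-i)-(\lambda_i-j+L-i+1)=n+j-L-1.
\]
The key observation is that the right-hand side no longer involves $i$: the $\lambda_i$ terms cancel and the $-i$ and $+i$ cancel. Since $L$ depends only on the column $j$ (it is the length of that column, which is an invariant of $\lambda$ once $j$ is fixed), the same computation applied to row $k$ yields $d_k-h(k,j)=n+j-L-1$ as well.

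Comparing the two expressions gives $d_i-h(i,j)=d_k-h(k,j)$, as claimed. The only thing to be slightly careful about is the implicit hypothesis that the cells $(i,j)$ and $(k,j)$ both lie in the Young diagram, so that $h(i,j)$ and $h(k,j)$ are defined and share the same column length $L$; this is automatic under the convention used throughout the section. There is no genuine obstacle — the lemma is really just the statement that the quantity $d_i-h(i,j)$ is a column invariant.
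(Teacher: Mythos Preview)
Your proof is correct and essentially identical to the paper's: both substitute $d_i=\lambda_i+n-i$ and the hook length formula \eqref{equation1}, simplify to $n+j-L-1$, and observe this is independent of the row index.
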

\begin{proof}
Using formula \eqref{equation1} the following calculations give the result
\[
d_i-h(i,j)=\lambda_i+n-i-\lambda_i+j-\lambda^T_j+i-1=n+j-\lambda^T_j-1
\]
and 
\[
d_k-h(k,j)=\lambda_k+n-k-\lambda_k+j-\lambda^T_j+k-1=n+j-\lambda^T_j-1.
\]
\end{proof}
\begin{defn}
An $\ell$-multipartition of $n$ is a $\ell$-tuple $(\lambda^1,\dots, \lambda^\ell)$ such that each $\lambda^i$ is a partition and $\sum_{i=1}^\ell|\lambda^i|=n$.
\end{defn}
The $first$ $column$ $hook$ $lengths$ of a partition is the set of hook lengths of the cells on the leftmost column of the Young diagram for a partition. For example, the partition $(3,2,1,1)$ has Young diagram
\begin{center}
\begin{ytableau}
       6 & 3 & 1\\
       4 & 1 \\
       2 \\
       1
\end{ytableau}
\end{center}
and the first column hook lengths are $\{6,4,2,1\}$. The first column hook lengths are important because the original partition can always be recovered from the first column hook lengths. To see this, recall the formula for hook length given above,
\[
h(i,j)=\lambda_i-j+\lambda^T_j-i+1.
\]
The first column hook lengths are given by fixing $j=1$. Hence
\[
h(i,1)=\lambda_i-1+\lambda^T_j-i+1=\lambda_i+\lambda^T_j-i,
\]
and thus $\lambda_i=h(i,1)-\lambda^T_j+i$. 
\begin{defn}\label{betanum}
let $\lambda=(\lambda_1,\dots,\lambda_m,0\dots,0)\vdash n$ and $p\geq m$ then define
\[
\beta_i^p=\lambda_i+p-i\textnormal{ for }1\leq i\leq p.
\]
The set $\{\beta_i^p|1\leq i\leq p\}$ is called a set of $\beta$-numbers for $\lambda$.
\end{defn}
The set of first column hook lengths is a set of $\beta$-numbers and as we shall see these allow us to define $\ell$-cores and $\ell$-quotients by bead diagrams.
\begin{defn}
We refer to elements of the set $\mathbb{Z}_{\leq -1}\times\{0,\dots, \ell-1\}$ as $points$. A $bead$ $diagram$ is a function $f:\mathbb{Z}_{\leq -1}\times\{0,\dots, \ell-1\}\rightarrow \{0,1\}$ which takes the value $1$ for only finitely many points. If $f(i,j)=1$ then the point is said to be occupied by a $bead$. If $f(i,j)=0$, then the point $(i,j)$ is $empty$. 
\end{defn}
We can construct a bead diagram if given any partition $\lambda\vdash n$ and integer $\ell\leq n$ in the following way. Following the notation of Definition~\ref{betanum} let $p$ be the smallest multiple of $\ell$ such that $p\geq m$. Then denote by $K$ the set of corresponding $\beta$-numbers and place the beads according to the following rule $f(i,j)=1$ if and only if $-(i+1)\cdot \ell+j\in K$. Denote this bead diagram by $\mathfrak{B}_\ell(\lambda)$.

There is an easier way to construct the bead diagram associated to a partition, that agrees with the condition given above. Consider the diagram of $\ell$ columns of empty beads. Then count left to right and begin with the top row (this bead corresponds to $0$), the full beads correspond to the elements of $K$. An example makes this clear.
\begin{example}
Let $\ell=3$, then we shall write bead diagram for the partition $(3,2,1,1)$. Since $p=6$ is the smallest multiple of $3$ that is greater than $4$ our set of $\beta$-numbers is $\{8,6,4,3,1,0\}$. Therefore the bead diagram has three columns and the full beads are placed according to the $\beta$-numbers counting left to right and top to bottom. Hence $\mathfrak{B}_3(3,2,1,1)$ is
\begin{center}
\begin{tikzpicture}[thick, scale=\textwidth/4cm]
    \begin{scope}[xshift=0, yshift=0]
      \coordinate (a) at (0,0); 
      \coordinate (b) at (0.5,0);
      \coordinate (c) at (1,0);
      \coordinate (d) at (0,0.25);
      \coordinate (e) at (0.5,0.25); 
      \coordinate (f) at (1,0.25);
      \coordinate (g) at (0,0.5);
      \coordinate (h) at (0.5,0.5);
      \coordinate (i) at (1,0.5); 
      \draw[fill] (a) circle (1pt) node [above=4.6pt] {};
      \draw[] (b) circle (1pt) node [right=2pt] {};
      \draw[fill] (c) circle (1pt) node [above=2pt] {};
      \draw[fill] (d) circle (1pt) node [above=2pt] {};
      \draw[fill] (e) circle (1pt) node [above=4.6pt] {};
      \draw[] (f) circle (1pt) node [right=2pt] {};
      \draw[fill] (g) circle (1pt) node [above=2pt] {};
      \draw[fill] (h) circle (1pt) node [above=2pt] {};
      \draw[] (i) circle (1pt) node [above=4.6pt] {};
    \end{scope}
  \end{tikzpicture}.
  \end{center}
\end{example}
Any bead diagram gives rise to a unique partition by following the reverse process, with the caveat that we begin counting the beads from the first empty bead. The set of numbers that we recover is the set of first column hook lengths for the partition. For instance consider the bead diagram 
\begin{center}
\begin{tikzpicture}[thick, scale=\textwidth/4cm]
    \begin{scope}[xshift=0, yshift=0]
      \coordinate (a) at (0,0); 
      \coordinate (b) at (0.5,0);
      \coordinate (c) at (1,0);
      \coordinate (d) at (0,0.25);
      \coordinate (e) at (0.5,0.25); 
      \coordinate (f) at (1,0.25);
      \coordinate (g) at (0,0.5);
      \coordinate (h) at (0.5,0.5);
      \coordinate (i) at (1,0.5); 
      \coordinate (j) at (0,0.75);
      \coordinate (k) at (0.5,0.75);
      \coordinate (l) at (1,0.75); 
      \draw[fill] (a) circle (1pt) node [above=4.6pt] {};
      \draw[] (b) circle (1pt) node [right=2pt] {};
      \draw[] (c) circle (1pt) node [above=2pt] {};
      \draw[fill] (d) circle (1pt) node [above=2pt] {};
      \draw[] (e) circle (1pt) node [above=4.6pt] {};
      \draw[] (f) circle (1pt) node [right=2pt] {};
      \draw[] (g) circle (1pt) node [above=2pt] {};
      \draw[] (h) circle (1pt) node [above=2pt] {};
      \draw[fill] (i) circle (1pt) node [above=4.6pt] {};
      \draw[fill] (j) circle (1pt) node [above=2pt] {};
      \draw[fill] (k) circle (1pt) node [above=2pt] {};
      \draw[fill] (l) circle (1pt) node [above=4.6pt] {};
    \end{scope}
  \end{tikzpicture}.
  \end{center}
The first column hook lengths are $\{2,3,6\}$ which corresponds to the partition $(4,2,2)$.
\begin{defn}
 Let $\lambda\vdash n$ and fix a positive integer $\ell\leq n$. Consider the bead diagram $\mathfrak{B}_\ell(\lambda)$ of $\lambda$ with $\ell$ columns. If we slide the beads upwards as much as possible we obtain a new bead diagram. The partition corresponding to this new bead diagram is called the $\ell$-core of $\lambda$.  
\end{defn}

\begin{defn}
Consider the bead diagram $\mathfrak{B}_\ell(\lambda)$. The columns can be considered as bead diagrams for $\ell=1$. Denote the partition defined by the first column by $\lambda^1$, the second by $\lambda^2$ and so on. We define the $\ell$-$quotient$ of $\lambda$ to be the $\ell$-multipartition $(\lambda^1,\dots, \lambda^\ell)$.
\end{defn}
\begin{example}
Let us find the $3$-core and $3$-quotient of $(4,2,2)$. First write $\mathfrak{B}_3(4,2,2)$ which is 
\begin{center}
\begin{tikzpicture}[thick, scale=\textwidth/4cm]
    \begin{scope}[xshift=0, yshift=0]
      \coordinate (a) at (0,0); 
      \coordinate (b) at (0.5,0);
      \coordinate (c) at (1,0);
      \coordinate (d) at (0,0.25);
      \coordinate (e) at (0.5,0.25); 
      \coordinate (f) at (1,0.25);
      \coordinate (g) at (0,0.5);
      \coordinate (h) at (0.5,0.5);
      \coordinate (i) at (1,0.5); 

      \draw[fill] (a) circle (1pt) node [above=4.6pt] {};
      \draw[] (b) circle (1pt) node [right=2pt] {};
      \draw[] (c) circle (1pt) node [above=2pt] {};
      \draw[fill] (d) circle (1pt) node [above=2pt] {};
      \draw[] (e) circle (1pt) node [above=4.6pt] {};
      \draw[] (f) circle (1pt) node [right=2pt] {};
      \draw[] (g) circle (1pt) node [above=2pt] {};
      \draw[] (h) circle (1pt) node [above=2pt] {};
      \draw[fill] (i) circle (1pt) node [above=4.6pt] {};

    \end{scope}
  \end{tikzpicture}
  \end{center}
  to find the $3$-core we shift all beads up as far as they can go, to obtain the new bead diagram 
  \begin{center}
\begin{tikzpicture}[thick, scale=\textwidth/4cm]
    \begin{scope}[xshift=0, yshift=0]
      \coordinate (a) at (0,0); 
      \coordinate (b) at (0.5,0);
      \coordinate (c) at (1,0);
      \coordinate (d) at (0,0.25);
      \coordinate (e) at (0.5,0.25); 
      \coordinate (f) at (1,0.25);
      \coordinate (g) at (0,0.5);
      \coordinate (h) at (0.5,0.5);
      \coordinate (i) at (1,0.5); 
      \draw[] (a) circle (1pt) node [above=4.6pt] {};
      \draw[] (b) circle (1pt) node [right=2pt] {};
      \draw[] (c) circle (1pt) node [above=2pt] {};
      \draw[fill] (d) circle (1pt) node [above=2pt] {};
      \draw[] (e) circle (1pt) node [above=4.6pt] {};
      \draw[] (f) circle (1pt) node [right=2pt] {};
      \draw[fill] (g) circle (1pt) node [above=2pt] {};
      \draw[] (h) circle (1pt) node [above=2pt] {};
      \draw[fill] (i) circle (1pt) node [above=4.6pt] {};
    \end{scope}
  \end{tikzpicture}.
  \end{center}
Beginning the count from the first empty bead we get first column hook lengths $\{1,2\}$ which corresponds to the partition $(1,1)$. To find the $3$-quotient we consider the columns of $\mathfrak{B}_3(4,2,2)$ as their own bead diagrams. Then the set of first column hook lengths for $\lambda^2$ and $\lambda^3$ are empty. The first bead column has first column hook lengths $\{1,2\}$. Therefore the $3$-quotient of $(4,2,2)$ is $((1,1),\emptyset,\emptyset)$.
\end{example}
Let us introduce one last piece of notation. Denote by $\mathcal{P}(n)$ the set of all partitions of $n$, and $\mathcal{P}(n,\ell)$ the set of all $\ell$-multipartitions of $n$. Also, denote by $\mathcal{P}^{\ell}_\lambda(n)$ the set of all partitions of $n$ with $\ell$-core $\lambda$. A partition is uniquely determined by its $\ell$-core and its $\ell$-quotient. The following is \cite[Theorem 2.7.30]{JK} which allows us to equate $\ell$-multipartitions of $n$ with partitions of $n\ell$ that have trivial $\ell$-core.
 \begin{thm}\label{thm1}
There is a bijection between the set of partitions of $n\ell$ with trivial $\ell$-core and the $\ell$-multipartitions of $n$
\[
\mathcal{P}^{\ell}_{\emptyset}(n\ell)\rightarrow \mathcal{P}(n,\ell),
\]
given by $\lambda\rightarrow quo(\lambda)$.
\end{thm}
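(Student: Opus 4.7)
The plan is to realise the bijection concretely via bead diagrams, which are already in play in this section, and then verify both directions amount to the same construction.

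First I would define the inverse explicitly. Given an $\ell$-multipartition $(\lambda^1,\dots,\lambda^\ell)$ with $\sum_i |\lambda^i| = n$, build an $\ell$-column bead diagram by taking column $j$ to be the (one-column) bead diagram $\mathfrak{B}_1(\lambda^j)$ of $\lambda^j$. The earlier observation that any bead diagram determines a unique partition (counting from the first empty bead, as described after the definition) then produces a partition $\mu$. The forward map, $\mu \mapsto \mathrm{quo}(\mu)$, reads off exactly the partitions encoded column by column in $\mathfrak{B}_\ell(\mu)$, so composing the two constructions in either order is the identity on the level of bead diagrams.

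The core of the argument is to verify the size/trivial-core bookkeeping. The key identity is
\[
|\lambda| \;=\; |\mathrm{core}_\ell(\lambda)| + \ell \sum_{i=1}^{\ell} |\lambda^i|,
\]
which I would prove by inducting on the number of upward bead-slides in $\mathfrak{B}_\ell(\lambda)$. A single upward slide of one bead in a fixed column corresponds to removing exactly one cell from the one-column partition for that column and to removing an $\ell$-rim hook from $\lambda$ (hence exactly $\ell$ cells from $|\lambda|$); the total number of slides needed to reach the unique bead diagram with all beads pushed up is $\sum_i |\lambda^i|$. Specialising to $\mathrm{core}_\ell(\lambda) = \emptyset$ gives $|\lambda| = \ell \sum_i |\lambda^i|$, so a partition of $n\ell$ with trivial $\ell$-core has its quotient parts summing to $n$, while conversely an $\ell$-multipartition of $n$ produces, by the construction above, a partition of $n\ell$ whose bead diagram is already fully slid-up and hence has trivial $\ell$-core.

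The main obstacle is justifying the slide/rim-hook correspondence cleanly: one has to check that a legal upward slide in the $\ell$-column bead diagram corresponds bijectively with the removal of an $\ell$-rim hook of $\lambda$, and that the first-column hook lengths transform as predicted so that the resulting bead diagram still represents a partition. Since this statement is cited as \cite[Theorem 2.7.30]{JK}, I would ultimately invoke the standard abacus description of partitions modulo the $\ell$-core rather than redevelop it; the contribution here is to fix the conventions matching the bead diagram $\mathfrak{B}_\ell(-)$ introduced above so that the bijection agrees with $\lambda \mapsto \mathrm{quo}(\lambda)$ as stated.
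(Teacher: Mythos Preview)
The paper does not give its own proof of this theorem; it simply cites it as \cite[Theorem 2.7.30]{JK}. Your sketch via bead diagrams --- constructing the inverse column-by-column, invoking the identity $|\lambda| = |\mathrm{core}_\ell(\lambda)| + \ell\sum_i|\lambda^i|$, and justifying it through the bead-slide/rim-hook correspondence --- is precisely the standard abacus argument that underlies the cited result, and you have already noted you would ultimately defer to that reference. So your proposal is correct and aligned with what the paper relies on; there is just no in-paper proof to compare against.
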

\section{The blocks of the centre}
In this section we will study the essential properties of the indecomposable blocks of the centre of the restricted rational Cherednik algebra. Namely, we recall that these blocks are isomorphic to a tensor product of endomorphism rings of baby Verma modules. This fact is of critical importance to later results, as it is these rings that we will describe. Further, we will show that the blocks are indexed by the irreducible representations of the complex reflection group $W$ when the centre is a regular algebra. We begin by recalling the definition of the rational Cherednik algebra.

Let $W\subset GL(\mathfrak{h})$ be a complex reflection group and let $V=\mathfrak{h} \oplus \mathfrak{h}^{*}$. There is a natural pairing $(-,-): \mathfrak{h} \times \mathfrak{h}^{*} \rightarrow \mathbb{C}$ given by $(y, x) := x(y)$. Then the standard
symplectic form $\omega$ on $V$ is given by
\[
\omega (y_1 \oplus x_1, y_2 \oplus x_2) = (y_1, x_2)-(y_2, x_1).
\]
Let $S$ be the set of complex reflections in $W$. For $s\in S$ we denote the symplectic 2-form that is equal to $\omega$ on $\mathrm{Im}(1-s)$ and $0$ on $\mathrm{Ker}(1-s)$ by $\omega_s$.

Given a complex reflection group $(W,\mathfrak{h})$, $t\in\mathbb{C}$ and a class function $c:S\rightarrow \mathbb{C}$. We define the $rational$ $Cherednik$ $algebra$
\[
H_{t,c}(W):=T(V)\rtimes W/\langle x\otimes y-y\otimes x-\kappa(x,y) \,|\,\forall x,\, y\in V\rangle
\]
where 
\[
\kappa(x,y)=t\cdot\omega(x,y)\cdot 1
+\sum_{s\in S}c(s)\cdot\omega_s(x,y)\cdot s.
\]
Let us fix some notation. For any algebra $A$ we denote by $Z(A)$ its centre. For brevity we also define $Z_c(W):=Z(H_c(W))$ and $X_c(W):=\spec\, Z_c(W)$. It is shown \cite[Proposition 3.6]{Baby} that there is an inclusion of algebras
\[
i:\mathbb{C}[\mathfrak{h}]^W\otimes\mathbb{C}[\mathfrak{h}^*]^W\hookrightarrow Z_c(W).
\]
The dual map on spectra is then denoted
\[
\gamma:X_c(W)\rightarrow \mathrm{Spec}\,(\mathbb{C}[\mathfrak{h}]^W\otimes\mathbb{C}[\mathfrak{h}^*]^W)=\mathfrak{h}/W\times \mathfrak{h}^*/W. 
\]

The $restricted$ $rational$ $Cherednik$ $algebra$ is the quotient algebra
\[
\overline{H}_{c}(W):=H_{0,c}(W)/R_+H_{0,c}(W).
\]
Here $R_+=\mathbb{C}[\mathfrak{h}]_+^W\otimes\mathbb{C}[\mathfrak{h}^*]^W+\mathbb{C}[\mathfrak{h}]^W\otimes\mathbb{C}[\mathfrak{h}^*]_+^W$ is the ideal of the central subalgebra $\mathbb{C}[\mathfrak{h}]^W\otimes\mathbb{C}[\mathfrak{h}^*]^W$ generated by elements with no constant term.

The primary object of this paper is the restricted rational Cherednik algebra of the wreath product group $S_n\wr\mathbb{Z}/\ell\mathbb{Z}$, and so we take this opportunity to give its definition. 
\begin{defn}
The wreath product $S_n\wr\mathbb{Z}/\ell\mathbb{Z}$ is the semidirect product $(\mathbb{Z}/\ell\mathbb{Z})^n\rtimes S_n$. This means that group multiplication is given by 
\[
(s,a_1,\dots,a_n)\cdot (t,b_1,\dots,b_n)=(st,a_{t(1)}b_1,\dots,a_{t(n)}b_n)
\]
where $s,t\in S_n$ and $a_i, b_j$ are $\ell^{th}$ roots of unity for all $1\leq i\leq n$ and $1\leq j\leq n$.
\end{defn}
Write the restricted rational Cherednik algebra as the direct sum of its indecomposable blocks
\begin{equation}\label{equation3}
\overline{H}_c(W)=\bigoplus_{j\in I}B_j.
\end{equation}
The blocks of $\overline{H}_c(W)$ are in bijection with the points of $\gamma^{-1}(0)$ \cite[Corollary 2.7]{Ramifications}. Therefore, we can replace the indexing set $I$ with $\gamma^{-1}(0)$ and write \eqref{equation3} as
\begin{equation}\label{equation4}
    \overline{H}_c(W)=\bigoplus_{p\in \gamma^{-1}(0)}B_p.
\end{equation}
To better understand the blocks we require two key facts, firstly that the blocks have the form of a matrix algebra and secondly that there is a bijection between the irreducible representation of $W$ and the blocks. Both of these require that $X_c(W)$ is smooth. Thankfully, this condition is not hard to guarantee due to \cite[Corollary 1.14]{EG} which says the following.
\begin{lem}\label{lem4}
For a suitably generic class function $c$ (one which is a complement to finitely many hyperplanes on the set of conjugacy classes of reflections in $S_n\wr \mathbb{Z}/\ell\mathbb{Z}$) the variety $X_c(S_n\wr \mathbb{Z}/\ell\mathbb{Z})$ is smooth.
\end{lem}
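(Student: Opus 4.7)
The plan is to follow the strategy of Etingof--Ginzburg, which underlies the citation. The essential point is to exhibit $X_c(W)$ for $W = S_n \wr \mathbb{Z}/\ell\mathbb{Z}$ as a flat family of affine schemes parametrised by the space of class functions $c$, and then argue that smoothness is a non-empty Zariski-open condition whose complement has the specific form claimed.

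First I would identify the parameter space. Since $c \colon S \to \mathbb{C}$ is required to be $W$-invariant, the space of admissible $c$ is $\mathbb{C}^N$, where $N$ is the number of $W$-conjugacy classes of reflections. For $W = S_n \wr \mathbb{Z}/\ell\mathbb{Z}$ this number is finite (and in fact is $\ell$ when $n \geq 2$: one class coming from transpositions of $S_n$, together with $\ell-1$ classes of pseudo-reflections coming from the non-trivial powers of a generator of $\mathbb{Z}/\ell\mathbb{Z}$ in a single factor). Next I would invoke the PBW theorem for rational Cherednik algebras, which implies that $H_{0,c}(W)$ is a flat deformation of $\mathbb{C}[\mathfrak{h} \oplus \mathfrak{h}^*] \rtimes W$ as $c$ varies. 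Passing to centres via the Satake-type isomorphism $eH_{0,c}(W)e \cong Z_c(W)$ (where $e = |W|^{-1}\sum_{w\in W} w$), one obtains a flat family of affine schemes $\{X_c(W)\}_{c \in \mathbb{C}^N}$ over the base $\mathbb{C}^N$, specialising to $(\mathfrak{h} \oplus \mathfrak{h}^*)/W$ at $c = 0$.

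Now I would use the standard fact that, in a flat family of affine schemes of fixed finite type over a Noetherian base, the locus of points over which the fibre is smooth is Zariski open in the base. This produces a (possibly empty) open locus $U \subseteq \mathbb{C}^N$ of smooth parameters. To rule out emptiness I would invoke the Etingof--Ginzburg identification: for $W = S_n \wr \mathbb{Z}/\ell\mathbb{Z}$ there is a choice of $c$ for which $X_c(W)$ is isomorphic to a (smooth) Nakajima quiver variety, in fact a resolution of $(\mathbb{C}^2/\mathbb{Z}/\ell\mathbb{Z})^n/S_n$ by the Hilbert scheme construction. This exhibits a specific smooth fibre, so $U$ is a non-empty Zariski-open subset of $\mathbb{C}^N$.

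It remains to show that $\mathbb{C}^N \setminus U$ is actually a finite union of hyperplanes, rather than a more general proper closed subset. For this I would appeal to the homogeneity of the set-up: the rational Cherednik algebra carries a natural $\mathbb{C}^*$-action under which $c$ is scaled linearly (this comes from the Euler grading on $T(V) \rtimes W$), and this action is compatible with the flat family. Consequently the singular locus in the base is a $\mathbb{C}^*$-invariant closed subvariety of $\mathbb{C}^N$, i.e.\ a cone. A codimension-one component of such a cone is a hyperplane through the origin, and since a smooth value of $c$ is known to exist, the singular locus has pure codimension one and is cut out by finitely many linear forms. This establishes the claim. The main obstacle, as expected, is the last step: without the $\mathbb{C}^*$-equivariance together with the Etingof--Ginzburg construction of a smooth fibre, one would only obtain that $U$ is a Zariski-open subset, and not the specific hyperplane-complement description asserted in the lemma.
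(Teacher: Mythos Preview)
The paper does not prove this lemma; it simply cites \cite[Corollary~1.14]{EG}. Your proposal is therefore an attempt to reconstruct the argument behind that citation, and most of it is reasonable: the PBW flatness, the Satake isomorphism, the identification with a Nakajima quiver variety to exhibit one smooth fibre, and the openness of the smooth locus are all correct ingredients.

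The genuine gap is your final paragraph. Two claims there are false as stated. First, ``a codimension-one component of such a cone is a hyperplane through the origin'' is not true: a codimension-one irreducible $\mathbb{C}^\times$-invariant closed subset of $\mathbb{C}^N$ is the zero locus of a single \emph{homogeneous} polynomial, which need not be linear (e.g.\ $x^2 - yz = 0$ in $\mathbb{C}^3$). Second, ``since a smooth value of $c$ is known to exist, the singular locus has pure codimension one'' is a non sequitur: non-emptiness of the smooth locus only tells you the singular locus is a proper closed subset, nothing about its codimension. So the $\mathbb{C}^\times$-equivariance alone does not force the singular locus to be a hyperplane arrangement.

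What actually produces the hyperplanes in the Etingof--Ginzburg argument is the explicit identification of $X_c(S_n\wr\mathbb{Z}/\ell\mathbb{Z})$ with a quiver variety for the cyclic (affine type~$A$) quiver, after which one invokes the smoothness criterion for Nakajima quiver varieties: the affine quotient $\mathfrak{M}_0(\lambda)$ is smooth precisely when the deformation parameter $\lambda$ avoids the root hyperplanes $\{\lambda\cdot\alpha = 0\}$ for the positive roots $\alpha$ appearing in the dimension vector. That criterion is where the linear structure of the walls comes from, and it cannot be recovered from the soft homogeneity argument you gave.
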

\textbf{Throughout the rest of this paper it is assumed} $c$ \textbf{is chosen so that $X_c(W)$ is smooth}. Also, we say that $c$ is generic if the algebra $Z(H_c(S_n\wr\mathbb{Z}/\ell\mathbb{Z}))$ is regular.

Let us describe the blocks in \eqref{equation3}. The algebra $B_p$ is a matrix algebra over the ring of functions at the points $p\in \gamma^{-1}(0)$ by \cite[Proposition 2.2]{brown2001}, 
\begin{equation}\label{equation5}
B_p=\mathrm{Mat}_{|W|}(\mathcal{O}_p). 
\end{equation}
Here $\mathcal{O}_p=(Z_c(W)/R_+Z_c(W))_p$ is the scheme theoretic fibre of $\gamma$ at $0$ localised at the point $p$. The next proposition demonstrates that the blocks can be labeled by the irreducible representations of $W$. 

To prove that the blocks of the centre are labeled by irreducible representations of $W$ we use the baby Verma modules and their opposites. Recall that these are the standard modules for the restricted rational Cherednik algebra. They are defined for any irreducible representation $\lambda\in \mathrm{Irr}\,W$ as 
\begin{equation}\label{equation6}
    \Delta_c(\lambda)=\overline{H}_c(W)\otimes_{\mathbb{C}[\mathfrak{h}^*]^{coW}\rtimes W}\lambda.
\end{equation}
Here $\mathfrak{h}\subset \mathbb{C}[\mathfrak{h}^*]^{coW}$ acts as $0$ on $\lambda$.
\begin{prop}\label{prop1}
The blocks of $\overline{H}_c(W)$ are in bijection with the irreducible representations of $W$.
\end{prop}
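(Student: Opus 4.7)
The plan is to combine the block decomposition \eqref{equation4} with the matrix algebra description \eqref{equation5} of each block. Under the standing smoothness assumption on $X_c(W)$, the fibre $\gamma^{-1}(0)$ consists of finitely many reduced closed points, so each localisation $\mathcal{O}_p = (Z_c(W)/R_+Z_c(W))_p$ is an Artinian local ring. Hence $B_p = \mathrm{Mat}_{|W|}(\mathcal{O}_p)$ is Morita equivalent to a local ring and therefore admits a unique simple module up to isomorphism. Summing over $p \in \gamma^{-1}(0)$, the isomorphism classes of simple $\overline{H}_c(W)$-modules are in bijection with the blocks.

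I would then identify the set of simple modules with $\mathrm{Irr}\,W$ using the baby Verma modules. For each $\lambda \in \mathrm{Irr}\,W$, the module $\Delta_c(\lambda)$ defined in \eqref{equation6} is finite dimensional and graded; a standard argument using its $\mathbb{Z}$-grading shows that it has a unique maximal graded submodule, giving a simple head $L(\lambda) := \Delta_c(\lambda)/\mathrm{rad}\,\Delta_c(\lambda)$. Since $L(\lambda)$ is indecomposable it lies in a single block, say $B_{p(\lambda)}$, and the classical argument (as in \cite{Baby}) shows that $\lambda \mapsto L(\lambda)$ is a bijection between $\mathrm{Irr}\,W$ and the simple $\overline{H}_c(W)$-modules.

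Composing the two bijections identifies $\mathrm{Irr}\,W$ with the set of blocks via $\lambda \mapsto B_{p(\lambda)}$. The one point that needs care is that no two distinct $\lambda, \mu \in \mathrm{Irr}\,W$ land in the same block; this is precisely where smoothness is essential, since without it several $\lambda$ can share a block (the Calogero--Moser families), and $\lambda \mapsto p(\lambda)$ fails to be injective. Under our hypothesis each block contains exactly one simple and hence exactly one baby Verma module, so the bijection is realised cleanly. The main obstacle is bookkeeping, namely verifying that the Morita equivalence identifies the unique simple of $B_p$ with some $L(\lambda)$, and then tracking that the resulting map $\lambda \mapsto p(\lambda)$ really is a bijection rather than merely a surjection; both of these follow from the smoothness hypothesis and the standard simplicity-criterion for graded modules.
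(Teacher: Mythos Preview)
Your proposal is correct and follows essentially the same approach as the paper: both use that each block $B_p=\mathrm{Mat}_{|W|}(\mathcal{O}_p)$ has a unique simple module (since $\mathcal{O}_p$ is local), together with the fact from \cite{Baby} that the simple $\overline{H}_c(W)$-modules are exactly the $L(\lambda)$ for $\lambda\in\mathrm{Irr}\,W$. The only cosmetic difference is that you first set up the bijection ``blocks $\leftrightarrow$ simples'' and then compose with ``simples $\leftrightarrow \mathrm{Irr}\,W$'', whereas the paper assigns to each $\lambda$ the block containing the indecomposable module $\Delta(\lambda)$ and checks injectivity directly via the unique-simple property; the content is the same.
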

\begin{proof}
Write 
\[
\overline{H}_c(W)=\bigoplus_{i\in \gamma^{-1}(0)} B_i,
\]
a sum of indecomposable subalgebras. Since $\Delta(\lambda)$ is a $\overline{H}_c(W)$-module, 
\[
\Delta(\lambda)=\overline{H}_c(W)\cdot \Delta(\lambda)=\bigoplus_{i\in \gamma^{-1}(0)} B_i\cdot \Delta(\lambda).
\]
Since $\Delta(\lambda)$ is indecomposable we must have $B_i\cdot \Delta(\lambda)=0$ for all $i\neq j$ for some (unique) $j$.

Let $L(\lambda)$ be the unique simple quotient of $\Delta(\lambda)$ \cite[Proposition 4]{Baby}. Since $\mathcal{O}_p$ is a local ring it has a unique simple module. Equation~\eqref{equation5} then implies that $B_p$ also has a unique simple module. If $\Delta(\lambda) =B_p \cdot \Delta(\lambda)$ and $\Delta(\mu)=B_p\cdot \Delta(\mu)$ then the simple module of $B_p$ equals both $L(\lambda)$ and $L(\mu)$. As the standard modules $\Delta(\lambda)$ have a unique simple quotient $L(\lambda)$ this forces $\lambda=\mu$. 

The mapping from irreducible representations of $W$ to the blocks defined above as $\lambda\rightarrow p$ where $B_p\cdot \Delta(\lambda)\neq 0$ is surjective. To see this note that each simple $\overline{H}_c(W)$-module is a quotient of a baby Verma module \cite[Proposition 4.3]{Baby}
\end{proof}
Let us now show that the centre of $\overline{H}_c(W)$ is the sum of the centres of the blocks. Using the bijection from Proposition~\ref{prop1}, let $B_\lambda$ correspond to the block $B_p$. Consider the following maps, the inclusion map $i:Z_c(W)\rightarrow H_c(W)$, the quotient map $q:H_c(W)\rightarrow \overline{H}_c(W)$ defined by $q(z)= z+R_+H_c(W)$ and the projection $\phi_p:\overline{H}_c(W)\rightarrow B_p$. Denote by $A(\lambda)$ the image of $Z_c(W)$ under the composition of these maps 
\begin{equation}\label{equation7}
Z_c(W)\hookrightarrow H_c(W)\twoheadrightarrow \overline{H}_c(W)\twoheadrightarrow B_\lambda. 
\end{equation}
We will show that $A(\lambda)=\mathcal{O}_p$.
\begin{lem}\label{lem5}
The image of the centre $Z_c(W)$ under the composition of the inclusion and quotient map is equal to the centre of $\overline{H}_c(W)$. That is, 
\[
q\circ i(Z_c(W))=Z(\overline{H}_c(W)).
\]
\end{lem}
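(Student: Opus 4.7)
The strategy is to prove both containments separately, one of which is immediate and the other of which is extracted from the block decomposition already introduced in the preceding pages.

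The containment $q \circ i(Z_c(W)) \subseteq Z_c(\overline{H}_c(W))$ is essentially tautological. If $z \in Z_c(W)$ commutes with every element of $H_c(W)$, then because $q$ is a surjective algebra homomorphism, $q(z) = z + R_+ H_c(W)$ commutes with every element of $q(H_c(W)) = \overline{H}_c(W)$. Since $i$ is the inclusion, this shows the image lies in the centre of $\overline{H}_c(W)$.

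For the reverse containment I would use the block decomposition (4) together with the matrix algebra description (5). Because $B_p \cong \mathrm{Mat}_{|W|}(\mathcal{O}_p)$ and the centre of a matrix algebra over a commutative ring is the ring itself, taking centres blockwise gives
\[
Z_c(\overline{H}_c(W)) = \bigoplus_{p \in \gamma^{-1}(0)} Z(B_p) = \bigoplus_{p \in \gamma^{-1}(0)} \mathcal{O}_p.
\]
Since $\overline{H}_c(W)$ is finite-dimensional over $\mathbb{C}$, so is the quotient $Z_c(W)/R_+ Z_c(W)$, and hence $\gamma^{-1}(0) = \spec(Z_c(W)/R_+ Z_c(W))$ is a zero-dimensional scheme which decomposes as the disjoint union of its local pieces. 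Consequently $Z_c(W)/R_+ Z_c(W) \cong \bigoplus_p \mathcal{O}_p$, and the right-hand side of the displayed equation is identified with $Z_c(W)/R_+ Z_c(W)$.

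To finish, I would check that the map $q \circ i$ realises this identification. Since $R_+ \subseteq \mathbb{C}[\mathfrak{h}]^W \otimes \mathbb{C}[\mathfrak{h}^*]^W \subseteq Z_c(W)$, the ideal $R_+ Z_c(W)$ sits inside $R_+ H_c(W) = \ker q$, so $q \circ i$ factors through $Z_c(W)/R_+ Z_c(W)$ and lands in the centre by the first containment. Under the block decomposition the induced map corresponds block-by-block to the scalar inclusion $\mathcal{O}_p \hookrightarrow \mathrm{Mat}_{|W|}(\mathcal{O}_p)$, and is therefore an isomorphism onto $Z_c(\overline{H}_c(W))$. The main obstacle is really packaged into the identification (5), which in turn rests on the smoothness assumption (equivalently, that $H_c(W)$ is Azumaya over $Z_c(W)$); once (5) is in hand, the argument reduces to the elementary observation that the centre of a finite direct sum of matrix algebras over local commutative rings is the direct sum of those local rings.
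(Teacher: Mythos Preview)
Your argument is correct and is in fact more self-contained than the paper's. The paper disposes of the lemma in two lines by citing \cite[Lemma~2.8]{MR1903963}, noting only that its hypothesis (that every relevant maximal ideal is an Azumaya point) holds because Azumaya equals smooth by \cite[Theorem~1.7]{EGGO} and smoothness has been assumed throughout. No details of that argument are reproduced.

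Your route instead extracts the lemma from the matrix description \eqref{equation5}, which the paper has already imported from \cite{Baby} prior to stating the lemma. This is logically sound: \eqref{equation5} is itself a consequence of the Azumaya property (the same input the cited reference uses), so there is no circularity, and once one has $\overline{H}_c(W)\cong\bigoplus_p\mathrm{Mat}_{|W|}(\mathcal{O}_p)$ the centre is visibly $\bigoplus_p\mathcal{O}_p \cong Z_c(W)/R_+Z_c(W)$. The one point you leave implicit is that \eqref{equation5} is an isomorphism of $Z_c(W)$-algebras, i.e.\ that the copy of $\mathcal{O}_p$ sitting inside $\mathrm{Mat}_{|W|}(\mathcal{O}_p)$ as scalars is literally the image of $Z_c(W)$ under $q\circ i$ followed by projection, not merely abstractly isomorphic to it. This is indeed how Gordon constructs the identification in \cite{Baby}, so the assumption is justified; but without it your last sentence would only establish an abstract isomorphism $Z_c(W)/R_+Z_c(W)\cong Z(\overline{H}_c(W))$ rather than that $q\circ i$ realises it. The payoff of your approach is that it makes transparent exactly where smoothness enters, via \eqref{equation5}.
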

\begin{proof}
The corollary \cite[Corollary 2.7]{brown2001} implies the statement of the lemma when the ideal $R_+Z_c(W)$ is contained in a maximal ideal corresponding to an Azumaya point. By \cite[Theorem 1.7]{EG} the Azumaya points of $H_c(W)$ are precisely the points in the smooth locus of $X_c(W)$, but we have assumed that $X_c(W)$ is smooth.
\end{proof}
\begin{thm}\label{thm2}
The image of $Z_c(W)$ under the composition of maps \eqref{equation7} is equal to $\mathcal{O}_p$. In particular $\mathcal{O}_p=A(\lambda)$.
\end{thm}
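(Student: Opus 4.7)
The plan is to decompose the composition \eqref{equation7} into two stages: first pass from $Z_c(W)$ to $Z(\overline{H}_c(W))$ using Lemma~\ref{lem5}, and then project into the block $B_\lambda = B_p$ and exploit the matrix-algebra description \eqref{equation5}.

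First I would invoke Lemma~\ref{lem5}, which asserts $q \circ i(Z_c(W)) = Z(\overline{H}_c(W))$. This replaces the awkward task of chasing central elements of $H_c(W)$ through the quotient with the simpler task of identifying the image of $Z(\overline{H}_c(W))$ under the projection $\phi_p:\overline{H}_c(W)\twoheadrightarrow B_p$. It is worth noting that this is the one step where the hypothesis that $X_c(W)$ is smooth really enters, via the Azumaya argument inside the proof of Lemma~\ref{lem5}.

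Next I would use the block decomposition \eqref{equation4}. Each $B_q$ is a two-sided ideal summand of $\overline{H}_c(W)$, cut out by an orthogonal central idempotent $e_q$ with $\sum_q e_q = 1$. A standard argument then gives
\[
Z(\overline{H}_c(W)) = \bigoplus_{q \in \gamma^{-1}(0)} Z(B_q),
\]
because any central element $z$ decomposes as $\sum_q z e_q$, with each piece central in the block $B_q$. The projection $\phi_p$ annihilates every summand $Z(B_q)$ with $q \neq p$ and restricts to the identity on $Z(B_p)$, so the image of the full composition \eqref{equation7} equals $Z(B_p)$.

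Finally, by \eqref{equation5} we have $B_p = \mathrm{Mat}_{|W|}(\mathcal{O}_p)$ with $\mathcal{O}_p$ commutative, and the centre of a matrix algebra over a commutative ring is the ring itself, embedded as scalar matrices; hence $Z(B_p) = \mathcal{O}_p$. Chaining the three identifications yields $A(\lambda) = \mathcal{O}_p$, as claimed. The only non-formal ingredient is the first step (which is precisely the content of Lemma~\ref{lem5}); the remainder of the proof is routine algebra of block decompositions and matrix algebras, so I would expect no substantive obstacle beyond being careful with the idempotent bookkeeping.
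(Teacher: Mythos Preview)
Your proof is correct and follows essentially the same route as the paper: invoke Lemma~\ref{lem5} to identify $q\circ i(Z_c(W))$ with $Z(\overline{H}_c(W))$, decompose the centre along the blocks via the orthogonal central idempotents, and then identify $Z(B_p)$ with $\mathcal{O}_p$. The only cosmetic difference is in this last step: you read off $Z(B_p)=\mathcal{O}_p$ directly from the matrix-algebra description \eqref{equation5}, whereas the paper instead verifies from the definition $\mathcal{O}_p=(Z_c(W)/R_+Z_c(W))_p$ that localising at $p$ kills all idempotents $e_q$ with $q\neq p$ and hence returns $A_p$; the paper then uses \eqref{equation5} anyway in the ensuing corollary.
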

\begin{proof}
By Lemma~\ref{lem5} the image of $Z_c(W)$ is $Z(\overline{H}_c(W))$. By \cite[Lemma 4.5]{Cuspidal} the kernel of $q\circ i$ is $R_+Z_c(W)$ therefore, $Z_c(W)/R_+Z_c(W)=Z(\overline{H}_c(W))$. 
Write the block decomposition 
\[
Z_c(W)/R_+Z_c(W)=Z(\overline{H}_c(W))=\bigoplus_{i\in\gamma^{-1}(0)}A_i\subset  \bigoplus_{i \in \gamma^{-1}(0)}B_i.
\]
Hence $A_i=Z(B_i)$. The  image of $Z_c(W)/R_+Z_c(W)$ under the map $\phi_p$ is then $A_p$.

To localise at the point $p$ we do the following. There is a unique maximal ideal $\mathfrak{m}_p\subset A_p$ that corresponds to the point $p$ and so a maximal ideal $A_1\oplus A_2\dots\oplus \mathfrak{m}_p\oplus\dots \oplus A_r$ in $Z_c(W)/R_+Z_c(W)$. We make every element not contained in this ideal invertible. Since we had the block decomposition of $Z_c(W)/R_+Z_c(W)$ there is a set of orthogonal idempotents which we shall label $e_1,\dots, e_n$ so that $A_i=Ae_i$. Therefore the maximal ideal $A_1\oplus A_2\dots \oplus \mathfrak{m}_p\oplus\dots \oplus A_r$ contains every $e_j\neq e_p$. By localising at $p$ we have made $e_p$ invertible and so for any other $e_i$ we have $e_i=e_ie_pe_p^{-1}=0$. Therefore, 
\[
(Z_c(W)/R_+Z_c(W))_p=A_p=\phi_p\circ q\circ i(Z).
\]
\end{proof}

\begin{cor}\label{cor1}
The centre of the block $B_\lambda$ is $A(\lambda)$.
\end{cor}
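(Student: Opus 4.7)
The plan is to combine the matrix algebra description of the blocks given in equation~\eqref{equation5} with the identification $\mathcal{O}_p = A(\lambda)$ just established in Theorem~\ref{thm2}. First, using the bijection between blocks and irreducible representations from Proposition~\ref{prop1}, I would identify $B_\lambda$ with the block $B_p$ for the unique point $p \in \gamma^{-1}(0)$ associated to $\lambda$. Equation~\eqref{equation5} then gives $B_\lambda = \mathrm{Mat}_{|W|}(\mathcal{O}_p)$.

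The key algebraic fact to invoke is that the centre of a matrix algebra $\mathrm{Mat}_n(R)$ over a commutative ring $R$ is $R$ itself, realised as the subring of scalar matrices. Since $\mathcal{O}_p$ is a local ring (it is the localisation of the scheme theoretic fibre of $\gamma$ at the closed point $p$), it is in particular commutative, so $Z(B_\lambda) = \mathcal{O}_p$. Combining this with Theorem~\ref{thm2}, which identifies $\mathcal{O}_p$ with $A(\lambda)$, yields the desired equality $Z(B_\lambda) = A(\lambda)$.

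The main obstacle has already been handled in Theorem~\ref{thm2}: namely, showing that the image of the global centre $Z_c(W)$ under the composition of inclusion into $H_c(W)$, quotient by $R_+H_c(W)$, and projection to the block $B_p$ is exactly the local ring $\mathcal{O}_p$. Once that identification is in hand, the present corollary is a formal consequence of the matrix algebra structure of $B_p$. The only additional check I would flag is that the elements of $A(\lambda)$ constructed via \eqref{equation7} are actually central in $B_\lambda$ rather than merely lying inside $B_\lambda$, but this is immediate since they originate in $Z_c(W)$ and both $q$ and $\phi_p$ are ring homomorphisms, which preserve centrality.
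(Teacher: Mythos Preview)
Your proposal is correct and follows essentially the same route as the paper's proof: both use equation~\eqref{equation5} to identify the centre of $B_\lambda = \mathrm{Mat}_{|W|}(\mathcal{O}_p)$ with $\mathcal{O}_p$, and then invoke Theorem~\ref{thm2} to conclude $\mathcal{O}_p = A(\lambda)$. Your version is somewhat more explicit than the paper's terse two-sentence argument, and the additional remark about centrality being preserved under the ring homomorphisms $q$ and $\phi_p$ is a reasonable sanity check, though not strictly needed once the matrix algebra description is in hand.
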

\begin{proof}
Equation~\eqref{equation5} implies that $\mathcal{O}_p$ is the centre of $B_\lambda$. Therefore, Theorem~\ref{thm2} implies that  $A(\lambda)$ is the center of the block.
\end{proof}
A consequence of the above corollary is that if we can describe $A(\lambda)$ for each $\lambda\in\mathrm{Irr}\, W$ then we have described the entire centre of $\overline{H}_c(W)$. To calculate $A(\lambda)$ we require \cite[Theorem 8.14]{bellamy2018highest} which is given below and states that the algebras $A(\lambda)$ are isomorphic to the tensor product of two endomorphism rings. It is these endomorphism rings that we will explicitly describe in later sections. In \cite{bellamy2018highest} the following definitions are given
\[
A_c(\lambda)^+:=\mathrm{End}_{\overline{H}_c(W)}\Delta(\lambda)
\textrm{ and } A_c(\lambda)^-:=\mathrm{End}_{\overline{H}_c(W)}\Delta^-(\lambda),
\]
where $\Delta^-(\lambda)=\overline{H}_c(W)\otimes_{\mathbb{C}[\mathfrak{h}]^{coW}\rtimes W}\lambda$.

The definition of $A^-_c(\lambda)$ is incorrect in \cite{bellamy2018highest}. This algebra should be the endomorphism ring of a dual baby Verma module $\Delta^-_{c}(\mu)$ as written, except that we have $\mu \neq \lambda$ in general. The argument of \textit{loc. cit.} goes through provided that $\Delta^-_{c}(\mu)$ is the dual baby Verma module whose composition factors are all isomorphic to $L(\lambda)$. 

We fix gradings $\deg \h = -1, \deg \h^* = 1$ and $\deg W = 0$ so that the weights of $\Delta(\lambda)$ are all non-negative, with $\Delta(\lambda)_0 \cong \lambda$, and the weights of $\Delta^-(\mu)$ are all non-positive, with $\Delta^-(\mu)_0 \cong \mu$.  

The graded simple module $L_{c}(\lambda)$ is characterized by the fact that its lowest weight space (i.e. graded piece of lowest degree) is $L_{c}(\lambda)_0 \cong \lambda$. However, the simple quotient $L^-_{c}(\mu)$ of $\Delta^-_{c}(\mu)$ is characterized by the fact that its \textit{highest weight} space is $\mu$. Thus, we need to compute the highest weight space of $L(\lambda)$. Up to a shift in degree, $\Delta_{c}(\lambda)$ contains a graded copy of $L(\lambda)$ in its socle. Therefore, if $N$ is the largest integer for which $\Delta_{c}(\lambda)_N \neq 0$ then the highest weight space of $L(\lambda)$ must equal $\Delta_{c}(\lambda)_N$. As a graded $W$-module, $\Delta_{c}(\lambda) = \C[\h]^{co W} \otimes \lambda$, so $\Delta_{c}(\lambda)_N \cong \C[\h]^{co W}_N \otimes \lambda$. 

The space $\C[\h]^{co W}_N$ is one-dimensional, spanned by the product of roots raised to appropriate powers, see \cite[Theorem 4.18]{broue2010introduction} where it is also explained how to compute the linear character of $W$ associated to $\C[\h]^{co W}_N$. We note that $N$ is the number of reflections in $W$ \cite[Proposition 4.16]{broue2010introduction}. Let $\delta \in \C[\h]^{co W}$ span the space of highest degree. Then $\delta$ is a $W$-semi-invariant and we can twist $W$-modules by it.

\begin{lem}
Assume that the centre of $H_{c}(W)$ is regular. There exists a graded isomorphism  $f : L^-_{c}(\lambda \o \delta)[N] \ra \mathrm{Soc} \, \Delta_{c}(\lambda)$. In particular, $L^-_{c}(\lambda \o \delta) \cong L_{c}(\lambda)$ as ungraded modules.
\end{lem}

\begin{proof}
First, we construct $f : \Delta^-_{c}(\lambda \o \delta)[N] \ra \Delta_{c}(\lambda)$. Since the space $\delta \o \lambda$ lies in the highest degree $N$ of $\Delta_{c}(\lambda)$, we have $\h^* \cdot \delta \o \lambda = 0$, where $\h^* \subset \C[\h]^{co W}$. Therefore, by definition, the $W$-isomorphism $\lambda \o \delta \ra \delta \o \lambda$ extends to a non-zero morphism $f : \Delta^-_{c}(\lambda \o \delta)[N] \ra \Delta_{c}(\lambda)$.

Since $\delta \in \C[\h]^{co W}$ is the socle of $\C[\h]^{co W}$ considered as a $\C[\h]^{co W}$-module, $\delta \o \lambda$ is the socle of $\Delta_{c}(\lambda)$ considered as a $\C[\h]^{co W} \rtimes W$-module. Let $L$ be the $H_{c}$-submodule of $\Delta_{c}(\lambda)$ generated by $\delta \o \lambda$. Then $L$ is simple and hence is in the socle of $\Delta_{c}(\lambda)$ (in fact is the socle of $\Delta_{c}(\lambda)$). To see this, let $U \subset L$ be a simple sub-module. Then $U$ is a $\C[\h]^{co W} \rtimes W$-submodule of $L$, hence must contain $\delta \o \lambda$ since this is the socle of all submodules of $\Delta_{c}(\lambda)$ as a $\C[\h]^{co W} \rtimes W$-module. Hence $U = L$.

Since the image of $f : \Delta^-_{c}(\lambda \o \delta) \ra \Delta_{c}(\lambda)$ is in the socle, $f$ must factor through the head of $\Delta^-_{c}(\lambda \o \delta)$ which is $L^-_{c}(\lambda \o \delta)$.
\end{proof}

In the case of the wreath product $S_n\wr \mathbb{Z}/\ell\mathbb{Z}$, the linear character $\nu$ afforded by $\C[\h]^{co W}_N$ can be computed using Stembridge's formula for the fake degrees of $S_n\wr \mathbb{Z}/\ell\mathbb{Z}$. We see that 
\[
\nu = (\emptyset, \dots, \emptyset, (1^n))
\]
and the fake polynomial is $t^{N}$ where $N = (\ell-1) n + \ell \frac{n(n-1)}{2} = \frac{\ell n(n+1)}{2} -n$. This is indeed the number of reflections in $S_n\wr \mathbb{Z}/\ell\mathbb{Z}$. 

Let $W(\lambda)$ be the irreducible $S_n$-module labelled by $\lambda$ and $\chi_i$ the irreducible $\mathbb{Z}/\ell\mathbb{Z}$-module with $\chi_i(\gamma) = \zeta^i$. One can compute that if $\underline{\lambda} = (\lambda^1, \dots, \lambda^\ell)$ and $V(\underline{\lambda})$ the associated representation then
\begin{align*}
V(\underline{\lambda}) \o V(\nu) & \cong \Ind_{S_{\underline{\lambda}} \ltimes (\mathbb{Z}/\ell\mathbb{Z})^n}^{S_n\wr \mathbb{Z}/\ell\mathbb{Z}} \left( (W(\lambda^{1}) \o \chi_1^{\o n_1}) \o \cdots \o (W(\lambda^{\ell}) \o \chi_{\ell}^{\o n_{\ell}}) \o (V(\nu) |_{S_{\underline{\lambda}} \ltimes (\mathbb{Z}/\ell\mathbb{Z})^n}) \right) \\
& \cong \Ind_{S_{\underline{\lambda}} \ltimes (\mathbb{Z}/\ell\mathbb{Z})^n}^{S_n\wr \mathbb{Z}/\ell\mathbb{Z}} \left( (W((\lambda^{1})^{T}) \o \chi_{\ell}^{\o n_1}) \o \cdots \o (W((\lambda^{\ell})^T) \o \chi_{\ell-1}^{\o n_{\ell}}) \right)\\
& = V((\lambda^{2})^T,\ds, (\lambda^{\ell})^T,(\lambda^{1})^T)
\end{align*}
where $n_i = |\lambda^{i}|$ because
\[
V(\nu) |_{S_{\underline{\lambda}} \ltimes (\mathbb{Z}/\ell\mathbb{Z})^n} = (W((1^{n_1})) \o \chi_{\ell}^{\o n_1}) \o \cdots \o (W((1^{n_{\ell}})) \o \chi_{\ell}^{\o n_{\ell}}).
\]

In summary, if we define 
\[
\underline{\lambda}^{\star} = ((\lambda^{2})^T,\ds, (\lambda^{\ell})^T,(\lambda^{1})^T),
\]
then
\[
A_{c}^-(\underline{\lambda}) = \End_{H_{c}}(\Delta_{c}^-(\underline{\lambda}^{\star} )). 
\]
It can be convenient to have a reverse operation so let us define $\underline{\lambda}^\sharp:=((\lambda^{\ell})^T,(\lambda^{1})^T,\dots,(\lambda^{\ell-1})^T)$, then $(\underline{\lambda}^\star)^\sharp=\underline{\lambda}$ and $(\underline{\lambda}^\sharp)^\star=\underline{\lambda}$. Therefore,
\[
A_{c}^-(\underline{\lambda}^\sharp) = \End_{H_{c}}(\Delta_{c}^-(\underline{\lambda})). 
\]
With these details clarified we now present \cite[Theorem 8.14]{bellamy2018highest}.
\begin{thm}\label{thm3}
Multiplication induces an isomorphism
\[
A_c(\lambda)^-\otimes_{\mathbb{C}} A_c(\lambda)^+\cong A_c(\lambda)
\]
\end{thm}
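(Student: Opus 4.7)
The plan is to use the matrix-algebra structure of the block together with the graded structure of baby Verma modules. By \eqref{equation5}, $B_\lambda \cong \mathrm{Mat}_{|W|}(\mathcal{O}_p)$ is Morita equivalent to $\mathcal{O}_p = A_c(\lambda)$, so the $\overline{H}_c(W)$-endomorphism rings $A_c(\lambda)^{\pm}$ are computed as $\mathcal{O}_p$-endomorphism rings of the $\mathcal{O}_p$-modules that correspond, under this Morita equivalence, to the baby Verma modules $\Delta(\lambda)$ and $\Delta^*(\lambda)$. Because $\mathcal{O}_p$ is commutative, each $A_c(\lambda)^{\pm}$ is naturally a quotient of $A_c(\lambda)$ by the annihilator of the corresponding $\mathcal{O}_p$-module.

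I would exploit the standard $\mathbb{Z}$-grading on $\overline{H}_c(W)$ with $\mathfrak{h}^*$ in degree $+1$ and $\mathfrak{h}$ in degree $-1$. Under this grading $\Delta(\lambda) \cong \mathbb{C}[\mathfrak{h}]^{coW} \otimes \lambda$ is concentrated in non-negative degrees, so every homogeneous $\overline{H}_c(W)$-endomorphism of $\Delta(\lambda)$ has non-negative degree; this forces $A_c(\lambda)^+$ to be a non-negatively graded quotient of $A_c(\lambda)$. Symmetrically, $A_c(\lambda)^-$ is a non-positively graded quotient. Picking graded lifts inside the commutative local ring $A_c(\lambda)$ and multiplying then produces the natural map
\[
\mu \colon A_c(\lambda)^- \otimes_{\mathbb{C}} A_c(\lambda)^+ \longrightarrow A_c(\lambda),
\]
whose surjectivity is immediate because together the non-negatively and non-positively graded pieces generate the whole commutative ring $A_c(\lambda)$.

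For injectivity I would compare graded dimensions: it suffices to show $\dim A_c(\lambda)^+ \cdot \dim A_c(\lambda)^- = \dim A_c(\lambda)$. The Hilbert series of $A_c(\lambda)$ is computed from the PBW/triangular decomposition $\overline{H}_c(W) \cong \mathbb{C}[\mathfrak{h}]^{coW}\otimes \mathbb{C}W \otimes \mathbb{C}[\mathfrak{h}^*]^{coW}$ combined with the known $W$-character of the coinvariant algebra, and smoothness of $X_c(W)$ forces this Hilbert series to factor as the product of positive and negative halves that match the two quotients. The main obstacle is rigorously establishing that $A_c(\lambda)^{\pm}$ have precisely these complementary Hilbert series, rather than being strictly larger or smaller quotients of $A_c(\lambda)$. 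This is where the smoothness hypothesis on $X_c(W)$ is essential: via the Azumaya-locus criterion \cite[Theorem 1.7]{EGGO} (applied already in Lemma~\ref{lem5}) it guarantees that Morita theory applies cleanly, so that the $\mathcal{O}_p$-module corresponding to $\Delta(\lambda)$ is cyclic of the expected rank and its endomorphism ring is exactly the predicted graded quotient of $A_c(\lambda)$; in the non-smooth situation the quotients could fail to assemble into $A_c(\lambda)$ even after forming the tensor product.
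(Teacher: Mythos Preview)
The paper does not give a proof of this theorem: it is quoted verbatim as \cite[Theorem~8.14]{bellamy2018highest}, with no argument supplied. So there is nothing in the paper to compare your proposal against.

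As for the proposal itself, the outline is in the right spirit but there is a real gap at the step where you build the multiplication map. You first argue that $A_c(\lambda)^{\pm}$ are \emph{quotients} of $A_c(\lambda)$ (via Morita theory and the annihilators of the cyclic $\mathcal{O}_p$-modules corresponding to the baby Vermas), and then you want a map $A_c(\lambda)^-\otimes_{\mathbb C} A_c(\lambda)^+\to A_c(\lambda)$ given by multiplication. A tensor product of two quotients of a ring has no canonical map back into that ring; one needs $A_c(\lambda)^{\pm}$ to sit inside $A_c(\lambda)$ as \emph{subalgebras}. Your phrase ``picking graded lifts'' is exactly the point that must be justified: you would need to know that the quotient maps $A_c(\lambda)\twoheadrightarrow A_c(\lambda)^{\pm}$ restrict to isomorphisms on the subalgebras generated by the non-negatively (resp.\ non-positively) graded parts of $A_c(\lambda)$. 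That is already most of the content of the theorem, and it does not follow from the grading constraint on endomorphisms alone.

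The dimension-count step has the same circularity. You assert that smoothness forces the Hilbert series of $A_c(\lambda)$ to factor as the product of the Hilbert series of the two halves, but this is essentially a restatement of the conclusion rather than an independent computation. The argument in \cite{bellamy2018highest} uses more structure: a bigrading on $A_c(\lambda)$ coming from the $(\mathbb C^\times)^2$-action on the Calogero--Moser space together with the fact that $A_c(\lambda)$ is a graded local Gorenstein algebra, so that the positively and negatively graded subalgebras are genuine complements rather than overlapping pieces whose tensor product is too large.
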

It turns out that in the case of the wreath product we need only describe one of these rings, as we can then deduce a description of the other.

The final theorem of this section proves the algebra $A_{c'}(\mu)^-$ is isomorphic to $A_c(\lambda)^+$, but for different generic $c'$ and simple modules $\lambda$, $\mu$. It is crucial to the description of $c'$ that we set some notation for $H_c(S_n\wr\mathbb{Z}/\ell\mathbb{Z})$. Fix a generator $\gamma\in\mathbb{Z}/\ell\mathbb{Z}$, let $\gamma_{i}\in S_n\wr\mathbb{Z}/\ell\mathbb{Z}$ denote the element $\gamma$ in the $i^{th}$ component. Then $\gamma_i s_{i,j}=s_{i,j}\gamma_j$, where $s_{i,j}\in S_n$ is the transposition of $i$ and $j$. The space $\mathfrak{h}$ has basis $\{x_1,\dots, x_n\}$ with action $\gamma_i x_i=\omega x_i$ and $\gamma_ix_j=x_j$ for $j\neq i$ where $\omega$ is a primitive $\ell^{th}$ root of unity. The permutations act as follows $s x_i=x_{s(i)}$.

The defining relations for $H_c(S_n\wr\mathbb{Z}/\ell\mathbb{Z})$ are 
\begin{equation}\label{equation30}
  \begin{aligned}
      & [x_i,x_j]=0,\quad [y_i,y_j]=0,\\
      & [y_i,x_i]=c(s_{i,j}\gamma_{i}^k\gamma_j^{-k}) \sum_{i\neq j}\sum_{k=0}^{\ell} s_{i,j}\gamma_i^k\gamma_j^{-k} + \sum_{k=1}^{\ell-1}c(\gamma_{i}^k)\gamma_i^k\\
      & [y_i,x_j]= -c(s_{i,j}\gamma_i^k\gamma_j^{-k})\sum_{k=1}^\ell\omega^k s_{i,j}\gamma_i^k\gamma_j^{-k}.
  \end{aligned}
\end{equation}

Then parameter $c'$ is defined by
\[
c'(s_{i,j}\gamma_{i}^k\gamma_j^{-k}):=c(s_{i,j}\gamma_{i}^k\gamma_j^{-k})\textnormal{ and } c'(\gamma_i^k):=c(\gamma_i^{-k}).
\]
The irreducible modules of $S_n\wr\mathbb{Z}/\ell\mathbb{Z}$ are labeled by $\ell$-multipartitions of $n$. If $\underline{\lambda}=(\lambda^1,\dots, \lambda^\ell)$ then define $\underline{\lambda}^*:=(\lambda^1,\lambda^\ell,\dots,\lambda^2)$.

Given an isomorphism $\psi:H_{c'}(W)\rightarrow H_c(W)$ and a $H_c(W)$-module $M$, we can make a $H_{c'}(W)$-module $M^\psi$ that, as a set equals $M$, but with the action given by
\[
h\cdot m=\psi(h) m,
\] 
for $h\in H_{c'}(W)$ and $m\in M$.
\begin{thm}\label{thm4}
There is an anti-graded isomorphism 
\[
A_{c'}(\underline{\mu})^-\cong A_c(\underline{\lambda})^+,
\]
where if $\underline{\lambda}=(\lambda^{1},\dots,\lambda^{\ell})$ then $\underline{\mu}=((\lambda^{2})^T,(\lambda^{1})^T,(\lambda^{(\ell)})^T,\dots,(\lambda^{3})^T)$.
Moreover $c'$ is generic if and only if $c$ is generic.
\end{thm}
\begin{proof}
The map $\phi:H_c(S_n\wr\mathbb{Z}/\ell\mathbb{Z})\rightarrow H_{c'} (S_n\wr\mathbb{Z}/\ell\mathbb{Z})$ given by 
\[
\phi(x_i)=y_i,\quad \phi(y_i)=-x_i,\quad \phi(s)=s\textnormal{ and } \phi(\gamma_i)=\gamma_i^{-1}
\]
for all $i$ and for all $s\in S_n$ is an anti-graded isomorphism of algebras. Recall the baby Verma module \eqref{equation6}
\[
\Delta_c(\underline{\lambda}):=\overline{H}_c\otimes_{\mathbb{C}[\mathfrak{h}]^{coW}\rtimes W}\underline{\lambda}
\]
and 
\[
\Delta_c^-(\underline{\lambda}):=\overline{H}_c\otimes_{\mathbb{C}[\mathfrak{h}^*]^{coW}\rtimes W}\underline{\lambda}.
\]
In an abuse of notation let $\phi$ denote its restriction to the group algebra $\mathbb{C}S_n\wr\mathbb{Z}/\ell\mathbb{Z}$, then as $\mathbb{C}S_n\wr\mathbb{Z}/\ell\mathbb{Z}$-modules $\underline{\lambda}^\phi=\underline{\lambda}^*$; this follows from the construction of irreducible $\mathbb{C}S_n\wr\mathbb{Z}/\ell\mathbb{Z}$-modules as in \cite[Section 5.3]{MoDegen}. Since $\phi(\mathbb{C}[\mathfrak{h}]^W_+)=\mathbb{C}[\mathfrak{h}^*]^W_+$, this then implies that 
\[
\Delta_{c}(\underline{\lambda})^\phi= \Delta^-_{c'}(\underline{\lambda}^*).
\]
Now $M\rightarrow M^\phi$ is a functor $H_c$-mod$\rightarrow H_{c'}$-mod which is an equivalence. This means that the map
\[
\mathrm{End}_{H_c}(\Delta_c(\underline{\lambda}))\rightarrow \mathrm{End}_{H_{c'}}(\Delta_{c}(\underline{\lambda})^\phi)=\mathrm{End}_{H_{c'}}(\Delta^-_{c'}(\underline{\lambda}^*)) 
\]
is an isomorphism. It implies that 
\[
A_{c'}(\underline{\mu})^-=\mathrm{End}_{H_{c'}}\Delta_{c'}^-(\underline{\lambda}^*)\cong \mathrm{End}_{H_c}\Delta_c(\underline{\lambda})=A_c(\underline{\lambda})^+,
\]
and so $\underline{\mu}=(\underline{\mu}^\star)^\sharp=(\underline{\lambda}^*)^\sharp$.
Since $\phi$ is an isomorphism, $Z(H_c(S_n\wr\mathbb{Z}/\ell\mathbb{Z}))$ is regular if and only if $Z(H_{c'}(S_n\wr\mathbb{Z}/\ell\mathbb{Z}))$ is regular. Thus $c$ is generic if and only if $c'$ is generic. Therefore, if we know $A(\underline{\lambda})^+$ for generic $c$ and for all $\underline{\lambda}$  then we know $A(\underline{\mu})^-$ for generic $c'$ and for all $\underline{\mu}$.
\end{proof}
\begin{remark}
In the case of the symmetric group $\ell=1$, $c'=c$ and $\mu=\lambda^T$. Therefore $A_c(\lambda)^+\cong A_c(\lambda^T)^-$.
\end{remark}
Later in the paper after we explain the specific $c$ required for the isomorphism of Bonnaf\'e and Maksimau we will see that Theorem~\ref{thm4} cannot be directly applied in the wreath product case. Instead we will have to find both $A(\lambda)^+$ and $A(\lambda)^-$ using the theory in Section 5. Then in Section 7 we will complete the argument, showing that we can indeed deduce a similar result to Theorem~\ref{thm4} for the wreath product case and the correct $c$. This will reduce the problem of finding the entire centre to simply giving an explicit description of $A(\lambda)^+$, the endomorphism rings of baby Verma modules.

\section{The symmetric group}
The aim of this section is to show that the endomorphism rings $A(\lambda)^+$ are isomorphic to the ring of functions on the preimage of a particular map of spectra denoted $\pi$. We begin with the following theorem which states two key facts (for any complex reflection group $W$). The first is that the baby Verma modules are quotients of the Verma modules. Recall that the Verma module is defined as $\underline{\Delta}_c(\lambda):=H_c(W)\otimes_{\mathbb{C}[\mathfrak{h}^*]\rtimes W}\lambda$,
where $\mathfrak{h}\subset \mathbb{C}[\mathfrak{h}^*]$ acts by $0$ on $\lambda$, its opposite is defined similarly $\underline{\Delta}_c(\lambda)^-:=H_c(W)\otimes_{\mathbb{C}[\mathfrak{h}]\rtimes W}\lambda$. The second, is that for any irreducible representation $\lambda\in\mathrm{Irr}W$ there is a surjection from the centre of the restricted rational Cherednik algebra onto the endomorphism ring of the Verma module when $X_c(W)$ is smooth.
\begin{thm}\label{thm5}
For all $\lambda\in\mathrm{Irr}\, W$,
\begin{enumerate}
    \item $\Delta(\lambda)=\underline{\Delta}(\lambda)/R_+\underline{\Delta}(\lambda)$ and $\Delta^-(\underline{\lambda})=\underline{\Delta}^-(\underline{\lambda})/R_+\underline{\Delta}^-(\underline{\lambda})$.
    \item The map defined by multiplication by elements of $Z_c(W)$ on $\underline{\Delta}(\lambda)$ and $\underline{\Delta}^-(\underline{\lambda})$ as $H_c(W)$-modules gives surjections $Z_c(W)\twoheadrightarrow \mathrm{End}_{H_c(W)}(\underline{\Delta}(\lambda))$, $Z_c(W)\twoheadrightarrow \mathrm{End}_{H_c(W)}(\underline{\Delta}^-(\lambda))$.
\end{enumerate}
\end{thm}
\begin{proof}
Statement $(1)$ follows from the PBW Theorem \cite[Theorem 1.3]{EG}. For a proof of $(2)$ see \cite[Theorem 2]{BellEnd}.
\end{proof}
Let us consider the implications of the second fact. There are the following compositions of maps, in both cases the first is the inclusion map and the second the surjection from Theorem~\ref{thm5} $(2)$
\[
\mathbb{C}[\mathfrak{h}]^W\hookrightarrow Z_c(W)\twoheadrightarrow \mathrm{End}_{H_c(W)}(\underline{\Delta}(\lambda))
\]
\[
\mathbb{C}[\mathfrak{h}^*]^W\hookrightarrow Z_c(W)\twoheadrightarrow \mathrm{End}_{H_c(W)}(\underline{\Delta}^-(\lambda)).
\]
These induce maps of spectra
\begin{equation}\label{equation8}
\pi: \mathrm{Spec}\, \mathrm{End}_{H_c(W)}(\underline{\Delta}(\lambda))\rightarrow \mathrm{Spec}\, \mathbb{C}[\mathfrak{h}]^W=\mathfrak{h}/W.
\end{equation}
\begin{equation}\label{equation19}
\pi^-: \mathrm{Spec}\, \mathrm{End}_{H_c(W)}(\underline{\Delta}^-(\lambda))\rightarrow \mathrm{Spec}\, \mathbb{C}[\mathfrak{h}^*]^W=\mathfrak{h}^*/W.
\end{equation}
From the map~\eqref{equation8} we see that 
\begin{equation}\label{equation9}
\mathbb{C}[\pi^{-1}(0)]\cong\mathrm{End}_{H_c(W)}(\underline{\Delta}(\lambda))/\mathbb{C}[\mathfrak{h}]^W_+\mathrm{End}_{H_c(W)}(\underline{\Delta}(\lambda)).
\end{equation}
Similarly from~\eqref{equation19} we get
\begin{equation}\label{equation20}
\mathbb{C}[(\pi^-)^{-1}(0)]\cong\mathrm{End}_{H_c(W)}(\underline{\Delta}^-(\lambda))/\mathbb{C}[\mathfrak{h}^*]^W_+\mathrm{End}_{H_c(W)}(\underline{\Delta}^-(\lambda)).
\end{equation}
The goal of this section is to prove that the right hand side of \eqref{equation9} is in fact isomorphic to $A(\lambda)^+$. This is not obvious as the rings $A(\lambda)^+$ are the endomorphism rings of the baby Verma modules whereas the right hand side of \eqref{equation8} is a quotient of the endomorphism ring of the Verma module. We do not prove an equivalent statement for $A(\lambda)^-$, it will be enough for us to prove the following isomorphisms
\[
\mathrm{End}_{\overline{H}_c(W)}(\Delta(\lambda))\cong \mathrm{End}_{H_c(W)}(\underline{\Delta}(\lambda))/\mathbb{C}[\mathfrak{h}]^W_+\mathrm{End}_{H_c(W)}(\underline{\Delta}(\lambda))
\]
\[
\mathrm{End}_{\overline{H}_c(W)}(\Delta^-(\lambda))\cong \mathrm{End}_{H_c(W)}(\underline{\Delta}^-(\lambda))/\mathbb{C}[\mathfrak{h}^*]^W_+\mathrm{End}_{H_c(W)}(\underline{\Delta}^-(\lambda)).
\]
Let
\[
e:=\frac{1}{|W|}\sum_{w\in W}w
\]
denote the trivial idempotent in $\mathbb{C}W\subset H_c(W)$. Recall that there is an isomorphism $Z_c(W)\cong eH_c(W)e$ given by the map $z\rightarrow z\cdot e$ \cite[Theorem 3.1]{EG}, called the Satake isomorphism.
\begin{lem}\label{lem6}
Let $A$ be a finitely generated algebra and $e$ be an idempotent of $A$. For any $A$-module $M$ we have the following isomorphism
\[
eA\otimes_A M\cong eM.
\]
\end{lem}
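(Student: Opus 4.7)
The plan is to write down an explicit isomorphism and its inverse. Regard $eA$ as a right $A$-module (where $e$ acts on the left) and $M$ as a left $A$-module, so that $eA\otimes_A M$ makes sense. Define
\[
\phi: eA\otimes_A M\longrightarrow eM,\qquad ea\otimes m\longmapsto (ea)\cdot m = eam,
\]
which I would check is well-defined and $A$-balanced: bilinearity is clear, and $(ea\cdot b)\otimes m$ and $ea\otimes(bm)$ both map to $eabm$. The image lies in $eM$ because $eam = e(am)\in eM$.

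For the inverse, define
\[
\psi: eM\longrightarrow eA\otimes_A M,\qquad em\longmapsto e\otimes em.
\]
This is $\mathbb{C}$-linear, and I would verify it is well-defined (it only depends on the element $em\in eM$, not on the choice of $m$, since $e\otimes em$ can be rewritten using $e\cdot e = e$).

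The two compositions are then routine to check, and this is the step where the idempotency of $e$ is used. On one side,
\[
\phi\circ\psi(em) = \phi(e\otimes em) = e\cdot em = e^2 m = em.
\]
On the other side,
\[
\psi\circ\phi(ea\otimes m) = \psi(eam) = e\otimes eam = e\cdot ea\otimes m = e^2 a\otimes m = ea\otimes m,
\]
where the third equality uses that the tensor product is balanced over $A$. Thus $\phi$ and $\psi$ are mutually inverse $\mathbb{C}$-linear isomorphisms.

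There is no real obstacle here; the only subtlety to flag is ensuring $\psi$ is well-defined as a map out of $eM$ (rather than out of $M$) and invoking the $A$-balancing at exactly one step to convert $e\otimes eam$ into $ea\otimes m$. The finite generation of $A$ plays no role in this lemma and can be ignored; the statement holds for any unital ring $A$ with idempotent $e$ and any left $A$-module $M$.
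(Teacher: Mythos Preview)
Your proof is correct and takes essentially the same approach as the paper: both define the map $\phi(ea\otimes m)=eam$ and rely on the key manipulation $ea\otimes m = e^2a\otimes m = e\otimes eam$ using idempotency and $A$-balancing. The only cosmetic difference is that you package this as an explicit inverse $\psi$ and check both compositions, while the paper uses the same computation to verify injectivity directly; your observation that finite generation of $A$ is unnecessary is also correct.
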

\begin{proof}
Define a homomorphism $\phi:eA\otimes_A M\rightarrow  eM$ as follows $\phi(ea\otimes m)=eam$. We shall prove this is an isomorphism. It is clearly surjective and a morphism so we need only prove that it is injective. We prove that the kernel of $\phi$ is $0$. If $\phi(ea\otimes m)=0$ then $eam=0$, but note that $ea\otimes m=e^2a\otimes m=e\otimes eam=e\otimes 0=0$.  
\end{proof}
The following is a standard fact \cite[Corollary 1.6.3]{BellamySRAlecturenotes}.
\begin{thm}\label{thm7}
The functor
\[
e:H_c(W)\mathrm{-mod}\rightarrow eH_c(W)e\mathrm{-mod}
\]
is an equivalence of categories if and only if $e\cdot M=0$ implies that $M=0$ for all $M\in H_c(W)\mathrm{-mod}$.
\end{thm}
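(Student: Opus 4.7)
The plan is to translate the hypothesis "$eM = 0 \Rightarrow M = 0$" into the purely ring-theoretic condition $AeA = A$, where I write $A := H_c(W)$ and $AeA$ denotes the two-sided ideal generated by $e$, and then to invoke the standard Morita dictionary. The two conditions are equivalent: if $AeA = A$, write $1 = \sum_i a_i e b_i$; then for any $A$-module $M$ with $eM = 0$ and any $m \in M$ one has $m = \sum_i a_i \cdot e(b_i m) \in A \cdot eM = 0$, forcing $M = 0$. Conversely, if $AeA \subsetneq A$, the nonzero quotient module $M := A / AeA$ satisfies $eM = 0$ because $eA \subseteq AeA$, which violates the hypothesis.

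Assuming now $AeA = A$, I would exhibit a quasi-inverse to the functor $e(-)$. By Lemma~\ref{lem6} this functor is naturally isomorphic to $eA \otimes_A (-)$, whose standard tensor-adjoint is $F := Ae \otimes_{eAe} (-) : eAe\mathrm{-mod} \to A\mathrm{-mod}$. The unit $N \to eF(N) = eAe \otimes_{eAe} N$ is manifestly the canonical isomorphism. The counit $Ae \otimes_{eAe} eM \to M$ sending $ae \otimes em \mapsto aem$ has image $A \cdot eM$, which equals $M$ by the very same computation as above. For injectivity, one checks first that for $M = A$ the counit is the multiplication map $Ae \otimes_{eAe} eA \to A$, which is always an isomorphism onto $AeA$ and hence, under our assumption, onto $A$; right-exactness of the tensor product together with the presentation of an arbitrary $M$ as the cokernel of a map between free $A$-modules then propagates the isomorphism from $A$ to all $M$.

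The converse implication is formal: any equivalence of categories reflects the zero object, so if $e(-)$ is an equivalence then $eM = 0$ forces $M \cong 0$.

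The chief obstacle is the injectivity of the counit, that is, the nontrivial Morita ingredient that a full idempotent yields a genuine equivalence of module categories. This is classical and is precisely the content of \cite[Corollary 1.6.3]{BellamySRAlecturenotes}; since nothing in the argument uses the Cherednik structure of $A$, the cleanest presentation is to pass to Lemma~\ref{lem6} and then quote that result rather than re-deriving Morita theory from scratch.
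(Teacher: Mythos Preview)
Your argument is correct and is precisely the standard Morita-theoretic justification underlying the cited reference; the paper itself does not give an independent proof but simply records Theorem~\ref{thm7} as a summary of what is established in the proof of \cite[Corollary 1.6.3]{BellamySRAlecturenotes}. In other words, the paper's ``proof'' is the citation, and what you have written out is exactly the content of that citation, so there is no divergence in approach.
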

\begin{lem}\label{lem7}
The $Z_c(W)$-modules $e\underline{\Delta}(\lambda)$ and $e\underline{\Delta}^-(\lambda)$ are cyclic.
\end{lem}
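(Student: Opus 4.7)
The approach uses the Morita equivalence $M \mapsto eM$ between $H_c(W)\text{-mod}$ and $Z_c(W)\text{-mod}$ afforded by Theorem~\ref{thm7}, which is applicable under the standing assumption that $X_c(W)$ is smooth. The PBW theorem identifies $\underline{\Delta}(\lambda) \cong \mathbb{C}[\mathfrak{h}] \otimes \lambda$ as a $\mathbb{C}[\mathfrak{h}]\rtimes W$-module (with $W$ acting diagonally), so $e\underline{\Delta}(\lambda) \cong (\mathbb{C}[\mathfrak{h}] \otimes \lambda)^W$, which is finitely generated over $Z_c(W) \cong eH_c(W)e$ via the Satake isomorphism.

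The key reduction is the following elementary observation: if there exists a $W$-invariant vector $w \in \underline{\Delta}(\lambda)$ (equivalently, $ew=w$) with $H_c(W)\cdot w = \underline{\Delta}(\lambda)$, then since $h\cdot w = h(ew) = (he)w$ for every $h \in H_c(W)$, one obtains
\[
e\underline{\Delta}(\lambda) = eH_c(W)\cdot w = eH_c(W)e \cdot w = Z_c(W)\cdot w,
\]
so $w$ cyclic-generates $e\underline{\Delta}(\lambda)$ over $Z_c(W)$. The problem therefore reduces to constructing a $W$-invariant cyclic generator of the Verma module $\underline{\Delta}(\lambda)$ as an $H_c(W)$-module.

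To construct such a $w$, I would invoke the Chevalley-Shephard-Todd decomposition $\mathbb{C}[\mathfrak{h}] \cong \mathbb{C}[\mathfrak{h}]^W \otimes \mathcal{H}$, where the coinvariant algebra $\mathcal{H}$ carries the regular representation of $W$ and in particular contains a copy of $\lambda^*$. Picking a basis $p_1,\ldots,p_{\dim\lambda} \in \mathcal{H}$ of this copy, dual to a basis $v_1,\ldots,v_{\dim\lambda}$ of $\lambda$, the element $w := \sum_i p_i \otimes v_i$ lies in $(\mathbb{C}[\mathfrak{h}]\otimes\lambda)^W$. The main obstacle is verifying that this $w$ generates $\underline{\Delta}(\lambda)$ over $H_c(W)$: by Nakayama's lemma applied to the filtration induced by the central ideal $R_+$, this reduces to showing that the image $\bar w$ in the baby Verma $\Delta(\lambda)$ projects to a nonzero element of the simple head $L(\lambda)$. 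Since in the Azumaya case (which we are in by smoothness of $X_c(W)$) the simple $L(\lambda)$ is isomorphic to $\mathbb{C}W$ as a $W$-module, the invariant space $L(\lambda)^W$ is one-dimensional, and a careful choice of the $p_i$ within the $\lambda^*$-isotypic component of $\mathcal{H}$ ensures the projection is nonzero — this final nonvanishing check is the subtlest step of the argument.
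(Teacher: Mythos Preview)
Your approach is correct in outline but differs substantially from the paper's. The paper's proof is essentially a citation: it quotes \cite[Theorem~4.1]{BellEnd}, which already shows that $e\underline{\Delta}(\lambda)$ is cyclic over $\mathrm{End}_{H_c(W)}(\underline{\Delta}(\lambda))$, and then invokes the surjection $Z_c(W)\twoheadrightarrow\mathrm{End}_{H_c(W)}(\underline{\Delta}(\lambda))$ from Theorem~\ref{thm5}(2). Your route is more self-contained, constructing a $W$-invariant cyclic generator of $\underline{\Delta}(\lambda)$ directly, at the cost of leaving the final nonvanishing unverified.

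Two remarks that would close the gaps in your argument. First, the Nakayama step should be phrased via the grading: $R_+$ is not contained in the Jacobson radical of $Z_c(W)$, but since $\mathbb{C}[\mathfrak{h}^*]^W_+$ is central and annihilates $1\otimes\lambda$, it kills all of $\underline{\Delta}(\lambda)$, so $R_+$ acts on $\underline{\Delta}(\lambda)$ through the positively-graded ideal $\mathbb{C}[\mathfrak{h}]^W_+$ and graded Nakayama applies to the $\mathbb{N}$-graded module $\underline{\Delta}(\lambda)$. Second, the ``subtlest step'' is in fact immediate once framed correctly: the functor $e(-)$ is exact, so $\Delta(\lambda)\twoheadrightarrow L(\lambda)$ yields a surjection $e\Delta(\lambda)\twoheadrightarrow eL(\lambda)\cong\mathbb{C}$, and any $\bar w\in e\Delta(\lambda)$ mapping to something nonzero does the job. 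Since $e\Delta(\lambda)=(\mathbb{C}[\mathfrak{h}]^{coW}\otimes\lambda)^W\cong(\mathcal{H}\otimes\lambda)^W$ and every nonzero element there corresponds to an embedding $\lambda^*\hookrightarrow\mathcal{H}$, the ``careful choice'' is simply ``any choice outside $e\cdot\mathrm{rad}\,\Delta(\lambda)$''. With these points made explicit, your construction completes to a valid alternative proof.
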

\begin{proof}
In \cite[Theorem 4.1]{BellEnd} it is shown that $e\underline{\Delta}(\lambda)$ is a cyclic $\mathrm{End}_{H_c(W)}(\underline{\Delta}(\lambda))$-module. We also know from Theorem~\ref{thm5} that $Z_c(W)$ surjects onto $\mathrm{End}_{H_c(W)}(\underline{\Delta}(\lambda))$, hence $e\underline{\Delta}(\lambda)$ is a cyclic $Z_c(W)$-module. By the proof of \cite[Lemma 8.13]{bellamy2018highest} $e\underline{\Delta}^-(\lambda)$ is also cyclic.
\end{proof}
We can now prove that the endomorphism ring of the baby Verma module is a quotient of the endomorphism ring of the corresponding Verma module. In particular they are exactly the quotients we desire.
\begin{thm}\label{thm8}
There are isomorphisms
\[
\mathrm{End}_{\overline{H}_c(W)}(\Delta(\lambda))\cong \mathrm{End}_{H_c(W)}(\underline{\Delta}(\lambda))/\mathbb{C}[\mathfrak{h}]^W_+\mathrm{End}_{H_c(W)}(\underline{\Delta}(\lambda))
\]
and 
\[
\mathrm{End}_{\overline{H}_c(W)}(\Delta^-(\lambda))\cong \mathrm{End}_{H_c(W)}(\underline{\Delta}^-(\lambda))/\mathbb{C}[\mathfrak{h}^*]^W_+\mathrm{End}_{H_c(W)}(\underline{\Delta}^-(\lambda))
\]
\end{thm}
\begin{proof}
The argument is identical for the opposite baby Verma module and so we prove the first isomorphism. For brevity, write $H=H_c(W)$. Theorem~\ref{thm7} states that the spherical Cherednik algebra $eHe$ is Morita equivalent to $H$, hence $eHe\mathrm{-mod}\cong H\mathrm{-mod}$. Given an endomorphism $f\in \mathrm{End}_{H}(\underline{\Delta}(\lambda))$ we have an endomorphism 
\[
\overline{f}\in \mathrm{End}_{H}(\underline{\Delta}(\lambda)/\mathbb{C}[\mathfrak{h}]^W_+\underline{\Delta}(\lambda))
\]
where $\overline{f}(m+\mathbb{C}[\mathfrak{h}]^W_+)=f(m)+\mathbb{C}[\mathfrak{h}]^W_+\underline{\Delta}(\lambda)$.
In this way we have a map 
\[
\phi:\mathrm{End}_{H}(\underline{\Delta}(\lambda))\rightarrow  \mathrm{End}_{H}(\underline{\Delta}(\lambda)/\mathbb{C}[\mathfrak{h}]^W_+\underline{\Delta}(\lambda)).
\]
We wish to show that $\mathrm{Ker}\phi=R_+\mathrm{End}_{H}(\underline{\Delta}(\lambda))$. Since $eHe$ is Morita equivalent to $H$ we have the following commutative diagram
\[
\begin{tikzpicture}[scale=2]
\node (A) at (0,1) {$\mathrm{End}_{H}(\underline{\Delta}(\lambda))$};
\node (B) at (3,1) {$\mathrm{End}_{H}(\underline{\Delta}(\lambda)/\mathbb{C}[\mathfrak{h}]^W_+\underline{\Delta}(\lambda))$};
\node (C) at (0,0) {$\mathrm{End}_{eHe}(e\underline{\Delta}(\lambda))$};
\node (D) at (3,0) {$\mathrm{End}_{eHe}(e\underline{\Delta}(\lambda)/e\mathbb{C}[\mathfrak{h}]^W_+e\underline{\Delta}(\lambda))$};
\path[->,font=\scriptsize,>=angle 90]
(A) edge node[above]{$\phi$} (B)
(A) edge node[left]{$\cong$} (C)
(B) edge node[right]{$\cong$} (D)
(C) edge node[below]{} (D);
\end{tikzpicture}
\]
By Lemma~\ref{lem7}, $e\underline{\Delta}(\lambda)$ is a cyclic $eHe$-module. Hence $e\underline{\Delta}(\lambda)\cong eHe/I$, where $I$ is the annihilator of the generator. Therefore,
\[
\mathrm{End}_{eHe}(e\underline{\Delta}(\lambda))\cong \mathrm{End}_{eHe}(eHe/I)\cong eHe/I.
\]
Similarly, 
\[
\mathrm{End}_{eHe}(e\underline{\Delta}(\lambda)/e\mathbb{C}[\mathfrak{h}]^W_+e\underline{\Delta}(\lambda))\cong \mathrm{End}_{eHe}((eHe/I)/(e\mathbb{C}[\mathfrak{h}]^W_+eHe/I)),
\]
and
\[
\mathrm{End}_{eHe}((eHe/I)/(e\mathbb{C}[\mathfrak{h}]^W_+eHe/I))\cong eHe/(\mathbb{C}[\mathfrak{h}]^W_+eHe+I).
\]
Hence we have a new commutative diagram
\[
\begin{tikzpicture}[scale=2]
\node (A) at (0,2) {$\mathrm{End}_{H}(\underline{\Delta}(\lambda))$};
\node (B) at (3,2) {$\mathrm{End}_{H}(\underline{\Delta}(\lambda)/\mathbb{C}[\mathfrak{h}]^W_+\underline{\Delta}(\lambda))$};
\node (C) at (0,1) {$\mathrm{End}_{eHe}(e\underline{\Delta}(\lambda))$};
\node (D) at (3,1) {$\mathrm{End}_{eHe}(e\underline{\Delta}(\lambda)/e\mathbb{C}[\mathfrak{h}]^W_+e\underline{\Delta}(\lambda))$};
\node (E) at (0,0) {$eHe/I$};
\node (F) at (3,0) {$eHe/(\mathbb{C}[\mathfrak{h}]^W_+eHe+I)$};
\path[->,font=\scriptsize,>=angle 90]
(A) edge node[above]{} (B)
(A) edge node[left]{$\cong$} (C)
(B) edge node[right]{$\cong$} (D)
(D) edge node[right]{$\cong$} (F)
(C) edge node[left]{$\cong$} (E)
(E) edge node[right]{} (F)
(C) edge node[below]{} (D);
\end{tikzpicture}.
\]
It is easy to see from the diagram that the kernel of the bottom map is $\mathbb{C}[\mathfrak{h}]^W_+eHe+I$. Then, via a simple diagram chasing argument, we see that 
$\mathrm{Ker}\phi=\mathbb{C}[\mathfrak{h}]^W_+\mathrm{End}_H(\underline{\Delta}(\lambda))$. Hence
\[
  \mathrm{End}_{H}(\underline{\Delta}(\lambda)/\mathbb{C}[\mathfrak{h}]^W_+\underline{\Delta}(\lambda))\cong\mathrm{End}_{H}(\underline{\Delta}(\lambda))/\mathbb{C}[\mathfrak{h}]^W_+\mathrm{End}_{H}\underline{\Delta}(\lambda)),
\]
and by Theorem~\ref{thm5}, $\mathrm{End}_{\overline{H}}(\Delta(\lambda))\cong  \mathrm{End}_{H}(\underline{\Delta}(\lambda)/\mathbb{C}[\mathfrak{h}]^W_+\underline{\Delta}(\lambda))$.
Therefore
\[
\mathrm{End}_{\overline{H}}(\Delta(\lambda))\cong \mathrm{End}_{H}(\underline{\Delta}(\lambda))/\mathbb{C}[\mathfrak{h}]^W_+\mathrm{End}_{H}\underline{\Delta}(\lambda)).
\]
\end{proof}
Applying Theorem~\ref{thm8} proves the following important corollary.
\begin{cor}\label{cor2}
There is an isomorphism of algebras $\mathbb{C}[\pi^{-1}(0)]\cong A(\lambda)^+$. 
\end{cor}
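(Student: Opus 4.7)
The plan is to read off the corollary as an immediate composite of identifications that have already been set up in the section, so essentially no new work is required. First I would invoke the definition of $A(\lambda)^+$ introduced just before Theorem~\ref{thm3}, namely $A(\lambda)^+ = \mathrm{End}_{\overline{H}_c(W)}(\Delta(\lambda))$. Next I would apply Theorem~\ref{thm8}, which exhibits this endomorphism ring as the quotient
\[
\mathrm{End}_{H_c(W)}(\underline{\Delta}(\lambda))/\mathbb{C}[\mathfrak{h}]^W_+\,\mathrm{End}_{H_c(W)}(\underline{\Delta}(\lambda)).
\]
Finally, I would identify this quotient with $\mathbb{C}[\pi^{-1}(0)]$ via the description of scheme-theoretic fibres recorded in equation~\eqref{equation9}, which follows from the surjection $Z_c(W)\twoheadrightarrow \mathrm{End}_{H_c(W)}(\underline{\Delta}(\lambda))$ of Theorem~\ref{thm5}(2) together with the inclusion $\mathbb{C}[\mathfrak{h}]^W\hookrightarrow Z_c(W)$ inducing $\pi$ in \eqref{equation8}.

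Composing these three isomorphisms in order yields $A(\lambda)^+\cong \mathbb{C}[\pi^{-1}(0)]$. The only subtlety is to check that the composite is a map of algebras and not merely of vector spaces; this is automatic because each of the three identifications is an algebra isomorphism: the definition of $A(\lambda)^+$ is literally an endomorphism algebra, Theorem~\ref{thm8} is proven as an algebra isomorphism via the Morita/Satake argument and a diagram chase, and \eqref{equation9} is obtained by applying $\mathrm{Spec}$ to an algebra homomorphism (so the coordinate ring of the fibre inherits its algebra structure from $\mathrm{End}_{H_c(W)}(\underline{\Delta}(\lambda))$ modulo the ideal generated by the image of $\mathbb{C}[\mathfrak{h}]^W_+$).

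There is no genuinely difficult step here: all of the substantive work has been done in Theorem~\ref{thm8}, whose proof exploited the Morita equivalence with the spherical subalgebra $eH_c(W)e$ and the cyclicity of $e\underline{\Delta}(\lambda)$ from Lemma~\ref{lem7}. The corollary itself is a formal repackaging, whose value is conceptual rather than technical: it recasts the still-mysterious algebra $A(\lambda)^+$ as the coordinate ring of an explicit scheme-theoretic fibre, which is exactly the form needed to bring in the Wronski map in Section~6 and ultimately to obtain the generators-and-relations presentation of Theorem~\ref{thm25}.
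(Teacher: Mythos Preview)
Your proposal is correct and follows essentially the same approach as the paper: invoke equation~\eqref{equation9} to identify $\mathbb{C}[\pi^{-1}(0)]$ with the quotient of $\mathrm{End}_{H_c(W)}(\underline{\Delta}(\lambda))$ by $\mathbb{C}[\mathfrak{h}]^W_+$, then apply Theorem~\ref{thm8} and the definition of $A(\lambda)^+$. The paper's proof runs the chain in the opposite direction and is terser, but the content is identical.
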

\begin{proof}
From the map \eqref{equation8} we see that
\[
\mathbb{C}[\pi^{-1}(0)]=\mathrm{End}_{H_c(W)}(\underline{\Delta}(\lambda))/\mathbb{C}[\mathfrak{h}]^W_+\mathrm{End}_{H_c(W)}(\underline{\Delta}(\lambda)).
\]
Hence, by Theorem~\ref{thm8}, $\mathbb{C}[\pi^{-1}(0)]\cong \mathrm{End}_{\overline{H}_c(W)}(\Delta(\lambda))=A(\lambda)^{+}$.
\end{proof}
Corollary~\ref{cor2} reduces the problem of understanding the blocks of $\overline{H}_c(W)$ to understanding the scheme theoretic fiber of the map $\pi$. 
\section{The wreath product group}
Describing the centre of the restricted rational Cherednik algebra of $S_n\wr\mathbb{Z}/\ell\mathbb{Z}$ requires more finesse than the $S_n$ case. We cannot simply use Corollary~\ref{cor2}, as we shall see in Section 6 the connection between $\pi$ and the Wronskian only holds in the case of the symmetric group. Instead we will need to construct two maps $\pi_{n\ell}$ and $\pi_{n,\ell}$ (and their opposites $\pi^-_{n\ell}$ and $\pi^-_{n,\ell}$) for the symmetric group and the wreath product group respectively. Then we will prove a generalisation of Corollary~\ref{cor2}. Key to all of this is the following isomorphism \cite[Theorem 4.21]{bonnafe2021fixed} which provides a connection between $Z_c(S_n)$ and $Z_c(S_n\wr\mathbb{Z}/\ell\mathbb{Z})$ via their respective Calogero-Moser spaces.
\begin{thm}\label{thm9}
Let $\sigma\in \mathbb{Z}/\ell\mathbb{Z}\subset \mathbb{C}^\times$ be a root of unity and assume $X_c(W)$ is smooth. Then $X_c(W)^{\sigma}$, the subscheme of $X_c(W)$ fixed by the action of $\sigma$, is smooth. For each irreducible component $X_0\subset X_c(W)^{\sigma}$ there exists a reflection subquotient $W'\subset W$ and conjugacy function $\overline{c}$ such that there is a $\mathbb{C}^\times$-equivariant isomorphism of varieties
\begin{equation}\label{equation10}
    X_0\cong X_{\overline{c}}(W').
\end{equation}
\end{thm}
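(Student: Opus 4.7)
The strategy is a local-to-global argument exploiting the fact that $X_c(W)$ is an affine symplectic variety whose structure sheaf deforms $\mathbb{C}[\mathfrak{h}\oplus\mathfrak{h}^*]^W$, and that the action of $\sigma$ is the restriction of the scaling $\mathbb{C}^\times$-action on $V=\mathfrak{h}\oplus\mathfrak{h}^*$. In particular $\sigma$ acts by symplectic automorphisms commuting with the global $\mathbb{C}^\times$, so the fixed locus will be a $\mathbb{C}^\times$-stable Poisson subscheme.

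The first step is smoothness of $X_c(W)^\sigma$. Since $\langle\sigma\rangle\cong\mathbb{Z}/\ell\mathbb{Z}$ is a finite (hence linearly reductive) group acting on the smooth affine scheme $X_c(W)$, at any fixed point $p$ the action linearises on $T_pX_c(W)$ and one can choose a $\sigma$-stable regular system of parameters in a formal neighbourhood; the fixed subscheme is then cut out by the non-invariant coordinates and its tangent space is $(T_pX_c(W))^\sigma$. This is the standard Cartan/Iversen fact that fixed subschemes of finite reductive groups acting on smooth schemes are smooth, and it immediately gives smoothness of each component $X_0$.

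Next I would identify the reflection subquotient $W'$ attached to $X_0$. A point of $X_c(W)^\sigma$ corresponds, via the deformation, to a $W$-orbit on $V$ stabilised by $\sigma$ up to $W$; to such a datum one associates a stabiliser $W_p\subseteq W$ and forms $W':=N_W(W_p,\sigma)/W_p$, which acts as a complex reflection group on the normal slice to the orbit. The parameter $\overline{c}$ is obtained by restricting the class function $c$ to the reflections in $W$ that descend to reflections in $W'$ and re-indexing along $W'$-conjugacy classes.

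The main obstacle will be constructing the equivariant isomorphism $X_0\cong X_{\overline{c}}(W')$. The plan is to build it first on formal neighbourhoods of matched fixed points using a completion/slice theorem for the rational Cherednik algebra in the style of Bezrukavnikov-Etingof: the completion of $H_c(W)$ at a semisimple $W$-orbit is Morita-equivalent to a matrix algebra over a completed Cherednik algebra for the stabiliser acting on a normal slice, and passing to the centre gives the desired local identification of Calogero-Moser spaces. Intertwining with the fixed-point functor for $\sigma$ then produces a $\mathbb{C}^\times$-equivariant isomorphism on formal neighbourhoods, and since both $X_0$ and $X_{\overline{c}}(W')$ are affine with contracting $\mathbb{C}^\times$-actions, this formal isomorphism globalises uniquely. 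The subtle point to check is that the parameter $\overline{c}$ produced by the slice construction is independent of the choice of matched fixed points and that distinct components $X_0$ are correctly matched with their, possibly non-conjugate, reflection subquotients.
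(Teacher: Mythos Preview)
The paper does not prove this statement; it is quoted verbatim from \cite[Theorem~4.21]{bonnafe2018fixed}, so there is no in-paper proof to compare against.

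Your smoothness step is fine and is the standard Cartan--Iversen argument. The serious gap is in the globalisation step. You assert that $X_0$ and $X_{\overline{c}}(W')$ are ``affine with contracting $\mathbb{C}^\times$-actions'' and that a formal isomorphism at a fixed point therefore extends uniquely. But the $\mathbb{C}^\times$-action through which $\sigma$ acts is $t\cdot(x,y)=(tx,t^{-1}y)$; it is \emph{not} contracting. Its fixed locus is the finite set $\gamma^{-1}(0)$, and the Calogero--Moser space decomposes into a Bialynicki--Birula pattern of attracting cells (this is exactly what the paper exploits later in Lemma~\ref{lem10} and Proposition~\ref{prop3}). A formal isomorphism at one fixed point gives you control only over one attracting cell, not the whole component $X_0$, so the passage from formal to global as written does not go through. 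Your description of $W'$ as $N_W(W_p,\sigma)/W_p$ is also too vague to pin down the reflection subquotient; in Bonnaf\'e--Maksimau the identification of $W'$ and of $\overline{c}$ is tied to the stratification of $V/W$ by parabolic subgroups and requires more than a pointwise stabiliser computation.

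Bonnaf\'e and Maksimau's actual proof is different in character. For general $W$ they use the Poisson stratification of $X_c(W)$ and the \'etale-local factorisation of Cherednik algebras along parabolic strata. In the only case the present paper needs, namely $W=S_{n\ell}$ (Theorem~\ref{thm15}), the isomorphism is made completely explicit via the identification of Calogero--Moser spaces with Nakajima quiver varieties: the map $i_\emptyset$ is the concrete inclusion of quiver data written down just before Theorem~\ref{thm17}. The global isomorphism is thus constructed directly rather than assembled from formal germs.
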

The function $\overline{c}$ in the above theorem is what forces our results to hold for a special choice of parameter. This is a very important point and we will return to it with a full description of how these parameters are defined and an explicit description for the wreath product case in Theorem~\ref{thm15}.

The following results hold for generic $c$ such that the Calogero-Moser space $X_c(W)$ is smooth, and so we will ignore the distinction between $c$ and $\overline{c}$ until Theorem~\ref{thm15}.

It is important at this stage to fix notation for the irreducible representations of $S_n\wr \mathbb{Z}/\ell\mathbb{Z}$. There is a bijection between the $\ell$-multipartitions of $n$ and the irreducible representations of $S_n\wr \mathbb{Z}/\ell\mathbb{Z}$ \cite[Page. 221]{MoDegen}. Then, by Theorem~\ref{thm1}, it makes sense to denote an irreducible representation of $S_n\wr \mathbb{Z}/\ell\mathbb{Z}$ as $\mathrm{quo}_\ell(\lambda)$, for a partition $\lambda\vdash n\ell$ with trivial $\ell$-core. The reason why we choose to label the irreducible representations by the $\ell$-quotients will become clearer later in the section, particularly in light of Lemma~\ref{lem12}. Note that when $\ell=1$ we are back in the symmetric case and $\mathrm{quo}_\ell(\lambda)=\lambda$. From this point on we will write $\mathrm{End}_{H_c(S_n\wr \mathbb{Z}/\ell\mathbb{Z})}\underline{\Delta}(\mathrm{quo}_\ell(\lambda))$ as $\mathrm{End}\underline{\Delta}(\mathrm{quo}_\ell(\lambda))$ (respectively $\mathrm{End}\underline{\Delta}^-(\mathrm{quo}_\ell(\lambda))$) for brevity.

As in \eqref{equation8}, we can construct two maps 
\[
\pi_{n,\ell}:\mathrm{Spec}\,\mathrm{End}\underline{\Delta}(\mathrm{quo}_\ell(\lambda))\rightarrow \mathbb{C}^n/(S_n\wr\mathbb{Z}/\ell\mathbb{Z}),
\]
and
\[
\pi_{n\ell}: (\mathrm{Spec}\, \mathrm{End}(\underline{\Delta}(\lambda)))^{\mathbb{Z}/\ell\mathbb{Z}}\rightarrow (\mathbb{C}^{n\ell}/S_{n\ell})^{\mathbb{Z}/\ell\mathbb{Z}}.
\]
Similarly for $A(\lambda^-)$ an analogue of Theorem~\ref{thm5} applies by \cite[Corollary 4.4]{BellEnd} and we get
\[
\pi^-_{n,\ell}:\mathrm{Spec}\,\mathrm{End}\underline{\Delta}^-(\mathrm{quo}_\ell(\lambda))\rightarrow (\mathbb{C}^n)^*/(S_n\wr\mathbb{Z}/\ell\mathbb{Z}),
\]
and
\[
\pi^-_{n\ell}: (\mathrm{Spec}\, \mathrm{End}(\underline{\Delta}^-(\lambda)))^{\mathbb{Z}/\ell\mathbb{Z}}\rightarrow ((\mathbb{C}^{n\ell})^*/S_{n\ell})^{\mathbb{Z}/\ell\mathbb{Z}}
\]

The strategy is to embed $\mathrm{Spec}\,\mathrm{End}\underline{\Delta}(\mathrm{quo}_\ell(\lambda))$ (respectively $\mathrm{Spec}\,\mathrm{End}\underline{\Delta}^-(\mathrm{quo}_\ell(\lambda))$) into $X_c(S_n\wr\mathbb{Z}/\ell\mathbb{Z})$ then, using \eqref{equation10}, realise it as a subvariety of $X_c(S_{n\ell})^{\mathbb{Z}/\ell\mathbb{Z}}$. We do this by proving that $\mathrm{Spec}\,\mathrm{End}\underline{\Delta}(\mathrm{quo}_\ell(\lambda))$ and $\mathrm{Spec}\,\mathrm{End}\underline{\Delta}^-(\mathrm{quo}_\ell(\lambda))$ are equal to subvarieties called attracting sets. 
\begin{defn}
Let $X$ be an affine scheme over $\mathbb{C}$ with a $\mathbb{C}^\times$-action and assume that $X^{\mathbb{C}^\times}$ is finite. An $attracting$ $set$ for the $\mathbb{C}^\times$-action is defined to be $\Omega_p:=\{x\in X\,|\,\lim_{t\to \infty}t\cdot x=x_p\}$ where $x_p$ is a fixed point. Similarly define the $opposite$ $attracting$ $set$ $\Omega_p^-:=\{x\in X\,|\,\lim_{t\to 0}t\cdot x=x_p\}$.
\end{defn}
To prove that the spectrums of the endomorphism rings can be identified with attracting sets in $X_c(S_n\wr\mathbb{Z}/\ell\mathbb{Z})$ we show that two equalities hold in the $A(\lambda)^+$ case. The first is 
\begin{equation}\label{equation11}
\mathrm{Spec}\,\mathrm{End}\underline{\Delta}(\mathrm{quo}_\ell(\lambda))= \supp_{Z_c(S_n\wr\mathbb{Z}/\ell\mathbb{Z})}(\underline{\Delta}(\mathrm{quo}_\ell(\lambda)),
\end{equation}
which is relatively straightforward. The second is
\begin{equation}\label{equation12}
\mathrm{Supp}_{Z_c(S_n\wr\mathbb{Z}/\ell\mathbb{Z})}(\underline{\Delta}(\mathrm{quo}_\ell(\lambda))=\Omega_{\mathrm{quo}_\ell(\lambda)},
\end{equation}
which requires significantly more work. We will also show that identical equalities hold for the $A(\lambda)^-$ case, with the appropriate changes to opposite attracting sets and opposite Verma modules. From now on we shorten $Z_c(S_n\wr\mathbb{Z}/\ell\mathbb{Z})$ to $Z_c$.
\begin{lem}\label{lem8}
There are equalities of supports
\[
\mathrm{Supp}_{Z_c} \underline{\Delta}(\mathrm{quo}_\ell(\lambda)) = \mathrm{Supp}_{Z_c} e\underline{\Delta}(\mathrm{quo}_\ell(\lambda))
\]
and
\[
\mathrm{Supp}_{Z_c} \underline{\Delta}^-(\mathrm{quo}_\ell(\lambda)) = \mathrm{Supp}_{Z_c} e\underline{\Delta}^-(\mathrm{quo}_\ell(\lambda))
\]
\end{lem}
\begin{proof}
Since $e\underline{\Delta}(\mathrm{quo}_\ell(\lambda))\subset \underline{\Delta}(\mathrm{quo}_\ell(\lambda))$ we have $\mathrm{ann}_{Z_c} \underline{\Delta}(\mathrm{quo}_\ell(\lambda))\subset \mathrm{ann}_{Z_c} e\underline{\Delta}(\mathrm{quo}_\ell(\lambda)) $. Hence
\[
\mathrm{Supp}_{Z_c} e\underline{\Delta}(\mathrm{quo}_\ell(\lambda)) \subset \mathrm{Supp}_{Z_c} \underline{\Delta}(\mathrm{quo}_\ell(\lambda)). 
\]
It remains to show the reverse inclusion. Consider $\mathfrak{p}\in  \mathrm{Supp}_{Z_c} \underline{\Delta}(\mathrm{quo}_\ell(\lambda))$. Then $\underline{\Delta}(\mathrm{quo}_\ell(\lambda)) \otimes_{Z_c} (Z_c)_{\mf{p}} \neq 0$.
Therefore, to prove the reverse inclusion all we need show is that $e\underline{\Delta}(\mathrm{quo}_\ell(\lambda)) \otimes_{Z_c} (Z_c)_{\mf{p}} \neq 0$. By Theorem~\ref{thm7}, the functor 
\[
e\cdot -: H_c(S_n\wr\mathbb{Z}/\ell\mathbb{Z}))\mathrm{-mod}\rightarrow eH_c(S_n\wr\mathbb{Z}/\ell\mathbb{Z}))e\mathrm{-mod}
\]
is an equivalence and hence maps the non-zero objects in $H_c(S_n\wr\mathbb{Z}/\ell\mathbb{Z}))\mathrm{-mod}$ to non-zero objects in $eH_c(S_n\wr\mathbb{Z}/\ell\mathbb{Z}))e\mathrm{-mod}$. Therefore $\underline{\Delta}(\mathrm{quo}_\ell(\lambda)) \otimes_{Z_c} (Z_c)_{\mf{p}} \neq 0$ if and only if $e\underline{\Delta}(\mathrm{quo}_\ell(\lambda)) \otimes_{Z_c} (Z_c)_{\mf{p}} \neq 0$. The argument is identical for $\underline{\Delta}^-(\mathrm{quo}_\ell(\lambda))$.
\end{proof}
We can now prove that the equality \eqref{equation11} and its associated version for opposite Verma modules holds.
\begin{thm}\label{thm10}
There are equalities of varieties
\[
\mathrm{Spec}\,\mathrm{End}\underline{\Delta}(\mathrm{quo}_\ell(\lambda))= \supp_{Z_c}\underline{\Delta}(\mathrm{quo}_\ell(\lambda))
\]
and
\[
\mathrm{Spec}\,\mathrm{End}\underline{\Delta}^-(\mathrm{quo}_\ell(\lambda))= \supp_{Z_c}\underline{\Delta}^-(\mathrm{quo}_\ell(\lambda)).
\]
\end{thm}
\begin{proof}
The argument is identical in the $\underline{\Delta}^-(\mathrm{quo}_\ell(\lambda))$ case, using the fact that $e\underline{\Delta}^-(\mathrm{quo}_\ell(\lambda))$ is a cyclic $Z_c$-module by the argument of \cite[Lemma 8.13]{bellamy2018highest}, so we will prove the first equality. Note that $e\underline{\Delta}(\mathrm{quo}_\ell(\lambda))$ is a cyclic $Z_c$-module by Lemma~\ref{lem7}. Therefore $Z_c/I\cong e\underline{\Delta}(\mathrm{quo}_\ell(\lambda))$ as left $Z_c$-modules for some ideal $I$. Since $H_c(S_n\wr\mathbb{Z}/\ell\mathbb{Z})$ is Morita equivalent to $eH_c(S_n\wr\mathbb{Z}/\ell\mathbb{Z})e$ and $eH_c(S_n\wr\mathbb{Z}/\ell\mathbb{Z})e\cong Z_c$ we have
\[
\mathrm{End}_{H_c(S_n\wr\mathbb{Z}/\ell\mathbb{Z})}(\underline{\Delta}(\mathrm{quo}_\ell(\lambda)))\cong \mathrm{End}_{eH_c(S_n\wr\mathbb{Z}/\ell\mathbb{Z})e}e\underline{\Delta}(\mathrm{quo}_\ell(\lambda))\cong \mathrm{End}_{Z_c}Z_c/I\cong Z_c/I.
 \]
Therefore
\[
\mathrm{Spec}\,\mathrm{End}_{H_c(S_n\wr\mathbb{Z}/\ell\mathbb{Z})}(\underline{\Delta}(\mathrm{quo}_\ell(\lambda)))\cong \mathrm{Spec}\,Z_c/I,
\]
and since $Z_c$ is commutative we see that $\mathrm{Spec}\,Z_c/I\cong \mathrm{Supp}_{Z_c} (e\underline{\Delta}(\mathrm{quo}_\ell(\lambda)))$. Then Lemma~\ref{lem8} implies $\mathrm{Supp}_{Z_c} (e\underline{\Delta}(\mathrm{quo}_\ell(\lambda)))\cong \mathrm{Supp}_{Z_c}(\underline{\Delta}(\mathrm{quo}_\ell(\lambda)))$. Hence
\[
\mathrm{Spec}\,\mathrm{End}_{H_c(S_n\wr\mathbb{Z}/\ell\mathbb{Z})}(\underline{\Delta}(\mathrm{quo}_\ell(\lambda)))= \mathrm{Supp}_{Z_c}(\underline{\Delta}(\mathrm{quo}_\ell(\lambda))).
\]
\end{proof}
Several technical results are required to prove the equality \eqref{equation12}. We start with the following result which is \cite[Theorem 13.4]{kemper2010course}.
\begin{thm}\label{thm11}
Let $R$ be a Noetherian local ring with maximal ideal $\mathfrak{m}$. Let $S$ be the associated graded of $R$ with respect to the $\mathfrak{m}$-adic filtration. Then $R$ is regular if and only if $S$ is a polynomial ring.
\end{thm}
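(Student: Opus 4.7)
The plan is to use the Hilbert--Samuel machinery to compare $R$ with a polynomial ring in $d := \dim_k \mathfrak{m}/\mathfrak{m}^2$ variables, where $k = R/\mathfrak{m}$. By Nakayama's lemma $\mathfrak{m}$ is generated by some $d$ elements $x_1,\dots,x_d$, and sending $X_i$ to the class of $x_i$ in $\mathfrak{m}/\mathfrak{m}^2 = S_1$ produces a surjective homomorphism of graded $k$-algebras
\[
\phi : k[X_1,\dots,X_d] \twoheadrightarrow S.
\]
Hence $S$ is a polynomial ring if and only if $\phi$ is an isomorphism, and both directions of the theorem reduce to controlling the Hilbert function of $S$.

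For the forward direction, assume $R$ is regular, so by definition $\dim R = d$. First I would recall that the Hilbert--Samuel function $n \mapsto \dim_k R/\mathfrak{m}^{n+1} = \sum_{i \leq n} \dim_k S_i$ is eventually polynomial of degree $d$ with leading coefficient $e(R)/d!$, where $e(R)$ is the Hilbert--Samuel multiplicity. I would then invoke the classical fact that a regular local ring has $e(R) = 1$, so the Hilbert function of $S$ matches that of the polynomial ring $k[X_1,\dots,X_d]$ in every sufficiently large degree. Since a surjection of graded rings from a polynomial ring either preserves the Hilbert function or makes it strictly smaller in all large degrees, the equality forces $\ker \phi = 0$, and $\phi$ is an isomorphism.

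For the converse, suppose $S \cong k[X_1,\dots,X_d]$. Then $\dim_k S_n = \binom{n+d-1}{d-1}$ for all $n \geq 0$, so the Hilbert--Samuel polynomial of $R$ has degree $d$, which by Krull's theorem gives $\dim R = d$. Combined with $\dim_k \mathfrak{m}/\mathfrak{m}^2 = d$, this is exactly the definition of a regular local ring.

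I expect the main obstacle to be the multiplicity-one input in the forward direction: without it, surjectivity of $\phi$ does not by itself rule out a non-trivial homogeneous kernel, and one would otherwise have to exhibit directly that a regular system of parameters forms a regular sequence whose associated graded behaves like indeterminates. Everything else is a dimension count in the graded pieces of $S$, and the converse is essentially immediate once the Hilbert function is known.
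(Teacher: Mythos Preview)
The paper does not prove this theorem at all; it is quoted as \cite[Theorem~13.4]{kemper2010course} and used as a black box, so there is no paper proof to compare your proposal against.

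Your outline follows the standard Hilbert--Samuel approach and is essentially correct. One imprecision in the forward direction: from $e(R)=1$ you only get that the \emph{leading coefficient} of the Hilbert--Samuel polynomial is $1/d!$, not that the Hilbert function of $S$ agrees with that of $k[X_1,\dots,X_d]$ in all large degrees. The argument is easier to close without multiplicity at all: if $\ker\phi$ contains a nonzero homogeneous element $f$ of degree $m$, then $S$ is a quotient of $k[X_1,\dots,X_d]/(f)$, whose Hilbert polynomial has degree at most $d-2$, so the Hilbert--Samuel polynomial of $R$ has degree at most $d-1<d=\dim R$, a contradiction. This uses only $\dim R=d$ and also sidesteps a potential circularity, since many textbook proofs that regular local rings have multiplicity one actually pass through $\mathrm{gr}_{\mathfrak m}R$ being a polynomial ring. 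Your converse direction is fine as written.
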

Let $X$ be an affine algebraic variety over $\mathbb{C}$ that admits a $\mathbb{C}^\times$-action. This induces a grading on $\mathbb{C}[X]$. Also note that $\mathbb{Z}/\ell\mathbb{Z}\subset \mathbb{C}^\times$ by identifying the cyclic group of order $\ell$ with the $\ell^{th}$ roots of unity. Recall that the fixed point locus is defined as 
\[
X^{\mathbb{Z}/\ell\mathbb{Z}}=\{x\in X \,|\, g \cdot x=x\,\,\forall g\in \mathbb{Z}/\ell\mathbb{Z} \}
\]
which can be equivalently defined as  
\[
\mathrm{Spec}\,\left( \frac{\mathbb{C}[X]}{\langle f-g\cdot f\,|\, g\in \mathbb{Z}/\ell\mathbb{Z} \textnormal{ and } f\in \mathbb{C}[X]\rangle }\right).
\]
Here we use the notation that if $S$ is a subset of a ring $R$ then $\langle S \rangle$ denotes the ideal generated by $S$. Since the $\mathbb{C}^\times$-action defines a grading of $\mathbb{C}[X]$ we can consider the ideal $\langle \mathbb{C}[X]_{\neq 0}  \rangle$ generated by the non-degree zero elements and define 
\[
\mathbb{C}[X](0):=\frac{\mathbb{C}[X]}{\langle \mathbb{C}[X]_{\neq 0}  \rangle }.
\]
\begin{lem}\label{lem9}
Assume that $X$ is smooth. Then $X^{\mathbb{Z}/\ell\mathbb{Z}} = \mathrm{Spec}\, \mathbb{C}[X](0)$ is smooth. In particular, $\mathbb{C}[X](0)$
is reduced.
\end{lem}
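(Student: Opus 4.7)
The statement has two ingredients: the identification $X^{\mathbb{Z}/\ell\mathbb{Z}}=\mathrm{Spec}\,\mathbb{C}[X](0)$ and the smoothness assertion. For the first I would apply Proposition~\ref{prop2} with $V=\mathbb{C}[X]$ itself, regarded as a $\mathbb{Z}/\ell\mathbb{Z}$-stable generating subspace of the coordinate ring. Since $\mathbb{Z}/\ell\mathbb{Z}\subset\mathbb{C}^\times$ acts on $\mathbb{C}[X]_d$ by the character $\zeta\mapsto\zeta^d$, an element is invariant exactly when its $\mathbb{C}^\times$-degree is divisible by $\ell$, so $V_{\mathbb{Z}/\ell\mathbb{Z}}=\mathbb{C}[X]_{\neq 0}$ and the defining ideal produced by Proposition~\ref{prop2} is exactly $\langle\mathbb{C}[X]_{\neq 0}\rangle$.

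For smoothness I would work locally. Fix a closed fixed point $p\in X^{\mathbb{Z}/\ell\mathbb{Z}}$ and set $R:=\mathcal{O}_{X,p}$, a regular local ring of dimension $d:=\dim_p X$. The group $\mathbb{Z}/\ell\mathbb{Z}$ preserves $\mathfrak{m}_p$ and acts linearly on the cotangent space $\mathfrak{m}_p/\mathfrak{m}_p^2$, which decomposes into one-dimensional character eigenspaces. Since $\mathbb{Z}/\ell\mathbb{Z}$ is linearly reductive over $\mathbb{C}$, averaging over the group produces an equivariant lift of each such eigenvector to $\mathfrak{m}_p$; in this way I may choose a regular system of parameters $t_1,\dots,t_d$ of $R$ consisting of $\mathbb{Z}/\ell\mathbb{Z}$-eigenvectors, with $t_1,\dots,t_m$ invariant and $t_{m+1},\dots,t_d$ of non-trivial character.

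The main technical step is then to show that the localisation at $p$ of the defining ideal of $X^{\mathbb{Z}/\ell\mathbb{Z}}$, call it $I_p\subset R$, equals $J:=(t_{m+1},\dots,t_d)$. The inclusion $J\subseteq I_p$ is immediate; for the reverse, any non-invariant $f\in R$ may be written modulo $\mathfrak{m}_p^k$ as a polynomial in $t_1,\dots,t_d$, whose non-invariant summand consists of monomials each divisible by at least one of $t_{m+1},\dots,t_d$. Thus $f\in J+\mathfrak{m}_p^k$ for every $k$, and the Krull intersection theorem (ideals in Noetherian local rings are $\mathfrak{m}_p$-adically closed) forces $f\in J$. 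Since $J$ is generated by part of a regular system of parameters, $R/J$ is regular of dimension $m$; equivalently, Theorem~\ref{thm11} applies to $R/J$ with associated graded the polynomial ring $\mathbb{C}[\overline{t}_1,\dots,\overline{t}_m]$. Letting $p$ vary, $X^{\mathbb{Z}/\ell\mathbb{Z}}$ is smooth, and in particular reduced. The principal obstacle is the identification $I_p=J$: this is where linear reductivity (needed to produce eigenvector parameters) and $\mathfrak{m}_p$-adic closure of ideals must be combined, while the surrounding steps are either immediate from Proposition~\ref{prop2} or standard consequences of regular sequences.
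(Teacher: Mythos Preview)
Your proposal is correct. For the identification $X^{\mathbb{Z}/\ell\mathbb{Z}}=\mathrm{Spec}\,\mathbb{C}[X](0)$ you make explicit what the paper leaves implicit: Proposition~\ref{prop2} with $V=\mathbb{C}[X]$ gives the scheme-theoretic equality directly, whereas the paper's written proof only checks this on closed points.

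For smoothness the two arguments diverge. The paper invokes an external result of Fogarty, which identifies the tangent cone of $X^{\mathbb{Z}/\ell\mathbb{Z}}$ at a fixed point $p$ with $(T_pX)^{\mathbb{Z}/\ell\mathbb{Z}}$, and then appeals to Theorem~\ref{thm11} to conclude regularity from the fact that this tangent cone is a linear space. Your route is more self-contained: you produce an eigenbasis $t_1,\dots,t_d$ of the maximal ideal using linear reductivity, and then show by a Krull intersection argument that the localised defining ideal is exactly $(t_{m+1},\dots,t_d)$, so the quotient is regular because it is cut out by part of a regular system of parameters. In effect you are reproving the relevant special case of Fogarty's theorem by hand. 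The paper's proof is shorter once one is willing to cite Fogarty; yours is more elementary and makes the mechanism transparent. One small point worth tightening in your write-up: when you say ``any non-invariant $f\in R$'', you should note that it suffices to treat $f$ a $\mathbb{Z}/\ell\mathbb{Z}$-eigenvector of non-trivial character (the isotypic decomposition of $R$ reduces to this case), since then the equivariance of $R\to R/\mathfrak{m}_p^k$ forces every monomial appearing in $P(t)$ to have that same non-trivial character, and hence to be divisible by some $t_i$ with $i>m$.
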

\begin{proof}
We first show equality of sets 
\[
X^{\mathbb{Z}/\ell\mathbb{Z}} = \mathrm{Spec}\,\mathbb{C}[X](0).
\]
Let $f\in \mathbb{C}[X]$ be homogeneous of degree $d$, $g\in \mathbb{Z}/\ell\mathbb{Z}$ and $p\in X^{\mathbb{Z}/\ell\mathbb{Z}}$ then we have
\[
f(p)=f(g^{-1}\cdot p)=(g\cdot f)(p)=g^{d}f(p)
\]
hence $f(p)=0$ if $d\neq 0 \,\mathrm{mod} \,\ell$. Therefore $\langle \mathbb{C}[X]_{\neq 0}  \rangle$ is contained in the maximal ideal defining $p$ and $X^{\mathbb{Z}/\ell\mathbb{Z}} \subset \mathrm{Spec}\,\mathbb{C}[X](0)$.

Conversely $\mathbb{Z}/\ell\mathbb{Z}$ acts trivially on $\mathbb{C}[X](0)$ and so every point in $\mathrm{Spec}\,\mathbb{C}[X](0)$ is fixed by $\mathbb{Z}/\ell\mathbb{Z}$. Hence $ \mathrm{Spec}\, \mathbb{C}[X](0)\subset X^{\mathbb{Z}/\ell\mathbb{Z}}$.

It remains to show that $\mathrm{Spec}\,\mathbb{C}[X](0)$ is reduced. We must show that the localisation of $\mathbb{C}[X](0)$ at each point is regular. By Theorem~\ref{thm11} it is enough to show that the tangent cone of $\mathrm{Spec}\,\mathbb{C}[X](0)$ at a fixed point $p\in X^{\mathbb{Z}/\ell\mathbb{Z}}$ is a polynomial ring. Since $X$ is regular at $p$ the tangent cone at $p$ of $X$ is equal to $V:=T_p(X)$ as a $\mathbb{Z}/\ell\mathbb{Z}$-module. By \cite[Theorem 5.2]{fogarty1973fixed} the tangent cone of $X^{\mathbb{Z}/\ell\mathbb{Z}}$ at a point $p$ is equal to $V^{\mathbb{Z}/\ell\mathbb{Z}}$. It is then clear that 
\[
V^{\mathbb{Z}/\ell\mathbb{Z}}=\mathrm{Spec}\,\mathbb{C}[V](0)
\]
is affine space and $\mathbb{C}[V](0)$ is a polynomial ring. 
\end{proof}
Before we can prove \eqref{equation12} we require a better understanding of the role of the attracting sets inside the structure of $X_c(S_n\wr\mathbb{Z}/\ell\mathbb{Z})$. To do so we need two powerful theorems, Theorem~\ref{thm12} and Theorem~\ref{thm13} these are \cite[Theorem 2.3]{BBFix} and \cite[Theorem 2.5]{BBFix} respectively. These use the concept of definite actions, for the readers benefit we include the definition here.
\begin{defn}
Let $G$ be an algebraic torus, then $G\cong \mathbb{C}^\times\times\dots \times \mathbb{C}^\times=(\mathbb{C}^\times)^n$ for some positive integer $n$. Any $G$-module can be written as a direct sum of one dimensional $G$-modules. If $V$ is a $G$-module, then we can find a basis $\{v_i\}$ of $V$ such that 
\[
(g_1\dots g_n)\cdot v_i=g_1^{s_{i1}}\dots g_n^{s_{in}}v_i\textnormal { for } (g_1\dots g_n)\in G.
\]
The module $V$ is positive (respectively negative) if
\begin{enumerate}
\item $s_{ij}\geq 0$ (respectively $s_{ij}\leq 0$) for all $i$, $j$.
\item For every $i\in I$ there exists $j$ such that $s_{ij}\neq 0$.
\end{enumerate}
The module is non-negative (respectively non-positive) if (1) is satisfied. The module is fully definite (respectively definite) if there exists an isomorphism $G\cong \mathbb{C}^\times\times\dots \times \mathbb{C}^\times$ such that the module is positive (respectively non-negative).
\end{defn}
\begin{defn}
Let $\eta:G\times X\rightarrow X$ be an action of a torus on $X$ and let $a\in X^{G}$ be a non-singular closed point. The action of $\eta$ on $a$ is fully definite (respectively definite) if the $G$-module $T_a(X)$ is fully definite (respectively definite).
\end{defn}
In the above $T_a(X)$ denotes the tangent space at $a$.
\begin{thm}\label{thm12}
Let $X$ be irreducible and reduced. Let $G$ be an algebraic 
torus. If the action of $G$ on $X$ is definite at $a \in X^G$ then $X^G$ is irreducible
\end{thm}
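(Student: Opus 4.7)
The plan is to reduce to a one-parameter subgroup and invoke a Białynicki--Birula style attracting-set argument at $a$, combined with the irreducibility of $X$.

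First, using the definiteness at $a$, I would fix an isomorphism $G \cong (\mathbb{C}^\times)^n$ so that every weight of $G$ on $T_a(X)$ lies in the non-negative orthant, and then pick a generic cocharacter $\lambda: \mathbb{C}^\times \to G$ such that $\lambda$ pairs non-negatively with all of these weights, strictly positively with the non-zero ones, and whose zero-weight subspace of $T_a(X)$ coincides with the simultaneous fixed subspace of $G$. This guarantees that $X^\lambda$ agrees with $X^G$ in a neighborhood of $a$, so it suffices to prove that $X^\lambda$ is irreducible.

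Next, I would invoke the local model for a definite action at a fixed point --- the technical core of \cite{BBFix} --- to produce a $\lambda$-equivariant map from a neighborhood of $a$ in $X$ to $T_a(X)$ realising the local $\lambda$-action as a linear one with non-negative weights. Since all $\lambda$-weights at $a$ are non-negative, the attracting set
\[
\Omega_a = \left\{ x \in X : \lim_{t\to 0} \lambda(t)\cdot x \text{ exists and lies in the } \lambda\text{-fixed component } F_a \text{ through } a\right\}
\]
is open in $X$ and contains $F_a$. By irreducibility of $X$ the open set $\Omega_a$ is dense, and the limit morphism $\varphi: \Omega_a \to F_a$, $x \mapsto \lim_{t\to 0}\lambda(t)\cdot x$, is a retraction restricting to the identity on $F_a$.

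Finally, any hypothetical second irreducible component $F'$ of $X^\lambda$ would be disjoint from $\Omega_a$, hence contained in the proper closed complement $X \setminus \Omega_a$; since $\Omega_a$ is dense in the irreducible $X$, applying the retraction $\varphi$ to points of $\Omega_a$ accumulating on $F'$ forces $F' \subseteq \overline{F_a}$, contradicting the assumption that $F'$ is a distinct component. Thus $X^\lambda = \overline{F_a}$ is irreducible, and therefore so is $X^G$. The main obstacle, and what occupies the bulk of \cite{BBFix}, is producing the local linear model in the second step when $X$ is only assumed irreducible and reduced rather than smooth: one cannot simply identify a formal neighborhood of $a$ with a neighborhood of $0$ in $T_a(X)$, and must instead use definiteness to control the filtration of $\widehat{\mathcal{O}}_{X,a}$ by $\lambda$-weights and lift the contracting flow to a Zariski neighborhood.
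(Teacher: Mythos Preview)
The paper does not prove this statement; it is quoted directly as \cite[Theorem 2.3]{BBFix}, so there is no in-paper argument to compare against. What follows is an assessment of your sketch on its own terms.

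The reduction to a one-parameter subgroup via a generic cocharacter is fine, and invoking a local model to see that the attracting set $\Omega_a$ contains an open neighbourhood of $a$ is a reasonable starting point. However, your final step does not go through. From ``$\Omega_a$ is open and dense in $X$'' you cannot conclude that a putative second fixed component $F'$ lies in $\overline{F_a}$: if $x_n \in \Omega_a$ with $x_n \to y \in F'$, then $\varphi(x_n) \in F_a$, but there is no reason for $\varphi(x_n)$ to converge to $y$. The retraction $\varphi$ is not known to extend continuously across the boundary of $\Omega_a$, so the interchange of limits you are tacitly invoking is unjustified. Mere density of $\Omega_a$ does not exclude other fixed components sitting in its complement; for instance, on $\mathbb{P}^1\times\mathbb{P}^1$ with the diagonal scaling action the attracting cell of one fixed point is open and dense, yet there are three other fixed points.

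What makes the argument work in the affine setting (the only one used in this paper) is stronger: definiteness at $a$ forces the \emph{entire} coordinate ring $A=\mathbb{C}[X]$ to have only non-positive weights, not just the cotangent space at $a$. Indeed, if $f \in A$ is homogeneous of strictly positive weight then $f(a)=0$, so $f\in\mathfrak m_a$; since each $\mathfrak m_a^{k}/\mathfrak m_a^{k+1}$ is a quotient of $\mathrm{Sym}^k(\mathfrak m_a/\mathfrak m_a^2)$ and hence carries only non-positive weights, induction gives $f\in\bigcap_k\mathfrak m_a^{k}$, and then Krull's intersection theorem together with $A$ being a domain forces $f=0$. Once $A_{>0}=0$, the ideal of $X^G$ is exactly $\bigoplus_{n<0}A_n$, so $\mathbb{C}[X^G]\cong A_0$, a subring of the domain $A$ and therefore itself a domain; hence $X^G$ is irreducible. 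Geometrically this says that \emph{every} point of $X$ flows to $X^G$, so your $\Omega_a$ is not merely dense but equal to all of $X$ --- and that is precisely the ingredient your sketch is missing.
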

\begin{thm}\label{thm13}
Let $G$ be an algebraic torus. Let the action of $G$ on $X$ be 
definite at $a$. If $X$ is irreducible then there exists an open $G$-invariant neighbourhood $U$ of $a$ 
which is $G$-isomorphic to $(U \cap X^G) \times V$, where $V$ is a finite-dimensional 
(fully definite) $G$-module and the action of $G$ on $(U \cap X^G) \times V$ is induced by 
the trivial action of $G$ on $U \cap X^G$ and the linear action on $V$ (determined by 
the given structure of a $G$-module).
\end{thm}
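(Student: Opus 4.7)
The plan is to build a $G$-equivariant morphism from a $G$-stable affine neighborhood of $a$ onto the product $(U\cap X^G)\times V$ by equivariantly lifting weight-space generators of the maximal ideal at $a$, and then to upgrade this morphism from a pointwise étale map into a Zariski-local isomorphism by exploiting the contracting one-parameter subgroup provided by definiteness.

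First, by Sumihiro's theorem I may replace $X$ by an affine $G$-invariant open neighborhood of $a$, so that $\mathbb{C}[X]$ is a rational $G$-module. Since $G$ is a torus, $\mathbb{C}[X]$ and its $G$-stable maximal ideal $\mathfrak{m}_a$ split as direct sums of $G$-weight spaces, and so does the cotangent space $\mathfrak{m}_a/\mathfrak{m}_a^2$. The Fogarty-type tangent space computation invoked in Lemma~\ref{lem9} identifies the $G$-invariant part of $\mathfrak{m}_a/\mathfrak{m}_a^2$ with $T_a(X^G)^*$; by full definiteness the complementary summand $V^*$ carries only nonzero weights, all of one sign after a suitable parameterization $G \cong (\mathbb{C}^\times)^n$. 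Semisimplicity of $\mathfrak{m}_a$ as a $G$-module allows me to pick $G$-invariant lifts $y_1,\dots,y_r\in\mathfrak{m}_a$ of a basis of $T_a(X^G)^*$ and $G$-eigenfunction lifts $v_1,\dots,v_s\in\mathfrak{m}_a$ of a basis of $V^*$. The $y_i$ are $G$-invariant and define a $G$-equivariant morphism to $X^G$ on a $G$-invariant neighborhood, while the $v_j$ define a $G$-equivariant morphism to $V$; together they assemble into
\[
\varphi = (y,v)\colon U \longrightarrow (U\cap X^G)\times V,
\]
which is étale at $a$ by construction, since the induced map on cotangent spaces is an isomorphism.

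The main obstacle is promoting étaleness at the single point $a$ into a genuine $G$-equivariant Zariski isomorphism on a $G$-stable open, and this is exactly where definiteness is decisive. Choose a generic one-parameter subgroup $\lambda\colon\mathbb{C}^\times\to G$ whose weights on $T_a(X)$ are non-negative with zero-weight subspace equal to $T_a(X^G)$. This endows $\mathbb{C}[X]$ with a non-negative $\mathbb{Z}$-grading whose degree-zero part is $\mathbb{C}[X^G]$ and places the $v_j$ in strictly positive degrees. A graded Nakayama argument over $\mathbb{C}[X^G]$ then produces a $G$-invariant $f\in\mathbb{C}[X^G]$ with $f(a)\neq 0$ such that the induced map $\mathbb{C}[X^G][v_1,\dots,v_s]\to \mathbb{C}[X]$ is surjective after inverting $f$; injectivity on the basic open $U:=X_f$ follows from étaleness at $a$ combined with the $\lambda$-contraction, since any two points of $U$ sharing an image under $\varphi$ lie in $\lambda$-orbits limiting to a common fixed point of $X^G$, where local injectivity then forces them to coincide. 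This delivers the desired $G$-equivariant isomorphism $U \cong (U\cap X^G)\times V$.
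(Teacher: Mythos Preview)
The paper does not prove this statement; it is quoted verbatim from Bia\l ynicki-Birula \cite[Theorem~2.5]{BBFix} and used as a black box. So there is no in-paper argument to compare against, and your proposal stands or falls on its own.

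Your strategy---linearise via equivariantly lifted weight-vectors and then upgrade the resulting \'etale map to a Zariski isomorphism using a contracting one-parameter subgroup---is the standard one and is essentially how Bia\l ynicki-Birula proceeds. However, a concrete step is missing. The $G$-invariant lifts $y_1,\dots,y_r$ give a morphism $U\to\mathbb{A}^r$, not a morphism to $X^G$; $G$-invariant functions do not by themselves produce a retraction onto the fixed locus. What actually furnishes the first component of $\varphi$ is the limit map $x\mapsto\lim_{t\to 0}\lambda(t)\cdot x$ for your chosen one-parameter subgroup, and this is only defined on the attracting set of the component of $X^G$ through $a$. Relatedly, your assertion that $\lambda$ gives $\mathbb{C}[X]$ a non-negative grading with degree-zero part ``$\mathbb{C}[X^G]$'' conflates $\lambda$-invariant functions on $X$ with functions on the fixed locus; these only agree after first restricting to the attracting set. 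Once you insert that restriction and use the limit retraction explicitly, the graded-Nakayama and injectivity arguments you sketch do go through.
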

More specifically, in the proof of Theorem~\ref{thm13} the vector space $V$ is defined to be the $G$-module complement of $T_a(X^G)$ in $T_a(X)$. In our case $X^{G}$ is a finite set and so $T_a(X^G)$ is zero and $V=T_a(X)$.

The following lemma tells us that the irreducible components are precisely the attracting sets and also that the attracting sets are equal to their own tangent space at the fixed point. Following the notation of the previous two theorems set $G=\mathbb{C}^\times$.
\begin{lem}\label{lem10}
Assume $X$ is smooth and $X^{\mathbb{C}^\times}$ is finite and non-empty and that $\lim_{t\rightarrow \infty}\, t\cdot x$ exists for all $x\in X$. If the action of $\mathbb{C}^\times$ is definite at each fixed point then
\begin{enumerate}
    \item \[X=\bigsqcup_{p\in X^{\mathbb{C}^\times}}\, \Omega_p,\] where $\Omega_p$ is the attracting set of $p$. The sets $\Omega_p$ are the irreducible components of the space $X$.
    \item $\Omega_p\cong T_p(\Omega_p)$ as varieties.
\end{enumerate}
\end{lem}
\begin{proof}
1.) Since $\lim_{t\rightarrow \infty}\,t\cdot x$ exists for each $x\in X$ and limits are unique it follows that $X$ is a disjoint union of the sets $\Omega_p$. To see that the sets $\Omega_p$ are the irreducible components note that for an arbitrary irreducible component $L$ we must have that $L^{\mathbb{C}^\times}$ contains a single point. This is because of Theorem~\ref{thm12}, which states that if $L$ is irreducible then so is $L^{\mathbb{C}^\times}$, but this is clearly not the case if it consists of more than one fixed point. Furthermore it must contain at least one point, as if $L$ is an irreducible component it equals its closure and the closure of any non-empty $\mathbb{C}^\times$-stable subset of $X$ contains some fixed point. 

If $L^{\mathbb{C}^\times}=\{p\}$ then the fact that $L$ is closed implies that $\lim_{t\rightarrow \infty}\,t\cdot x=p$ for all $x\in L$. Hence $L\subset \Omega_p$. Conversely if $x\in \Omega_p$ then $\overline{\mathbb{C}^\times \cdot x}$ is an irreducible subvariety containing $x$. Since $X$ is smooth, $L$ is a connected component of $X$. So $p\in \overline{\mathbb{C}^\times\cdot x}\cap L\neq\emptyset$ implies $\overline{\mathbb{C}^\times\cdot x}\subset L$ and hence $\Omega_p\subset L$. 

2.) Since $p$ is a unique fixed point of $\Omega_p$ we apply Theorem~\ref{thm13} to $\Omega_p$ to conclude that there exists an open neighbourhood $U\subset \Omega_p$ containing $p$ such that $U\cong (U\cap X^{\mathbb{C}^\times})\times V$. By the hypothesis we have $\Omega_p^{\mathbb{C}^\times}=p$ and the discussion above states that $V=T_p(\Omega_p)$, hence $U\cong \{p\}\times T_p(\Omega_p)\cong T_p(\Omega_p)$. Now we show that $\Omega_p\subset U$. Let $x\in \Omega_p$. Then $\lim_{t\to \infty}t\cdot x=p$. Since $U$ is an open neighbourhood of $p$ we must have that $t\cdot x\in U$ for some $t$. Recall that $U$ is $\mathbb{C}^\times$-invariant and so if $t\cdot x\in U$ then we must have $x\in U$. Hence $U= \Omega_p$ and $\Omega_p\cong T_p(\Omega_p)$. 
\end{proof}
The lemma above has an analogue for $\Omega_p^-$, with the assumption that $\lim_{t\rightarrow 0}\, t\cdot x$ exists for all $x\in X$. The proof is by an identical argument.
\begin{lem}\label{lem10.5}
Assume $X$ is smooth and $X^{\mathbb{C}^\times}$ is finite and non-empty and that $\lim_{t\rightarrow 0}\, t\cdot x$ exists for all $x\in X$. If the action of $\mathbb{C}^\times$ is definite at each fixed point then
\begin{enumerate}
    \item \[X=\bigsqcup_{p\in X^{\mathbb{C}^\times}}\, \Omega^-_p,\] where $\Omega^-_p$ is the opposite attracting set of $p$. The sets $\Omega^-_p$ are the irreducible components of the space $X$.
    \item $\Omega^-_p\cong T_p(\Omega^-_p)$ as varieties.
\end{enumerate}
\end{lem}

\begin{prop}\label{prop3}
	Assume that $Y$ is a smooth, affine scheme over $\mathbb{C}$ with $Y^{\mathbb{C}^\times}$ finite. Let $\mathbb{C}[Y]=A$ and 
\[
Y^+ = \left\{ y \in Y \, \big| \, \lim_{t \to \infty} t \cdot y \textrm{ exists} \right\}.
\]
Then:
\begin{enumerate}
		\item Let $\langle A_{<0} \rangle$ denote the ideal generated by homogeneous polynomials of negative degree, then $Y^+$ is a closed subset of $Y$, defined by the vanishing of the (reduced) ideal $\langle A_{<0} \rangle$. 
		\item $Y^+ = \bigsqcup_{p \in Y^{\mathbb{C}^\times}} \Omega_p$, where $\Omega_p \cong (T_p Y)_{>0}$ as $\mathbb{C}^\times$-varieties.
		\item If $I_{A_{0}}(p) = \{ a \in A_0 \, | \, a(p) = 0 \}$ then $\Omega_p$ is the closed subset  of $Y$ defined by the reduced ideal $\langle A_{<0}, I_{A_{0}}(p) \rangle$. 
\end{enumerate}
\end{prop}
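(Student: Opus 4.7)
For Part (1), I would first establish the set-theoretic identity $Y^+ = V(\langle A_{<0}\rangle)$. Writing $A = \bigoplus_d A_d$ for the grading induced by the $\mathbb{C}^\times$-action, a homogeneous $a \in A_d$ satisfies $a(t \cdot y) = t^{-d} a(y)$, so $\lim_{t\to\infty} a(t \cdot y)$ exists if and only if $d \geq 0$ or $a(y) = 0$. Hence $y \in Y^+$ if and only if every $a \in A_{<0}$ vanishes at $y$, giving the set-theoretic identity. For reducedness of $\langle A_{<0}\rangle$, I would argue locally at each fixed point $p$: since $Y$ is smooth and $Y^{\mathbb{C}^\times}$ is finite, a Bia\l ynicki-Birula type local model (of which Theorem~\ref{thm13} is the relevant instance on $Y^+$) provides a $\mathbb{C}^\times$-equivariant identification of an open neighborhood of $p$ in $Y$ with $T_pY$. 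Under this identification the ideal $\langle A_{<0}\rangle$ pulls back to $\langle \mathbb{C}[T_pY]_{<0}\rangle$, which cuts out the coordinate subspace $(T_pY)_{>0}$ and is visibly radical. As such neighborhoods cover $V(\langle A_{<0}\rangle)$, the ideal is globally radical.

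For Part (2), the disjoint union is immediate from uniqueness of limits: each $y \in Y^+$ has a unique limit $\lim_{t\to\infty} t \cdot y \in Y^{\mathbb{C}^\times}$, placing it in exactly one $\Omega_p$. For the isomorphism $\Omega_p \cong (T_pY)_{>0}$, I would reuse the local model $U \cong T_pY$ at $p$ and argue exactly as in Lemma~\ref{lem10}(2) that $\Omega_p \subseteq U$: if $y \in \Omega_p$ then $t \cdot y$ enters the open neighborhood $U$ of $p$ for large $t$, so by $\mathbb{C}^\times$-invariance $y$ itself lies in $U$. Restricting the local isomorphism then identifies $\Omega_p$ with the attracting set of the linear model $T_pY$, which is precisely $(T_pY)_{>0}$.

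For Part (3), the inclusion $\Omega_p \subseteq V(\langle A_{<0}, I_{A_0}(p)\rangle)$ combines (1) with the $\mathbb{C}^\times$-invariance of $A_0$: for $y \in \Omega_p$ and $a \in A_0$ one has $a(y) = a(t \cdot y) = a(\lim_{t\to\infty} t\cdot y) = a(p)$, so $a - a(p)$ vanishes at $y$. Conversely, if $y$ lies in this vanishing locus, then by (1) $y \in Y^+$, and by (2) $y \in \Omega_q$ for some $q \in Y^{\mathbb{C}^\times}$; evaluating $a \in A_0$ gives $a(q) = a(y) = a(p)$. The $\mathbb{C}^\times$-analog of Lemma~\ref{lem9} identifies $Y^{\mathbb{C}^\times}$ with $\mathrm{Spec}(A/\langle A_{\neq 0}\rangle)$, whose closed points are separated by the images of $A_0$, forcing $q = p$. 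Reducedness of $\langle A_{<0}, I_{A_0}(p)\rangle$ follows once more from the local model, where it corresponds to the linear ideal cutting out $(T_pY)_{>0}$ inside $T_pY$. I expect the principal obstacle to be producing the equivariant local model at each fixed point in a sufficiently canonical form to allow clean pullback comparisons of all three ideals simultaneously; this is exactly where the smoothness of $Y$ is indispensable.
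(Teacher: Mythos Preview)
Your set-theoretic arguments for all three parts match the paper's closely, and your treatment of (2) and (3) is essentially the same (indeed, your argument for the reverse inclusion in (3) via $A_0$ separating the finite fixed-point set is more explicit than the paper's).

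The genuine divergence is in the reducedness of $\langle A_{<0}\rangle$ in (1). You want to invoke an equivariant local model identifying a neighbourhood of $p$ in $Y$ with $T_pY$, then pull the ideal back to the linear picture. The difficulty you yourself flag is real: Theorem~\ref{thm13} requires the action to be \emph{definite} at $p$, so it only produces a local model on $Y^+$ (where the weights are non-negative), not on $Y$ itself (where both signs occur). To run your argument as written you would need an external input such as Luna's \'etale slice theorem; that is perfectly legitimate, but it is not available from the tools assembled in the paper.

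The paper avoids this by a direct commutative-algebra argument at each fixed point: choosing a homogeneous lift $V\subset\mathfrak m$ of $\mathfrak m/\mathfrak m^2$ containing a lift $N$ of $(T_p^*Y)_{<0}$, it compares the $\mathfrak m$-adic and $\mathfrak n$-adic filtrations via the graded isomorphisms $\mathrm{Sym}\,V/\mathfrak n^q\cong A/\mathfrak m^q$, and then uses Iversen's lemma to conclude that $A_{<0}A_{\mathfrak m}$ is generated by the regular sequence $N$, hence $A_{\mathfrak m}/A_{<0}A_{\mathfrak m}$ is regular. In effect the paper \emph{proves} the formal local linearisation that you wish to \emph{invoke}. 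Your approach is more conceptual and, once the slice theorem is granted, shorter; the paper's is self-contained but requires the filtered comparison and the appeal to \cite{iversen1972fixed}.
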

\begin{proof}
$(1)$ Similarly to the proof of Lemma~\ref{lem9} we have that $\langle A_{< 0}\rangle$ vanishes on $Y^+$. Indeed if $f\in  A_{<0}$ is homogeneous of degree $r<0$, $y\in Y^+$ and $t\in \mathbb{C}^\times$ then
\[
f(t\cdot y)=t^{-r}f(y)
\]
and
\[
\lim_{t\rightarrow \infty}f(t\cdot y)=\lim_{t\rightarrow \infty}\,t^{-r}f(y).
\]
If $f(y)\neq 0$ then the limit of $f(t\cdot y)$ does not exist. This is a contradiction. Therefore $\langle A_{<0}\rangle$ vanishes on $Y^+$. Clearly $\mathrm{Spec}\,(A/\langle A_{<0}\rangle)\subset Y$ hence $\mathrm{Spec}\,(A/\langle A_{<0}\rangle )^{\mathbb{C}^\times}\subset Y^{\mathbb{C}^\times}$ is finite. The ring $A/\langle A_{<0}\rangle$ is non-negatively graded, so $\mathrm{Spec}\, (A/\langle A_{<0}\rangle)^+=\mathrm{Spec}\,(A/\langle A_{<0}\rangle)$. By Lemma~\ref{lem10} we see that $\mathrm{Spec}\,(A/\langle A_{<0}\rangle)$ is a disjoint union of attracting sets, in particular all of its limits exist and so it is the the vanishing ideal defining $Y^+$.

Now we must check that the ideal $\langle A_{<0}\rangle$ is reduced. Since it is homogeneous, the radical of $A/\langle A_{<0}\rangle $ is homogeneous. Hence, if it is not zero there exists a fixed point that is not reduced. Therefore, it suffices to show that for every $p\in (\mathrm{Spec}\,A/\langle A_{<0}\rangle)^{\mathbb{C}^\times}$, the local ring $(A/\langle A_{<0}\rangle)_p$ is reduced.

Let $\mathfrak{m}$ denote the maximal ideal corresponding to $p$, so $\mathfrak{m}$ is stable under $\mathbb{C}^\times$. We show that $A_\mathfrak{m}/A_{<0}A_\mathfrak{m}$ is a regular local ring. Let $(T_p^* Y)_{<0}\subset \mathfrak{m}/\mathfrak{m}^2$ be the subspace spanned by all negative weight vectors and choose $N\subset \mathfrak{m}$ a homogeneous vector space lift of $(T^*_p Y)_{<0}$. We fix another homogeneous vector space lift $V\subset \mathfrak{m}$ of $\mathfrak{m}/\mathfrak{m}^2$ that contains $N$. A basis of $V$ is a regular system of parameters for $A_\mathfrak{m}$. If $\mathfrak{n}$ is the augmentation ideal of $Sym V$ then the map $Sym V\rightarrow A$ induces graded isomorphisms $\phi_q: Sym V/\mathfrak{n}^q\rightarrow A/\mathfrak{m}^q$ and $A/\mathfrak{m}^q= A_\mathfrak{m}/(\mathfrak{m}A_\mathfrak{m})^q$ for all $q\geq 1$ as $A$ is regular at $\mathfrak{m}$. Since $\mathbb{C}^\times$ acts semisimply on $A$, the quotient $A\twoheadrightarrow A/\mathfrak{m}^q$ induces surjections $A_i\twoheadrightarrow (A/\mathfrak{m}^q)_i$ for all $i$. Hence $(A/\mathfrak{m}^q )_{<0}=(A_{<0}+\mathfrak{m}^q)/\mathfrak{m}^q$. Therefore $\phi_q$ restricts to 
\[
\frac{Sym V_{<0}+\mathfrak{n}^q}{\mathfrak{n}^q}=(Sym V/\mathfrak{n}^q)_{<0}\cong (A/\mathfrak{m}^q)_{<0}=\frac{A_{<0}+\mathfrak{m}^q}{\mathfrak{m}^q}.
\]
Since $N$ is a subspace of $V$ defined by being the lift of the space of negative weightvectors we have $NSym V\subset (SymV)_{<0}Sym V$. Since the action of $\mathbb{C}^\times$ on $V$ is linear, $NSym V=(Sym V_{<0})Sym V$ as any negativley graded vector in $Sym V$ can be broken into a sum of monomials, which in particular are negativley graded weightvectors. Now we argue that $(A_{<0}A+\mathfrak{m}^q)/\mathfrak{m}^q=(NA+\mathfrak{m}^q)/\mathfrak{m}^q$. Clearly $NA+\mathfrak{m}^q/\mathfrak{m}^q\subset A_{<0}A+\mathfrak{m}^q/\mathfrak{m}^q$ so we show the opposite inclusion. First note
\[
\frac{A_{<0}+\mathfrak{m}^q}{\mathfrak{m}^q}=\phi_q\left(\frac{Sym V_{<0}+\mathfrak{n}^q}{\mathfrak{n}^q}\right)\subset \phi_q\left(\frac{NSym V+\mathfrak{n}^q}{\mathfrak{n}^q}\right)=\frac{NA+\mathfrak{m}^q}{\mathfrak{m}^q}.
\]
Now note $\frac{NA+\mathfrak{m}^q}{\mathfrak{m}^q}$ is an ideal hence $\frac{A_{<0}A+\mathfrak{m}^q}{\mathfrak{m}^q}\subset \frac{NA+\mathfrak{m}^q}{\mathfrak{m}^q}$. Since $\frac{A_{<0}A+\mathfrak{m}^q}{\mathfrak{m}^q}=\frac{A_{<0}A_{\mathfrak{m}}+\mathfrak{m}^q}{\mathfrak{m}^q}$, \cite[Lemma~2.1]{iversen1972fixed} implies that $A_{<0}A_\mathfrak{m}$ is generated by the regular sequence $N$. Since $A_\mathfrak{m}$ is regular this implies that $A_\mathfrak{m}/A_{<0}A_{\mathfrak{m}}$ is a regular local ring. 

$(2)$ This is simply an application of Lemma~\ref{lem10} as $Y^+$ is smooth and the action of $\mathbb{C}^\times$ is definite at each fixed point.

$(3)$ Let $a\in A_0$ and $y\in \Omega_p$, then by Lemma~\ref{lem9} this is reduced.
\[
a(y)=t^0a(y)=(t\cdot a)(y)=a(t^{-1}y)
\]
hence $a(y)$ is a constant. Therefore $I_{A_0}(p)$ vanishes on $\Omega_p$ and the zero set of $\langle I_{A_0}(p)\rangle $ is equal to $\Omega_p$ as sets.  
\end{proof}
By considering the opposite attracting sets $\Omega^-_p$ we get a dual result to Proposition~\ref{prop3}. The argument is identical to that already presented, simply replacing the sets $\Omega_p$ with $\Omega^-_p$ and swapping positive and negative gradings.
\begin{prop}\label{prop3.5}
	Assume that $Y$ is a smooth, affine scheme over $\mathbb{C}$ with $Y^{\mathbb{C}^\times}$ finite. Let $\mathbb{C}[Y]=A$ and 
\[
Y^- = \left\{ y \in Y \, \big| \, \lim_{t \to 0} t \cdot y \textrm{ exists} \right\}.
\]
Then:
\begin{enumerate}
		\item Let $\langle A_{>0} \rangle$ denote the ideal generated by homogeneous polynomials of positive degree, then $Y^-$ is a closed subset of $Y$, defined by the vanishing of the (reduced) ideal $\langle A_{>0} \rangle$. 
		\item $Y^- = \bigsqcup_{p \in Y^{\mathbb{C}^\times}} \Omega^-_p$, where $\Omega^-_p \cong (T_p Y)_{<0}$ as $\mathbb{C}^\times$-varieties.
		\item If $I_{A_{0}}(p) = \{ a \in A_0 \, | \, a(p) = 0 \}$ then $\Omega^-_p$ is the closed subset  of $Y$ defined by the reduced ideal $\langle A_{>0}, I_{A_{0}}(p) \rangle$. 
\end{enumerate}
\end{prop}
As explained in \cite[Page. 703]{BellEnd} the fixed points set $X_c(S_n\wr\mathbb{Z}/\ell\mathbb{Z})^{\mathbb{C}^\times}$ is precisely $\gamma^{-1}(0)$. Therefore, by Proposition~\ref{prop1}, we can identify $\mathrm{Irr}\, S_n\wr\mathbb{Z}/\ell\mathbb{Z}\xrightarrow{\sim} X_c(S_n\wr\mathbb{Z}/\ell\mathbb{Z})^{\mathbb{C}^\times}$ by the map $\mathrm{quo}_\ell(\lambda)\mapsto x_{\mathrm{quo}_\ell(\lambda)}$. It is now possible to prove the equality \eqref{equation12}.
\begin{lem}\label{lem11}
Assume that $X_c(S_n\wr\mathbb{Z}/\ell\mathbb{Z})$ is smooth. Then there is an equality of varieties 
\[
\mathrm{Supp}_{Z_c}\underline{\Delta}(\mathrm{quo}_\ell(\lambda))=\Omega_{\mathrm{quo}_\ell(\lambda)}\quad and\quad \mathrm{Supp}_{Z_c}\underline{\Delta}^-(\mathrm{quo}_\ell(\lambda))=\Omega^-_{\mathrm{quo}_\ell(\lambda)}.
\]
\end{lem}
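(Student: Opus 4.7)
The plan is to sandwich $\mathrm{Supp}_{Z_c}\underline{\Delta}(\mathrm{quo}_\ell(\lambda))$ between two copies of $\Omega_{\mathrm{quo}_\ell(\lambda)}$, using the attracting variety description of Proposition~\ref{prop3} for the upper bound and a dimension count for the lower. By Theorem~\ref{thm10} the support equals $\mathrm{Spec}(Z_c/I)$, where $I$ is the annihilator of the cyclic $Z_c$-generator of $e\underline{\Delta}(\mathrm{quo}_\ell(\lambda))$ supplied by Lemma~\ref{lem7}. In the natural $\mathbb{C}^\times$-grading (with $\mathfrak{h}^{*}$ in degree $+1$ and $\mathfrak{h}$ in degree $-1$) the Verma module $\underline{\Delta}(\mathrm{quo}_\ell(\lambda))$ is non-negatively graded, with its cyclic generator concentrated in degree $0$; hence $(Z_c)_{<0}$ lies in $I$. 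Proposition~\ref{prop3}(1) then yields
\[
\mathrm{Supp}_{Z_c}\underline{\Delta}(\mathrm{quo}_\ell(\lambda)) \subset V(\langle (Z_c)_{<0}\rangle) = X_c^{+} = \bigsqcup_{p \in \gamma^{-1}(0)} \Omega_p.
\]

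Next I would identify which of these $\Omega_p$ can contribute. Since $I$ is homogeneous, $\mathrm{Supp}_{Z_c}\underline{\Delta}(\mathrm{quo}_\ell(\lambda))$ is $\mathbb{C}^\times$-stable, and each of its irreducible components sits inside a single $\Omega_p$ and contains the corresponding fixed point $p = \lim_{t\to\infty} t\cdot y$. It therefore suffices to check that the only fixed point of $X_c$ in the support is $\mathrm{quo}_\ell(\lambda)$. For $p \in \gamma^{-1}(0)$ the maximal ideal $\mathfrak{m}_p \subset Z_c$ contains $R_+$, and the PBW theorem makes $\underline{\Delta}(\mathrm{quo}_\ell(\lambda))$ a finitely generated $\mathbb{C}[\mathfrak{h}]^W$-module (hence a finitely generated $Z_c$-module), so Nakayama's lemma applied to $(Z_c)_{\mathfrak{m}_p}$ gives
\[
\underline{\Delta}(\mathrm{quo}_\ell(\lambda))_{\mathfrak{m}_p} \neq 0 \iff \Delta(\mathrm{quo}_\ell(\lambda))_{\mathfrak{m}_p} \neq 0.
\]
By Proposition~\ref{prop1} the baby Verma $\Delta(\mathrm{quo}_\ell(\lambda))$ lies in the block $B_{\mathrm{quo}_\ell(\lambda)}$; the central idempotent projecting to this block is invertible in $(Z_c)_{\mathfrak{m}_p}$ for $p \neq \mathrm{quo}_\ell(\lambda)$ while annihilating $\Delta(\mathrm{quo}_\ell(\lambda))$, so the right hand side holds precisely when $p = \mathrm{quo}_\ell(\lambda)$. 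This yields $\mathrm{Supp}_{Z_c}\underline{\Delta}(\mathrm{quo}_\ell(\lambda)) \subset \Omega_{\mathrm{quo}_\ell(\lambda)}$.

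For the reverse inclusion I would use a dimension count. Since $\underline{\Delta}(\mathrm{quo}_\ell(\lambda))$ is free over $\mathbb{C}[\mathfrak{h}]^W$ of rank $|W|\cdot\dim\,\mathrm{quo}_\ell(\lambda)$, its image under the finite map $\gamma$ is the full subvariety $\mathfrak{h}/W \times \{0\}$, forcing $\dim\mathrm{Supp}_{Z_c}\underline{\Delta}(\mathrm{quo}_\ell(\lambda)) \geq \dim\mathfrak{h}$. On the other hand Proposition~\ref{prop3}(2) identifies $\Omega_{\mathrm{quo}_\ell(\lambda)}$ with $(T_{\mathrm{quo}_\ell(\lambda)}X_c)_{>0}$, which has dimension $\tfrac{1}{2}\dim X_c = \dim\mathfrak{h}$ since $X_c$ is symplectic with $\mathbb{C}^\times$ acting by weights $\pm 1$. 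As $\Omega_{\mathrm{quo}_\ell(\lambda)}$ is irreducible of this dimension, the sandwich forces equality. The main obstacle I anticipate is the careful bookkeeping of $\mathbb{C}^\times$-weight conventions, in particular ensuring that $\underline{\Delta}(\mathrm{quo}_\ell(\lambda))$ really does sit in non-negative weight so that $(Z_c)_{<0} \subset I$, and that this sign matches the one built into Proposition~\ref{prop3}(1) via the definition of $X_c^{+}$.
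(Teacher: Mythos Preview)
Your overall strategy matches the paper's: show $\mathrm{Supp}_{Z_c}\underline{\Delta}(\mathrm{quo}_\ell(\lambda))\subset X_c^{+}$ via the vanishing of $(Z_c)_{<0}$, pin down the relevant attracting cell, and finish with a dimension count using freeness over $\mathbb{C}[\mathfrak{h}]^W$ and the symplectic structure on $T_pX_c$.

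There is one small but genuine difference worth noting. To identify the unique fixed point in the support, the paper simply observes that $\mathrm{Spec}\,\mathrm{End}\,\underline{\Delta}(\mathrm{quo}_\ell(\lambda))$ is connected (it is a non-negatively graded algebra with degree~$0$ part~$\mathbb{C}$) and contains $x_{\mathrm{quo}_\ell(\lambda)}$. Your Nakayama/block-idempotent argument reaches the same conclusion by a different route and is perfectly valid; it has the advantage of being explicit about \emph{why} no other fixed point can lie in the support.

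There is, however, one imprecision you should fix. The claim that ``$\mathbb{C}^\times$ acts by weights $\pm 1$'' on $T_{\mathrm{quo}_\ell(\lambda)}X_c$ is not correct: the tangent-space weights at a fixed point of the Calogero--Moser space are various integers depending on $\lambda$, not just $\pm 1$. What you actually need (and what the paper proves) is that $(T_pX_c)_0=0$, because the fixed points are isolated, together with $\dim (T_pX_c)_{+}=\dim (T_pX_c)_{-}$, because the $\mathbb{C}^\times$-invariant symplectic form on $T_pX_c$ pairs positive-weight spaces nondegenerately with negative-weight spaces. From $\dim X_c=2\dim\mathfrak{h}$ this gives $\dim (T_pX_c)_{>0}=\dim\mathfrak{h}$. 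Replace the ``weights $\pm 1$'' sentence with this argument and the proof is complete.
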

\begin{proof}
Throughout this proof denote $X_c(S_n\wr\mathbb{Z}/\ell\mathbb{Z})$ by $X$. Note that the Verma module is positively graded with the degree zero part equal to $1\otimes \mathrm{quo}_\ell(\lambda)$. Let $z\in Z_c(S_n\wr\mathbb{Z}/\ell\mathbb{Z})$ be a negatively graded element. Then 
\[
z\cdot x\otimes\mathrm{quo}_\ell(\lambda)=zx\otimes \mathrm{quo}_\ell(\lambda)=x(z\otimes \mathrm{quo}_\ell(\lambda))=0,
\]
as $z\otimes \mathrm{quo}_\ell(\lambda)$ has negative degree and $\underline{\Delta}(\mathrm{quo}_\ell(\lambda))$ is positively graded. Therefore the annihilator of $\underline{\Delta}(\mathrm{quo}_\ell(\lambda))$ contains all the negatively graded elements of $Z_c(S_n\wr\mathbb{Z}/\ell\mathbb{Z})$. If we denote the ideal generated by the negatively graded elements by $I_-$ then $I_-\subset \mathrm{ann}_{Z_c}\underline{\Delta}(\mathrm{quo}_\ell(\lambda))$. Hence $\mathrm{Supp}_{Z_c}\underline{\Delta}(\mathrm{quo}_\ell(\lambda))\subset V(I_-)$. By Proposition~\ref{prop3} $(1)$ and $(2)$ we see that $\mathrm{Supp}_{Z_c} \underline{\Delta}(\mathrm{quo}_\ell(\lambda))$ is contained in one of the connected components of $X^+$. Since $x_{\mathrm{quo}_\ell(\lambda)}\in \mathrm{Supp}_{Z_c}(\underline{\Delta}(\mathrm{quo}_\ell(\lambda)))$ we have $\mathrm{Supp}_{Z_c}\underline{\Delta}(\mathrm{quo}_\ell(\lambda))\subset \Omega_{\mathrm{quo}_\ell(\lambda)}$. We argue that this containment is actually an equality by proving that $\dim \mathrm{Supp}_{Z_c}\underline{\Delta}(\mathrm{quo}_\ell(\lambda))=\dim \Omega_{\mathrm{quo}_\ell(\lambda)}$. This suffices since $\Omega_{\mathrm{quo}_\ell(\lambda)}$ is an irreducible variety and $\mathrm{Supp}_{Z_c}\underline{\Delta}(\mathrm{quo}_\ell(\lambda))$ a closed subset of $\Omega_{\mathrm{quo}_\ell(\lambda)}$.

By Theorem~\ref{thm10} we have the equality $\mathrm{Supp}_{Z_c}\underline{\Delta}(\mathrm{quo}_\ell(\lambda))=\mathrm{Spec}\, \mathrm{End}\underline{\Delta}(\mathrm{quo}_\ell(\lambda))$ and therefore the dimensions are equal, $\dim \mathrm{Supp}_{Z_c}\underline{\Delta}(\mathrm{quo}_\ell(\lambda))=\dim \mathrm{Spec}\, \mathrm{End}\underline{\Delta}(\mathrm{quo}_\ell(\lambda))$. But $\dim \mathrm{Spec}\, \mathrm{End}\underline{\Delta}(\mathrm{quo}_\ell(\lambda))$ is equal to the Krull dimension of $\mathrm{End}\underline{\Delta}(\mathrm{quo}_\ell(\lambda))$. Since $\mathrm{End}\underline{\Delta}(\mathrm{quo}_\ell(\lambda))$ is a finite free module over $\mathbb{C}[\mathfrak{h}]^{S_n\wr\mathbb{Z}/\ell\mathbb{Z}}$ it has Krull dimension equal to $\dim \mathfrak{h}$ by \cite[Corollary 1.4.5]{Benson}. From Lemma~\ref{lem10}, we see that $\Omega_{\mathrm{quo}_\ell(\lambda)}\cong T_{\mathrm{quo}_\ell(\lambda)}(\Omega_{\mathrm{quo}_\ell(\lambda)})$, so we need show that $\dim T_{\mathrm{quo}_\ell(\lambda)}(\Omega_{\mathrm{quo}_\ell(\lambda)})\leq\dim \mathfrak{h}$.

Since $x_{\mathrm{quo}_\ell(\lambda)}$ is a fixed point we have the following inclusions of $\mathbb{C}^\times$-submodules $T_{\mathrm{quo}_\ell(\lambda)}(x_{\mathrm{quo}_\ell(\lambda)})\subset T_{\mathrm{quo}_\ell(\lambda)}(\Omega_{\mathrm{quo}_\ell(\lambda)})\subset T_{\mathrm{quo}_\ell(\lambda)}(X)$. We can decompose $T_{\mathrm{quo}_\ell(\lambda)} (X)=T_{-}\oplus T_0\oplus T_+$ into the negatively graded part, the degree zero part and the positively graded part. From \cite[Theorem 5.2]{fogarty1973fixed} we have that $T_{\mathrm{quo}_\ell(\lambda)}(x_{\mathrm{quo}_\ell(\lambda)})=T_{\mathrm{quo}_\ell(\lambda)} (X^{\mathbb{C}^\times})=T_0$. Now Lemma~\ref{lem9} says that $X^{\mathbb{C}^\times}$ is smooth hence $T_{\mathrm{quo}_\ell(\lambda)}(x_{\mathrm{quo}_\ell(\lambda)})=\{0\}$ and so $T_0=\{0\}$. The fixed point $x_{\mathrm{quo}_\ell(\lambda)}$ is in the smooth locus and \cite[Theorem 7.8]{PoissonOrders} implies that $T_{\mathrm{quo}_\ell(\lambda)} (X)$ is a symplectic vector space. The symplectic form on $T_{\mathrm{quo}_\ell(\lambda)} (X)$ is $\mathbb{C}^\times$-invariant hence its non-degeneracy forces $\dim T_-=\dim T_+$. Since $X$ is smooth we have $\dim X=\dim T_{\mathrm{quo}_\ell(\lambda)}(X)$. Since $Z_c(S_n\wr\mathbb{Z}/\ell\mathbb{Z})=\mathbb{C}[X]$ is a finite free module over $\mathbb{C}[\mathfrak{h}]^{S_n\wr\mathbb{Z}/\ell\mathbb{Z}}\otimes \mathbb{C}[\mathfrak{h}^*]^{S_n\wr\mathbb{Z}/\ell\mathbb{Z}}$ we have 
\[
\dim Z_c(S_n\wr\mathbb{Z}/\ell\mathbb{Z})=\dim \mathbb{C}[\mathfrak{h}]^{S_n\wr\mathbb{Z}/\ell\mathbb{Z}}\otimes \mathbb{C}[\mathfrak{h}^*]^{S_n\wr\mathbb{Z}/\ell\mathbb{Z}}=2\dim \mathfrak{h}.
\]
This means that $\dim X=2\dim\mathfrak{h}$. Therefore $\dim T_+=\dim \mathfrak{h}$. Since $T_{\mathrm{quo}_\ell(\lambda)}(\Omega_{\mathrm{quo}_\ell(\lambda)})$ is positively graded we have $T_{\mathrm{quo}_\ell(\lambda)}(\Omega_{\mathrm{quo}_\ell(\lambda)})\subset T_+$ hence $\dim T_{\mathrm{quo}_\ell(\lambda)}(\Omega_{\mathrm{quo}_\ell(\lambda)})\leq \dim \mathfrak{h}$. 

The second equality follows by the same argument swapping the positive to negative throughout and considering the opposite Verma module $\underline{\Delta}^-(\mathrm{quo}_\ell(\lambda))$.
\end{proof}
Thus the spectrum of the endomorphism ring of any given Verma module is equal to a corresponding attracting set. Therefore, $\mathrm{Spec}\,\mathrm{End}\underline{\Delta}(\mathrm{quo}_\ell(\lambda))$ and $\mathrm{Spec}\,\mathrm{End}\underline{\Delta}^-(\mathrm{quo}_\ell(\lambda))$ can be realised as subvarieties of $X_c(S_n\wr\mathbb{Z}/\ell\mathbb{Z})$. 
\begin{thm}\label{thm14}
For any $\mathrm{quo}_\ell(\lambda)\in \mathrm{Irr}\, S_n\wr\mathbb{Z}/\ell\mathbb{Z}$ we have isomorphisms of varieties 
\[
\mathrm{Spec}\,\mathrm{End}\underline{\Delta}(\mathrm{quo}_\ell(\lambda))= \Omega_{\mathrm{quo}_\ell(\lambda)},\quad
\mathrm{Spec}\,\mathrm{End}\underline{\Delta}^-(\mathrm{quo}_\ell(\lambda))= \Omega^-_{\mathrm{quo}_\ell(\lambda)}.
\]
\end{thm}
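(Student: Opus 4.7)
The plan is to observe that Theorem~\ref{thm14} is essentially the composition of the two equalities \eqref{equation11} and \eqref{equation12} which have already been established, namely Theorem~\ref{thm10} and Lemma~\ref{lem11}. So the proof should be a short chaining argument rather than any new construction.

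First I would invoke Theorem~\ref{thm10}, which gives the equality
\[
\mathrm{Spec}\,\mathrm{End}\,\underline{\Delta}(\mathrm{quo}_\ell(\lambda))=\mathrm{Supp}_{Z_c}\underline{\Delta}(\mathrm{quo}_\ell(\lambda)).
\]
The content there was that, via the Satake isomorphism and Morita equivalence, the endomorphism ring of $\underline{\Delta}(\mathrm{quo}_\ell(\lambda))$ is a cyclic quotient of $Z_c$, so its spectrum is literally the support of $e\underline{\Delta}(\mathrm{quo}_\ell(\lambda))$, and Lemma~\ref{lem8} identifies this with the support of $\underline{\Delta}(\mathrm{quo}_\ell(\lambda))$ itself.

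Second I would apply Lemma~\ref{lem11}, which under the standing smoothness assumption on $X_c(S_n\wr\mathbb{Z}/\ell\mathbb{Z})$ gives
\[
\mathrm{Supp}_{Z_c}\underline{\Delta}(\mathrm{quo}_\ell(\lambda))=\Omega_{\mathrm{quo}_\ell(\lambda)}.
\]
Concatenating these two equalities immediately yields the desired identification of $\mathrm{Spec}\,\mathrm{End}\,\underline{\Delta}(\mathrm{quo}_\ell(\lambda))$ with the attracting set $\Omega_{\mathrm{quo}_\ell(\lambda)}$.

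There is no real obstacle here since both ingredients are already proved; the only thing worth mentioning in the write-up is that the identification of the fixed point $x_{\mathrm{quo}_\ell(\lambda)}\in X_c(S_n\wr\mathbb{Z}/\ell\mathbb{Z})^{\mathbb{C}^\times}$ with the irreducible representation $\mathrm{quo}_\ell(\lambda)$ (via Proposition~\ref{prop1}) is what makes the indexing on both sides match, so one should point to that correspondence before combining the two displays.
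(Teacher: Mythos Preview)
Your proposal is correct and matches the paper's proof essentially verbatim: the paper simply invokes Theorem~\ref{thm10} to get $\mathrm{Spec}\,\mathrm{End}\,\underline{\Delta}(\mathrm{quo}_\ell(\lambda))=\mathrm{Supp}_{Z_c}\underline{\Delta}(\mathrm{quo}_\ell(\lambda))$ and then Lemma~\ref{lem11} to get $\mathrm{Supp}_{Z_c}\underline{\Delta}(\mathrm{quo}_\ell(\lambda))=\Omega_{\mathrm{quo}_\ell(\lambda)}$, and concludes. Your additional remark about Proposition~\ref{prop1} ensuring the indexing matches is a nice clarification but not strictly needed for the argument.
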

\begin{proof}
Theorem~\ref{thm10} states $\mathrm{Spec}\,\mathrm{End}\underline{\Delta}(\mathrm{quo}_\ell(\lambda))=\mathrm{Supp}_{Z_c}\underline{\Delta}(\mathrm{quo}_\ell(\lambda))$ for any $\mathrm{quo}_\ell(\lambda)\in \mathrm{Irr}\, S_n\wr\mathbb{Z}/\ell\mathbb{Z}$ and, by Lemma~\ref{lem11}, $\mathrm{Supp}_{Z_c} \underline{\Delta}(\mathrm{quo}_\ell(\lambda))=\Omega_{\mathrm{quo}_\ell(\lambda)}$. The claim follows. Similarly the second isomorphism follows from Theorem~\ref{thm10} and Lemma~\ref{lem11}.
\end{proof}
Recall that at the beginning of this section we claimed that it would be necessary to use the isomorphism from Theorem~\ref{thm9} to generalise the results of Section $4$. 

A more refined version of the statement is given below, in terms of the symmetric group and wreath product group. First, let us introduce some required notation. Let $\mathcal{C}_\ell[n]$ denote the set of $\ell$-cores $\gamma$ such that 
\[
|\gamma|\leq n\textnormal{ and } |\gamma|=n\,\mathrm{mod}\, \ell. 
\]
Given an $\ell$-multipartition $\lambda=(\lambda^1,\dots,\lambda^{\ell})$ we define $\lambda^\flat=(\lambda^\ell,\dots,\lambda^1)$, the reverse $\ell$-multipartition.

We must also discuss some subtleties concerning the difference in our paramterisation and the one given in \cite{bonnafe2021fixed}. Recall that in the case of $S_n$ the conjugacy invariant class functions on reflections are simply scalars, that is $c\in\C$. The conjugacy classes of reflections of $S_n\wr\mathbb{Z}/\ell\mathbb{Z}$ are slightly more complicated. Let $s_{i,j}$ be the transposition $(i,j)\in S_n$ and let $\gamma_i$ denote $\gamma\in \mathbb{Z}/\ell\mathbb{Z}$ in the $i^{th}$ component of $S_n\wr\mathbb{Z}/\ell\mathbb{Z}$. There are $\ell$ conjugacy classes of reflections in $S_n\wr\mathbb{Z}/\ell\mathbb{Z}$, there are $\ell-1$ of the form $\{\gamma_i\,|\, 0\leq i\leq n\}$ for $\gamma\in \mathbb{Z}/\ell\mathbb{Z}\setminus\{0\}$. A simple way of writing representatives of this conjugacy class is by considering their action as matrices, then $t,t^2,\dots, t^{\ell-1}$ is a full list of representatives where
\[
t=diag(\zeta,1,\dots,1)
\]
and $\zeta$ is a primitive $\ell^{th}$ root of unity. The set
\[
\{s_{i,j}\cdot \gamma_i^k\cdot \gamma_j^{-k}\,|\, 1\leq i,j\leq n,\textrm{ and } 1\leq k\leq \ell\}
\]
is the remaining conjugacy class and we will denote by $\sigma$ a representitive of this class. Bonnaf\'e and Maksimau use a different parametrisation than we do, so we now explain this. For the following let $c^H$ be our parametrisation and $c^B$ be the one in \cite{bonnafe2021fixed}, then 
\[
c^H(\sigma)=c^B(\sigma), \quad c^H(t^i)= \frac{\zeta^i-1}{-2} c^B(t^i).
\] 

In \cite{bonnafe2021fixed} they introduce a new family of parameters $k_0,\dots,k_{\ell-1}$ depending on $c$ defined by \[
k_j=\frac{1}{\ell}\sum^{\ell-1}_{i=1} \zeta^{-i(j-1)} c(\gamma_i). 
\]
It follows then that 
\[
c^B(t^i)=\sum_{j\in\mathbb{Z}/\ell\mathbb{Z}}\zeta^{i(j-1)}k_j \textnormal{ and } c^H(t^i)=\frac{\zeta^i-1}{-2} \sum_{j\in\mathbb{Z}/\ell\mathbb{Z}}\zeta^{i(j-1)}k_j.
\]
There is also a map $\beta_{\ell,v}:P^\ell_{v}(n)\rightarrow P(n,\ell)$ which is the restriction of the $\ell$-quotient map to the set of partitions with $\ell$-core $v$. For the wreath product this map becomes $\beta_{1,\emptyset}:P^1_{\emptyset}(n\ell)\rightarrow P(n\ell)$, the $1$-quotient, from the partitions of $n\ell$ with trivial $1$-core to the set of partitions of $n\ell$. Let $\lambda\in P^\ell(n)$ and $core_k(\lambda)=(\gamma^0,\dots,\gamma^{\ell-1})$ then $\beta^\flat_{k,\gamma}$ is defined as the $k\ell$-partition 
\[
(\mu^0,\dots,\mu^{k\ell-1})
\]
such that 
\[
\beta_{k,\gamma^i}(\lambda^i)=(\mu^{i+(k-1)\ell},\dots,\mu^{i+\ell},\mu^i)
\]
for $i\in\{0,1,\dots,\ell-1\}$.

The following theorem is \cite[Theorem 4.21]{bonnafe2021fixed}, applied to the particular case of $W=S_{n\ell}$. Recall that $X_c(S_{n\ell})$ admits a $\mathbb{C}^\times$-action. Therefore, the group $\mathbb{Z}/\ell\mathbb{Z}$ can be considered acting on $X_c(S_{n\ell})$ by identifying $\mathbb{Z}/\ell\mathbb{Z}$ with the $\ell^{th}$ roots of unity. 
\begin{thm}\label{thm15}
Assume that $X_c(S_{n\ell})$ is smooth. Then $X_c(S_{n\ell})^{\mathbb{Z}/\ell\mathbb{Z}}$ is smooth and:
\begin{enumerate}
    \item There is a bijection $\gamma\rightarrow \mathcal{I}(\gamma)$ between $\mathcal{C}_\ell[n\ell]$ and the irreducible components of $X_c(S_{n\ell})^{\mathbb{Z}/\ell\mathbb{Z}}$ such that $x_{\lambda}\in \mathcal{I}(\gamma)$ if and only if $core_\ell(\lambda)=\gamma$ for $\lambda\in\mathcal{P}(n\ell)$.

    \item Let $\gamma\in \mathcal{C}_\ell[n\ell]$ and $r=(n\ell-|\gamma|)/\ell$. There is an isomorphism of varieties
    \[
    i_{\gamma}:X_{\overline{c}}(S_{r}\wr \mathbb{Z}/\ell\mathbb{Z})\rightarrow \mathcal{I}(\gamma),
    \]
     where $\overline{c}$ is the parameter associated with the family $(a',k'_0,\dots k'_{\ell-1})$ such that 
    \[
    \begin{cases}
        a'=ka,\\
        k'_j=a\left( j-1-\frac{\ell-1}{2} +\ell(d_{1-j}-d_{-j}) \right)\textnormal{ for } 1\leq j\leq \ell \textnormal{ and } k_0'=k_\ell',
    \end{cases}
    \]
    which satisfies $x_{(\beta^\flat_{k,\gamma})^{-1}(\mu)}=i_\gamma(x^{\ell}_\mu)$ for all $\mu\in P(n,kl)$. Here, $d=(d_i)_{i\in \mathbb{Z}/\ell\mathbb{Z}}$ is defined 
    \[
    d=res_\ell(v)+r\delta_\ell,
    \]
    where $v=(\beta_{\emptyset})^{-1}(\gamma)$ is an $\ell$-core and $res_\ell(v)\in\mathbb{Z}^{\mathbb{Z}/\ell\mathbb{Z}}$ where $res_\ell(v)_i$ is the number of boxes of $v$ with $\ell$-residue $i$.
\end{enumerate}

\end{thm}
We apply the above theorem to the case when $\gamma=\emptyset$. There is an isomorphism of varieties 
    \[
    i_\emptyset:X_{\overline{c}}(S_{r}\wr \mathbb{Z}/\ell\mathbb{Z})\rightarrow \mathcal{I}(\emptyset)\subset X_c(S_{n\ell})^{\mathbb{Z}/\ell\mathbb{Z}},
    \]
    where $c=a$ and $\overline{c}$ is the parameter associated with the family $(a',k'_0,\dots k'_{\ell-1})$ such that 
    \[
    \begin{cases}
        a'=\ell a,\\
        k'_j=a\left( j-1-\frac{\ell-1}{2} \right) \textnormal{ for } 1\leq j\leq \ell \textnormal{ and } k_0'=k_\ell',
    \end{cases}
    \]
which satisfies $x_{(\mathrm{quo}_{\ell})^{-1}(\mu)}=i_\emptyset(x^{\ell}_\mu)$ for all $\mu\in P(n,\ell)$. 

It is immediate that the fixed points $x_\lambda$ with $\lambda$ having trivial $\ell$-core all lie in the irreducible component $\mathcal{I}(\emptyset)$. The isomorphism 
\[
X_{\overline{c}}(S_{n}\wr \mathbb{Z}/\ell\mathbb{Z})\cong \mathcal{I}(\emptyset).
\]
given by Theorem~\ref{thm15} also describes where the fixed points are mapped under the isomorphism. Since, in our case, the quotient map $\mathrm{quo}_\ell :\mathcal{P}^\ell_\emptyset(n\ell)\rightarrow \mathcal{P}(n,\ell)$ is a bijection by Theorem~\ref{thm1}, we have $i_\emptyset (x_{\mathrm{quo}_\ell(\lambda)})=x_\lambda$. This fact will be important and so we record it as a lemma.
\begin{lem}\label{lem12}
There is a $\mathbb{C}^\times$-equivariant isomorphism $i_\emptyset:X_{\overline{c}}(S_n\wr \mathbb{Z}/\ell\mathbb{Z})\rightarrow \mathcal{I}(\emptyset)$ 
such that under the labeling of the fixed points we have $i_\emptyset(x_{\mathrm{quo}_\ell(\lambda)})=x_\lambda$ for $\lambda\in \mathcal{P}(n\ell)$.
\end{lem}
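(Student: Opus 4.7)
The plan is to deduce this lemma as a direct specialisation of Theorem~\ref{thm15} to the case $\gamma = \emptyset$. First I would check that $\emptyset$ lies in the indexing set $\mathcal{C}_\ell[n\ell]$: by definition we need $|\emptyset| \leq n\ell$ and $|\emptyset| \equiv n\ell \pmod{\ell}$, both of which are immediate since $|\emptyset| = 0$. Hence $\emptyset \in \mathcal{C}_\ell[n\ell]$ and Theorem~\ref{thm15}(2) applies.

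Next I would substitute $\gamma = \emptyset$ into the formula $r = (n\ell - |\gamma|)/\ell$, giving $r = n$. Theorem~\ref{thm15}(2) then directly produces a $\mathbb{C}^\times$-equivariant isomorphism of varieties
\[
i_\emptyset : X_c(S_n \wr \mathbb{Z}/\ell\mathbb{Z}) \xrightarrow{\sim} \mathcal{I}(\emptyset),
\]
where the $\mathbb{C}^\times$-equivariance is part of the conclusion of Theorem~\ref{thm9} (of which Theorem~\ref{thm15} is a special case for $W = S_{n\ell}$).

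It remains to verify the fixed-point labelling. Theorem~\ref{thm15}(2) provides $i_\emptyset(x_\mu^\ell) = x_{(\mathrm{quo}_\ell^\flat)^{-1}(\mu)}$ for $\mu \in \mathcal{P}^\ell[n]$ with $\ell$-core $\emptyset$. Given any $\lambda \in \mathcal{P}[n\ell]$, which necessarily has trivial $\ell$-core since the target set in Theorem~\ref{thm15}(1) restricts to $\mathcal{I}(\emptyset)$ precisely under $\mathrm{core}_\ell(\lambda) = \emptyset$, I set $\mu = \mathrm{quo}_\ell(\lambda)$. By Theorem~\ref{thm1} (equivalently \cite[Lemma~4.7]{bonnafe2018fixed}), the map $\mathrm{quo}_\ell : \mathcal{P}_\emptyset(n\ell) \to \mathcal{P}(n,\ell)$ is a bijection and agrees with $\mathrm{quo}_\ell^\flat$ on partitions with trivial $\ell$-core, so $(\mathrm{quo}_\ell^\flat)^{-1}(\mu) = \lambda$ and thus $i_\emptyset(x_{\mathrm{quo}_\ell(\lambda)}) = x_\lambda$.

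The only real obstacle is a notational one: one must confirm that the map $\mathrm{quo}_\ell^\flat$ appearing in Bonnaf\'e–Maksimau coincides with the combinatorial $\mathrm{quo}_\ell$ used in this paper when restricted to $\mathcal{P}_\emptyset(n\ell)$. Once this identification is made, the lemma is immediate from Theorem~\ref{thm15}.
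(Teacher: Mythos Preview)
Your proposal is correct and follows essentially the same argument as the paper: specialise Theorem~\ref{thm15} to $\gamma=\emptyset$, obtain $r=n$, and then use the bijection $\mathrm{quo}_\ell:\mathcal{P}_\emptyset(n\ell)\to\mathcal{P}(n,\ell)$ (cited as \cite[Lemma~4.7]{bonnafe2018fixed}) to read off the fixed-point correspondence. The paper's proof is just the short paragraph preceding the lemma statement, and your version is a slightly more detailed unpacking of that same deduction, including the explicit check that $\emptyset\in\mathcal{C}_\ell[n\ell]$ and the identification of $\mathrm{quo}_\ell^\flat$ with $\mathrm{quo}_\ell$ on trivial-core partitions, which the paper leaves implicit.
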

In the following by $\Omega_{\mathrm{quo}_\ell(\lambda)}$ (respectively $\Omega^-_{\mathrm{quo}_\ell(\lambda)}$) we mean the attracting set (respectively the opposite attracting set) inside $X_{\overline{c}}(S_{n}\wr \mathbb{Z}/\ell\mathbb{Z})$ and $\Omega_\lambda$, $\Omega^-_{\lambda}$ are considered inside $X_c(S_{n\ell})
^{\mathbb{Z}/\ell\mathbb{Z}}$.
\begin{prop}\label{prop4}
The map $i_\emptyset$ restricts to $\mathbb{C}^\times$-equivariant isomorphisms of attracting sets 
\[
i_{\emptyset}:\Omega_{\mathrm{quo}_\ell(\lambda)}\cong \Omega_{\lambda}^{\mathbb{Z}/\ell\mathbb{Z}}=\Omega_\lambda\cap X_c(S_{n\ell})
^{\mathbb{Z}/\ell\mathbb{Z}},
\]
\[
i_{\emptyset}:\Omega^-_{\mathrm{quo}_\ell(\lambda)}\cong (\Omega^-_{\lambda})^{\mathbb{Z}/\ell\mathbb{Z}}=\Omega^-_{\lambda}\cap X_c(S_{n\ell})
^{\mathbb{Z}/\ell\mathbb{Z}}.
\]
\end{prop}
\begin{proof}
Since the map $i_\emptyset$ is $\mathbb{C}^\times$-equivariant it maps attracting sets to attracting sets hence
\[
i_{\emptyset}(\Omega_{\mathrm{quo}_\ell(\lambda)})\subset \Omega_{\lambda}^{\mathbb{Z}/\ell\mathbb{Z}} \quad and \quad i^{-1}_\emptyset(\Omega_{\lambda}^{\mathbb{Z}/\ell\mathbb{Z}})\subset \Omega_{\mathrm{quo}_\ell(\lambda)}
\]
and so $i_\emptyset:\Omega_{\mathrm{quo}_\ell(\lambda)}\rightarrow \Omega_{\lambda}^{\mathbb{Z}/\ell\mathbb{Z}}$ is a bijective morphism. Since a bijective morphism between smooth varieties is an isomorphism, the result follows. The second isomorphism follows by a similar argument.
\end{proof}
To summarise we have equalities of varieties 
\[
\mathrm{Spec}\,\mathrm{End}\underline{\Delta}(\mathrm{quo}_\ell(\lambda))= \Omega_{\mathrm{quo}_\ell(\lambda)}\quad and \quad \mathrm{Spec}\,\mathrm{End}\underline{\Delta}^-(\mathrm{quo}_\ell(\lambda))= \Omega^-_{\mathrm{quo}_\ell(\lambda)}
\]
and isomorphisms
\[
\Omega_{\mathrm{quo}_\ell(\lambda)}\cong \Omega_{\lambda}^{\mathbb{Z}/\ell\mathbb{Z}}\quad and\quad \Omega^-_{\mathrm{quo}_\ell(\lambda)}\cong (\Omega^-_{\lambda})^{\mathbb{Z}/\ell\mathbb{Z}}.
\]
This gives us a way to relate the endomorphism rings of the Verma modules for the symmetric and wreath product groups. Unfortunately, this is not enough to arrive at an explicit presentation of the endomorphism rings of the baby Verma modules. To do that we must understand a particular isomorphism explicitly. In \cite[Theorem 11.16]{EG} Etingof and Ginzburg construct an isomorphism between $Z_c(S_n\wr\mathbb{Z}/\ell\mathbb{Z})$ and a suitable Calogero-Moser space. We now focus on describing this map and showing it has the properties we desire.

Let $e_{\mathbb{Z}/\ell\mathbb{Z}}\in \mathrm{End}_\C (\mathbb{C}\mathbb{Z}/\ell\mathbb{Z}))$ be the map onto the trivial representation defined on elements by
\[
e_{\mathbb{Z}/\ell\mathbb{Z}}(g)=\left(\frac{1}{|\ell|}\sum_{\gamma\in \mathbb{Z}/\ell\mathbb{Z}}\gamma\right)\cdot g,
\]
and denote by $\bf{O}$ the conjugacy class inside $\mathfrak{sl}_n$ formed by all $n\times n$ matrices of the form $P-Id$ where $P$ is a semisimple matrix of rank $1$ with $tr(P)=n$. Given any class function $\mathbb{Z}/\ell\mathbb{Z}\setminus\{0\}\rightarrow \C$ sending $\gamma$ to $c'_\gamma$ then the element $c'=\sum_{\gamma}c'_\gamma \gamma$ is central with zero trace in the regular representation. Then for any pair $c=(k,c')$ may define the following set 
\[
M_{\mathbb{Z}/\ell\mathbb{Z},n,c}=\{\nabla_1,\nabla_2\in \mathrm{End}(\mathbb{C}^n\otimes\mathbb{C}\mathbb{Z}/\ell\mathbb{Z})\,|\, [\nabla_1,\nabla_2]=k\ell\cdot o\otimes e_{\mathbb{Z}/\ell\mathbb{Z}}+Id_{\mathbb{C}^n}\otimes c'\textnormal{ for some }o\in\bf{O}\}.
\]
The Calogero-Moser space can be defined as the quotient variety
\[
\mathcal{M}_{\mathbb{Z}/\ell\mathbb{Z},n,c}=M_{\mathbb{Z}/\ell\mathbb{Z},n,c}/PGL_{\mathbb{Z}/\ell\mathbb{Z},n,c}.
\]
We require an explicit understanding of the isomorphism $i_\emptyset$ by Bonnaf\'e and Maksimau. In their paper this map is given by the inclusion map between the Calogero-Moser spaces associated to $X_{c}(S_n\wr\mathbb{Z}/\ell\mathbb{Z})$ and $X_c(S_{n\ell})$.

By \cite[Theorem 1.7]{EG} there is an identification of $\mathrm{Spec}\,Z_c(S_n\wr\mathbb{Z}/\ell\mathbb{Z})$ with $\mathrm{Irr}H_c(S_n\wr \mathbb{Z}/\ell\mathbb{Z})$ given by the assignment
\begin{equation}\label{equation13}
p\rightarrow H_c(S_n\wr\mathbb{Z}/\ell\mathbb{Z})e\otimes_Z p,\quad \forall p\in\mathrm{MaxSpec}\,Z_c(S_n\wr\mathbb{Z}/\ell\mathbb{Z})
\end{equation}
where $p$ is viewed as a homomorphism $Z_c(S_n\wr\mathbb{Z}/\ell\mathbb{Z})\rightarrow \mathbb{C}$. Consider an irreducible $H_c(S_n\wr\mathbb{Z}/\ell\mathbb{Z})$-module $E$. Let $\Gamma_{n-1}$ denote the subgroup of $\Gamma=S_n\wr \mathbb{Z}/\ell\mathbb{Z}$ that stabilises the first basis vector $x_1$ in $\mathfrak{h}$. Let $E^{\Gamma_{n-1}}$ denote the subspace of $E$ fixed by $\Gamma_{n-1}$. Clearly $x_1$ and $y_1$ commute with the action of $\Gamma_{n-1}$. Therefore we can define operators $X,Y\in \mathrm{End}_{\mathbb{C}}(E^{\Gamma_{n-1}})$ via the action of $x_1$ and $y_1$ on $E$ respectively. The isomorphism $\phi:\mathrm{Irr}H_c(S_n\wr\mathbb{Z}/\ell\mathbb{Z})\rightarrow \mathcal{M}_{\mathbb{Z}/\ell\mathbb{Z},n,c}$ \cite[Theorem 11.16]{EG} is given by $\phi(E)=(X,Y)$.

Consider the open set $U$ in $\mathrm{Irr} H_c(S_n\wr\mathbb{Z}/\ell\mathbb{Z})$ where the action of the elements $x_i-\omega^{k}x_j$ are invertible; $U$ is an open set. Let $(\lambda,\mu)\in\mathbb{C}^{2n}$ with $\lambda_i^\ell\neq \lambda_j^\ell$ for all $i\neq j$. Let $\mathcal{O}_{(\lambda,\mu)}$ denote the orbit of $(\lambda,\mu)$ under the group $S_n\wr\mathbb{Z}/\ell\mathbb{Z}$. This is a free orbit, so $|\mathcal{O}_{(\lambda,\mu)}|=n!\ell^n$. Up to isomorphism, each representation $E$ in $U$ is of the form $E_{(\lambda,\mu)}=\mathbb{C}[\mathcal{O}_{\lambda,\mu}]$. A basis of $E_{(\lambda,\mu)}$ is given by the characteristic equations
\[
\chi_s(a,b)=\begin{cases}
1 \textnormal{ if } s\cdot (a,b)=(\lambda,\mu)\\
0 \textnormal{ else}
\end{cases}
\]
for $s\in S_n\wr \mathbb{Z}/\ell\mathbb{Z}$. The subspace $E^{\Gamma_{n-1}}$ is then $\ell n$-dimensional with basis $\chi_{s_{1,i}\gamma_1^r}$ for $1\leq i\leq a$ and $0\leq r\leq \ell -1$. The action of $H_c(S_n\wr\mathbb{Z}/\ell\mathbb{Z})$ on $E_{(\lambda,\mu)}$ is given by
\[
x_i \cdot F(a,b)=a_iF(a,b),\textnormal{ } y_i\cdot F(a,b)=b_i \cdot F(a,b) + c_0\sum_{j\neq i}\sum_{k=0}^{\ell-1}\frac{s_{i,j}\gamma_i^k\gamma_j^{-k}F(a,b)}{\omega^{-k}a_j-a_i}+\sum_{k=1}^{\ell-1}\frac{c_k\gamma_i^k F(a,b)}{a_i\omega^k -a_i}
\]
and
\[
(w\cdot F)(a,b)=F(w^{-1}\cdot a,w^{-1}\cdot b),
\]
for $w\in S_n\wr \mathbb{Z}/\ell\mathbb{Z}$ and $\omega$ a primitive $\ell^{th}$ root of unity.
We must check that these equations satisfy the defining relations of the rational Cherednik algebra at $t=0$, recall these where given \eqref{equation30}. The check is just a straightforward computation which we include for completeness
\[
[y_i,x_t]\cdot F(a,b)=y_i\cdot x_t\cdot F(a,b)-x_t\cdot y_i\cdot F(a,b)
\]
\[
=x_t\left(b_i \cdot F(a,b) + c_0\sum_{j\neq i}\sum_{k=0}^{\ell-1}\frac{s_{i,j}\gamma_i^k\gamma_j^{-k}F(a,b)}{\omega^{-k}a_j-a_i}+\sum_{k=1}^{\ell-1}\frac{c_k\gamma_i^k F(a,b)}{a_i\omega^k -a_i}\right)-y_i\cdot a_tF(a,b)
\]
\[
=\left(a_tb_i \cdot F(a,b) + c_0\sum_{j\neq i}\sum_{k=0}^{\ell-1}\frac{s_{i,j}\gamma_i^k\gamma_j^{-k}a_t F(a,b)}{\omega^{-k}a_j-a_i}+\sum_{k=1}^{\ell-1}\frac{c_k\gamma_i^k a_t F(a,b)}{a_i\omega^k -a_i}\right)
\]
\[
-\left( a_tb_i \cdot F(a,b) + a_t c_0\sum_{j\neq i}\sum_{k=0}^{\ell-1}\frac{s_{i,j}\gamma_i^k\gamma_j^{-k}F(a,b)}{\omega^{-k}a_j-a_i}+a_t\sum_{k=1}^{\ell-1}\frac{c_k\gamma_i^k F(a,b)}{a_i\omega^k -a_i}\right)
\]
\[
=c_0\sum_{j\neq i}\sum_{k=0}^{\ell-1}\frac{s_{i,j}\gamma_i^k\gamma_j^{-k}a_tF(a,b)}{\omega^{-k}a_j-a_i}-a_tc_0\sum_{j\neq i}\sum_{k=0}^{\ell-1}\frac{s_{i,j}\gamma_i^k\gamma_j^{-k}F(a,b)}{\omega^{-k}a_j-a_i}
\]
\[
=c_0\sum_{k=0}^{\ell-1}\frac{s_{i,t}\gamma_i^k\gamma_t^{-k}a_tF(a,b)}{\omega^{-k}a_t-a_i}-a_tc_0\sum_{k=0}^{\ell-1}\frac{s_{i,t}\gamma_i^k\gamma_t^{-k}F(a,b)}{\omega^{-k}a_t-a_i}
\]
\[
=(\omega^k a_i-a_t)c_0\sum_{k=0}^{\ell-1}\frac{s_{i,t}\gamma_i^k\gamma_t^{-k}F(a,b)}{\omega^{-k}a_t-a_i}=-c_0\sum_{k=0}^{\ell-1}\omega^ks_{i,t}\gamma_i^k\gamma_t^{-k}F(a,b)
\]
Now we check the relation 
\[
[y_i,x_i]=c_0\sum_{i\neq j}\sum^{\ell-1}_{k=0}s_{ij}s_i^ks_j^{-k}+\sum_{k=1}^{\ell-1}c_ks_j^{-k}.
\]
So
\[
[y_i,x_i]\cdot F(a,b) =y_i\cdot x_i\cdot F(a,b)-x_i\cdot y_i\cdot F(a,b)
\]
\[
=x_i\cdot \left(b_i \cdot F(a,b) + c_0\sum_{j\neq i}\sum_{k=0}^{\ell-1}\frac{s_{i,j}\gamma_i^k\gamma_j^{-k}F(a,b)}{\omega^{-k} a_j-a_i}+\sum_{k=1}^{\ell-1}\frac{c_k\gamma_i^k F(a,b)}{a_i\omega^k -a_i}\right )-y_i\cdot a_iF(a,b)
\]
\[
=a_ib_iF(a,b)+c_0\sum_{j\neq i}\sum_{k=0}^{\ell-1}\frac{s_{i,j}\gamma_i^k\gamma_j^{-k}\omega^{-k}a_jF(a,b)}{\omega^{-k}a_j-a_i}+\sum_{k=1}^{\ell-1}\frac{c_k\gamma_i^k \omega^k a_iF(a,b)}{a_i\omega^k- a_i})
\]
\[
-a_i\left(b_i \cdot F(a,b) + c_0\sum_{j\neq i}\sum_{k=0}^{\ell-1}\frac{s_{i,j}\gamma_i^k\gamma_j^{-k}F(a,b)}{\omega^{-k} a_j-a_i}+\sum_{k=1}^{\ell-1}\frac{c_k\gamma_i^k F(a,b)}{a_i\omega^k -a_i}\right)
\]
\[
=c_0\sum_{j\neq i}\sum_{k=0}^{\ell-1}\frac{(\omega^{-k}a_j-a_i)s_{i,j}\gamma_i^k\gamma_j^{-k}F(a,b)}{\omega^{-k} a_j-a_i}+\sum_{k=1}^{\ell-1}\frac{(\omega^k a_i-a_i)c_k\gamma_i^k F(a,b)}{a_i\omega^k -a_i}
\]
\[
=c_0\sum_{j\neq i}\sum_{k=0}^{\ell-1}s_{i,j}\gamma_i^k\gamma_j^{-k}F(a,b)+\sum_{k=1}^{\ell-1}c_k\gamma_i^k F(a,b)
\]
as required.

Similarly we can consider the open set $V$ in $\mathrm{Irr} H_c(S_n\wr\mathbb{Z}/\ell\mathbb{Z})$ where the action of the elements $y_i-\omega^{k}y_j$ are invertible. Each representation $E$ in $V$ has the same form as before, $E_{(\lambda,\mu)}=\mathbb{C}[\mathcal{O}_{\lambda,\mu}]$. The representations $E_{(\lambda,\mu)}$ have the same basis, but with different action of $H_c(S_n\wr\mathbb{Z}/\ell\mathbb{Z})$ given by
\[
x_i \cdot F(a,b)=a_iF(a,b)-c_0\sum_{j\neq i}\sum_{k=0}^{\ell-1}\frac{s_{i,j}\gamma_i^k\gamma_j^{-k}F(a,b)}{\omega^{-k} b_j-b_i}-\sum_{k=1}^{\ell-1}\frac{c_k\gamma_i^k F(a,b)}{b_i\omega^k -b_i},\textnormal{ } y_i\cdot F(a,b)=b_i 
\]
and
\[
(w\cdot F)(a,b)=F(w^{-1}\cdot a,w^{-1}\cdot b),
\]
for $w\in S_n\wr \mathbb{Z}/\ell\mathbb{Z}$ and $\omega$ a primitive $\ell^{th}$ root of unity. We must recheck the Cherednik algebra relations
\[
[y_t,x_i]\cdot F(a,b)=y_t\cdot x_i\cdot F(a,b)-x_i\cdot y_t\cdot F(a,b)
\]
\[
=x_i\cdot b_t F(a,b)-y_t\cdot\left( a_iF(a,b)-c_0\sum_{j\neq i}\sum_{k=0}^{\ell-1}\frac{s_{i,j}\gamma_i^k\gamma_j^{-k}F(a,b)}{\omega^{-k} b_j-b_i}-\sum_{k=1}^{\ell-1}\frac{c_k\gamma_i^k F(a,b)}{b_i\omega^k -b_i}\right)
\]
\[
=\left(a_ib_tF(a,b)-b_tc_0\sum_{j\neq i}\sum_{k=0}^{\ell-1}\frac{s_{i,j}\gamma_i^k\gamma_j^{-k}F(a,b)}{\omega^{-k} b_j-b_i}-b_t\sum_{k=1}^{\ell-1}\frac{c_k\gamma_i^k F(a,b)}{b_i\omega^k -b_i}\right)
\]
\[
-\left(a_ib_tF(a,b)-c_0\sum_{j\neq i}\sum_{k=0}^{\ell-1}\frac{s_{i,j}\gamma_i^k\gamma_j^{-k}b_tF(a,b)}{\omega^{-k} b_j-b_i}-\sum_{k=1}^{\ell-1}\frac{c_k\gamma_i^k b_tF(a,b)}{b_i\omega^k -b_i}\right)
\]
\[
=c_0\sum_{j\neq i}\sum_{k=0}^{\ell-1}\frac{s_{i,j}\gamma_i^k\gamma_j^{-k}b_tF(a,b)}{\omega^{-k} b_j-b_i}-b_tc_0\sum_{j\neq i}\sum_{k=0}^{\ell-1}\frac{s_{i,j}\gamma_i^k\gamma_j^{-k}F(a,b)}{\omega^{-k} b_j-b_i}
\]
\[
=c_0\sum_{k=0}^{\ell-1}\frac{s_{i,t}\gamma_i^k\gamma_t^{-k}b_tF(a,b)}{\omega^{-k} b_t-b_i}-b_tc_0\sum_{k=0}^{\ell-1}\frac{s_{i,t}\gamma_i^k\gamma_t^{-k}F(a,b)}{\omega^{-k} b_t-b_i}
\]
\[
=(\omega^kb_i-b_t)c_0\sum_{k=0}^{\ell-1}\frac{s_{i,t}\gamma_i^k\gamma_t^{-k}b_tF(a,b)}{\omega^{-k} b_t-b_i}=-c_0\sum_{k=0}^{\ell-1}\omega^k s_{i,t}\gamma_i^k\gamma_t^{-k}F(a,b).
\]
Let us now check the second relation
\[
[y_i,x_i]\cdot F(a,b) =y_i\cdot x_i\cdot F(a,b)-x_i\cdot y_i\cdot F(a,b)
\]
\[
x_i\cdot b_iF(a,b)-y_i\cdot\left(a_iF(a,b)-c_0\sum_{j\neq i}\sum_{k=0}^{\ell-1}\frac{s_{i,j}\gamma_i^k\gamma_j^{-k}F(a,b)}{\omega^{-k} b_j-b_i}-\sum_{k=1}^{\ell-1}\frac{c_k\gamma_i^k F(a,b)}{b_i\omega^k -b_i}\right)
\]
\[
=\left(a_ib_iF(a,b)-b_ic_0\sum_{j\neq i}\sum_{k=0}^{\ell-1}\frac{s_{i,j}\gamma_i^k\gamma_j^{-k}F(a,b)}{\omega^{-k} b_j-b_i}-b_i\sum_{k=1}^{\ell-1}\frac{c_k\gamma_i^k F(a,b)}{b_i\omega^k -b_i}\right)
\]
\[
-\left(a_ib_iF(a,b)-c_0\sum_{j\neq i}\sum_{k=0}^{\ell-1}\frac{s_{i,j}\gamma_i^k\gamma_j^{-k}b_iF(a,b)}{\omega^{-k} b_j-b_i}-\sum_{k=1}^{\ell-1}\frac{c_k\gamma_i^k b_iF(a,b)}{b_i\omega^k -b_i}\right)
\]
\[
(\omega^{-k}b_j-b_i)c_0\sum_{j\neq i}\sum_{k=0}^{\ell-1}\frac{s_{i,j}\gamma_i^k\gamma_j^{-k}F(a,b)}{\omega^{-k} b_j-b_i}+  (b_i\omega^k-b_i)  \sum_{k=1}^{\ell-1}\frac{c_k\gamma_i^k F(a,b)}{b_i\omega^k -b_i}
\]
\[
=c_0\sum_{j\neq i}\sum_{k=0}^{\ell-1}s_{i,j}\gamma_i^k\gamma_j^{-k}F(a,b)+\sum_{k=1}^{\ell-1}c_k\gamma_i^k F(a,b).
\]

\begin{thm}\label{thm16}
Let $\phi:\mathrm{Spec}\,Z_c(S_n\wr\mathbb{Z}/\ell\mathbb{Z})\rightarrow \mathcal{M}_{\mathbb{Z}/\ell\mathbb{Z},n,c}$ be the map defined above. Then we have the following equalities 
\[
\phi^*(tr(X)^k)=\begin{cases}
\ell(x_1^k+...+x_n^k) \textnormal{ if } \ell\vert k\\
0 \textnormal{ else } 
\end{cases}
\]
\[
\phi^*(tr(Y)^k)=\begin{cases}
\ell(y_1^k+...+y_n^k) \textnormal{ if } \ell\vert k\\
0 \textnormal{ else } 
\end{cases}
\]
\end{thm}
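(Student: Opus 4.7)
The plan is to evaluate the identity on the open dense subset $U \subset \mathrm{Spec}\, Z_c(S_n\wr\mathbb{Z}/\ell\mathbb{Z})$ parametrising the generic irreducible representations $E_{(\lambda,\mu)}$ described above, and then extend by continuity. I interpret $\mathrm{tr}(X)^k$ as $\mathrm{tr}(X^k)$, the regular function on $\mathcal{M}_{\mathbb{Z}/\ell\mathbb{Z},n,c}$ which sends a representation to the trace of $X^k$ acting on the $\Gamma_{n-1}$-fixed subspace.

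First I would diagonalise $X = x_1|_{E_{(\lambda,\mu)}^{\Gamma_{n-1}}}$ in the basis $\{\chi_{s_{1,i}\gamma_1^r}\}_{1\le i\le n,\,0\le r\le \ell-1}$. Since $x_1$ acts by $x_1\cdot F(a,b) = a_1 F(a,b)$, and since $\chi_{s_{1,i}\gamma_1^r}$ is supported on the $\Gamma_{n-1}$-orbit of the single point $(s_{1,i}\gamma_1^r)^{-1}(\lambda,\mu)$, each basis vector is an eigenvector of $X$ whose eigenvalue equals the common first coordinate of the points in that orbit ($\Gamma_{n-1}$ fixes $x_1$, so the first coordinate is well-defined on the orbit). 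Computing $(s_{1,i}\gamma_1^r)^{-1} = \gamma_1^{-r}s_{1,i}$ and applying to $(\lambda,\mu)$ — first $s_{1,i}$ swaps the entries in positions $1$ and $i$, then $\gamma_1^{-r}$ scales the first coordinate by $\omega^r$ (since $\gamma_1 x_1 = \omega x_1$) — yields the eigenvalue $\omega^r \lambda_i$.

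Therefore $X^k$ is diagonal with eigenvalues $\omega^{rk}\lambda_i^k$, and
\[
\mathrm{tr}(X^k) \;=\; \sum_{i=1}^n\sum_{r=0}^{\ell-1}\omega^{rk}\lambda_i^k \;=\; \left(\sum_{r=0}^{\ell-1}\omega^{rk}\right)\left(\sum_{i=1}^n \lambda_i^k\right).
\]
The standard geometric sum gives $\sum_{r=0}^{\ell-1}\omega^{rk} = \ell$ when $\ell\mid k$ and $0$ otherwise, producing the two cases in the statement.

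Finally, when $\ell\mid k$ the element $p_k := x_1^k+\cdots+x_n^k$ is invariant under the full wreath product (permutations act obviously, and each $\gamma_i$ scales $x_i^k$ by $\omega^k = 1$), so it lies in $\mathbb{C}[\mathfrak{h}]^{S_n\wr\mathbb{Z}/\ell\mathbb{Z}} \subset Z_c(S_n\wr\mathbb{Z}/\ell\mathbb{Z})$; via \eqref{equation13} it evaluates to $\sum_i \lambda_i^k$ at the point corresponding to $E_{(\lambda,\mu)}$. This matches the value of $\phi^*(\mathrm{tr}(X^k))$ at $E_{(\lambda,\mu)}$ computed above, so the two regular functions agree on the open dense set $U$. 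Since $\mathrm{Spec}\, Z_c(S_n\wr\mathbb{Z}/\ell\mathbb{Z})$ is smooth (hence reduced) under our hypothesis and $U$ is nonempty open, agreement on $U$ forces agreement on all of $\mathrm{Spec}\, Z_c(S_n\wr\mathbb{Z}/\ell\mathbb{Z})$. The main obstacle is the eigenvalue computation in the diagonalisation step, where care is needed tracking the $\mathbb{Z}/\ell\mathbb{Z}$-action conventions on $\mathfrak{h}$ versus $\mathfrak{h}^*$; once this is pinned down, the rest reduces to a one-line geometric sum followed by a density argument.
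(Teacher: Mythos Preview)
Your proposal is correct and follows the same overall strategy as the paper: restrict to the open dense set $U$ of generic representations $E_{(\lambda,\mu)}$, compute $\mathrm{tr}(X^k)$ there, match it against the value of the power sum $x_1^k+\cdots+x_n^k$, and conclude by reducedness/irreducibility.

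The only substantive difference is in how $\mathrm{tr}(X^k)$ is computed. You diagonalise $X$ directly in the basis $\{\chi_{s_{1,i}\gamma_1^r}\}$, obtaining eigenvalues $\omega^r\lambda_i$ and reducing the trace to a geometric sum. The paper instead decomposes $E_{(\lambda,\mu)}^{\Gamma_{n-1}}$ into its $\mathbb{Z}/\ell\mathbb{Z}$-isotypic pieces $V_0\oplus\cdots\oplus V_{\ell-1}$ and observes that $X$ shifts $V_i\to V_{i+1}$, giving the block cyclic matrix~\eqref{equation14}; the trace then vanishes unless $\ell\mid k$, and when $k=m\ell$ one reads off $\ell\,\mathrm{tr}\bigl((X_0\cdots X_{\ell-1})^m\bigr)=\ell\sum_i\lambda_i^k$. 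Your route is a little more elementary and self-contained; the paper's block form has the advantage that the very matrix~\eqref{equation14} is reused immediately afterwards to define the inclusion $\overline{i}_\emptyset:\mathcal{M}_{\mathbb{Z}/\ell\mathbb{Z},n,c}\to\mathcal{M}_{\mathbb{Z}/\mathbb{Z},n\ell,c}$ needed in Theorem~\ref{thm17}. A minor point: for the density step the paper invokes that $Z_c$ is a domain (hence reduced and irreducible) rather than smoothness, but under the standing smoothness hypothesis your justification is equally valid.
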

\begin{proof}
We begin by noting that $Z_c(S_n\wr\mathbb{Z}/\ell\mathbb{Z})$ is reduced since \cite[Proposition 7.2]{PoissonOrders} states that $eH_{c}(S_n\wr\mathbb{Z}/\ell\mathbb{Z})e\cong Z_c(S_n\wr\mathbb{Z}/\ell\mathbb{Z})$ is a domain. Hence for $f,g\in Z_c(S_n\wr\mathbb{Z}/\ell\mathbb{Z})$, $f=g$ if and only if $f(p)=g(p)$ for all $p\in \mathrm{maxSpec}\, Z_c(S_n\wr\mathbb{Z}/\ell\mathbb{Z})$. Furthermore $\mathrm{Spec}\, Z_c(S_n\wr\mathbb{Z}/\ell\mathbb{Z})$ is irreducible, therefore $f=g$ if and only if $f(p)=g(p)$ for all $p\in U\subset \mathrm{Spec}\,Z_c(S_n\wr\mathbb{Z}/\ell\mathbb{Z})$, where $U$ is the open set defined above. We also use the identification of $\mathrm{Spec}\,Z_c(S_n\wr\mathbb{Z}/\ell\mathbb{Z}) $ with $\mathrm{Irr}H_c(S_n\wr\mathbb{Z}/\ell\mathbb{Z})$ as in equation~\eqref{equation13} above. Fix an irreducible module $E_{(\lambda,\mu)}\in U$. We shall first calculate $\phi^*(tr(X))^k(E_{(\lambda,\mu)})$. As described in Section 11 of \cite{EG}, $E_{(\lambda,\mu)}^{\Gamma_{n-1}}$ is isomorphic as a $\mathbb{C}(\mathbb{Z}/\ell\mathbb{Z})$-module to $n$ copies of $\mathbb{C}(\mathbb{Z}/\ell\mathbb{Z})$. Therefore, $E_{(\lambda,\mu)}^{\Gamma_{n-1}}$ can be viewed a sum of vector spaces $V_0\oplus\dots\oplus V_{\ell-1}$ where $V_i$ is $n$ copies of the irreducible representation of $ \mathbb{Z}/\ell\mathbb{Z}$ where the generator $s$ acts by $\omega^i$. If we denote the action of $x_1$ on $V_i$ by $X_i$ then note that $X_i:V_i\rightarrow V_{i+1}$ as 
\[
s\cdot X_i(v)=s\cdot x_1 v=\omega x_1 s \cdot v=\omega^{i+1}x_1v=\omega^{i+1}X_i(v)\textnormal{ if }v\in V_i.
\]
Recall that $\phi(E)=(X,Y)$, where $X=x_1$ acting on $E^{\Gamma_{n-1}}$. Then we see that as a matrix,
\begin{equation}\label{equation14}
X=\begin{bmatrix}
    0 & 0 & \dots  & X_{\ell-1} \\
    X_0 & 0&  \dots  & 0 \\
    0 & \ddots & 0 & 0 \\
    0 & \dots & X_{\ell-2}  & 0
\end{bmatrix}.
\end{equation}
Hence $\phi^*(tr(X)^k)(E_{(\lambda,\mu)})=tr(X^k)$ and if $\ell\not| \,k$ then $tr(X^k)=0$ as $X^k$ has every entry on the main diagonal equal to $0$. However if $\ell\vert k$ then write $k=m\ell$ and $tr(X^k)=\ell tr((X_0\dots X_{\ell-1})^m)$. For each $X_i$, we have $X_i=\mathrm{diag}(\lambda_1,\dots ,\lambda_n)$, hence 
\[
tr((X_0\dots X_{\ell-1})^m)=\lambda_1^{m\ell}+\dots +\lambda_{n}^{m\ell}=\lambda_1^{k}+\dots +\lambda_{n}^{k}.
\]
Substituting back in we find 
\[
tr(X^k)=\ell(\lambda_1^{k}+\dots +\lambda_{n}^{k}).
\]
Now we must check that $\ell(x_1^k+\dots +x_n^k)(u)=\ell(\lambda_1^k+\dots +\lambda_n^k)(u)$ for all $u\in E$. For each $F\in E_{(\lambda,\mu)}$, $(x_1^k+\dots +x_n^k\cdot F)(a,b)=(\lambda_1^k+\dots + \lambda_n^k)F(a,b)$ hence $x_1^k+\dots+ x_n^k$ acts by scalar multiplication on $E_{(\lambda,\mu)}$ by $\lambda_1^k+\dots + \lambda_n^k$. 

To prove the statement of the proof for $Y$ we simply note that the same argument holds, provided we use the open set $V\subset \mathrm{Spec}\,Z_c(S_n\wr\mathbb{Z}/\ell\mathbb{Z})$ instead of $U$. Then the action of $Y$ on $E^{\Gamma_{n-1}}$ is given by the matrix
\begin{equation}\label{equationy}
\begin{bmatrix}
    0 & Y_1 & \dots  & 0 \\
    0 & 0 &  \ddots  & 0 \\
    0 & 0 & \dots & Y_{\ell-1} \\
    Y_0 & 0 & \dots  & 0
\end{bmatrix}.
\end{equation}
From an identical calculation of traces we see that 
\[
tr(Y^k)=\ell(\mu_1^{k}+\dots +\mu_{n}^{k})
\]
if $\ell | k$ and $tr(Y)^k=0$ otherwise.
\end{proof}
We note also.
\begin{lem}\label{lem13}
There is an isomorphism  
\[
\alpha : \mathbb{C}^n/(S_n\wr\mathbb{Z}/\ell\mathbb{Z}) \xrightarrow{\sim} (\mathbb{C}^{n\ell}/S_{n\ell})^{\mathbb{Z}/\ell\mathbb{Z}}
\]
given by 
\[
\alpha(a_1,a_2,\dots,a_n)=(a_1,\omega a_1,\omega^2 a_1,\dots, \omega^{\ell-1}a_n)
\]
\end{lem}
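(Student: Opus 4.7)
The plan is to verify the isomorphism on the level of coordinate rings, where the claim reduces to a comparison of two polynomial rings via an explicit computation of invariants and power sums. The map $\alpha$ is first checked to be a well-defined morphism: the formula extends to a morphism $\widetilde{\alpha}:\mathbb{C}^n\to\mathbb{C}^{n\ell}$, and the natural block-embedding $S_n\wr\mathbb{Z}/\ell\mathbb{Z}\hookrightarrow S_{n\ell}$ (in which $\sigma\in S_n$ permutes length-$\ell$ blocks and $\gamma_i$ cyclically rotates the $i^{th}$ block) makes $\widetilde{\alpha}$ carry a single $(S_n\wr\mathbb{Z}/\ell\mathbb{Z})$-orbit into a single $S_{n\ell}$-orbit, so $\alpha$ descends to the quotients.

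I would then identify the source coordinate ring by taking invariants in two stages. Since $(\mathbb{Z}/\ell\mathbb{Z})^n\trianglelefteq S_n\wr\mathbb{Z}/\ell\mathbb{Z}$ acts via $\gamma_i\cdot x_i=\omega x_i$, its invariants are $\mathbb{C}[x_1^\ell,\dots,x_n^\ell]$, on which the quotient $S_n$ acts by permutation of the generators. Writing $z_i=x_i^\ell$ and using power sums, this yields
\[
\mathbb{C}[x_1,\dots,x_n]^{S_n\wr\mathbb{Z}/\ell\mathbb{Z}}=\mathbb{C}[q_1,\dots,q_n], \qquad q_m:=x_1^{m\ell}+\cdots+x_n^{m\ell}.
\]
Next, to identify the target, I would apply Lemma~\ref{lem9} to the smooth variety $X=\mathbb{C}^{n\ell}/S_{n\ell}\cong\mathbb{C}^{n\ell}$. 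The $\mathbb{C}^\times$-action (by scaling) induces a $\mathbb{Z}/\ell\mathbb{Z}$-grading on $\mathbb{C}[X]^{S_{n\ell}}=\mathbb{C}[p_1,\dots,p_{n\ell}]$ in which the power sum $p_k:=y_1^k+\cdots+y_{n\ell}^k$ has weight $k\bmod\ell$. The ideal $\langle\mathbb{C}[X]_{\neq 0}\rangle$ killing the non-trivial isotypic components is therefore $\langle p_k:\ell\nmid k\rangle$, and since the $p_k$ are algebraically independent the quotient is the polynomial ring $\mathbb{C}[p_\ell,p_{2\ell},\dots,p_{n\ell}]$.

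Finally I would compute the pullback. A direct calculation using orthogonality of characters gives
\[
\alpha^*(p_k)=\sum_{i=1}^n\sum_{j=0}^{\ell-1}(\omega^j a_i)^k=\Bigl(\sum_{j=0}^{\ell-1}\omega^{jk}\Bigr)\Bigl(\sum_{i=1}^n a_i^k\Bigr)=\begin{cases}\ell\,q_{k/\ell} & \text{if }\ell\mid k,\\ 0 & \text{otherwise.}\end{cases}
\]
Hence $\alpha^*$ annihilates $\langle p_k:\ell\nmid k\rangle$ and descends to a homomorphism $\mathbb{C}[p_\ell,\dots,p_{n\ell}]\to\mathbb{C}[q_1,\dots,q_n]$ sending $p_{m\ell}\mapsto\ell\,q_m$, manifestly an isomorphism of polynomial rings in $n$ generators. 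Dualising yields the required isomorphism of affine schemes.

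\textbf{Main obstacle.} The only subtlety is the scheme-theoretic identification of $(\mathbb{C}^{n\ell}/S_{n\ell})^{\mathbb{Z}/\ell\mathbb{Z}}$: one must verify that the non-trivial isotypic ideal $\langle\mathbb{C}[X]_{\neq 0}\rangle$ is exactly generated by the power sums $p_k$ with $\ell\nmid k$ (rather than merely containing them), which follows from the fact that every homogeneous element of nonzero $\mathbb{Z}/\ell\mathbb{Z}$-weight, when expressed as a polynomial in the $p_k$, must contain at least one $p_k$ with $\ell\nmid k$ in each monomial. Once this is verified, the orthogonality-of-characters calculation collapses the proof to comparing two polynomial rings.
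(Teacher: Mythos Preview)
The paper does not actually supply a proof of this lemma; it is stated and then used in the proof of Theorem~\ref{thm17} without justification. Your argument is correct and supplies exactly the missing details. In fact the core computation you give,
\[
\alpha^*(p_k)=\Bigl(\sum_{j=0}^{\ell-1}\omega^{jk}\Bigr)\Bigl(\sum_{i=1}^n a_i^k\Bigr)=
\begin{cases}\ell\,(a_1^{k}+\cdots+a_n^{k}) & \ell\mid k,\\ 0 & \ell\nmid k,\end{cases}
\]
is precisely the calculation the paper carries out inside the proof of Theorem~\ref{thm17} when it evaluates $\alpha^*(x_1^{k\ell}+\cdots+x_{n\ell}^{k\ell})$. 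So your approach is not merely compatible with the paper's but is the natural elaboration of what the author evidently had in mind.

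One small remark: your identification of the target as $\mathrm{Spec}\,\mathbb{C}[p_\ell,\dots,p_{n\ell}]$ via Lemma~\ref{lem9} is clean, and your ``main obstacle'' paragraph correctly isolates the only point requiring care, namely that $\langle\mathbb{C}[X]_{\neq 0}\rangle=\langle p_k:\ell\nmid k\rangle$. Your justification (any monomial of nonzero $\mathbb{Z}/\ell\mathbb{Z}$-weight in the $p_k$ must involve some $p_k$ with $\ell\nmid k$) is sound. You might also note, for completeness, that the image of $\alpha$ indeed lands in the $\mathbb{Z}/\ell\mathbb{Z}$-fixed locus: scaling the tuple $(a_1,\omega a_1,\dots,\omega^{\ell-1}a_n)$ by $\omega$ cyclically rotates each length-$\ell$ block, which is an element of $S_{n\ell}$ and hence acts trivially on the quotient.
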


Recall the map $i_\emptyset$ introduced in Lemma~\ref{lem12}. We must break this into a composition of three maps. Consider the inclusion map on quiver varieties
\[
\overline{i}_\emptyset :\mathcal{M}_{\mathbb{Z}/\ell\mathbb{Z},n,\overline{c}}\rightarrow \mathcal{M}_{\mathbb{Z}/\mathbb{Z},n\ell,c}
\]
given by sending $(X_0,\dots X_{\ell-1})$ to the matrix $X$ of equation~\eqref{equation14} and $(Y_0,\dots,Y_{\ell-1})$ to \eqref{equationy}. Then the map $i_\emptyset$ of Lemma~\ref{lem12} is given by  $\phi^{-1}_{S_{n\ell}}\circ \overline{i}_\emptyset\circ \phi_{S_{n}\wr\mathbb{Z}/\ell\mathbb{Z}}$. With all the appropriate maps introduced we can perform the following diagram chase.
\begin{thm}\label{thm17}
There is an isomorphism $X_{\overline{c}}(S_n\wr\mathbb{Z}/\ell\mathbb{Z})\rightarrow W$ to a connected component of $X_{c}(S_{n\ell})^{\mathbb{Z}/\ell\mathbb{Z}}$ such that the following diagrams commute
\[
\begin{tikzpicture}[scale=2]
\node (A) at (0,1.5) {$X_{\overline{c}}(S_n\wr\mathbb{Z}/\ell\mathbb{Z})$};
\node (B) at (3,1.5) {$X_{c}(S_{n\ell})^{\mathbb{Z}/\ell\mathbb{Z}}$};
\node (C) at (0,0) {$\mathbb{C}^n/(S_n\wr\mathbb{Z}/\ell\mathbb{Z})$};
\node (D) at (3,0) {$(\mathbb{C}^{n\ell}/S_{n\ell})^{\mathbb{Z}/\ell\mathbb{Z}}$};

\path[->,font=\scriptsize,>=angle 90]
(A) edge node[above]{$\phi^{-1}_{S_{n\ell}}\circ \overline{i}_\emptyset\circ \phi_{S_{n}\wr\mathbb{Z}/\ell\mathbb{Z}}$} (B)
(A) edge node[left]{$\pi_{n,\ell}$} (C)
(B) edge node[right]{$\pi_{n\ell}$} (D)
(C) edge node[below]{$\alpha$} (D);
\end{tikzpicture}
\]
and 
\[
\begin{tikzpicture}[scale=2]
\node (A) at (0,1.5) {$X_{\overline{c}}(S_n\wr\mathbb{Z}/\ell\mathbb{Z})$};
\node (B) at (3,1.5) {$X_{c}(S_{n\ell})^{\mathbb{Z}/\ell\mathbb{Z}}$};
\node (C) at (0,0) {$(\mathbb{C}^n)^*/(S_n\wr\mathbb{Z}/\ell\mathbb{Z})$};
\node (D) at (3,0) {$((\mathbb{C}^{n\ell})^*/S_{n\ell})^{\mathbb{Z}/\ell\mathbb{Z}}$};

\path[->,font=\scriptsize,>=angle 90]
(A) edge node[above]{$\phi^{-1}_{S_{n\ell}}\circ \overline{i}_\emptyset\circ \phi_{S_{n}\wr\mathbb{Z}/\ell\mathbb{Z}}$} (B)
(A) edge node[left]{$\pi_{n,\ell}$} (C)
(B) edge node[right]{$\pi_{n\ell}$} (D)
(C) edge node[below]{$\alpha$} (D);
\end{tikzpicture}.
\]
\end{thm}
\begin{proof}
The first step is to unpack the diagram by introducing the Calogero-Moser spaces
\[
\begin{tikzpicture}[scale=2]
\node (A) at (0,1.5) {$X_{\overline{c}}(S_n\wr\mathbb{Z}/\ell\mathbb{Z})$};
\node (B) at (5.25,1.5) {$X_{c}(S_{n\ell})$};
\node (C) at (0,0) {$\mathbb{C}^n/(S_n\wr\mathbb{Z}/\ell\mathbb{Z})$};
\node (D) at (5.25,0) {$(\mathbb{C}^{n\ell}/S_{n\ell})^{\mathbb{Z}/\ell\mathbb{Z}}$};
\node (E) at (1.75,1.5) {$\mathcal{M}_{\mathbb{Z}/\ell\mathbb{Z},n,\overline{c}}$};
\node (F) at (3.5,1.5) {$\mathcal{M}_{\mathbb{Z}/\mathbb{Z},n\ell,c}$};
\path[->,font=\scriptsize,>=angle 90]
(A) edge node[above]{$ \phi_{S_{n}\wr\mathbb{Z}/\ell\mathbb{Z}}$} (E)
(E) edge node[above]{$\overline{i}_\emptyset$} (F)
(F) edge node[above]{$\phi^{-1}_{S_{n\ell}}$} (B)
(A) edge node[left]{$\pi_{n,\ell}$} (C)
(B) edge node[right]{$\pi_{n\ell}$} (D)
(C) edge node[below]{$\alpha$} (D);
\end{tikzpicture}.
\]
It is easier understand the duals of the maps in the diagram above and since a diagram commutes if and only if its dual does we shall prove this instead. We must therefore prove the commutativity of the following diagram
\[
\begin{tikzpicture}[scale=2]
\node (A) at (0,1.5) {$Z_{\overline{c}}(S_n\wr\mathbb{Z}/\ell\mathbb{Z})$};
\node (B) at (5.25,1.5) {$Z_{c}(S_{n\ell})$};
\node (C) at (0,0) {$\mathbb{C}[\mathbb{C}^n/(S_n\wr\mathbb{Z}/\ell\mathbb{Z})]$};
\node (D) at (5.25,0) {$\mathbb{C}[(\mathbb{C}^{n\ell}/S_{n\ell})^{\mathbb{Z}/\ell\mathbb{Z}}]$};
\node (E) at (1.75,1.5) {$\mathbb{C}[\mathcal{M}_{\mathbb{Z}/\ell\mathbb{Z},n,\overline{c}}]$};
\node (F) at (3.5,1.5) {$\mathbb{C}[\mathcal{M}_{\mathbb{Z}/\mathbb{Z},n\ell,c}]$};
\path[->,font=\scriptsize,>=angle 90]
(E) edge node[above]{$ \phi^*_{S_{n}\wr\mathbb{Z}/\ell\mathbb{Z}}$} (A)
(F) edge node[above]{$\overline{i}^*_\emptyset$} (E)
(B) edge node[above]{$(\phi^*)^{-1}_{S_{n\ell}}$} (F)
(C) edge node[left]{$i_{n,\ell}$} (A)
(D) edge node[right]{$i_{n\ell}$} (B)
(D) edge node[below]{$\alpha^*$} (C);
\end{tikzpicture}.
\]
First note that $\mathbb{C}[(\mathbb{C}^{n\ell}/S_{n\ell})^{\mathbb{Z}/\ell\mathbb{Z}}]$ are the symmetric polynomials fixed under the action of the $\ell^{th}$ roots of unity. Hence it is generated by elements of the form
\[
x_1^{k\ell}+\dots +x_{n\ell}^{k\ell} \textnormal{ where } k\in\mathbb{Z}_{\geq 0}.
\]
Now $\alpha^*(f)(p)=f(\alpha(p))$, where $p\in \mathbb{C}^n/(S_n\wr\mathbb{Z}/\ell\mathbb{Z})$. Consider an arbitrary generator $x_1^{k\ell}+\dots + x_{n\ell}^{k\ell}$, then
\[
\alpha^*(x_1^{k\ell}+\dots + x_{n\ell}^{k\ell})(a_1,a_2,\dots, a_n)=(x_1^{k\ell}+\dots + x_{n\ell}^{k\ell})(a_1,\omega a_1,\dots, \omega^{\ell-1}a_n)
\]
\[
=a_1^{k\ell}+(\omega a_1)^{k\ell}+(\omega^2 a_1)^{k\ell}+\dots +(\omega^{\ell-1}a_n)^{k\ell}
\]
\[
=(1+\omega^{k\ell}+(\omega^2)^{k\ell}\dots+ (\omega^{\ell-1})^{k\ell})(a_1^{k\ell}+a_2^{k\ell}+\dots+ x_n^{k\ell})=\ell(a_1^{k\ell}+\dots + a_n^{k\ell}).
\]
Hence 
\[
\alpha^*(x_1^{k\ell}+\dots+ x_{n\ell}^{k\ell})=\ell(x_1^{k\ell}+\dots +x_n^{k\ell}).
\]
Recall the map $\pi$ is the dual of the inclusion map so \[
i_{n,\ell}(\ell(x_1^{k\ell}+\dots+ x_n^{k\ell}))=\ell(x_1^{k\ell}+\dots+ x_n^{k\ell}).
\]
Now we must chase the diagram the other way. The first map $i_{n\ell}$ is also the inclusion map hence
\[
i_{n\ell}(x_1^{k\ell}+\dots +x_{n\ell}^{k\ell})=x_1^{k\ell}+\dots +x_{n\ell}^{k\ell}.
\]
By Theorem~\ref{thm16}, we have that $\phi^*_{S_{n\ell}}(tr(X)^k)=x_1^k+\dots +x_{n\ell}^k$, therefore $(\phi^*_{S_{n\ell}})^{-1}(x_1^{k\ell}+\dots +x_{n\ell}^{k\ell})=tr(X)^{k\ell}$. Then we have by definition 
\[
(\overline{i}_\emptyset^{*})^{-1}(tr(X)^{k\ell})=tr(\overline{i}_\emptyset(X)^{k\ell}).
\]
Therefore we complete the proof by showing $\phi^*_{S_n\wr \mathbb{Z}/\ell\mathbb{Z}}(tr(\overline{i}_\emptyset(X)^{k\ell})=\ell(x_1^{k\ell}+\dots+ x_n^{k\ell})$. This is precisely the statement of Theorem~\ref{thm16}. The second diagram commutes by an identical argument and the second equality of Theorem~\ref{thm16}.
\end{proof}
Recall by Corollary~\ref{cor2} that $A(\lambda)^+\cong\mathbb{C}[\pi^{-1}(0)]$ and, we denoted the functions on the fixed point locus as
\[
A(\lambda)^+_{\mathbb{Z}/\ell\mathbb{Z}}:=\mathbb{C}[\pi^{-1}_{n\ell}(0)]/\langle f-s\cdot f\,|\, s\in \mathbb{Z}/\ell\mathbb{Z},  f\in \mathbb{C}[\pi^{-1}_{n\ell}(0)] \rangle.
\]
\begin{thm}\label{thm18}
There are isomorphisms of algebras
\[
A_{\overline{c}}(\mathrm{quo}_\ell(\lambda))^+\cong A_c(\lambda)^+_{\mathbb{Z}/\ell\mathbb{Z}}\quad and\quad A_{\overline{c}}(\mathrm{quo}_\ell(\lambda)^\sharp)^-\cong A_c(\lambda^T)^-_{\mathbb{Z}/\ell\mathbb{Z}}.
\]
\end{thm}
\begin{proof}
By definition, $A_{\overline{c}}(\mathrm{quo}_\ell(\lambda))^+:=\mathrm{End}\Delta_{\overline{c}}(\mathrm{quo}_\ell(\lambda))=\mathbb{C}[\mathrm{Spec}\,\mathrm{End}\Delta_{\overline{c}}(\mathrm{quo}_\ell(\lambda))]$.
By Proposition~\ref{prop4} there is an isomorphism 
\[
i_\emptyset:\Omega_{\mathrm{quo}_\ell(\lambda)}\cong \Omega_{\lambda}\cap X_c(S_{n\ell})^{\mathbb{Z}/\ell\mathbb{Z}}.
\]
Therefore Theorem~\ref{thm17} implies that there is a commutative diagram
\[
\begin{tikzpicture}[scale=2]
\node (A) at (0,1.5) {$\Omega_{\mathrm{quo}_\ell(\lambda)}$};
\node (B) at (3,1.5) {$\Omega_{\lambda}^{\mathbb{Z}/\ell\mathbb{Z}}$};
\node (C) at (0,0) {$\mathbb{C}^n/(S_n\wr\mathbb{Z}/\ell\mathbb{Z})$};
\node (D) at (3,0) {$(\mathbb{C}^{n\ell}/S_{n\ell})^{\mathbb{Z}/\ell\mathbb{Z}}$};
\path[->,font=\scriptsize,>=angle 90]
(A) edge node[above]{$\phi^{-1}_{S_{n\ell}}\circ \overline{i}_\emptyset\circ \phi_{S_{n}\wr\mathbb{Z}/\ell\mathbb{Z}}$} (B)
(A) edge node[left]{$\pi_{n,\ell}$} (C)
(B) edge node[right]{$\pi_{n\ell}$} (D)
(C) edge node[below]{$\alpha$} (D);
\end{tikzpicture}.
\]
By Theorem~\ref{thm14}, $\mathrm{Spec}\,\mathrm{ End }\underline{\Delta}_{\overline{c}}(\mathrm{quo}_\ell(\lambda))= \Omega_{\mathrm{quo}_\ell(\lambda)}$ and $\mathrm{Spec}\,\mathrm{ End }\underline{\Delta}_c(\lambda)= \Omega_{\lambda}.$ Hence the diagram becomes
\[
\begin{tikzpicture}[scale=2]
\node (A) at (0,1.5) {$\mathrm{Spec}\,\mathrm{End}\underline{\Delta}_{\overline{c}}(\mathrm{quo}_\ell(\lambda))=\Omega_{\mathrm{quo}_\ell(\lambda)}$};
\node (B) at (3,1.5) {$\Omega_\lambda^{\mathbb{Z}/\ell\mathbb{Z}}=(\mathrm{Spec}\,\mathrm{End}\underline{\Delta}_c(\lambda))^{\mathbb{Z}/\ell\mathbb{Z}}$};
\node (C) at (0,0) {$\mathbb{C}^n/(S_n\wr\mathbb{Z}/\ell\mathbb{Z})$};
\node (D) at (3,0) {$(\mathbb{C}^{n\ell}/S_{n\ell})^{\mathbb{Z}/\ell\mathbb{Z}}$};
\path[->,font=\scriptsize,>=angle 90]
(A) edge node[above]{$i_\emptyset$} (B)
(A) edge node[left]{$\pi_{n,\ell}$} (C)
(B) edge node[right]{$\pi_{n\ell}$} (D)
(C) edge node[below]{$\alpha$} (D);
\end{tikzpicture}.
\]
Since $\alpha$ and $i_\emptyset$ are both isomorphisms we have $\pi_{n,\ell}^{-1}(0)\cong (\pi_{n\ell}^{-1}(0))^{\mathbb{Z}/\ell\mathbb{Z}}$. Therefore there is an algebra isomorphism $\mathbb{C}[\pi^{-1}_{n,\ell}(0)]\cong\mathbb{C}[\pi^{-1}_{n\ell}(0)^{\mathbb{Z}/\ell\mathbb{Z}}]$. Finally, Corollary~\ref{cor2} implies that
\begin{align*}
    A_{\overline{c}}(\mathrm{quo}_\ell(\lambda))^+\cong &\mathbb{C}[\pi^{-1}_{n,\ell}(0)]\\
    \cong&\mathbb{C}[\pi^{-1}_{n\ell}(0)^{\mathbb{Z}/\ell\mathbb{Z}}]\\
    \cong&
    \mathbb{C}[\pi^{-1}_{n\ell}(0)]/\langle f-s\cdot f\,|\, s\in \mathbb{Z}/\ell\mathbb{Z},  f\in \mathbb{C}[\pi^{-1}_{n\ell}(0)] \rangle
\end{align*}
hence $A_{\overline{c}}(\mathrm{quo}_\ell(\lambda))^+\ = A_c(\lambda)^+_{\mathbb{Z}/\ell\mathbb{Z}}.$

The argument for the opposite case holds until the conclusion $\mathbb{C}[(\pi^-)^{-1}_{n,\ell}(0)]\cong\mathbb{C}[(\pi^-)^{-1}_{n\ell}(0)^{\mathbb{Z}/\ell\mathbb{Z}}]$. We use Theorem~\ref{thm8} to see that 
\[
\mathrm{End}\underline{\Delta}^-(\mathrm{quo}_\ell(\lambda))\cong \mathbb{C}[(\pi^-)^{-1}_{n,\ell}(0)]\cong\mathbb{C}[(\pi^-)^{-1}_{n\ell}(0)^{\mathbb{Z}/\ell\mathbb{Z}}]\cong (\mathrm{End}\underline{\Delta}^-(\mathrm(\lambda)))_{\mathbb{Z}/\ell\mathbb{Z}}.
\]
Note now that $(\mathrm{End}\underline{\Delta}^-(\mathrm(\lambda)))_{\mathbb{Z}/\ell\mathbb{Z}}=A_c(\lambda^T)^-_{\mathbb{Z}/\ell\mathbb{Z}}$, therefore 
\[
A_c(\lambda^T)^-_{\mathbb{Z}/\ell\mathbb{Z}}= (\mathrm{End}\underline{\Delta}^-(\mathrm(\lambda)))_{\mathbb{Z}/\ell\mathbb{Z}}\cong \mathrm{End}\underline{\Delta}^-(\mathrm{quo}_\ell(\lambda))=A_{\overline{c}}(\mathrm{quo}_\ell(\lambda)^\sharp)^-.
\]
\end{proof}
Using similar methods to the proof of Theorem~\ref{thm4} we will show in Section 7 that we can remove the transpose from $A(\lambda^T)^-$ in the second equality.

Theorem~\ref{thm18} allows us to understand the endomorphism rings of the baby Verma modules for the wreath product group in terms of the symmetric group. As we will see in the next section this will allow us to easily generalise the explicit presentation of $Z_c(S_n)$ to $Z_{\overline{c}}(S_n\wr\mathbb{Z}/\ell\mathbb{Z})$.
\section{The Wronskian, Schubert cells and an explicit presentation of $A(\lambda)^+$}
Here we will introduce the Wronskian and explain why it allows us to give an explicit presentation of $A(\lambda)^+$ and therefore, an explicit presentation of $Z_c(S_n)$. The Wronskian of a set of polynomials $\{f_1,\dots, f_n\}$ is the determinant
\[
\mathrm{Wr}(f_1,f_2,\dots, f_n):=\mathrm{det}\begin{bmatrix}
    f_1 & f_2 & f_3 & \dots  & f_n \\
    f_1^{(1)} & f_2^{(1)} & f_3^{(1)} & \dots  & f_n^{(1)} \\
    \vdots & \vdots & \vdots & \ddots & \vdots \\
    f_1^{(n-1)} & f_2^{(n-1)} & f_3^{(n-1)} & \dots  & f_n^{(n-1)}
\end{bmatrix}.
\]
There is a closely related map called the Wronski map, it is actually this which we will be most interested in. As we will explain next there is a connection to Schubert cells. Importantly, Schubert cells are objects which can be described explicitly in terms of generators and relations. In their paper \cite{SchubertGLN} Mukhin, Tarasov and Varachenko describe how to write the ring of functions on a Schubert cell labelled by a partition $\lambda\vdash n$ explicitly. Let us describe their construction. Given $\lambda\vdash n$ define positive integers $d_i=\lambda_i+n-i$ and denote the set of these by $P=\{d_1,\dots, d_n\}$. Let $\C_d[u]$ be the space of polynomials in $u$ of degree less than $d$. Following the notation of Lemma~\ref{lem1} the Schubert cell $\Omega_\lambda^{qe}\subset \mathrm{Gr}(N,\C_d[u])$ is a subvariety of the Grassmannian of all $N$-dimensional subspaces of $\C_d[u]$ and consists of subspaces $X$ with basis
\begin{equation}\label{equation15}
f_i= u^{d_i}+\sum^{d_i}_{j=1,\, d_i-j\not\in P}f_{i,j}u^{d_i-j}.
\end{equation}
The algebra $\mathbb{C}[\Omega_\lambda^{qe}]$ is a free polynomial algebra with generators $f_{ij}$, i.e.
\[
\mathbb{C}[\Omega_\lambda^{qe}]=\mathbb{C}[f_{ij},\, i=1,\dots, n,\,\ j=1,\,\dots d_i,\, d_i-j\not\in P ].
\] 
Note that this is a graded algebra if we set $\mathrm{deg}(f_{ij})=j$. The Wronskian of a basis of $X$ is a polynomial of degree $n$. We write
\[
\mathrm{Wr}(f_1,\dots,f_n)=u^n+r_1u^{n-1}+\dots+r_n.
\]
The Wronski map $\mathrm{Wr}_{\lambda}:\Omega_\lambda^{qe}\rightarrow \C^n$ is defined on elements by 
\[
\mathrm{Wr}_{\lambda}(X)=(a_1,\dots,a_n) \textnormal{ if } \mathrm{Wr}(X)=u^n+\sum^n_{i=1} (-1)^ia_i u^{n-i}.
 \]
Therefore the scheme theoretic fibre of the Wronski map is
\begin{equation}\label{equation16}
\mathbb{C}[\mathrm{Wr}_\lambda^{-1}(a)]\cong\mathbb{C}[\Omega_\lambda^{qe}]/I_{\lambda,a},
\end{equation}
where $I_{\lambda,a}$ is the ideal generated by the $r_s -(-1)^sa_s$. 

Thanks to the isomorphism \eqref{equation16} we see that the scheme theoretic fibre of the Wronski map can be written explicitly as we know how to do so for the ring of functions on the Schubert cell. It only remains for us to connect the notion of the Wronski map with the map $\pi$ from Sections $4$ and $5$. This is done below in the following theorem, but it is extremely important to note this holds for the symmetric group case only.
\begin{thm}\label{thm19}
If $W=S_n$, there is an isomorphism of varieties
\[
\pi^{-1}(0)\cong \mathrm{Wr}_{\lambda}^{-1}(0).
\]
where $\pi$ is the map on spectra $\pi:\mathrm{Spec}\,\mathrm{End}\underline{\Delta}(\lambda)\rightarrow \mathfrak{h}/S_n$ induced by the inclusion map $\mathbb{C}[\mathfrak{h}]^{S_n}\hookrightarrow \mathrm{End}\underline{\Delta}(\lambda)$.
\end{thm}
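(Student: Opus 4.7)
The plan is to identify the entire map $\pi\colon \Omega_\lambda \to \mathfrak{h}/S_n$ with the Wronski map $\mathrm{Wr}_\lambda \colon \Omega_\lambda^{qe} \to \mathbb{C}^n$, and then pass to the scheme--theoretic fibre over the origin.

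The first step is to reinterpret the source and the map $\pi$ on the Calogero--Moser side. By Theorem~\ref{thm14} in the symmetric case $\ell = 1$, the variety $\mathrm{Spec}\,\mathrm{End}\,\underline{\Delta}(\lambda)$ is the attracting cell $\Omega_\lambda \subset X_c(S_n)$. Under the Etingof--Ginzburg isomorphism $X_c(S_n) \cong \mathcal{M}_n$, closed points are $PGL_n(\mathbb{C})$-orbits of pairs $(X,Y)$ with $[X,Y] + I_n$ of rank one. Specialising Theorem~\ref{thm16} to $\ell=1$, the composition $\mathbb{C}[\mathfrak{h}]^{S_n} \hookrightarrow Z_c(S_n) \cong \mathbb{C}[\mathcal{M}_n]$ sends each power sum $p_k = x_1^k + \cdots + x_n^k$ to $\mathrm{tr}(X^k)$. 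Hence $\pi$ carries the class $[(X,Y)]$ to the unordered multiset of eigenvalues of $X$, equivalently to the coefficients of $\det(uI - X)$, and $\pi^{-1}(0)$ is precisely the locus of $[(X,Y)] \in \Omega_\lambda$ for which $X$ is nilpotent.

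Next, I would invoke the Wilson bispectral correspondence (the adelic Grassmannian construction) to produce a $\mathbb{C}^\times$-equivariant isomorphism $\Omega_\lambda \cong \Omega_\lambda^{qe}$. To each $[(X,Y)] \in \mathcal{M}_n$ one associates an $n$-dimensional subspace of $\mathbb{C}[u]$ generated by the Baker--Akhiezer function and its derivatives, built from the resolvent of $X$. Matching the $\mathbb{C}^\times$-weights on the two tangent spaces at the corresponding fixed points (both governed by the first-column hook data $\{d_1, \ldots, d_n\}$ of $\lambda$) should confirm that the attracting cell for $x_\lambda$ is sent onto the Schubert cell $\Omega_\lambda^{qe}$. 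Wilson's formula then identifies $\det(uI - X)$ with the Wronskian $\mathrm{Wr}(f_1, \ldots, f_n)$ of the associated polynomial subspace, so under this identification $\pi$ coincides with $\mathrm{Wr}_\lambda$. Restricting to the fibres over $0$ on each side yields the desired isomorphism $\pi^{-1}(0) \cong \mathrm{Wr}_\lambda^{-1}(0)$.

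The hardest step will be making the bispectral dictionary respect the combinatorial labelling by $\lambda$: one must verify that the torus fixed point $x_\lambda \in \mathcal{M}_n$ (as labelled by the irreducible $S_n$-representation associated to $\lambda$) corresponds exactly to the distinguished monomial subspace $\langle u^{d_1}, \ldots, u^{d_n}\rangle$ sitting at the centre of $\Omega_\lambda^{qe}$. Once the fixed-point labelling is reconciled, via a tangent-space weight comparison of the kind used in Lemma~\ref{lem11}, the coincidence of the two maps away from the fixed point is forced by $\mathbb{C}^\times$-equivariance together with Wilson's explicit formula relating the characteristic polynomial of the Calogero--Moser matrix to the Wronskian of its bispectral partner.
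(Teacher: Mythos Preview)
The paper does not give its own argument here: its entire proof of Theorem~\ref{thm19} is the single line ``See \cite[Proposition 6.4]{BellSchubert}.'' So there is no in-paper proof to compare your proposal against; you are effectively attempting to reconstruct (or at least outline) the proof from the cited reference.

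Your outline is broadly along the right lines and uses the standard toolkit: identify $\mathrm{Spec}\,\mathrm{End}\,\underline{\Delta}(\lambda)$ with the attracting cell $\Omega_\lambda$ in the Calogero--Moser space (Theorem~\ref{thm14}), transport the map $\pi$ across the Etingof--Ginzburg isomorphism so that it reads off the characteristic polynomial of $X$ (via Theorem~\ref{thm16} at $\ell=1$), and then invoke Wilson's adelic Grassmannian construction to pass from pairs $(X,Y)$ to $n$-dimensional polynomial subspaces, where the characteristic polynomial of $X$ becomes the Wronskian. This is indeed the mechanism underlying the cited result.

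That said, what you have written is a sketch rather than a proof, and you correctly flag the two places where real work remains. First, the identification of $\det(uI-X)$ with $\mathrm{Wr}(f_1,\ldots,f_n)$ under Wilson's correspondence is not a tautology; it needs the explicit form of the map (e.g.\ via the Baker function or the ``pole data'' description) and a short computation. Second, matching the fixed-point labellings --- that the Calogero--Moser fixed point indexed by $\lambda$ goes to the monomial flag $\langle u^{d_1},\ldots,u^{d_n}\rangle$ --- is exactly the content that makes the theorem non-formal, and a tangent-weight comparison alone does not quite pin it down (different $\lambda$ can have the same multiset of hook lengths). One typically either traces the construction on the nilpotent $X$ corresponding to $\lambda$ directly, or appeals to the explicit bijection established in the Mukhin--Tarasov--Varchenko or Bellamy papers. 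If you want a self-contained argument you should fill in at least one of these two steps; otherwise, citing \cite{BellSchubert} as the paper does is the honest move.
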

\begin{proof}
See \cite[Proposition 6.4]{BellSchubert}.
\end{proof}
Thanks to the results from Section $4$ we easily deduce the following.
\begin{thm}\label{thm20}
There is an isomorphism of algebras
\[
A(\lambda)^+\cong \mathbb{C}[\mathrm{Wr}^{-1}_\lambda(0)].
\]
\end{thm}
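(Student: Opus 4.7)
The plan is to combine Corollary~\ref{cor2} and Theorem~\ref{thm19} directly, since between them they express $A(\lambda)^+$ and $\mathbb{C}[\mathrm{Wr}_\lambda^{-1}(0)]$ both as the coordinate ring of the same scheme-theoretic fibre $\pi^{-1}(0)$.

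First, I would invoke Corollary~\ref{cor2}: the endomorphism ring of the baby Verma module is canonically the coordinate ring of the scheme-theoretic fibre of the map
\[
\pi:\mathrm{Spec}\,\mathrm{End}_{H_c(S_n)}(\underline{\Delta}(\lambda))\rightarrow \mathfrak{h}/S_n
\]
at $0$, so that $A(\lambda)^+\cong \mathbb{C}[\pi^{-1}(0)]$. Next, I would appeal to Theorem~\ref{thm19}, which identifies the scheme $\pi^{-1}(0)$ with the fibre $\mathrm{Wr}_\lambda^{-1}(0)$ of the Wronski map on the Schubert cell $\Omega_\lambda^{qe}$. Passing to rings of functions yields $\mathbb{C}[\pi^{-1}(0)]\cong \mathbb{C}[\mathrm{Wr}_\lambda^{-1}(0)]$, and composing the two isomorphisms gives the desired
\[
A(\lambda)^+\cong \mathbb{C}[\mathrm{Wr}_\lambda^{-1}(0)].
\]

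The only subtlety that could require attention is making sure that Theorem~\ref{thm19} is being used at the scheme-theoretic level rather than merely as a set-theoretic or reduced-scheme identification, since otherwise taking $\mathbb{C}[-]$ might lose nilpotent information. However, this is already guaranteed by the way the two fibres are defined on both sides: $\pi^{-1}(0)$ is defined via the quotient of $\mathrm{End}\underline{\Delta}(\lambda)$ by the augmentation ideal $\mathbb{C}[\mathfrak{h}]^{S_n}_+$ as in \eqref{equation9}, and $\mathrm{Wr}_\lambda^{-1}(0)$ is defined via the ideal $I_{\lambda,0}\subset \mathbb{C}[\Omega_\lambda^{qe}]$ generated by the coefficients $r_1,\dots,r_n$ as in \eqref{equation16}. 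The argument is therefore essentially a one-line deduction, with the genuine content already buried in Corollary~\ref{cor2} (proven above) and Theorem~\ref{thm19} (cited from \cite{BellSchubert}).
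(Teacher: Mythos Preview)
Your proposal is correct and matches the paper's proof essentially line for line: invoke Corollary~\ref{cor2} to get $A(\lambda)^+\cong \mathbb{C}[\pi^{-1}(0)]$, invoke Theorem~\ref{thm19} to get $\pi^{-1}(0)\cong \mathrm{Wr}_\lambda^{-1}(0)$, and compose. Your additional remark about the scheme-theoretic nature of the fibres is a sensible clarification but is not made explicit in the paper's own argument.
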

\begin{proof}
Theorem~\ref{thm19} states $\pi^{-1}(0)\cong \mathrm{Wr}_\lambda^{-1}(0)$, hence $\mathbb{C}[\pi^{-1}(0)]=\mathbb{C}[\mathrm{Wr}_\lambda^{-1}(0)]$. By Corollary~\ref{cor2} we have that $A(\lambda)^+\cong \mathbb{C}[\pi^{-1}(0)]$ and therefore $A(\lambda)^+\cong\mathbb{C}[\mathrm{Wr}^{-1}_\lambda(0)].$
\end{proof}
Consider the case $a=0$ in \eqref{equation16}. Theorem~\ref{thm20} implies that $A(\lambda)^+$ is the quotient of $\mathbb{C}[\Omega_\lambda^{qe}]$ by the ideal generated by the coefficients $r_s$ of the polynomial $\mathrm{Wr}(f_1,\dots, f_n)$. Let us now describe how to find the algebra $A(\lambda)^+$ explicitly in terms of generators and relations in the symmetric case. First calculate the Wronskian 
\[
\mathrm{Wr}(f_1,f_2,\dots, f_n)=\mathrm{det}\begin{bmatrix}
    f_1 & f_2 & f_3 & \dots  & f_n \\
    f_1^{(1)} & f_2^{(1)} & f_3^{(1)} & \dots  & f_n^{(1)} \\
    \vdots & \vdots & \vdots & \ddots & \vdots \\
    f_1^{(n-1)} & f_2^{(n-1)} & f_3^{(n-1)} & \dots  & f_n^{(n-1)}
\end{bmatrix}
=u^n+r_1u^{n-1}+\dots+r_n
\]
of the polynomials 
\[
f_i=u^{d_i}+\sum^{d_i}_{j=1,\,\,d_i-j\not\in P} f_{ij}u^{d_i-j}.
\]
The algebra $A(\lambda)^+$ is given by taking the polynomial algebra generated by the $f_{ij}$ and quotienting by the coefficients $r_s$ of the Wronskian.
\begin{example}\label{exam1}
For the partition $\lambda=(3,2)$ we have $d_1=7$, $d_2=5$, $d_3=2$, $d_4=1$ and $d_5=0$. Therefore $f_1(u)=u^7+f_{11}u^6+f_{13}u^4+f_{14}u^3$, $f_2(u)=u^5+f_{21}u^4+f_{22}u^3$, $f_3(u)=u^2$, $f_4(u)=u$ and $f_5(u)=1$. Let us calculate the Wronskian, which is
\[
\mathrm{det}\begin{bmatrix}
  u^7+f_{11}u^6+f_{13}u^4+f_{14}u^3 & u^5+f_{21}u^4+f_{22}u^3 & u^2  & u&  1 \\
   7u^6+6f_{11}u^5+4f_{13}u^3+3f_{14}u^2 & 5u^4+4f_{21}u^3+3f_{22}u^2& 2u & 1  & 0 \\
    42u^5+30f_{11}u^4+12f_{13}u^2+6f_{14}u & 20u^3+12f_{21}u^2+6f_{22}u & 2 & 0 & 0 \\
    210u^4+120f_{11}u^3+24f_{13}u+6f_{14} & 60u^2+24f_{21}u+6f_{22} & 0 & 0 & 0 \\
    840u^3+360f_{11}u^2+24f_{13} & 120u+24f_{21} & 0 & 0 & 0 
\end{bmatrix}.
\]
Hence
\[
\mathrm{Wr}(f_1(u),f_2(u),f_3(u),f_4(u),f_5(u))=25200u^5+(14400f_{11}+30240f_{21})u^4
\]
\[
+(11520f_{11}f_{21}+10080f_{22})u^3
+(-2880f_{13}+4320f_{11}f_{22})u^2-1440f_{14}u+(-288f_{14}f_{21}+288f_{13}f_{22}).
\]
Then $A(3,2)^+$ is the quotient by the ideal generated by the coefficients, this can be equivalently written as
\[
A(3,2)^+=\C[f_{11},f_{13},f_{14},f_{21},f_{22}]/(10f_{11}-21f_{21},8f_{11}f_{21}+7f_{22},2f_{13}-3f_{11}f_{22},f_{14}, f_{14}f_{21}-f_{13}f_{22}).
\]
Hence
\[
A(3,2)^+\cong\C[f_{11}]/(f_{11}^5).
\]
\end{example}
To be able to construct the algebras $A(\lambda)^+$ directly from the partition $\lambda\vdash n$ will require a greater understanding of both the Wronski map and the dimension of $A(\lambda)^+$. It is the latter of these that we now focus on, we shall address the former in the next section.

The following results are inspired by the formula  
\[
s_\lambda(1,q,\dots,q^n)=\frac{\prod^n_{i=1}(1-q^i)}{\prod_{(i,j)\in D_{\lambda}}(1-q^{h(i,j)})}.
\]
This can be found in \cite[Page. 364]{Stembridge}. The term $s_\lambda$ denotes the Schur function associated to the partition $\lambda\vdash n$, $D_\lambda$ denotes the Young diagram of $\lambda$ and $h(i,j)$ is the hook length of the box $(i,j)$. A similar formula allows us to calculate the graded dimension of $A(\lambda)^+$.

We now record two general lemmata about the Wronskian and certain homogeneous polynomials.
\begin{lem}\label{lem14}
If $f_1,\dots,f_n,$ is a family of homogeneous polynomials and $\mathrm{Wr}(f_1,\dots,f_n)\neq 0$ then the Wronskian is homogeneous and $\deg(\mathrm{Wr}(f_1,\dots,f_n))=\sum_i(\mathrm{deg}(f_i))-\frac{(n-1)(n)}{2}$.
\end{lem}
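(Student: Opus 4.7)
The plan is to expand the Wronskian as a sum over permutations and observe that every term is homogeneous of the same degree.

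First I would recall the key fact that if $f$ is a homogeneous polynomial of degree $d$, then its derivative $f'$ is either zero or homogeneous of degree $d-1$; iterating, $f^{(k)}$ is either zero or homogeneous of degree $d-k$. Hence every nonzero entry $f_i^{(k-1)}$ appearing in row $k$ (for $k=1,\dots,n$) of the Wronskian matrix is homogeneous of degree $\deg(f_i) - (k-1)$.

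Next, I would use the Leibniz expansion of the determinant:
\[
\mathrm{Wr}(f_1,\dots,f_n) = \sum_{\sigma \in S_n} \mathrm{sgn}(\sigma)\, \prod_{k=1}^n f_{\sigma(k)}^{(k-1)}.
\]
For each $\sigma$, the product $\prod_{k=1}^n f_{\sigma(k)}^{(k-1)}$ is either zero or homogeneous of degree
\[
\sum_{k=1}^n \bigl(\deg(f_{\sigma(k)}) - (k-1)\bigr) \;=\; \sum_{i=1}^n \deg(f_i) \;-\; \sum_{k=1}^n (k-1) \;=\; \sum_{i=1}^n \deg(f_i) - \frac{n(n-1)}{2},
\]
since $\sigma$ is a bijection of $\{1,\dots,n\}$. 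Thus every nonzero summand has the same degree, so their sum is either zero or homogeneous of that common degree. Since we assumed $\mathrm{Wr}(f_1,\dots,f_n)\neq 0$, the Wronskian is homogeneous of degree $\sum_i \deg(f_i) - \frac{(n-1)n}{2}$, as required.

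There is no genuine obstacle here; the only subtle point is that individual summands in the Leibniz expansion may vanish (because some $f_{\sigma(k)}^{(k-1)}$ may be zero when $k-1 > \deg(f_{\sigma(k)})$), but this does not affect the degree computation since the remaining nonzero terms still all share the stated degree.
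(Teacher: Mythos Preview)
Your proof is correct. The paper takes a different route: it proceeds by induction on $n$, expanding the determinant along the first row and applying the inductive hypothesis to the resulting $(n-1)\times(n-1)$ minors. Your Leibniz-expansion argument is more direct and avoids induction entirely, since the observation that $\sigma$ is a bijection immediately gives every nonzero summand the same degree. The inductive approach in the paper is slightly more hands-on but requires tracking the degree through the cofactor expansion; your approach trades that bookkeeping for a single clean permutation sum. Both arrive at the same formula with no real difference in depth.
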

\begin{proof}
We proceed by induction on $n$, the case $n=1$ being trivial. Suppose the lemma holds for all positive integers less than $n$. Then 
\[
\mathrm{det}\begin{bmatrix}
    f_1 & f_2 & f_3 & \dots  & f_n \\
    f_1^{(1)} & f_2^{(1)} & f_3^{(1)} & \dots  & f_n^{(1)} \\
    \vdots & \vdots & \vdots & \ddots & \vdots \\
    f_1^{(n-1)} & f_2^{(n-1)} & f_3^{(n-1)} & \dots  & f_n^{(n-1)}
\end{bmatrix}=\sum_{i=1}^{n}(-1)^{i+1}f_i\begin{bmatrix}
    f_1^{(1)} & \hat{f}_i^{(1)} & \dots  & f_n^{(1)} \\
    f_1^{(2)} & \hat{f}_i^{(2)}  & \dots  & f_n^{(2)} \\
    \vdots & \vdots & \ddots & \vdots \\
    f_1^{(n-1)} & \hat{f}_i^{(n-1)}  & \dots  & f_n^{(n-1)}
\end{bmatrix}
\]
where the $\hat{}$ symbol denotes an omitted column. By the inductive hypothesis, each of the components of the sum is a homogeneous polynomial of degree 
\[
\mathrm{deg}(f_i)+\sum_{j,j\neq i}^n\mathrm{deg}(f_j^{(1)})-\frac{(n-2)(n-1)}{2} =\mathrm{deg}(f_i)+\sum_{j,j\neq i}^n\mathrm{deg}(f_j)-(n-1)-\frac{(n-2)(n-1)}{2}.
\]
This then simplifies to
\[
\sum_{i=1}^n \mathrm{deg}(f_i)-(n-1)-\frac{(n-2)(n-1)}{2}=\sum_{i=1}^n\mathrm{deg}(f_i)-\frac{(n-1)n}{2}.
\]
\end{proof}
In the following lemma it is important to recall that when considering the polynomials defined in \ref{equation15} the generators $f_{ij}$ have degree $j$. For instance the polynomial 
\[
u^{3}+f_{12}u
\]
is homogeneous as both $u^3$ and $f_{12}u$ have degree $3$.
\begin{lem}\label{lem15}
Let $\lambda\vdash n$ be a partition of $n$, and $\{f_1,\dots ,f_n\}$ the family of polynomials as defined in (\ref{equation15}). The Wronskian $\mathrm{Wr}(f_1,\dots, f_n)$ is a homogeneous polynomial of degree $n$.
\end{lem}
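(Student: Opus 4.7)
The plan is to apply Lemma~\ref{lem14} directly to the family $\{f_1, \dots, f_n\}$. The first step is to verify that each $f_i$ is homogeneous with respect to the grading where $\deg(u) = 1$ and $\deg(f_{ij}) = j$ (as stipulated earlier in the section). This is immediate from the defining formula~\eqref{equation15}: each monomial $f_{ij} u^{d_i - j}$ has total degree $j + (d_i - j) = d_i$, matching the leading term $u^{d_i}$. So $f_i$ is homogeneous of degree $d_i = \lambda_i + n - i$.

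Before invoking Lemma~\ref{lem14}, I must confirm its hypothesis that $\mathrm{Wr}(f_1,\dots,f_n) \neq 0$ in $\mathbb{C}[f_{ij}][u]$. The cleanest route is a specialisation argument: set every indeterminate $f_{ij}$ equal to zero. The resulting expression is $\mathrm{Wr}(u^{d_1},\dots,u^{d_n})$, which factors as a monomial in $u$ times the constant $\det\bigl(d_i(d_i-1)\cdots(d_i-j+1)\bigr)_{i,j}$. Since the $d_i$ are strictly decreasing and non-negative, this determinant (a falling-factorial analogue of a Vandermonde) is non-zero, so the specialisation of the Wronskian is non-zero, and therefore the Wronskian itself is non-zero.

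With both hypotheses in hand, Lemma~\ref{lem14} yields that $\mathrm{Wr}(f_1,\dots,f_n)$ is homogeneous of degree $\sum_{i=1}^n d_i - \tfrac{n(n-1)}{2}$. Substituting $d_i = \lambda_i + n - i$ gives
$$\sum_{i=1}^n (\lambda_i + n - i) - \frac{n(n-1)}{2} \;=\; |\lambda| + \sum_{i=1}^n (n-i) - \frac{n(n-1)}{2} \;=\; n + \frac{n(n-1)}{2} - \frac{n(n-1)}{2} \;=\; n,$$
which completes the argument. There is no real obstacle here: the only non-routine ingredient is the non-vanishing of $\mathrm{Wr}(f_1,\dots,f_n)$, and the specialisation trick handles it cleanly.
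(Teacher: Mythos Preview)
Your proof is correct and follows essentially the same route as the paper: verify the $f_i$ are homogeneous of degree $d_i$, invoke Lemma~\ref{lem14}, and compute $\sum d_i - \tfrac{n(n-1)}{2} = n$. The one difference is that you explicitly check the non-vanishing hypothesis $\mathrm{Wr}(f_1,\dots,f_n)\neq 0$ via specialisation to $f_{ij}=0$, whereas the paper's proof simply writes ``if the Wronskian is non-zero'' and defers this point (it is effectively handled later by Lemmas~\ref{lem24} and~\ref{lem25}); your treatment is therefore slightly more self-contained at this stage.
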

\begin{proof}
By Lemma~\ref{lem14} if the Wronskian is non-zero then it is a homogeneous polynomial with degree $\sum_{i}(\mathrm{deg}(f_i))-\frac{(n-1)(n)}{2}$. We have $\mathrm{deg}(f_i)=d_i=\lambda_i+n-i$. Hence 
\[
\sum_i^{n}(\mathrm{deg}(f_i))-\frac{(n-1)(n)}{2}=\sum^n_i(\lambda_i+n-i)-\frac{(n-1)(n)}{2}=\sum^n_i \lambda_i +\sum^n_i n+ \sum^n_i -i-\frac{(n-1)(n)}{2},
\]
and 
\[
\sum^n_i \lambda_i +\sum^n_i n+ \sum^n_i -i-\frac{(n-1)(n)}{2}=n+n^2-\frac{n(n+1)}{2}-\frac{(n-1)(n)}{2}=n.
\]
\end{proof}
It will be important to us that $A(\lambda)^+$ is a complete intersection. to prove this we need to use the following non-trivial fact which is can be found in \cite[Lemma 3.11]{SchubertGLN}.
\begin{lem}\label{lem16}
The algebra $A(\lambda)^+$ is finite dimensional.
\end{lem}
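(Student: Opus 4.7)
The cleanest approach is to bypass the Wronskian/Schubert picture entirely and exploit the representation-theoretic definition $A(\lambda)^+ = \End_{\overline{H}_c(W)}(\Delta(\lambda))$. The plan is to prove that $\Delta(\lambda)$ has finite dimension as a $\mathbb{C}$-vector space; then $A(\lambda)^+$, being a subalgebra of $\End_{\mathbb{C}}(\Delta(\lambda))$, is automatically finite-dimensional. This replaces a geometric problem about the fibre of the Wronski map with a standard structural fact about restricted rational Cherednik algebras.

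To execute this, I would invoke the PBW theorem for $H_{0,c}(W)$ (already invoked in Theorem~\ref{thm5}), which supplies a $\mathbb{C}$-vector space isomorphism $H_{0,c}(W) \cong \mathbb{C}[\mathfrak{h}] \otimes \mathbb{C} W \otimes \mathbb{C}[\mathfrak{h}^*]$. Quotienting by the ideal generated by $R_+$ replaces each polynomial factor by the corresponding coinvariant algebra, yielding a vector-space isomorphism $\overline{H}_c(W) \cong \mathbb{C}[\mathfrak{h}]^{coW} \otimes \mathbb{C} W \otimes \mathbb{C}[\mathfrak{h}^*]^{coW}$. By the Chevalley--Shephard--Todd theorem each coinvariant algebra has total dimension $|W|$, so $\dim_{\mathbb{C}} \overline{H}_c(W) = |W|^3$ is finite. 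The baby Verma module $\Delta(\lambda)$ is generated as an $\overline{H}_c(W)$-module by the image of the finite-dimensional subspace $\lambda$, so $\dim_{\mathbb{C}}\Delta(\lambda) \leq |W|^3 \cdot \dim\lambda < \infty$, as required.

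I expect essentially no obstacle on this route: every input is classical and already implicit in the paper's setup. For contrast, the approach suggested by the paper's reference to \cite[Lemma~3.11]{SchubertGLN} would proceed via Theorem~\ref{thm20}, which identifies $A(\lambda)^+ \cong \mathbb{C}[\mathrm{Wr}_\lambda^{-1}(0)]$, and then show that the Wronski map $\Omega_\lambda^{qe} \to \mathbb{A}^n$ is quasi-finite at the origin. Via the graded Nakayama lemma this reduces to verifying that the relations $r_1,\ldots,r_n$ cut out set-theoretically only the origin of $\Omega_\lambda^{qe}\cong\mathbb{A}^n$; this is a genuine geometric statement about Schubert cells and is where the real work would lie. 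The Cherednik-algebra argument sidesteps it completely, which is why I would prefer it.
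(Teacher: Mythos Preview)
Your argument is correct and genuinely more elementary than the paper's. The paper does not actually prove Lemma~\ref{lem16} but outsources it to \cite[Lemma~3.11]{SchubertGLN}, treating finite-dimensionality as a fact about the fibre $\mathrm{Wr}_\lambda^{-1}(0)$ of the Wronski map---which, as you note, requires real geometric input about Schubert cells. You instead return to the original definition $A(\lambda)^+=\End_{\overline{H}_c(W)}(\Delta(\lambda))$ from Section~3 and observe that $\overline{H}_c(W)$ is itself finite-dimensional over $\C$ (an immediate consequence of PBW plus Chevalley--Shephard--Todd, both already invoked in the paper), whence $\Delta(\lambda)$ and its endomorphism ring are finite-dimensional. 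This is entirely self-contained within the paper's existing setup and avoids the Schubert literature. The paper's route has the mild narrative advantage of staying inside the Wronskian presentation being developed in Section~6, and it gives an independent confirmation that the scheme-theoretic fibre is zero-dimensional; but for the bare statement of the lemma your argument is strictly simpler and arguably what the author should have written.
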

\begin{lem}\label{lem17}
For any $\lambda\vdash n$, the algebra $A(\lambda)^+$ is a complete intersection.
\end{lem}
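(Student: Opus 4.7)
The plan is to exhibit $A(\lambda)^+$ as a quotient of a polynomial ring in exactly $n$ variables by an ideal generated by exactly $n$ elements, and then invoke the Cohen--Macaulay property of the polynomial ring together with the finite dimensionality of the quotient to conclude that those $n$ elements form a regular sequence.

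First, I will count generators. Recall from the definition preceding \eqref{equation15} that
\[
\mathbb{C}[\Omega_\lambda^{qe}] = \mathbb{C}[f_{ij} \mid 1 \le i \le n,\ 1 \le j \le d_i,\ d_i - j \notin P]
\]
is a polynomial algebra. By Lemma~\ref{lem1}, for each fixed $i$ the number of admissible indices $j$ is exactly $\lambda_i$, so the total number of polynomial generators is $\sum_{i=1}^n \lambda_i = n$. In particular $\mathbb{C}[\Omega_\lambda^{qe}]$ has Krull dimension $n$ and is a graded polynomial ring, hence Cohen--Macaulay.

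Next, by Theorem~\ref{thm20} and \eqref{equation16} with $a=0$, I have $A(\lambda)^+ \cong \mathbb{C}[\Omega_\lambda^{qe}]/I$, where $I = (r_1, \ldots, r_n)$ is generated by the $n$ coefficients of the Wronskian $\mathrm{Wr}(f_1, \ldots, f_n) = u^n + r_1 u^{n-1} + \cdots + r_n$. Thus we have a presentation of $A(\lambda)^+$ as a quotient of a regular ring of Krull dimension $n$ by an ideal with exactly $n$ generators.

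Finally, Lemma~\ref{lem16} tells us that $A(\lambda)^+$ is finite dimensional over $\mathbb{C}$, hence of Krull dimension zero. Since the quotient of the Cohen--Macaulay ring $\mathbb{C}[\Omega_\lambda^{qe}]$ of dimension $n$ by the $n$ elements $r_1, \ldots, r_n$ has dimension $n - n = 0$, a standard result in commutative algebra (for example \cite[Theorem 17.4]{matsumura1989commutative}, or equivalently the coincidence of depth and codimension in the Cohen--Macaulay setting) implies that $r_1, \ldots, r_n$ form a regular sequence in $\mathbb{C}[\Omega_\lambda^{qe}]$. Consequently $A(\lambda)^+$ is a complete intersection, as claimed.

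There is no serious obstacle here: the whole argument is a dimension count, and the only non-trivial inputs are the enumeration in Lemma~\ref{lem1} (giving the generator count $n$) and the finite dimensionality statement of Lemma~\ref{lem16}, both already established. The point is simply that a system of parameters of the right length in a Cohen--Macaulay ring is automatically regular.
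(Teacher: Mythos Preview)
Your proof is correct and follows essentially the same approach as the paper's. The paper's argument is terser---it simply asserts that $A(\lambda)^+$ has $n$ generators and $n$ relations, then uses finite dimensionality (Lemma~\ref{lem16}) to deduce Krull dimension zero via the Artinian property---whereas you spell out the generator count explicitly via Lemma~\ref{lem1} and name the Cohen--Macaulay property of the polynomial ring, but the underlying logic is identical.
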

\begin{proof}
The algebra $A(\lambda)^+$ has $n$ generators and $n$ relations. Therefore, it is a complete intersection if and only if its Krull dimension is zero.  Since $A(\lambda)^+$ is finite dimensional as a vector space it is Artinian and therefore every prime ideal is maximal \cite[Proposition 8.1]{atiyah2018introduction}. Hence it has Krull dimension zero.
\end{proof}
Assume that $\C[x_1, \dots, x_m]/(g_1,\dots,g_n)$ is graded with $\deg (x_i) = a_i > 0$, so that each $g_j$ is homogeneous, of degree $b_j$ say. The Hilbert-Poincar\'e polynomial of $A$ is defined to be
\[
P(A,q) := \sum_{i \ge 0} (\dim A_i) q^i.
\]
\begin{lem}\label{lem18}
If $A$ is a graded complete intersection then
\[
P(A,q) = \frac{\prod_{i = 1}^t (1-q^{b_i})}{\prod_{j = 1}^n (1-q^{a_j})}.
\]
\end{lem}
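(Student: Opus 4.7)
The plan is to use a Koszul resolution. Since $A$ is a graded complete intersection, the defining homogeneous elements $g_1,\dots,g_n$ form a regular sequence in the graded polynomial ring $B=\C[x_1,\dots,x_m]$. In the graded setting this is automatic from the dimension condition $\dim A = m-n$ that ``complete intersection'' encodes, combined with the fact that $B$ is Cohen--Macaulay, so any sequence of homogeneous elements whose quotient drops dimension by the expected amount is regular. Consequently the Koszul complex
\[
K_\bullet = K_\bullet(g_1,\dots,g_n;B),\qquad K_p=\bigoplus_{1\leq i_1<\cdots<i_p\leq n} B(-b_{i_1}-\cdots-b_{i_p}),
\]
is an exact graded free resolution of $A$ as a graded $B$-module.

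Next I would invoke additivity of the Hilbert--Poincar\'e series on short exact sequences, which extends by induction on the length to an alternating-sum identity for any bounded exact complex of finitely generated graded modules. Applied to $K_\bullet\twoheadrightarrow A$, and using that twisting by $(-b)$ multiplies the Hilbert series by $q^b$, this yields
\[
P(A,q)=\sum_{p=0}^{n}(-1)^p P(K_p,q)=P(B,q)\cdot \sum_{p=0}^{n}(-1)^p e_p(q^{b_1},\dots,q^{b_n})=P(B,q)\cdot\prod_{j=1}^{n}\bigl(1-q^{b_j}\bigr),
\]
where $e_p$ denotes the elementary symmetric polynomial. The last equality is the standard factorisation of $\prod_j(1-q^{b_j})$ in terms of elementary symmetric functions.

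Finally, one combines this with the classical computation
\[
P(B,q)=\prod_{i=1}^{m}\frac{1}{1-q^{a_i}},
\]
obtained by noting that the monomials form a vector space basis of $B$ and summing the resulting geometric series in each variable separately. Substituting gives the stated formula. The only step requiring real care is the verification that $g_1,\dots,g_n$ is a regular sequence; if the paper takes ``complete intersection'' to mean literally this, there is nothing to check, while if it is taken in the dimension-theoretic sense of Lemma~\ref{lem17} one appeals to the Cohen--Macaulay property of $B$. Everything else --- Koszul exactness for a regular sequence and additivity of Hilbert series --- is standard.
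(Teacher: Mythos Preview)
Your proposal is correct and follows essentially the same route as the paper: both arguments take the Koszul resolution of $A$ over the polynomial ring and apply the Euler--Poincar\'e principle (additivity of Hilbert series along an exact complex) to obtain the product formula. Your version is somewhat more explicit about why the sequence $g_1,\dots,g_n$ is regular, whereas the paper simply invokes the Koszul resolution directly; otherwise the two proofs are the same, merely phrased via shifts $B(-b_{i_1}-\cdots-b_{i_p})$ in your case and via exterior powers $\wedge^j V$ in the paper's.
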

\begin{proof}
Since $A$ is a complete intersection we can write
\[
 A=\frac{\C[x_1, \dots, x_m]}{(g_1, \dots, g_t)}
\]
where $(g_1, \dots, g_t)$ is a regular sequence. Let $S = \C[x_1, \dots, x_m]$. The Koszul resolution \cite[Chapter 17]{Eisenbud} can be used to resolve $A$ as a graded $S$-module. Let $V = \mathrm{Span}_{\C} \{ g_1, \dots, g_t \}$, a graded vector space. Then
\[
0 \rightarrow \wedge^t V \otimes_{\C} S \rightarrow \wedge^{t-1} V \otimes_{\C} S \rightarrow \dots \rightarrow \wedge^0 V \otimes S \rightarrow A \to 0
\]
is an exact sequence of graded $S$-modules since $(g_1, \dots, g_t)$ is regular \cite[Corollary 1.6.14 (b)]{CohMac}. The maps in the Koszul resolution are $d_k:\wedge^k V \otimes_{\C} S \rightarrow \wedge^{k-1} V \otimes_{\C} S$ and defined on elements
\[
d_k(v_1\wedge\dots\wedge v_t\otimes f)=\sum^t_{i=1} (-1)^{i+1}v_1\wedge\dots \wedge \hat{v_i}\wedge\dots\wedge v_t\otimes v_i f,
\]
here $\hat{v_i}$ means the term $v_i$ is omitted. This implies (by the “Euler-Poincar\'e principle’’) that
\[
P(A,q)  =  \sum_{j = 0}^t (-1)^{j+1} P(\wedge^{j} V \otimes_{\mathbb{C}} S, q) 
\]
\[
= \left( \sum_{j = 0}^t (-1)^{j+1} P(\wedge^{j} V, q) \right) P(S,q) .
\]
We have
\[
P(S,q) = \left( \prod_{j =1}^n (1-q^{a_j}) \right)^{-1} \textnormal{ and } \sum_{j = 0}^t (-1)^{j+1} P(\wedge^{j} V, q) = \prod_{i = 1}^t (1- q^{b_i}).
\]
\end{proof}
We can now present the formula for calculating the graded dimension of $A(\lambda)^+$. Recall that $D_\lambda$ denotes the Young diagram for a partition $\lambda$ and $h(i,j)$ is the hook length of the cell $(i,j)$.
\begin{thm}\label{thm21}
For any $A(\lambda)^+$, we have
\[
\sum_{i \ge 0} (\dim A(\lambda)^+_i) q^i=\frac{\prod_{i = 1}^n (1-q^{i})}{\prod_{(i,j)\in D_\lambda} (1-q^{h(i,j)})}.
\]
\end{thm}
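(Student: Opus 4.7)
The plan is to recognize that the theorem is a direct application of Lemma~\ref{lem18} to $A(\lambda)^+$, once we correctly identify the degrees of the generators and relations in the presentation.

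First I would invoke Lemma~\ref{lem17} to conclude that $A(\lambda)^+$ is a graded complete intersection, presented as
\[
A(\lambda)^+ = \mathbb{C}[f_{ij} \mid 1 \le i \le n,\ 1 \le j \le d_i,\ d_i - j \notin P] / (r_1, \dots, r_n),
\]
where the $r_s$ are the coefficients of the Wronskian $\mathrm{Wr}(f_1, \dots, f_n) = u^n + r_1 u^{n-1} + \cdots + r_n$. Lemma~\ref{lem18} then gives
\[
P(A(\lambda)^+, q) = \frac{\prod_{s=1}^{n}(1-q^{\deg r_s})}{\prod_{(i,j)}(1-q^{\deg f_{ij}})},
\]
so all that remains is to compute the two sets of degrees.

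For the numerator, I would use Lemma~\ref{lem15}, which shows that $\mathrm{Wr}(f_1,\dots,f_n)$ is homogeneous of total degree $n$ (where $u$ is assigned degree $1$). Writing $\mathrm{Wr}(f_1, \dots, f_n) = u^n + \sum_{s=1}^n r_s u^{n-s}$ and equating degrees in each term forces $\deg(r_s) = s$ for every $s = 1, \dots, n$. Hence the numerator becomes $\prod_{s=1}^n (1-q^s)$, matching the numerator in the stated formula.

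For the denominator, I would use the grading convention $\deg(f_{ij}) = j$ coming from (\ref{equation15}). The generators in the $i^{th}$ row of the presentation are indexed by those $j \in \{1, \dots, d_i\}$ with $d_i - j \notin P$. Lemma~\ref{lem2} identifies this index set exactly with the set of hook lengths of row $i$ in the Young diagram $D_\lambda$. Therefore
\[
\prod_{(i,j)}(1 - q^{\deg f_{ij}}) = \prod_{i=1}^{n} \prod_{j :\ d_i - j \notin P}(1 - q^{j}) = \prod_{(i,j) \in D_\lambda}(1 - q^{h(i,j)}),
\]
which gives the denominator in the claim. Substituting both products into the Koszul formula from Lemma~\ref{lem18} yields the desired Hilbert--Poincaré series. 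There is no real obstacle here beyond the bookkeeping of degrees; the nontrivial content is contained in Lemmas~\ref{lem2}, \ref{lem15}, \ref{lem17}, and \ref{lem18}, which have already been established.
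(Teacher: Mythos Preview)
Your proposal is correct and follows essentially the same approach as the paper: both invoke Lemma~\ref{lem17} to get the complete intersection property, apply the Koszul formula of Lemma~\ref{lem18}, and use Lemma~\ref{lem2} to identify the generator degrees with hook lengths. You are slightly more explicit than the paper in invoking Lemma~\ref{lem15} to justify $\deg(r_s)=s$, where the paper simply writes ``by definition of $A(\lambda)^+$'', but this is a difference in exposition rather than in strategy.
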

\begin{proof}
Since $A(\lambda)^+$ is a complete intersection by Lemma~\ref{lem17}, Lemma~\ref{lem18} implies that
\[
\sum_{i \ge 0} (\dim A(\lambda)^+_i) q^i=P(A(\lambda)^+,q) = \frac{\prod_{i = 1}^t (1-q^{b_i})}{\prod_{j = 1}^n (1-q^{a_j})}.
\]
Lemma~\ref{lem2} says that $\prod_{j = 1}^n (1-q^{a_j})=\prod_{(i,j)\in D_\lambda} (1-q^{h(i,j)})$. By definition of $A(\lambda)^+$,
\[
\prod_{i = 1}^t (1-q^{b_i})=\prod_{i = 1}^n (1-q^{i}).
\]
Hence,
\[
\sum_{i \ge 0} (\dim A(\lambda)^+_i) q^i=\frac{\prod_{i = 1}^n (1-q^{i})}{\prod_{(i,j)\in D_\lambda} (1-q^{h(i,j)})}.
\]
\end{proof}
The following example highlights how easy this formula is to use.
\begin{example}
The Young diagram for the partition $\lambda=(3,1,0,0)$ is
\begin{center}
\begin{ytableau}
       4 & 2 & 1  \\
       1
\end{ytableau}
\end{center}
and therefore 
\[
\sum_{i \ge 0} (\dim A(\lambda)^+_i) q^i=\frac{(1-q)(1-q^2)(1-q^3)(1-q^4)}{(1-q^4)(1-q^2)(1-q)(1-q)}=\frac{1-q^3}{1-q}=1+q+q^2.
\]
Hence $A(\lambda)^+$ consists of a one dimensional space in degrees $0$, $1$ and $2$. 
\end{example}
The formula for the graded dimensions allows us to calculate the dimension of the entire algebra $A(\lambda)^+$.
\begin{thm}\label{thm22}
The dimension of $A(\lambda)^+$ is given by the hook length formula
\[
\dim A(\lambda)^+=
\frac{n!}{\prod_{(i,j)\in D_\lambda}h(i,j)}.
\]
\end{thm}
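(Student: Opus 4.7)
The plan is to evaluate the Hilbert--Poincar\'e series from Theorem~\ref{thm21} at $q=1$. Since $A(\lambda)^+$ is finite dimensional by Lemma~\ref{lem16}, the total dimension equals
\[
\dim A(\lambda)^+ = \sum_{i \ge 0} \dim A(\lambda)^+_i = \lim_{q \to 1} \frac{\prod_{i=1}^n (1-q^i)}{\prod_{(i,j)\in D_\lambda}(1-q^{h(i,j)})}.
\]
Both the numerator and denominator vanish at $q=1$, so the limit requires some care. The key observation is that both products contain exactly $n$ factors: the numerator has one factor for each $i\in\{1,\dots,n\}$, and the denominator has one factor for each cell of $D_\lambda$, of which there are precisely $|\lambda|=n$.

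The main step is to extract a factor $(1-q)$ from each term, using the identity $1-q^k = (1-q)(1+q+\cdots+q^{k-1})$. This rewrites the ratio as
\[
\frac{(1-q)^n \prod_{i=1}^n (1+q+\cdots+q^{i-1})}{(1-q)^n \prod_{(i,j)\in D_\lambda}(1+q+\cdots+q^{h(i,j)-1})} = \frac{\prod_{i=1}^n (1+q+\cdots+q^{i-1})}{\prod_{(i,j)\in D_\lambda}(1+q+\cdots+q^{h(i,j)-1})}.
\]
After this cancellation the expression is continuous at $q=1$, and direct evaluation gives $i$ in place of $1+q+\cdots+q^{i-1}$ and $h(i,j)$ in place of $1+q+\cdots+q^{h(i,j)-1}$. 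Therefore the limit equals
\[
\frac{\prod_{i=1}^n i}{\prod_{(i,j)\in D_\lambda} h(i,j)} = \frac{n!}{\prod_{(i,j)\in D_\lambda} h(i,j)},
\]
as required.

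There is no serious obstacle here: the only subtlety is ensuring that the multiplicities of $(1-q)$ in the numerator and denominator of the Hilbert series agree, so that no indeterminate form remains after cancellation. This is guaranteed by $|D_\lambda|=n$. One could equivalently apply L'H\^opital's rule $n$ times, or observe that this is exactly the classical derivation of the hook length formula from the principal specialization of the Schur function mentioned before Theorem~\ref{thm21}, but the factoring argument above is the most direct.
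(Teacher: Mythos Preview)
Your proof is correct and takes essentially the same approach as the paper: both evaluate the Hilbert--Poincar\'e series of Theorem~\ref{thm21} at $q=1$ to obtain the total dimension. The paper invokes repeated applications of L'H\^opital's rule without further detail, whereas you carry out the limit explicitly by factoring $1-q^k=(1-q)(1+q+\cdots+q^{k-1})$ and cancelling the common $(1-q)^n$; this is cleaner and makes precise why the limit exists (namely because $|D_\lambda|=n$), but the underlying idea is identical.
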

\begin{proof}
We have the formula 
\[
\sum_{i \ge 0} (\dim A_i) q^i=\frac{\prod_{i = 1}^n (1-q^{i})}{\prod_{(i,j)\in D_\lambda} (1-q^{h(i,j)})}.
\]
We can use L'Hopitals rule to evaluate the formula when $q=1$. Clearly the left hand side gives the dimension of $A$. Repeated applications of L'Hoptials rule to the right hand side gives
\[
\frac{n!}{\prod_{(i,j)\in D_\lambda}h(i,j)}.
\]
\end{proof}
\section{Calculating $A(\lambda)^+$ directly from the partition}
As demonstrated in Section $6$ we have an algorithm for calculating the algebras $A(\lambda)^+$ using the Wronskian. Here we will refine this result and show that calculating the Wronskian is unnecessary. In fact, the algebra $A(\lambda)^+$ can be derived directly from the partition $\lambda$. Key to providing an explicit presentation of $A(\lambda)^+$ directly from a partition $\lambda\vdash n$ is understanding how the coefficients of the terms appear in the Wronskian. We shall split the problem of understanding the coefficients into two distinct cases. We say that terms of the form $f_{i,j}$ are linear and terms of the form $f_{i_1,j_1}f_{i_2,j_2}\dots f_{i_m,j_m}$ for $m>1$ are non-linear. We begin with the simpler task of understanding the linear terms. 

The first question we want to answer is how often do linear terms appear in the coefficients of the Wronskian for a given partition of $n$? It is fairly straightforward to see that each linear term can appear in only one coefficient. Recall that algebra $A(\lambda)^+$ can be written as 
\[
\frac{\mathbb{C}[f_{ij},\, i=1,\dots, n,\,\ j=1,\,\dots ,d_i,\, d_i-j\not\in P ]}{(r_1,\dots, r_n)}
\]
where the $r_s$ are homogeneous elements and $\deg(r_s)=s$. Abusing terminology we say that ``a monomial $m$ in the $f_{i,j}$ appears in $r_s$'' if the coefficient of $m$ in $r_s$ is non-zero. 
\begin{lem}\label{lem19}
If the linear term $f_{i,j}$ appears as a monomial in one of the elements $r_s$, then $j=s$. 
\end{lem}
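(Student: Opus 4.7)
The plan is essentially to combine the homogeneity of the Wronskian (Lemma~\ref{lem15}) with the grading on $\mathbb{C}[\Omega_\lambda^{qe}]$ under which $\deg(f_{i,j}) = j$. The whole statement then collapses into a degree comparison.

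First I would recall the grading convention set up in Section~6: we declare $\deg(u) = 1$ and, in order for each $f_i = u^{d_i} + \sum_{j} f_{i,j} u^{d_i - j}$ to be homogeneous, we are forced to take $\deg(f_{i,j}) = j$. With this convention each $f_i$ is homogeneous of degree $d_i$, so by Lemma~\ref{lem15} the Wronskian $\mathrm{Wr}(f_1,\dots,f_n)$ is homogeneous of total degree $n$.

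Next I would expand $\mathrm{Wr}(f_1,\dots,f_n) = u^n + r_1 u^{n-1} + \cdots + r_n$ and observe that, since each summand $r_s u^{n-s}$ must be homogeneous of total degree $n$ and $\deg(u^{n-s}) = n-s$, the coefficient $r_s$ is itself homogeneous of degree exactly $s$ in the variables $f_{i,j}$. In particular every monomial appearing in $r_s$ has total degree $s$.

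Finally I would specialise this to the linear case: a linear monomial in $r_s$ is of the form $f_{i,j}$ for some $(i,j)$, and its degree in our grading is $j$. Homogeneity of $r_s$ forces $j = s$, which is exactly the statement of the lemma. There is no real obstacle here — the only thing one has to be a little careful about is to verify that the $f_i$ really are homogeneous in the chosen grading, but this follows immediately from Lemma~\ref{lem2}, since the exponents $d_i - j$ that occur in $f_i$ are precisely those for which $j$ is a hook length in row $i$, and setting $\deg(f_{i,j}) = j$ ensures that each summand $f_{i,j} u^{d_i - j}$ matches the degree $d_i$ of the leading term $u^{d_i}$.
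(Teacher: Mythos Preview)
Your proposal is correct and takes essentially the same approach as the paper: both arguments use Lemma~\ref{lem15} to say the Wronskian is homogeneous of degree $n$, deduce that $r_s$ is homogeneous of degree $s$, and conclude via $\deg(f_{i,j})=j$. Your version is arguably a little cleaner, since the paper interjects that $A(\lambda)^+$ is a complete intersection to ensure each $r_i\neq 0$, a step which is unnecessary for the conditional statement of the lemma.
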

\begin{proof}
By Lemma~\ref{lem15} the Wronskian is a homogeneous polynomial of degree $n$. Also $A(\lambda)^+$ is a complete intersection and therefore the coefficient of $u^i$ is non-zero for all $0\leq i\leq n$. In other words, $r_i\neq 0$  for all $i$. Therefore, a linear coefficient of $u^i$ has degree $n-i$. Since the linear term $f_{i,j}$ has degree $j$ we conclude that it can only appear in $r_j$.
\end{proof}
The following is a partial converse to Lemma~\ref{lem19}.
\begin{lem}\label{lem20}
Consider a finite dimensional commutative ring
\[
A=\frac{\C[x_1,\dots, x_n]}{(r_1,\dots, r_n)},
\]
where the relations $r_i$ are homogeneous and do not contain any constant terms. For each $1\leq j\leq n$, there exist $k\geq 1$ and $i$ such that $r_i$ contains the monomial $x_j^k$.
\end{lem}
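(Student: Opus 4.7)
The plan is to argue by contradiction: assume some variable $x_j$ never appears as a pure power monomial in any $r_i$, construct an explicit surjection $A \twoheadrightarrow \C[x_j]$, and use finite-dimensionality of $A$ to derive a contradiction.

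Fix $j$ and suppose, for the sake of contradiction, that no $r_i$ contains a monomial of the form $x_j^k$ for any $k \geq 1$. Consider the $\C$-algebra homomorphism
\[
\phi \colon \C[x_1,\dots,x_n] \longrightarrow \C[x_j], \qquad \phi(x_j) = x_j, \quad \phi(x_i) = 0 \text{ for } i \neq j.
\]
This map sends a monomial $x_1^{a_1}\cdots x_n^{a_n}$ to $0$ unless $a_i = 0$ for every $i \neq j$, in which case it is sent to $x_j^{a_j}$. In particular, $\phi$ annihilates every monomial which involves any variable other than $x_j$. Since we assumed that the only monomials in each $r_i$ involve some variable distinct from $x_j$ (recall that $r_i$ has no constant term by hypothesis), we conclude $\phi(r_i) = 0$ for every $i$.

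Therefore the ideal $(r_1,\dots,r_n)$ is contained in $\ker \phi$, so $\phi$ descends to a $\C$-algebra homomorphism $\bar\phi \colon A \to \C[x_j]$. This map is surjective because the image of the class of $x_j$ generates $\C[x_j]$ as a $\C$-algebra. But $A$ is finite-dimensional as a $\C$-vector space by hypothesis, while $\C[x_j]$ is infinite-dimensional, which is the desired contradiction. Hence some $r_i$ must contain a monomial of the form $x_j^k$ for some $k \geq 1$.

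The argument is essentially immediate once one notices that a missing pure power lets one project away all the other variables without touching any relation; the only genuine subtlety is remembering the hypothesis that the $r_i$ have no constant terms, which is what guarantees $\phi(r_i) = 0$ rather than merely $\phi(r_i) \in \C$. No serious obstacle arises.
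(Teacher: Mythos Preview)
Your proof is correct and follows essentially the same idea as the paper's: both argue by contradiction, assuming no pure power $x_j^k$ appears in any relation, and then exploit the observation that setting all other variables to zero kills every relation. The only cosmetic difference is that the paper evaluates at a specific nonzero point $x_j = c$ to get a map $A \to \C$ and derives $c^m = 0$, whereas you project onto the whole polynomial ring $\C[x_j]$ and use infinite-dimensionality directly; your version is arguably cleaner.
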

\begin{proof}
Argue by contradiction. Assume that $x_1^k$ does not appear in any $r_i$ for $k\geq 1$. Since the algebra is finite dimensional and positively graded, we must have $x_1^m=0$ for some $m$ and hence 
\[
x_1^m=\sum_l c_lr_l
\]
for $c_l\in \C$. Since every monomial with non-zero coefficient in $r_j$ is divisible by some $x_i$ with $i\neq 1$, there is a well defined evaluation morphism $ev_c:A\rightarrow \C$ that sends $x_1$ to some constant $c\neq 0$ and $x_i$ to $0$ for $i>1$. Then
\[
c^m=ev_c(x_1^m)=ev_c\left(\sum_l c_lr_l\right)=\sum_l c_l ev_c(r_l)=0,
\]
which is a contradiction. 
\end{proof}
\begin{lem}\label{lem21}
Let $f_1,\dots, f_n$ be the set of pairwise distinct polynomials as in \eqref{equation15}. All terms in the Wronskian $\mathrm{Wr}(f_1,\dots ,f_n)$ of the form $f_{i,j}^{k_1}\dots f_{u,v}^{k_z}$ have $k_s\leq 1$ for all $1\leq s\leq z$.
\end{lem}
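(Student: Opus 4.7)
The plan is to exploit the column structure of the Wronskian matrix. Observe that in
\[
\mathrm{Wr}(f_1,\dots,f_n)=\det\!\bigl[f_i^{(r-1)}\bigr]_{r,i=1}^{n},
\]
the entire $i$-th column consists of the polynomials $f_i, f_i', \dots, f_i^{(n-1)}$. Since $f_i = u^{d_i} + \sum_{j,\ d_i - j \notin P} f_{i,j}\, u^{d_i - j}$ is affine in the family $\{f_{i,j}\}_j$ (with $u$-dependent coefficients), every derivative $f_i^{(r-1)}$ is also affine in $\{f_{i,j}\}_j$, and crucially, variables $f_{i,j}$ with subscript $i$ appear in column $i$ and in no other column.

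Next, I expand the determinant via the Leibniz formula:
\[
\mathrm{Wr}(f_1,\dots,f_n) = \sum_{\sigma \in S_n} \mathrm{sgn}(\sigma) \prod_{i=1}^n f_i^{(\sigma(i)-1)}.
\]
In each summand, exactly one entry is taken from column $i$, for each $i$. Writing each such entry as a $\mathbb{C}[u]$-linear combination of $1$ and the generators $\{f_{i,j}\}_j$ and distributing the product, every resulting monomial in the $f_{i,j}$ contains at most one factor with first index $i$, for each $i = 1, \dots, n$. In particular, no $f_{i,j}$ can appear with exponent $\geq 2$ in any monomial produced by the expansion.

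Finally, I would note that cancellation among these expanded monomials when one collects like terms cannot create new monomials with higher powers; it can only kill or adjust the scalar coefficients of monomials already present. Hence every monomial appearing in $\mathrm{Wr}(f_1,\dots,f_n)$ with nonzero coefficient is squarefree in the variables $\{f_{i,j}\}$, which is precisely the claim $k_s \leq 1$. There is no real obstacle here: the argument is purely combinatorial-linear-algebraic and rests entirely on the fact that the $f_{i,j}$ are compartmentalised into their respective columns, so the main thing to be careful about in the write-up is tracking the column index correctly and noting that no cross-column interactions can ever produce a repeated factor.
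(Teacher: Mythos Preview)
Your argument is correct and is essentially the same as the paper's: both observe that each variable $f_{i,j}$ occurs only in the $i$-th column of the Wronskian matrix, and since the Leibniz expansion of the determinant selects exactly one entry from each column, no monomial can contain $f_{i,j}$ to a power greater than one. Your write-up is more explicit about the affine dependence and the harmless effect of cancellation, but the underlying idea is identical.
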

\begin{proof}
By writing the Wronskian
\[
\mathrm{Wr}(f_1,\dots, f_n)=\mathrm{det}\begin{bmatrix}
    f_1 & f_2 & f_3 & \dots  & f_n \\
    f_1^{(1)} & f_2^{(1)} & f_3^{(1)} & \dots  & f_n^{(1)} \\
    \vdots & \vdots & \vdots & \ddots & \vdots \\
    f_1^{(n-1)} & f_2^{(n-1)} & f_3^{(n-1)} & \dots  & f_n^{(n-1)}
\end{bmatrix}
\]
the statement becomes clearer. Each term in the determinant is some product of terms which do not share a row or column. Fix an $f_{i,j}$, then we see that it only appears in the $i^{th}$ column. Since the product of the terms in the Wronskian cannot share a column we see that if $f_{i,j}^{k}$ appears then $k= 1$.
\end{proof}
Lemma~\ref{lem21} states that in the case of the Wronskian our coefficients can not contain terms of a higher power than $1$, for instance we cannot have $f_{11}^2$ appearing in the relations. It is also clear that the coefficients of the Wronskian contain no constant terms, since they are homogeneous of positive degree. These observations give us the following lemma.
\begin{lem}\label{lem22}
Given a partition $\lambda\vdash n$, each $f_{i,j}$ appears as a linear term of a coefficient in the Wronskian.
\end{lem}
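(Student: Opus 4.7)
The plan is to combine three of the preceding lemmas: Lemma~\ref{lem19}, Lemma~\ref{lem20} and Lemma~\ref{lem21}. The key observation is that Lemma~\ref{lem20} applies to any finite dimensional graded quotient of a polynomial ring by homogeneous relations with no constant term, and guarantees that for each generator some power of it appears in one of the relations. Lemma~\ref{lem21} then forces that power to be $1$ in the Wronskian setting.

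First I would verify the hypotheses of Lemma~\ref{lem20} for $A(\lambda)^+$. By Lemma~\ref{lem1}, the number of generators $f_{i,j}$ of $\mathbb{C}[\Omega_\lambda^{qe}]$ is $\sum_i \lambda_i = n$, matching the number of relations $r_1,\dots,r_n$. The algebra $A(\lambda)^+$ is finite dimensional by Lemma~\ref{lem16}, so it is Artinian. The relations $r_s$ are homogeneous, being the coefficients of the Wronskian, which is a homogeneous polynomial by Lemma~\ref{lem15}; since $\mathrm{deg}(r_s)=s \geq 1$, none of the $r_s$ contains a constant term. Hence the hypotheses of Lemma~\ref{lem20} are met.

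Applying Lemma~\ref{lem20} yields, for each generator $f_{i,j}$, an integer $k \geq 1$ and an index $s$ such that the monomial $f_{i,j}^k$ appears with nonzero coefficient in $r_s$. By Lemma~\ref{lem21}, every monomial occurring in the Wronskian $\mathrm{Wr}(f_1,\dots,f_n)$ has all its exponents bounded by $1$. Since each $r_s$ is a coefficient (with respect to powers of $u$) of that Wronskian, its monomials inherit this property, forcing $k=1$. Therefore $f_{i,j}$ appears as a linear monomial in some $r_s$, and finally Lemma~\ref{lem19} pins down the relation: $s=j$, so $f_{i,j}$ appears as a linear term in $r_j$.

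The argument is short because all the technical content has been packaged into the three prior lemmas; the only point requiring any care is checking that the numbers of generators and relations of $A(\lambda)^+$ are both equal to $n$ so that Lemma~\ref{lem20} is applicable, which follows from Lemma~\ref{lem1}. No genuine obstacle arises in this step.
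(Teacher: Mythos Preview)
Your proof is correct and follows essentially the same approach as the paper's, which simply cites Lemma~\ref{lem20} and Lemma~\ref{lem21}. You are more careful in verifying the hypotheses of Lemma~\ref{lem20} (in particular the count of generators via Lemma~\ref{lem1}), and your final invocation of Lemma~\ref{lem19} to pin down $s=j$ goes slightly beyond what Lemma~\ref{lem22} asserts, effectively anticipating Proposition~\ref{prop5}.
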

\begin{proof}
Follows from Lemma~\ref{lem21} and Lemma~\ref{lem20}.
\end{proof}
Now that we have proven that each linear term must appear we can strengthen Lemma~\ref{lem19}. Using the same notation as before we have the following.
\begin{prop}\label{prop5}
Each linear term $f_{i,j}$ appears in $r_j$ and with non-zero coefficient.
\end{prop}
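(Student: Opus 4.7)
The plan is to combine Lemma~\ref{lem19} with Lemma~\ref{lem22}. Recall from the paragraph preceding Lemma~\ref{lem19} that the convention ``a monomial $m$ appears in $r_s$'' already means that the coefficient of $m$ in $r_s$ is non-zero, so the two assertions of the proposition collapse into a single one: each linear generator $f_{i,j}$ is one of the monomials making up $r_j$.

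First I would invoke Lemma~\ref{lem22}, which guarantees that for every admissible pair $(i,j)$ (that is, $1\le i\le n$ and $1\le j\le d_i$ with $d_i-j\notin P$), the generator $f_{i,j}$ appears with non-zero coefficient in at least one of the relations $r_1,\dots,r_n$. This step uses the full strength of the previous subsection: Lemma~\ref{lem21} forces every monomial of the Wronskian to be squarefree in the $f_{i,j}$, and Lemma~\ref{lem20} then forces each generator to show up somewhere, since otherwise an evaluation morphism setting that generator to a non-zero constant and all others to zero would contradict the finite-dimensionality of $A(\lambda)^+$ established in Lemma~\ref{lem16}.

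Next I would apply Lemma~\ref{lem19} to pin down exactly which relation receives $f_{i,j}$. By Lemma~\ref{lem15} the Wronskian is homogeneous of degree $n$, so $r_s$ is homogeneous of degree $s$ in the grading where $\deg(f_{i,j})=j$; hence a linear monomial $f_{i,j}$ can only sit inside the relation of matching degree, namely $r_j$. Combining the two observations, $f_{i,j}$ both must appear somewhere and can only appear in $r_j$, so it appears in $r_j$ with non-zero coefficient, which is the content of the proposition.

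I do not foresee a real obstacle here, since all the combinatorial and dimensional input has been done in the preceding lemmas; the one point that requires a little care is simply confirming that the definition of ``appears'' (non-zero coefficient) is the same one used in both Lemma~\ref{lem19} and Lemma~\ref{lem22}, so that the two statements can be chained without any gap.
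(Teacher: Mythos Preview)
Your proposal is correct and follows essentially the same approach as the paper: the paper's proof is simply the one-line ``Follows from Lemma~\ref{lem19} and Lemma~\ref{lem22},'' and you have merely unpacked this combination in detail.
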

\begin{proof}
Follows from Lemma~\ref{lem19} and Lemma~\ref{lem22}.
\end{proof}
Proposition~\ref{prop5} completely solves the problem of understanding the position of the linear terms. The next step is to prove an analogous statement for the non-linear terms. We will find that almost all non-linear terms appear in the coefficients, except for a specific few. We will need the following results first.
\begin{lem}\label{lem23}
Let $\{f_1,\dots ,f_n\}$ be the set of polynomials defined as in \eqref{equation15}. Assume there are polynomials $f_i$ and $f_j$ such that $f_i$ contains a term of the form $f_{i,s}u^k$ and $f_j$ contains a term $f_{j,t}u^k$. Then the Wronskian $\mathrm{Wr}(f_1,\dots f_n)$ contains no monomial divisible by $f_{i,s}f_{j,t}$.
\end{lem}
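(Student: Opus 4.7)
The strategy is to exploit multilinearity of the determinant in its columns and then exhibit a repeated column. Note first that the generator $f_{i,s}$ appears only in the $i$-th polynomial $f_i$ (and hence only in the $i$-th column of the Wronskian matrix), and similarly $f_{j,t}$ only in column $j$. Consequently, any monomial in $\mathrm{Wr}(f_1,\dots,f_n)$ that is divisible by $f_{i,s}f_{j,t}$ must arise from choices of terms that pick up the $f_{i,s}$-contribution in column $i$ and the $f_{j,t}$-contribution in column $j$ simultaneously.

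To isolate these contributions, decompose $f_i = f_{i,s}u^k + g_i$ and $f_j = f_{j,t}u^k + g_j$, where $g_i$, $g_j$ are the sums of the remaining terms, and note that $g_i$ contains no $f_{i,s}$ and $g_j$ contains no $f_{j,t}$. By multilinearity of the Wronskian in columns $i$ and $j$, one can expand $\mathrm{Wr}(f_1,\dots,f_n)$ as a sum of four Wronskians, and the only summand whose entries can contribute a factor of $f_{i,s}f_{j,t}$ is the one where column $i$ is replaced by (the derivatives of) $f_{i,s}u^k$ and column $j$ by (the derivatives of) $f_{j,t}u^k$.

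Now factor $f_{i,s}$ out of column $i$ and $f_{j,t}$ out of column $j$ in that summand. What remains is a Wronskian-type determinant in which columns $i$ and $j$ are both the vector $(u^k,\,k u^{k-1},\,k(k-1)u^{k-2},\dots,(u^k)^{(n-1)})^T$, i.e.\ they are identical. Hence this determinant vanishes, so the total coefficient of $f_{i,s}f_{j,t}$ in $\mathrm{Wr}(f_1,\dots,f_n)$ (and thus of any monomial divisible by $f_{i,s}f_{j,t}$) is zero. The main (very mild) obstacle is simply the bookkeeping needed to verify that no other summand in the multilinear expansion can contribute a monomial divisible by $f_{i,s}f_{j,t}$, which is immediate from the observation above that $g_i$ and $g_j$ do not contain these generators.
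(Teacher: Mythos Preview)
Your argument is correct and essentially the same as the paper's: both isolate the contribution to the Wronskian coming from the term $f_{i,s}u^k$ in column $i$ and $f_{j,t}u^k$ in column $j$, and observe that this contribution vanishes because the two resulting columns are identical. You frame this via multilinearity of the determinant and a repeated column, while the paper groups the Leibniz expansion into $2\times 2$ minors $f_i^{(a)}f_j^{(b)}-f_i^{(b)}f_j^{(a)}$ and checks that the relevant part of each minor vanishes; these are two presentations of the same idea.
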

\begin{proof}
The determinant is a sum of multiples of elements of different columns and different rows in the matrix. The terms $f_{i,s}$ and $f_{j,t}$ only appear in the columns $i$ and $j$ respectively. Hence, all the terms with form $f_{i,s}f_{j,t}$ appearing in the determinant come from an expression of the form $F(f_i^{(a)}f_j^{(b)}-f_{i}^{(b)}f_j^{(a)})$. Here $F$ is some multiple of entries from different rows and columns excluding columns $i$ and $j$. An easy calculation gives $f_{i,s}(u^k)^{(a)}f_{j,t}(u^k)^{(b)}-f_{i,s}(u^k)^{(b)}f_{j,t}(u^k)^{(a)}=0$. 
\end{proof}
There is a useful recursive formula for the Wronskian that can be found in \cite[Proposition 1]{krusemeyer1988does}.
\begin{prop}\label{prop6}
The recursive formula for the Wronskian is given by
\[
\mathrm{Wr}(f_1,\dots, f_n)=f_1^n \mathrm{Wr}\left(\left(\frac{f_2}{f_1}\right)',\dots, \left(\frac{f_n}{f_1}\right)'\right)
\]
where $\deg (f_i)<\deg (f_j)$ for $i<j$.
\end{prop}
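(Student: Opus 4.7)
The plan is to prove this recursive identity by combining two elementary and independent identities for the Wronskian, and then specialising.

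\textbf{Step 1: Scaling identity.} First I would prove the general identity
\[
\mathrm{Wr}(fg_1, fg_2, \ldots, fg_n) = f^n \, \mathrm{Wr}(g_1, g_2, \ldots, g_n)
\]
for any sufficiently differentiable functions $f, g_1,\ldots,g_n$ with $f$ invertible where needed. The key observation is that by the Leibniz rule,
\[
(fg_i)^{(k)} = \sum_{j=0}^{k} \binom{k}{j} f^{(k-j)} g_i^{(j)},
\]
so the Wronskian matrix of $(fg_1,\ldots,fg_n)$ factors as a product $AB$, where $A$ is the lower triangular $n \times n$ matrix with entries $A_{kj} = \binom{k}{j} f^{(k-j)}$ for $0 \le j \le k \le n-1$ (and $0$ otherwise), and $B$ is the matrix with entries $B_{ji} = g_i^{(j)}$. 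The diagonal of $A$ consists of the $n$ entries $A_{kk} = \binom{k}{k} f = f$, so $\det(A) = f^n$, while $\det(B) = \mathrm{Wr}(g_1,\ldots,g_n)$. The Cauchy--Binet / multiplicative formula for determinants then yields the claim.

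\textbf{Step 2: Reduction when one entry is constant.} Next I would show that if $h_1 = 1$, then
\[
\mathrm{Wr}(1, h_2, h_3, \ldots, h_n) = \mathrm{Wr}(h_2', h_3', \ldots, h_n').
\]
Indeed, the first column of the Wronskian matrix is $(1, 0, 0, \ldots, 0)^T$ since all higher derivatives of the constant function $1$ vanish. Expanding the determinant along this first column leaves the $(n-1)\times(n-1)$ minor with $(k,i)$-entry $h_i^{(k)}$ for $1 \le k \le n-1$ and $2 \le i \le n$. Reindexing by setting $\tilde{h}_i := h_i'$, this minor is precisely $(\tilde{h}_i^{(k-1)})$, i.e.\ the Wronskian of $(h_2', h_3', \ldots, h_n')$.

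\textbf{Step 3: Combine.} Finally, I would specialise Step 1 with $f = f_1$ and $g_i = f_i / f_1$ (setting $g_1 = 1$). This gives
\[
\mathrm{Wr}(f_1, f_2, \ldots, f_n) = f_1^n \, \mathrm{Wr}\!\left(1, \tfrac{f_2}{f_1}, \ldots, \tfrac{f_n}{f_1}\right),
\]
and then Step 2 converts the right-hand side into $f_1^n \, \mathrm{Wr}\!\left((f_2/f_1)', \ldots, (f_n/f_1)'\right)$, as required. The degree ordering hypothesis $\deg f_i < \deg f_j$ for $i < j$ plays no role in the formal identity; it only ensures that $f_1$ is the lowest degree polynomial so that the quotients $f_i/f_1$ are well-defined rational functions in whatever ambient field one is working in (or that $f_1 \neq 0$ so that division makes sense). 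There is no real obstacle here: both Step 1 and Step 2 are direct determinantal computations, and the main subtlety is simply recognising the matrix factorisation in Step 1 and computing that the triangular factor has determinant $f^n$.
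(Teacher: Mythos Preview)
Your proof is correct and follows the standard two-step argument (the scaling identity via the Leibniz-rule matrix factorisation, followed by cofactor expansion along the constant column). The paper does not actually prove this proposition at all: it simply cites \cite[Proposition~1]{krusemeyer1988does} and states the formula. So there is nothing to compare against beyond noting that your argument is the classical one and would serve perfectly well as a self-contained proof in place of the citation.
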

We will use this formula to prove two lemmata that will be crucial in showing that most non-linear terms are non-zero.
\begin{lem}\label{lem24}
Let $\{f_1,\dots, f_n\}$ be a set of monomials in one variable such that $\deg (f_i)>\deg (f_j)$ for $i<j$, and $f_i\neq 0$ for all $i$. The Wronskian $\mathrm{Wr}(f_1,\dots, f_n)$ is non-zero. 
\end{lem}
\begin{proof}
Proceed by induction on $n$. The case where $n=1$ is obvious. Assume the statement is true for all $m<n$, then use the recursive formula given in Proposition~\ref{prop6}
\[
\mathrm{Wr}(f_1,\dots, f_n)=f_1^n \mathrm{Wr}\left(\left(\frac{f_2}{f_1}\right)',\dots, \left(\frac{f_n}{f_1}\right)'\right).
\]
Clearly $\mathrm{Wr}((\frac{f_2}{f_1})',\dots, (\frac{f_n}{f_1})')$ satisfies the assumptions of the lemma. Therefore $\mathrm{Wr}((\frac{f_2}{f_1})',\dots, (\frac{f_n}{f_1})')$ is non-zero. Since $f_1^n\neq 0$, 
\[
\mathrm{Wr}(f_1,\dots, f_n)=f_1^n \mathrm{Wr}\left(\left(\frac{f_2}{f_1}\right)',\dots, \left(\frac{f_n}{f_1}\right)'\right)\neq 0.
\]
\end{proof}
\begin{lem}\label{lem25}
Let $\{f_1,\dots, f_n\}$ be a set of monomials in one variable, such that $\deg f_i\neq\deg f_j$ for all $i\neq j$ and $f_i\neq 0$ for all $i$. The Wronskian $\mathrm{Wr}(f_1,\dots, f_n)$ is non-zero. 
\end{lem}
\begin{proof}
We see from Lemma~\ref{lem24} above that if $\mathrm{deg} (f_i)>\deg(f_j)$ for $i<j$ then the Wronskian is non-zero. Since the monomials all have pairwise different degrees there is a matrix $A$ that permutes the columns of the matrix such that the monomials are in order of ascending degree in the first row. Then $\det \mathrm{Wr}(f_1,\dots, f_n)=\det(A)\det(\mathrm{Wr}(f_1',\dots, f_{n}'))$ where $f_i'>f_j'$ for $i<j$. Since $A$ is an invertible matrix its determinant is non-zero and so $\det \mathrm{Wr}(f_1,\dots, f_n)\neq 0$.
\end{proof}
This is the last result we require to state precisely where and which non-linear terms appear in the coefficients of the Wronskian. There is just one convention we must establish first. Recall Lemma~\ref{lem2}, which stated that the set
\[
\{j\,|\,d_i-j\not\in P \textnormal{ for } 1\leq j\leq d_i\}
\]
is equal to the set of hook lengths in the $i^{th}$ row of length $\lambda_i$.
Also recall that 
\[
f_i=u^{d_i}+\sum^{d_i}_{j=1,\,\,d_i-j\not\in P} f_{i,j}u^{d_i-j}.
\]
Lemma~\ref{lem2} implies there is a bijection between the polynomials $f_{i,j}$ for a fixed $i$ and the cells in the $i^{th}$ row of $\lambda$. This bijection sends the cell $(i,j)$ to $f_{i,h(i,j)}$. To demonstrate this bijection let us consider an example.
\begin{example}
Take the partition $(3,2)$. The Young diagram is 
\begin{center}
\begin{ytableau}
       4 & 3 & 1  \\
       2 & 1
\end{ytableau}.
\end{center}
The cells of the first row, read left to right, are mapped to $f_{1,4}$, $f_{1,3}$ and $f_{1,1}$ respectively. The cells of the second row are similarly mapped to the generators $f_{2,2}$ and $f_{2,1}$.
\end{example}
We say that two generators $f_{i,j}$ and $f_{s,t}$ share a row or column if they share a row or column in the Young diagram under this mapping.
\begin{thm}\label{thm23}
Fix a partition $\lambda\vdash n$ and let $f_1,\dots, f_n$ be as in \eqref{equation15}. The non-linear coefficients of $\mathrm{Wr}(f_1,\dots, f_n)$ are all non-zero except for monomials divisible by $f_{i,j}f_{s,t}$ where $i=j$ or $h(i,j)=h(s,t)$. In other words, they have no factors that share a row or column in the partition diagram. 
\end{thm}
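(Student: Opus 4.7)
The plan is to split the claim into two pieces. First I would show that any monomial divisible by $f_{i,j}f_{s,t}$ whose corresponding cells share a row or column in $D_\lambda$ has zero coefficient in $\mathrm{Wr}(f_1,\ldots,f_n)$. Then I would show every other non-linear monomial appears with non-zero coefficient.

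For the ``zero coefficient'' direction, both cases reduce to elementary structural facts about the determinant. If the cells share a row, i.e.\ $i=s$, then the variables $f_{i,j}$ and $f_{i,t}$ occur only in the $i$-th column of the Wronskian matrix, since $f_{i,\ast}$ appears nowhere outside $f_i$ and its derivatives. Any term in a determinantal expansion picks one entry per column, so no monomial in the Wronskian can be divisible by $f_{i,j}f_{i,t}$. If the cells share a column instead, Lemma~\ref{lem3} yields $d_i-j=d_s-t$, so the $u$-powers of $f_{i,j}u^{d_i-j}$ inside $f_i$ and of $f_{s,t}u^{d_s-t}$ inside $f_s$ coincide, and Lemma~\ref{lem23} directly rules out monomials divisible by $f_{i,j}f_{s,t}$.

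For the non-zero direction, fix a monomial $M=f_{i_1,j_1}\cdots f_{i_m,j_m}$ whose factors pairwise share neither a row nor a column. By Lemma~\ref{lem21} each variable $f_{\ell,k}$ appears at most to the first power in the Wronskian, so the coefficient of $M$ is obtained by multilinear expansion: set all other $f_{\ell,k}=0$ and extract the coefficient of $\prod_k f_{i_k,j_k}$. Under this specialisation each $f_{i_k}$ becomes $u^{d_{i_k}}+f_{i_k,j_k}u^{d_{i_k}-j_k}$ and every remaining $f_\ell$ becomes $u^{d_\ell}$. Multilinearity of the determinant in its columns then identifies the coefficient of $\prod_k f_{i_k,j_k}$ with $\pm\mathrm{Wr}(h_1,\ldots,h_n)$, where $h_{i_k}=u^{d_{i_k}-j_k}$ and $h_\ell=u^{d_\ell}$ for $\ell\notin\{i_1,\ldots,i_m\}$.

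The final step is to apply Lemma~\ref{lem25} to this Wronskian of monomials, for which I need the degrees of the $h_\ell$ to be pairwise distinct. The $d_\ell$ for $\ell\notin\{i_1,\ldots,i_m\}$ are distinct by the definition of $P$. Lemma~\ref{lem2} guarantees $d_{i_k}-j_k\notin P$, so these degrees cannot coincide with any $d_\ell$. Finally, by Lemma~\ref{lem3}, $d_{i_k}-j_k=d_{i_{k'}}-j_{k'}$ would force the corresponding cells to lie in the same column, contradicting the hypothesis on $M$; and the $i_k$ themselves are distinct because no two factors share a row. Hence Lemma~\ref{lem25} delivers a non-zero Wronskian, so the coefficient of $M$ is non-zero. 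The main obstacle is the bookkeeping in the multilinear-expansion step — keeping track of signs and of which variable lives in which column — together with the clean translation of the Young-diagram conditions into degree-distinctness via Lemmas~\ref{lem2} and~\ref{lem3}; once that translation is in hand, Lemma~\ref{lem25} closes the argument.
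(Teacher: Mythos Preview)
Your proposal is correct and follows essentially the same route as the paper: the vanishing direction via the column structure of the determinant together with Lemmas~\ref{lem3} and~\ref{lem23}, and the nonvanishing direction by specialising the extraneous variables to zero and applying Lemma~\ref{lem25} to the resulting Wronskian of pure powers of $u$. The only difference is cosmetic: you treat a general product $f_{i_1,j_1}\cdots f_{i_m,j_m}$ directly via multilinearity, whereas the paper writes out only the case $m=2$ and defers the general case to the explicit coefficient formula of Proposition~\ref{prop8}.
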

\begin{proof}
The first thing to note is that if two generators $f_{i,j}$ and $f_{s,t}$ share the same row in the partition then $i=s$ and they must appear in the same column in the Wronskian. Hence, they cannot appear in the determinant. Assume now that $f_{i,j}$ and $f_{s,t}$ appear in the same column of $D_\lambda$. Lemma~\ref{lem3} says that
\[
d_a-h(a,b)=d_c-h(c,b)
\]
holds for all $a$, $b$ and $c$. In particular if $f_{i,j}$ and $f_{s,t}$ share the same column in the Young diagram then $d_i-j=d_s-t$. From the definition of the $f_i$ in \eqref{equation15}, we see that in $f_i$ the monomial $f_{i,j}$ is the coefficient of $u^{d_i-j}$. Likewise $f_{s,t}$ is the coefficient of $u^{d_s-t}$ in $f_{s}$. Lemma~\ref{lem23} then implies that there is no monomial in the Wronskian that is divisible by $f_{i,j}f_{s,t}$. We need only prove that the other nonlinear terms are non-zero.

We prove this for products of two monomials, the general case follows from a similar argument using the coefficients in Proposition~\ref{prop7}. Assume that $f_{i,j}$ and $f_{s,t}$ share neither a row nor a column in the Young diagram. In the determinant $f_{i,j}f_{s,t}$ will be the coefficient of the $u^{n-j-t}$ term. This observation lets us see that when deciding if this is nonzero we need only consider entries in the Wronskian that are scalars or powers of $u$. That is we exclude all $f_{y,z}$ where $f_{y,z}\neq f_{i,j}$ or $f_{s,t}$. Hence we need only check that $\mathrm{Wr}(u^{d_1},\dots, f_{i,j}u^{d_i-j},\dots, f_{s,t}u^{d_s-t},\dots, u^{d_n})$ is non-zero. This simplifies  
\[
\mathrm{Wr}(u^{d_1},\dots, f_{i,j}u^{d_i-j},\dots, f_{s,t}u^{d_s-t},\dots, u^{d_n})=f_{i,j}f_{s,t}\mathrm{Wr}(u^{d_1},\dots, u^{d_i-j},\dots, u^{d_s-t},\dots, u^{d_n}).
\]
Since $f_{i,j}$ and $f_{s,t}$ do not share a column in $D_\lambda$, Lemma~\ref{lem3} implies that $d_i-j\neq d_s-t$. Therefore, the degrees of all the monomials are pairwise different and non-zero. In this case Lemma~\ref{lem25} implies that the determinant is non-zero.
\end{proof}
These results can be improved upon by giving a formula for calculating the scalar coefficients of the linear and non-linear terms in the Wronskian. Let us explain some necessary notation. Recall that the recursive formula in Proposition~\ref{prop6} is given for a particular order of entries, namely that they are increasing in degree. This is often not the case, and so we must permute the columns of the Wronskian first. Let $A$ be the matrix that permutes in the desired order. Note that $\mathrm{det}(A)=\sigma(A)$, where $\sigma$ is the sign function.
\begin{prop}\label{prop7}
The scalar coefficient of a given term $f_{i_1j_1}\dots f_{i_mj_m}$ is 
\[
\sigma(A) \left(\prod_{\stackrel{i<j,i\neq i_k,}{ j\neq j_k}} d_i-d_j\right)\left(\prod_{d_{i_k}-j_k>d_j \textnormal{ and }1\leq k\leq m}(d_{i_k}-j_k-d_l)\right)\left(\prod_{d_i-i_k>d_j-j_l}(d_i-i_k-d_j-j_l)\right).
\]
\end{prop}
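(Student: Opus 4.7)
The plan is to isolate the scalar coefficient of $f_{i_1j_1}\cdots f_{i_mj_m}$ in $\mathrm{Wr}(f_1,\dots,f_n)$ by exploiting the defining feature of the polynomials in \eqref{equation15}: the variable $f_{i,j}$ appears only in column $i$ of the Wronskian matrix, as the coefficient of $u^{d_i-j}$ inside $f_i$. In the Leibniz expansion of the determinant, any monomial in the $f$-variables that contains $f_{i,j}$ must receive its contribution from column $i$. Therefore, reading off the coefficient of $f_{i_1j_1}\cdots f_{i_mj_m}$ is the same as setting every other $f_{l,t}$ to zero. After this substitution, column $i_k$ effectively contributes only the monomial $u^{d_{i_k}-j_k}$ (with the symbol $f_{i_kj_k}$ factored out), while every other column $l$ contributes only $u^{d_l}$. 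By multilinearity of the determinant, the scalar coefficient we want equals the Wronskian $\mathrm{Wr}(u^{a_1},\dots,u^{a_n})$ of pure monomials, where $a_l=d_l$ for $l\notin\{i_1,\dots,i_m\}$ and $a_{i_k}=d_{i_k}-j_k$.

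Next I would evaluate this monomial Wronskian explicitly. The $(i,l)$-entry is $a_l^{\underline{i-1}}\,u^{a_l-i+1}$, where $a^{\underline{k}}$ denotes the falling factorial. Pulling a power of $u$ out of each column reduces the determinant to $u^{\sum_l a_l-\binom{n}{2}}\det[a_l^{\underline{i-1}}]$. Since the falling-factorial basis differs from the monomial basis by a unit lower-triangular change of basis, elementary row operations give $\det[a_l^{\underline{i-1}}]=\det[a_l^{i-1}]$, which is the classical Vandermonde determinant $\prod_{l<l'}(a_{l'}-a_l)$. Hence the scalar coefficient equals this single Vandermonde product evaluated at our chosen sequence of exponents.

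The remaining task is combinatorial: rewrite this Vandermonde product in the three-factor form claimed, by grouping pairs $(l,l')$ according to how many of the indices lie in $\{i_1,\dots,i_m\}$. Pairs with both indices outside produce the factors $d_i-d_j$ (positive, since $d_1>d_2>\cdots>d_n$); pairs with exactly one index inside produce the mixed terms of the form $(d_{i_k}-j_k)-d_l$; and pairs with both indices inside produce $(d_{i_k}-j_k)-(d_{i_{k'}}-j_{k'})$. Reordering the columns by a permutation $A$ so that the exponents $a_l$ appear in increasing order then lets us present each factor with the ``larger minus smaller'' convention prescribed by the inequality conditions in the statement, and the global sign $\sigma(A)$ absorbs the discrepancy between the natural column order $1,2,\dots,n$ and the order dictated by the magnitudes of $a_l$.

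The main obstacle I expect is the bookkeeping for these signs: the $d_i$ are already arranged in decreasing order, but when some columns get replaced by $d_{i_k}-j_k$ the correct ordering of the $a_l$ is no longer monotone in $l$, and the inequality conditions attached to each of the three products in the statement are precisely what records the sign of each Vandermonde factor relative to $\sigma(A)$. No ingredient beyond multilinearity, the triangularity of the Stirling change of basis, and the Vandermonde identity is required; the substantive work lies entirely in verifying that the three index sets in the displayed product exhaust the pairs $l<l'$ without overlap and that the signs collapse cleanly into $\sigma(A)$.
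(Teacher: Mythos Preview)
Your proposal is correct and reaches the same destination as the paper, but by a different and cleaner route. The paper's proof of Proposition~\ref{prop7} also begins (implicitly) from the reduction you describe---isolating the monomial $f_{i_1j_1}\cdots f_{i_mj_m}$ by replacing column $i_k$ with $f_{i_kj_k}u^{d_{i_k}-j_k}$ and all other columns with the pure leading monomial $u^{d_l}$---but then evaluates the resulting Wronskian of monomials by \emph{induction on $n$} via the recursive formula of Proposition~\ref{prop6}, peeling off one column at a time and accumulating the product of differences.

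Your approach instead evaluates $\mathrm{Wr}(u^{a_1},\dots,u^{a_n})$ in one shot: factor powers of $u$, use the unitriangular change of basis between falling factorials and ordinary powers, and recognise the Vandermonde determinant $\prod_{l<l'}(a_{l'}-a_l)$. This is more elementary (no recursion needed) and has the further advantage that it lands directly on the compact expression $\prod_{i<j}(e_i-e_j)$ of Proposition~\ref{prop8}; the paper instead first derives the awkward three-factor form of Proposition~\ref{prop7} and then reorganises it into Proposition~\ref{prop8} as a separate step. Your anticipated sign bookkeeping is exactly what the paper handles in Proposition~\ref{prop8} with the permutation $A$, so you have correctly identified where the remaining work lies.
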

\begin{proof}
The proof is by induction, on the size of Wronskian, the case $m=1$ is clear. Assume the statement holds for all $m$ up to $n-1$. Consider the case $m=n$. We have  
\[
\mathrm{Wr}(u^{d_1},\dots, f_{i_1j_1}u^{d_{i_1}-j_1},\dots, u^d_n)=
\det(A)\mathrm{Wr}(u^d_n,\dots, f_{i_1j_1}u^{d_{i_1}-j_1},\dots, u^{d_1})
\]
\[
=u^{d_n}\mathrm{Wr}((u^{d_{n-1}}/u^{d_n})',\dots, f_{i_1j_1}(u^{d_{i_1}-j_1}/u^{d_n})',\dots, (u^d_1/u^{d_n})')
\]
\[
=u^{d_n}\mathrm{Wr}((d_{n-1}-d_{n})u^{d_{n-1}-d_{n}-1},\dots, f_{i_1j_1}(d_{i_1}-j_1-d_n)u^{d_{i_1-j_1-d_n-1}}, \dots, (d_1-d_n)u^{d_1-d_n-1})
\]
\[
=\prod_{1\leq i< n}(d_i-d_n)\prod_{d_{i_k}-j_k>d_n}(d_{i_k}-j_k-d_n) f_{i_1j_1}\dots f_{i_mj_m}\mathrm{Wr}(u^{d_{n-1}-d_{n}-1},\dots, u^{d_{i_1-j_1-d_n-1}}, \dots, u^{d_1-d_n-1}).
\]
and therefore
\[
\sigma(A) \left(\prod_{\stackrel{i<j,i\neq i_k,}{ j\neq j_k}} d_i-d_j\right)\left(\prod_{d_{i_k}-j_k>d_j \textnormal{ and }1\leq k\leq m}(d_{i_k}-j_k-d_l)\right)\left(\prod_{d_i-i_k>d_j-j_l}(d_i-i_k-d_j-j_l)\right).
\]
\end{proof}
By defining a new term $e$ we can greatly simplify this expression.
\begin{prop}\label{prop8}
Consider the nonlinear term $f_{i_1j_1}\dots f_{i_mj_m}$ ordered so that $i_a<i_{a+1}$. Define
\[ 
e_{i_k} = \left\lbrace \begin{array}{ll} d_{i_k}-j_k & \textrm{if }  1\leq k\leq m  \\ d_{i_k} & \textrm{else. } \end{array} \right.
\]
Then the scalar coefficient of the term of $f_{i_1j_1}\dots f_{i_mj_m}$ in the Wronskian is 
\[
\prod_{1\leq i<j\leq n} e_i-e_j.
\]
\end{prop}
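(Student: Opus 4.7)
The plan is to exploit the multilinearity of the Wronskian determinant in its columns. Since each $f_r$ decomposes as a sum of monomials in $u$---with coefficient either $1$ (for the leading term $u^{d_r}$) or one of the generators $f_{r,s}$ (for the lower-degree terms)---the coefficient of $f_{i_1 j_1}\cdots f_{i_m j_m}$ in $\mathrm{Wr}(f_1,\ldots,f_n)$ is isolated by a unique substitution: set $f_r = u^{d_r}$ for every $r\notin\{i_1,\ldots,i_m\}$, and replace $f_{i_k}$ by the single summand $f_{i_k,j_k}\,u^{d_{i_k}-j_k}$. Pulling the scalars $f_{i_k,j_k}$ outside the determinant, the problem reduces to computing $\mathrm{Wr}(u^{e_1},\ldots,u^{e_n})$, where the exponents $e_r$ are precisely those defined in the statement.

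The core step is then to establish the closed-form identity
\[
\mathrm{Wr}(u^{e_1},\ldots,u^{e_n}) \;=\; \Big(\prod_{1\leq i<j\leq n}(e_j - e_i)\Big)\,u^{\,e_1+\cdots+e_n - \binom{n}{2}}.
\]
To prove this I would write the $(i,r)$ entry of the Wronskian matrix as $e_r^{\underline{i-1}}\,u^{e_r-i+1}$, where $e_r^{\underline{k}}=e_r(e_r-1)\cdots(e_r-k+1)$ denotes the falling factorial. Factoring $u^{e_r-n+1}$ from column $r$ and $u^{n-i}$ from row $i$ collects all the $u$-powers and leaves a purely numerical determinant $\det[e_r^{\underline{i-1}}]_{i,r}$. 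The falling factorials $1,\,x,\,x(x-1),\,x(x-1)(x-2),\ldots$ are each monic of the correct degree, so they are related to the monomials $1,x,x^2,\ldots$ by a unitriangular change of basis; consequently $\det[e_r^{\underline{i-1}}]$ equals the classical Vandermonde $\det[e_r^{i-1}] = \prod_{i<j}(e_j - e_i)$.

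The final step is to match the result with the three-part expression in Proposition~\ref{prop7}. One re-indexes the products there---$(i<j,\,i\neq i_k,\,j\neq j_k)$ together with the two mixed products involving $d_{i_k}-j_k$---by whether the indices $i,j$ lie in $\{i_1,\ldots,i_m\}$. These three index sets partition $\{(i,j):1\leq i<j\leq n\}$, and the corresponding factors in Proposition~\ref{prop7} agree term-by-term with the differences $e_j - e_i$ (respectively $e_i-e_j$) appearing in the Vandermonde product. I expect the main obstacle to be sign bookkeeping: the Vandermonde calculation naturally yields $\prod_{i<j}(e_j-e_i)$, which differs from the stated $\prod_{i<j}(e_i - e_j)$ by $(-1)^{\binom{n}{2}}$, and this sign must be verified to be exactly absorbed by the permutation factor $\sigma(A)$ from Proposition~\ref{prop7}. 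Beyond this accounting, the argument is routine.
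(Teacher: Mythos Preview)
Your approach via multilinearity of the determinant and the Vandermonde identity is correct and genuinely different from the paper's. The paper does not recompute the Wronskian of monomials directly; instead it takes the three-product expression of Proposition~\ref{prop7} (obtained from the recursive Wronskian formula) as given, observes that $\prod_{i<j}(e_i-e_j)$ agrees with it up to sign, and then argues that the signs coincide by counting negative factors versus the parity of the sorting permutation~$A$. Your route bypasses Proposition~\ref{prop7} entirely: once you have $\mathrm{Wr}(u^{e_1},\dots,u^{e_n})=\bigl(\prod_{i<j}(e_j-e_i)\bigr)u^{\sum e_r-\binom{n}{2}}$, you already possess the scalar coefficient outright, so your ``final step'' of matching term-by-term with Proposition~\ref{prop7} is unnecessary. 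What your approach buys is a self-contained one-line computation (falling factorials give a unitriangular change of basis, hence Vandermonde); what the paper's approach buys is that it reuses Proposition~\ref{prop7}, at the cost of a somewhat informal sign comparison.

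One substantive point: you are right to be suspicious of the sign. Your Vandermonde computation produces $\prod_{i<j}(e_j-e_i)$, which differs from the stated $\prod_{i<j}(e_i-e_j)$ by the global factor $(-1)^{\binom{n}{2}}$. A direct check with $n=2$ and $\lambda=(1,1)$ (where $\mathrm{Wr}(u^2+f_{1,2},\,u+f_{2,1})=-u^2-2f_{2,1}u+f_{1,2}$) confirms that it is \emph{your} version that matches the actual Wronskian, so the proposition as literally stated carries a sign discrepancy when $\binom{n}{2}$ is odd, and the paper's sign-matching argument does not account for this factor. This is harmless for every downstream use in the paper: the sign $(-1)^{\binom{n}{2}}$ depends only on $n$, so all coefficients are rescaled by the same nonzero constant and the ideal $I$ (hence $A(\lambda)^+$) is unchanged; moreover the paper's worked example has $n=5$, where $\binom{5}{2}=10$ is even and the two formulas agree.
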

\begin{proof}
From Proposition~\ref{prop7} we see that 
\begin{equation}\label{equation17}
\sigma(A)\prod_{\stackrel{i<j,i\neq i_k,}{ j\neq j_k}} d_i-d_j\prod_{d_{i_k}-j_k>d_j \textnormal{ and }1\leq k\leq m}(d_{i_k}-j_k-d_l)\prod_{d_i-i_k>d_j-j_l}(d_i-i_k-d_j-j_l)
\end{equation}
is the coefficient of $f_{i_1j_1}\dots f_{i_mj_m}$. By definition of the $e_i$, the term
\begin{equation}\label{equation18}
\prod_{1\leq i<j\leq n} e_i-e_j.
\end{equation}
is either equal to \eqref{equation17} or its negative. Now note that $\eqref{equation17}$ is negative or positive precisely when $\sigma(A)$ is negative or positive. If $\sigma(A)$ is negative then there is an odd number of terms $e_i-e_j$ such that $e_i-e_j<0$. In this case \eqref{equation18} is also negative. By a similar argument \eqref{equation17} is positive precisely when \eqref{equation18} is.
\end{proof}
It is now possible, by collecting our results so far to explicitly describe $A(\lambda)^+$ with no mention of the Wronskian. The following theorem explains how the structure of $A(\lambda)^+$ depends only on the partition $\lambda\vdash n$ and its various hook lengths. As a consequence we can now calculate the centre of $\overline{H}_c(S_n)$ for generic $c$ and any $n$ directly from the partitions of $n$.
\begin{thm}\label{thm24}
Let $\lambda\vdash n$ be a partition of length $t$. The algebra $A(\lambda)^+$ is the quotient
\[
A(\lambda)^+\cong \mathbb{C}[f_{i_1,h(i_1,j_1)},\dots, f_{i_t,h(i_t,j_t)}]/I
\]
by the ideal $I$ that is generated by $n$ homogeneous elements $r_1,\dots,r_n$. The $r_i$ are ordered so that $\mathrm{deg}(r_i)=i$. The monomials in $r_i$ are all products of the form $f_{i_k,h(i_k,j_k)}\dots f_{i_\ell,h(i_\ell,j_\ell)}$ such that $u\neq v$ and $w\neq x$ for any two factors $f_{u,h(u,w)}$ and $f_{v,h(v,x)}$. The coefficients of the monomials inside the $r_i$ are given by Proposition~\ref{prop8}. 
\end{thm}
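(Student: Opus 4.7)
The plan is to assemble this result as a synthesis of the material developed through Sections~6 and~7. By Theorem~\ref{thm20} together with equation~\eqref{equation16}, we already know
\[
A(\lambda)^+ \;\cong\; \mathbb{C}[\Omega_\lambda^{qe}]/I_{\lambda,0}
\;=\; \mathbb{C}[f_{i,j} \,:\, 1\leq i\leq n,\ 1\leq j\leq d_i,\ d_i-j\notin P]\,\big/\,(r_1,\dots,r_n),
\]
where the $r_s$ are the coefficients of $u^{n-s}$ in $\mathrm{Wr}(f_1,\dots,f_n)$. So the work reduces to rewriting this presentation purely in terms of combinatorial data attached to the Young diagram $D_\lambda$.

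The first step is to translate the indexing set of generators. By Lemma~\ref{lem2}, for each fixed row index $i$ the set $\{j : 1\leq j\leq d_i,\ d_i-j\notin P\}$ coincides with the set of hook lengths of the cells in row $i$ of $D_\lambda$; the assignment $(i,k)\mapsto f_{i,h(i,k)}$ is therefore a bijection between the cells of $D_\lambda$ and the polynomial generators. Since $\lambda$ has length $t$ (rows of positive length), and the diagram has $n$ cells in total, this gives exactly the generating set displayed in the theorem. Observe also that a row of $D_\lambda$ is indexed by $i$, and Lemma~\ref{lem3} identifies the columns of $D_\lambda$ with the fibres of $(i,k)\mapsto d_i - h(i,k)$; this is the sense in which the row/column conditions on the generators make sense.

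Next I control the relations. Lemma~\ref{lem15} says $\mathrm{Wr}(f_1,\dots,f_n)$ is homogeneous of degree $n$, and unpacking what this means in terms of the grading $\deg(f_{i,j})=j$ gives precisely $\deg(r_s)=s$ for every $s$, so there are $n$ homogeneous relations of the stated degrees. Lemma~\ref{lem21} shows that no generator appears to a power greater than $1$ in any monomial of the Wronskian, so every monomial of every $r_s$ is a square-free product $f_{i_1,h(i_1,j_1)}\cdots f_{i_\ell,h(i_\ell,j_\ell)}$. Lemma~\ref{lem23}, together with Lemma~\ref{lem3}, shows that any such monomial containing two factors sharing a row ($u=v$) or a column ($h(u,w)=h(v,x)$ with $u\neq v$, equivalently $d_u-h(u,w)=d_v-h(v,x)$) has coefficient zero; equivalently, only products of pairwise non-collinear cells of $D_\lambda$ can appear. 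Finally, Theorem~\ref{thm23} ensures that no other monomials are killed, and Proposition~\ref{prop8} supplies the closed formula for the scalar coefficient of each surviving monomial.

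The proof is therefore essentially a matter of assembly: apply Theorem~\ref{thm20} and~\eqref{equation16}, relabel the generators using Lemma~\ref{lem2}, and then read off the three structural properties of the relations (degree, square-free monomials of non-collinear cells, explicit coefficients) from Lemmata~\ref{lem15},~\ref{lem21},~\ref{lem23}, Theorem~\ref{thm23}, and Proposition~\ref{prop8} respectively. The only subtle point — and the one worth writing out carefully — is verifying that the ``share a column in $D_\lambda$'' condition is exactly the condition $d_i-j = d_s-t$ coming from Lemma~\ref{lem23}; this identification is precisely the content of Lemma~\ref{lem3}, so no further work is required. No obstacle remains beyond bookkeeping.
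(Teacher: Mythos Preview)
Your proposal is correct and follows essentially the same approach as the paper, which proves the theorem in one line by citing Proposition~\ref{prop5}, Theorem~\ref{thm23}, and Proposition~\ref{prop8}; you have simply unpacked these together with their supporting lemmata. One small point: Theorem~\ref{thm23} as stated covers only the non-linear monomials, so to fully justify that every linear generator $f_{i,j}$ appears with non-zero coefficient in $r_j$ you should also cite Proposition~\ref{prop5} (or observe that the Lemma~\ref{lem25} argument used in Theorem~\ref{thm23} applies verbatim to the linear case).
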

\begin{proof}
Follows from Proposition~\ref{prop5}, Theorem~\ref{thm23} and Proposition~\ref{prop8}. 
\end{proof}
We can rewrite Theorem~\ref{thm24} using a bijection between the generators $f_{i,h(i,j)}$ and the Young diagram of the partitions. Assign the generator $f_{i,h(i,j)}$ to the cell $(i,j)$ in $D_\lambda$.
\begin{thm}\label{thm25}
Let $\lambda\vdash n$ be a partition. The algebra $A(\lambda)^+$ is the quotient 
\[
A(\lambda)^+\cong \mathbb{C}[D_\lambda]/I
\]
by the ideal $I$ that is generated by $n$ homogeneous elements $r_1,\dots,r_n$. The $r_s$ are ordered so that $\mathrm{deg}(r_s)=s$. The monomials appearing in $r_i$ are products of cells which share neither a row or column in $D_\lambda$. In other words if $\square_{i,j}\square_{k,\ell}$ is a factor of some monomial in the $r_s$ we must have that $i\neq k$ and $j\neq \ell$. The coefficients of the generators of $I$ are given by Proposition~\ref{prop8}. 
\end{thm}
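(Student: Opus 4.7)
The plan is to view Theorem~\ref{thm25} as a purely notational reformulation of Theorem~\ref{thm24}: I will construct an explicit bijection between the cells of $D_\lambda$ and the generators $f_{i,h(i,j)}$ appearing in Theorem~\ref{thm24}, then verify that under this bijection the ``no shared row or column'' condition on monomials and the scalar coefficients from Proposition~\ref{prop8} transfer verbatim.

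First I would define the map $\Phi : D_\lambda \to \{f_{i,h(i,j)}\}$ on cells by $\Phi(\square_{i,j}) = f_{i,h(i,j)}$, declaring the polynomial variable $\square_{i,j} \in \mathbb{C}[D_\lambda]$ to have degree $h(i,j)$ so that $\Phi$ respects the grading. To see $\Phi$ is a bijection, I argue row by row: fix a row $i$; by Lemma~\ref{lem2} the set of hook lengths in row $i$ equals $\{s : 1 \le s \le d_i,\ d_i - s \notin P\}$, which is exactly the index set parametrising the generators $f_{i,\cdot}$ produced in Theorem~\ref{thm24}, and Lemma~\ref{lem1} confirms both sets have cardinality $\lambda_i$. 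Summing across rows gives a bijection between the $n$ cells of $D_\lambda$ and the $n$ generators, hence a graded algebra isomorphism $\mathbb{C}[D_\lambda] \cong \mathbb{C}[f_{i_1,h(i_1,j_1)}, \ldots, f_{i_n,h(i_n,j_n)}]$.

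Next I would verify that the support condition and the coefficients agree. Under $\Phi$, the Theorem~\ref{thm24} requirement that any two factors $f_{u,h(u,w)}$ and $f_{v,h(v,x)}$ of a monomial in $r_s$ satisfy $u \neq v$ and $w \neq x$ translates directly into the geometric statement that $\square_{u,w}$ and $\square_{v,x}$ lie in distinct rows and distinct columns of $D_\lambda$. Since the scalars in Proposition~\ref{prop8} are built from the integers $d_i = \lambda_i + n - i$ and the row/column coordinates of the chosen factors --- data intrinsic to $\lambda$ --- they pass through $\Phi^{-1}$ unchanged. Pulling each $r_s$ back through $\Phi^{-1}$ therefore yields a homogeneous element of degree $s$ in $\mathbb{C}[D_\lambda]$ with monomial support and coefficients exactly as described in Theorem~\ref{thm25}, completing the identification $A(\lambda)^+ \cong \mathbb{C}[D_\lambda]/I$.

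The only mildly delicate point is ensuring that $\Phi$ is genuinely a bijection row by row rather than merely on total counts, and this is precisely the content of Lemma~\ref{lem2}; without that identification of row hook lengths with the generator indices, the relabeling would not be well-defined. With that input in hand, however, the remainder of Theorem~\ref{thm25} is bookkeeping, and I do not expect a substantive obstacle specific to this theorem beyond the machinery already assembled in Sections~6 and~7.
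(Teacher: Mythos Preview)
Your proposal is correct and follows essentially the same approach as the paper: the paper's proof of Theorem~\ref{thm25} is the single line ``Follows from Theorem~\ref{thm24} using the described bijection,'' where the bijection in question is exactly your map $\Phi(\square_{i,j})=f_{i,h(i,j)}$. You have simply unpacked this one-line argument in more detail, invoking Lemmas~\ref{lem1} and~\ref{lem2} to justify that $\Phi$ is a bijection row by row, which is precisely the content the paper leaves implicit.
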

\begin{proof}
Follows from Theorem~\ref{thm24} using the described bijection.
\end{proof}
Theorem~\ref{thm25} demonstrates that we can use a completely combinatorial approach to find the centre of $\overline{H}_c(S_n)$ by directly deriving a presentation of $A(\lambda)^+$ from the Young diagram. This is much simpler than the previously described method using the Wronskian. To demonstrate this point we will calculate $A(3,2)^+$ using Theorem~\ref{thm25}, compare this with Example~\ref{exam1}.
\begin{example}
Let $\lambda=(3,2)$. First lets write out the Young diagram with the hook lengths of the cells included
\begin{center}
\begin{ytableau}
       4 & 3 & 1  \\
       2 & 1
\end{ytableau}.
\end{center}
Therefore our generators are $f_{1,4}$, $f_{1,3}$, $f_{1,1}$, $f_{2,2}$ and $f_{2,1}$. Hence,
\[
A(\lambda)^+\cong\mathbb{C}[f_{1,4},f_{1,3},f_{1,1},f_{2,2},f_{2,1}]/(r_1,r_2,r_3,r_4,r_5).
\]
The relations $r_i$ contain the all the linear terms. The relations $r_i$ also contain each nonlinear term except those that have factors sharing a row or column. From the Young diagram we see that the non-zero nonlinear terms are then $f_{1,1}f_{2,1}$, $f_{1,1}f_{2,2}$, $f_{1,3}f_{2,2}$ and $f_{1,4}f_{2,1}$. Furthermore, we know that the terms appear in the relations according to their degree. So we need only compute the coefficients, which can be done using Proposition~\ref{prop8}. Recall that $d_1=7$, $d_2=5$, $d_3=2$, $d_4=1$ and $d_5=0$, hence $e_1=6$, $e_2=4$, $e_3=2$, $e_4=1$, $e_5=0$. Let us calculate the coefficient of $f_{1,1}f_{2,1}$. This is the product
\[
\prod_{1\leq i<j\leq n} e_i-e_j=(6-4)(6-2)(6-1)(6-0)(4-2)(4-1)(4-0)(2-1)(2-0)(1-0).
\]
which is equal to $11520$. The other coefficients are similarly calculated. The relations are
\[
14400f_{11}+30240f_{21}
\]
\[
11520f_{11}f_{21}+10080f_{22}
\]
\[
-2880f_{13}+4320f_{11}f_{22}
\]
\[
-1440f_{14}
\]
\[
-288f_{14}f_{21}+288f_{13}f_{22}.
\]
Therefore, $A(3,2)^+\cong\C[f_{11}]/(f_{11}^5)$.
\end{example}
Theorem~\ref{thm25} allows us to easily calculate $A(\lambda)^+$ for small $\lambda\vdash n$. 

Recall that in Section $3$, we described how given an isomorphism $\psi:H_{\overline{c}}(W)\rightarrow H_c(W)$ and a $H_c(W)$-module $M$, we can make a $H_{\overline{c}}(W)$-module $M^\psi$. In particular, define an isomorphism $\sigma: H_c(S_n)\rightarrow H_{-c}(S_n)$ by \[
\sigma(x)=x,\textnormal{ } \sigma(y)=y\textnormal{ and }\sigma(w)=(-1)^{sgn(w)}w \textnormal{ for all }x\in \mathfrak{h},\textnormal{ } y\in \mathfrak{h}\textnormal{ and } w\in S_n.
\]
\begin{thm}\label{thm26}
Let $\lambda\vdash n$ be a partition and $\lambda^T$ its transpose. There is an isomorphism of $H_c(S_n)$-modules $\underline{\Delta}_{-c}(\lambda)^\sigma\cong \underline{\Delta}_c(\lambda^T)$.
\end{thm}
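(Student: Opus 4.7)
The plan is to reduce the claim to the classical representation-theoretic fact that $\lambda \otimes \mathrm{sgn} \cong \lambda^T$ as $S_n$-modules, by observing that the twist functor $(-)^\sigma$ commutes with the induction defining the Verma modules. First I would confirm that $\sigma$ is a well-defined algebra isomorphism; the key relations to check are the commutators $[y_i, x_j]$ of $H_c(S_n)$, which in the symmetric group case take the form $c \cdot (\text{linear combination of transpositions } s_{ij})$. Since $\sigma$ fixes each $x_i$ and $y_i$ but negates every transposition (as transpositions are odd), such a relation is mapped precisely to the corresponding relation of $H_{-c}(S_n)$ obtained by replacing $c$ with $-c$.

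Next, $\sigma$ restricts to an automorphism of the common subalgebra $B := \mathbb{C}[\mathfrak{h}^*] \rtimes S_n$, acting as the identity on $\mathbb{C}[\mathfrak{h}^*]$ and as the sign twist on $S_n$. Consequently, viewing $\lambda$ as a $B$-module with $\mathfrak{h}^* \subset \mathbb{C}[\mathfrak{h}^*]$ acting as zero, the twisted $B$-module $\lambda^\sigma$ is exactly $\lambda \otimes \mathrm{sgn}$, which is classically isomorphic to $\lambda^T$. Moreover, twisting commutes with induction from $B$: for any $B$-module $M$, there is a natural $H_c(S_n)$-module isomorphism
\[
\left( H_{-c}(S_n) \otimes_B M \right)^\sigma \;\cong\; H_c(S_n) \otimes_B M^\sigma,
\]
given on elementary tensors by $h \otimes m \mapsto \sigma^{-1}(h) \otimes m$ and easily checked to respect the $B$-balancing because $\sigma|_B$ is an automorphism. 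Applying this with $M = \lambda$ yields
\[
\underline{\Delta}_{-c}(\lambda)^\sigma
\;=\; \left( H_{-c}(S_n) \otimes_B \lambda \right)^\sigma
\;\cong\; H_c(S_n) \otimes_B \lambda^\sigma
\;\cong\; H_c(S_n) \otimes_B \lambda^T
\;=\; \underline{\Delta}_c(\lambda^T).
\]

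The main obstacle is the sign book-keeping in the first step: one must verify that the minus sign obtained by sending odd permutations to their negatives cancels precisely against the change $c \mapsto -c$ inside every defining commutator of the rational Cherednik algebra. Once this compatibility is in place, the remainder of the argument is essentially formal, relying only on the universal property of the induced module and the well-known fact that tensoring with the sign representation takes $\lambda$ to $\lambda^T$.
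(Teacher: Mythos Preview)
Your proposal is correct and follows essentially the same approach as the paper: both arguments rest on the classical identification $\lambda \otimes \mathrm{sgn} \cong \lambda^T$ and construct the Verma isomorphism via the explicit map $h \otimes v \mapsto \sigma^{\pm 1}(h) \otimes v$. Your presentation is slightly more functorial (phrasing the compatibility as ``twisting commutes with induction''), whereas the paper writes down the map directly and checks well-definedness, surjectivity, and injectivity by hand, but the underlying content is the same.
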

\begin{proof}
Let $S=\mathbb{C}\cdot s$ be the sign representation. There is an isomorphism of irreducible representations $\lambda^T\cong \lambda\otimes S$, see \cite[Page. 36]{JK}. We claim there is an isomorphism of $H_{c}(W)$-modules $\phi: \underline{\Delta}_{c}(\lambda\otimes S)\rightarrow \underline{\Delta}_{-c}(\lambda)^\sigma$ given by 
\[
\phi(h\otimes v\otimes s)=\sigma(h)\otimes v,
\]
and extended linearly. We must check that $\phi$ is well-defined. Since the module $\underline{\Delta}(\lambda\otimes S)$ is uniquely defined by the following conditions
\begin{enumerate}
    \item $y\cdot 1\otimes v\otimes s=0$ for all $y\in \mathfrak{h}$
    \item $w\cdot 1\otimes v\otimes s=1\otimes w\cdot (v\otimes s)=1\otimes wv\otimes ws=1\otimes wv\otimes sgn(w)s$
\end{enumerate}
we check that $\phi$ preserves these conditions.
\[
 \phi(y\cdot 1\otimes v\otimes s)=\phi(y\otimes v)=\sigma(y)\otimes v=y\otimes v=0
\]
and
\[
 \phi(w\cdot 1\otimes v\otimes s)=\phi( w\otimes v)=\sigma(w) \otimes v=sgn(w)w\otimes v= sgn(w)\otimes wv
\]
and
\[
sgn(w)\otimes wv=\phi(sgn(w)\otimes wv\otimes s)=\phi(1\otimes wv\otimes sgn(w)s). 
\]
This map is surjective since $\underline{\Delta}_{-c}(\lambda)^{\sigma}$ is generated by $1\otimes \lambda$. It is injective because both $\underline{\Delta}_c(\lambda\otimes S)$ and $\underline{\Delta}_{-c}(\lambda)^\sigma$ are free $\mathbb{C}[\mathfrak{h}]$-modules of rank $\dim\lambda$. It is a morphism of $H_{\overline{c}}(S_n)$-modules because
\begin{equation}\label{eq1}
\begin{split}
a\cdot \phi(h\otimes v\otimes s) & = a\cdot (\sigma(h)\otimes v) \\
 & = \sigma(a)\sigma(h)\otimes v \\
 & = \sigma(ah)\otimes v \\
  & = \phi(ah\otimes v\otimes s) \\
 & = \phi(a\cdot h\otimes v\otimes s).
\end{split}
\end{equation}
Therefore $\phi$ is an isomorphism.
\end{proof}
Now define another isomorphism $\eta:H_c(S_n)\rightarrow H_{-c}(S_n)$ by 
\[
\eta(x)=-x,\textnormal{ } \eta(y)=-y\textnormal{ and }\eta(s)=s \textnormal{ for all }x\in \mathfrak{h},\textnormal{ } y\in \mathfrak{h}\textnormal{ and } s\in S_n.
\]
\begin{lem}\label{lem26}
Let $\lambda\vdash n$ be a partition. We have an isomorphism of $H_c(S_n)$-modules $\underline{\Delta}_{c}(\lambda)^{\eta}\cong \underline{\Delta}_{-c}(\lambda)$.
\end{lem}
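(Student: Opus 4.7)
The proof will closely mirror that of Theorem~\ref{thm26}, with $\eta$ playing the role that $\sigma$ played there. The key technique is to invoke the universal property of the Verma module: I will exhibit an element of $\underline{\Delta}_c(\lambda)^\eta$ that satisfies exactly the defining relations of the generator $1\otimes v$ of $\underline{\Delta}_{-c}(\lambda)$, obtain the map from the universal property, and then upgrade it to an isomorphism using a rank argument via PBW.

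Concretely, I propose to define
\[
\phi:\underline{\Delta}_{-c}(\lambda)\longrightarrow \underline{\Delta}_c(\lambda)^{\eta},\qquad \phi(h\otimes v)=\eta^{-1}(h)\otimes v,
\]
and then to verify that this is well-defined by checking that the image of $1\otimes v$ still satisfies the two properties that characterise the generator of the Verma module, namely $y\cdot(1\otimes v)=0$ for every $y\in\mathfrak{h}$ and $w\cdot(1\otimes v)=1\otimes wv$ for every $w\in S_n$. Since the $H_{-c}(S_n)$-action on $\underline{\Delta}_c(\lambda)^\eta$ is obtained from the original $H_c(S_n)$-action by precomposition with $\eta^{-1}$, and since $\eta^{-1}(y)=-y$ for $y\in\mathfrak{h}$ while $\eta^{-1}(w)=w$ for $w\in S_n$, the two checks are one-line computations: $y\cdot_\eta(1\otimes v)=(-y)\cdot(1\otimes v)=0$ and $w\cdot_\eta(1\otimes v)=w\cdot(1\otimes v)=1\otimes wv$. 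The universal property of $\underline{\Delta}_{-c}(\lambda)$ then yields the desired module map. That $\phi$ is a homomorphism of modules is immediate from the construction, exactly as in the computation~\eqref{eq1} carried out in the proof of Theorem~\ref{thm26}.

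Surjectivity is immediate because $1\otimes\lambda$ generates $\underline{\Delta}_c(\lambda)$ as an $H_c(S_n)$-module, hence generates $\underline{\Delta}_c(\lambda)^{\eta}$ as an $H_{-c}(S_n)$-module through the isomorphism $\eta$. For injectivity I will appeal to the PBW theorem \cite[Theorem 1.3]{EG}: both $\underline{\Delta}_{-c}(\lambda)$ and $\underline{\Delta}_c(\lambda)^{\eta}$ are free $\mathbb{C}[\mathfrak{h}]$-modules of rank $\dim\lambda$ (the module structure on $\underline{\Delta}_c(\lambda)^{\eta}$ differs from the original one only by the automorphism $x\mapsto -x$ of $\mathbb{C}[\mathfrak{h}]$, which preserves freeness and rank). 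A surjective homomorphism between two free modules of the same finite rank over a Noetherian ring is an isomorphism, so $\phi$ is bijective.

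The only real obstacle is bookkeeping: one must carefully track in which direction $\eta$ transports the module structure, and verify that the signs produced by $\eta$ on both $\mathfrak{h}$ and $\mathfrak{h}^*$ leave the group algebra untouched — this is precisely what makes the twist carry $\underline{\Delta}_c(\lambda)$ to $\underline{\Delta}_{-c}(\lambda)$ rather than to a module for some other parameter. Once these conventions are fixed, the remainder of the proof is a direct transcription of the argument already established for Theorem~\ref{thm26}.
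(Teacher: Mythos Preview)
Your proposal is correct and follows exactly the approach the paper intends: the paper's own proof consists of the single sentence ``The proof is similar to the proof of Theorem~\ref{thm26},'' and your write-up is precisely that argument carried out in detail, with $\eta$ in place of $\sigma$. The well-definedness check via the two defining relations of the Verma generator, the module-map verification mirroring~\eqref{eq1}, surjectivity from cyclicity, and injectivity from the PBW rank count are all as in Theorem~\ref{thm26}.
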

\begin{proof}
The proof is similar to the proof of Theorem~\ref{thm26}.
\end{proof}
\begin{cor}\label{cor3}
There is an isomorphism $\underline{\Delta}_c(\lambda)^{\eta\circ\sigma}\cong\underline{\Delta}_c(\lambda^T)$.
\end{cor}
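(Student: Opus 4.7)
The plan is to deduce Corollary~\ref{cor3} as a purely formal consequence of Theorem~\ref{thm26} and Lemma~\ref{lem26}, by exploiting the functoriality of the twisting construction $M \mapsto M^\delta$.

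First I would record two elementary observations about twists. Suppose $\delta \colon A \to B$ and $\epsilon \colon C \to A$ are algebra isomorphisms and $M$ is a $B$-module. Then the iterated twist satisfies $(M^\delta)^\epsilon = M^{\delta \circ \epsilon}$, because for $c \in C$ and $m \in M$,
\[
c \cdot_\epsilon m \;=\; \epsilon(c) \cdot_\delta m \;=\; \delta(\epsilon(c))\, m \;=\; (\delta \circ \epsilon)(c)\, m.
\]
Moreover, twisting is functorial: if $f \colon M \to N$ is a $B$-linear isomorphism, then the same underlying set-theoretic map is a $C$-linear isomorphism $M^\delta \to N^\delta$, since $f(h \cdot_\delta m) = f(\delta(h) m) = \delta(h) f(m) = h \cdot_\delta f(m)$.

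With these in hand the proof is immediate. Lemma~\ref{lem26} yields an isomorphism $\underline{\Delta}_c(\lambda)^\eta \cong \underline{\Delta}_{-c}(\lambda)$. Applying the functor $(-)^\sigma$ to both sides gives
\[
\bigl(\underline{\Delta}_c(\lambda)^\eta\bigr)^\sigma \;\cong\; \underline{\Delta}_{-c}(\lambda)^\sigma.
\]
By Theorem~\ref{thm26} the right-hand side is isomorphic to $\underline{\Delta}_c(\lambda^T)$, while the general identity above collapses the left-hand side to $\underline{\Delta}_c(\lambda)^{\eta \circ \sigma}$. Chaining the three isomorphisms yields
\[
\underline{\Delta}_c(\lambda)^{\eta \circ \sigma} \;\cong\; \underline{\Delta}_c(\lambda^T),
\]
as required.

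The only point that requires a brief bookkeeping check is that the domains and codomains of $\sigma$ and $\eta$ line up so that $\eta \circ \sigma$ makes sense as a well-defined automorphism of $H_c(S_n)$ and each intermediate twist lives in the correct module category; this amounts to reading $\sigma \colon H_c(S_n) \to H_{-c}(S_n)$ and $\eta \colon H_{-c}(S_n) \to H_c(S_n)$, which is consistent with the formulas given since both $\sigma$ and $\eta$ are defined by involutive rules and thus work in either direction. Apart from this, there is no substantive obstacle: the corollary simply records the composition of the two equivalences already established, and no new computation with the defining relations of $H_c(S_n)$ is needed.
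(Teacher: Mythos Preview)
Your proof is correct and is exactly the approach the paper takes: the paper's own proof consists of the single sentence ``This follows from Theorem~\ref{thm26} and Lemma~\ref{lem26},'' and you have simply made explicit the two formal ingredients needed to chain these together, namely the identity $(M^\delta)^\epsilon = M^{\delta\circ\epsilon}$ and the functoriality of twisting. Your final paragraph on the domains and codomains of $\sigma$ and $\eta$ is a welcome clarification of a point the paper leaves implicit.
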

\begin{proof}
This follows from Theorem~\ref{thm26} and Lemma~\ref{lem26}.
\end{proof}
\begin{lem}\label{lem27}
The isomorphism $\eta\circ \sigma$ induces an isomorphism of baby Verma modules
\[
\eta\circ \sigma:\Delta(\lambda)^{\eta\circ\sigma}\cong \Delta(\lambda^T).
\]
\end{lem}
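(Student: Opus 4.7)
The plan is to deduce the claim from Corollary~\ref{cor3} by showing that the automorphism $\eta\circ\sigma$ preserves the defining ideal of the restricted rational Cherednik algebra, so that everything passes cleanly to the quotient. First I would note that $\eta\circ\sigma$ is an automorphism of $H_c(S_n)$: composing the definitions gives $(\eta\circ\sigma)(x)=-x$, $(\eta\circ\sigma)(y)=-y$ and $(\eta\circ\sigma)(w)=\mathrm{sgn}(w)\,w$ for $x\in\mathfrak{h}$, $y\in\mathfrak{h}^*$ and $w\in S_n$. In particular this automorphism restricts to an automorphism of $\mathbb{C}[\mathfrak{h}]$ (by $x_i\mapsto -x_i$) and of $\mathbb{C}[\mathfrak{h}^*]$, each of which scales a homogeneous $S_n$-invariant of degree $d$ by $(-1)^d$. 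Consequently $\eta\circ\sigma$ stabilises $\mathbb{C}[\mathfrak{h}]^{S_n}$, $\mathbb{C}[\mathfrak{h}^*]^{S_n}$ and their augmentation ideals, so it stabilises the ideal
\[
R_+=\mathbb{C}[\mathfrak{h}]_+^{S_n}\otimes\mathbb{C}[\mathfrak{h}^*]^{S_n}+\mathbb{C}[\mathfrak{h}]^{S_n}\otimes\mathbb{C}[\mathfrak{h}^*]_+^{S_n}.
\]

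Since $\eta\circ\sigma$ preserves $R_+$, it also preserves the two-sided ideal $R_+H_c(S_n)$ and therefore descends to an automorphism of $\overline{H}_c(S_n)=H_c(S_n)/R_+H_c(S_n)$. I would use this in conjunction with Theorem~\ref{thm5}(1), which identifies $\Delta(\lambda)=\underline{\Delta}(\lambda)/R_+\underline{\Delta}(\lambda)$. Because $(\eta\circ\sigma)(R_+)=R_+$, twisting the module structure commutes with taking the quotient by $R_+$, giving a natural identification
\[
\Delta(\lambda)^{\eta\circ\sigma}\;=\;\underline{\Delta}(\lambda)^{\eta\circ\sigma}\big/R_+\cdot\underline{\Delta}(\lambda)^{\eta\circ\sigma}.
\]

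Now I would feed in Corollary~\ref{cor3}, which supplies an isomorphism of $H_c(S_n)$-modules $\underline{\Delta}_c(\lambda)^{\eta\circ\sigma}\cong \underline{\Delta}_c(\lambda^T)$. Because this isomorphism is $H_c(S_n)$-linear and $R_+\subset H_c(S_n)$, it sends $R_+\cdot \underline{\Delta}(\lambda)^{\eta\circ\sigma}$ onto $R_+\cdot\underline{\Delta}(\lambda^T)$, so passing to quotients yields
\[
\Delta(\lambda)^{\eta\circ\sigma}\;\cong\;\underline{\Delta}(\lambda^T)/R_+\underline{\Delta}(\lambda^T)\;=\;\Delta(\lambda^T),
\]
the last equality by Theorem~\ref{thm5}(1) applied to $\lambda^T$. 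The main (really only) obstacle I expect is the bookkeeping at step one, namely verifying that the twist $\eta\circ\sigma$ genuinely preserves $R_+$ as a subspace of $H_c(S_n)$; once that invariance is in hand, the rest is a formal passage to the quotient and an invocation of Corollary~\ref{cor3}.
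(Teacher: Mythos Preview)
Your proposal is correct and follows essentially the same approach as the paper: both arguments use Theorem~\ref{thm5}(1) to write the baby Verma as a quotient of the Verma by $R_+$, verify that $\eta\circ\sigma$ preserves $R_+$ (you by computing $(\eta\circ\sigma)(x)=-x$ etc., the paper by noting that $\sigma$ is trivial on $R_+$ and $\eta$ scales homogeneous elements by a sign), and then invoke Corollary~\ref{cor3} to pass to the quotient. Your bookkeeping of the twist-versus-quotient step is, if anything, slightly cleaner than the paper's.
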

\begin{proof}
By Theorem~\ref{thm5} $\Delta(\lambda)=\underline{\Delta}(\lambda)/R_+\underline{\Delta}(\lambda)$ and so we need to show that 
\[
\eta\circ\sigma(R_+\underline{\Delta}(\lambda))=R_+\underline{\Delta}(\lambda^T).
\]
Since $\eta\circ\sigma$ is an isomorphism of $H_c(W)$-modules we know that 
\[
\eta\circ\sigma(R_+\underline{\Delta}(\lambda))=(\eta\circ\sigma)(R_+)(\eta\circ\sigma)(\underline{\Delta}(\lambda)).
\]
Since $\eta\circ\sigma(\underline{\Delta}(\lambda))=\underline{\Delta}(\lambda^T)$ we need show that $\eta\circ\sigma(R_+)=R_+$. Note that by the definition of $R_+$ it contains no group elements and so $\sigma$ is simply the identity on $R_+$. Hence we show that $\eta(R_+)=R_+$. By linearity it suffices to check on homogeneous elements in $R_+$. Again this is clear since applying $\eta$ to an element of $R_+$ we find 
\[
\eta(p(x)_+\otimes q(y)+r(x)\otimes s(y)_+)=(-1)^m p(x)_+\otimes q(y)+(-1)^n r(x)\otimes s(y)_+
\] where $m=|\deg(p)||\deg(q)|$ and $n=|\deg(r)||\deg(s)|$. Therefore $\eta(R_+)=R_+$ and the proof is complete.
\end{proof}
\begin{lem}\label{lem28}
Let $H$ be an algebra, $Z$ its centre and $a:H\rightarrow H$ an automorphism. For any $H$-module $M$, $a$ induces an isomorphism
\[
a:Z/ \mathrm{ann}_Z M \rightarrow Z/\mathrm{ann}_Z M^a.
\]
\end{lem}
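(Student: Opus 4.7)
The plan is to verify two elementary facts about $a$ and then invoke the first isomorphism theorem. First, I would observe that, since $a$ is an algebra automorphism, it preserves the centre: for any $z \in Z$ and $h \in H$ the identity $a(z)a(h) = a(zh) = a(hz) = a(h)a(z)$ together with surjectivity of $a$ shows that $a(z)$ commutes with every element of $H$, so $a(z) \in Z$. Running the same argument for $a^{-1}$ yields $a(Z) = Z$, hence $a|_{Z}$ is a ring automorphism of $Z$.

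Second, I would unwind the definition of the twisted module to identify $\mathrm{ann}_Z M^a$ directly. Using the convention $h \cdot m = a(h)m$ fixed earlier in Section 7, the computation $z \cdot m = a(z)m$ for $z \in Z$ and $m \in M^a = M$ shows that $z \in \mathrm{ann}_Z M^a$ if and only if $a(z) \in \mathrm{ann}_Z M$. This gives the set-theoretic equality $\mathrm{ann}_Z M^a = a^{-1}(\mathrm{ann}_Z M)$, or equivalently $a(\mathrm{ann}_Z M^a) = \mathrm{ann}_Z M$.

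Combining the two steps, $a^{-1}|_Z$ restricts to an ideal isomorphism $\mathrm{ann}_Z M \xrightarrow{\sim} \mathrm{ann}_Z M^a$, and therefore descends to a ring isomorphism $Z/\mathrm{ann}_Z M \xrightarrow{\sim} Z/\mathrm{ann}_Z M^a$, which is the content of the statement. The only point requiring care is the direction of the induced map: the label ``$a$'' appearing in the conclusion is a mild abuse of notation, since depending on the convention adopted for the twist one uses either $a$ or $a^{-1}$ to produce the explicit map on quotients. Beyond this bit of bookkeeping there is no genuine obstacle; the whole argument is a two-line unwinding of the definitions and an application of the first isomorphism theorem.
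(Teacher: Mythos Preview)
Your proof is correct and follows essentially the same approach as the paper: both arguments identify $\mathrm{ann}_Z M^a = a^{-1}(\mathrm{ann}_Z M)$ via the computation $z\cdot m = a(z)m$ and then pass to quotients. You are in fact slightly more careful than the paper, since you explicitly check that $a$ restricts to an automorphism of $Z$ and you flag the direction issue---the paper's proof states $a(\mathrm{ann}_Z M)=\mathrm{ann}_Z M^a$ but actually verifies the equivalent statement $a^{-1}(\mathrm{ann}_Z M)=\mathrm{ann}_Z M^a$, precisely the bookkeeping point you anticipated.
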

\begin{proof}
We show that $a(\mathrm{ann}_{Z}M)=\mathrm{ann}_{Z(W)}M^a$. This can be seen by noting that $z\in \mathrm{ann}_{Z}M$ if and only if $a^{-1}(z)\in \mathrm{ann}_{Z}M^a$ as 
\[
a^{-1}(z)\cdot m=a(a^{-1}(z))m=zm.
\]
Therefore if $zm=0$ then $a^{-1}(z)\cdot m=0$ and vice versa. 
\end{proof}
\begin{lem}\label{lem29}
There is an isomorphism of algebras
\[
 \mathrm{End}_{H_c(W)}\underline{\Delta}(\lambda)\cong Z_c(W)/\mathrm{ann}_{Z_c(W)}\underline{\Delta}(\lambda).
\]
\end{lem}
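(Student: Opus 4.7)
The plan is to apply the first isomorphism theorem to the surjection coming from Theorem~\ref{thm5}(2). Define the map
\[
\Psi : Z_c(W) \longrightarrow \mathrm{End}_{H_c(W)}(\underline{\Delta}(\lambda)), \qquad \Psi(z)(m) := z \cdot m.
\]
This is a well-defined algebra homomorphism: since $z$ is central in $H_c(W)$, multiplication by $z$ commutes with the $H_c(W)$-action on $\underline{\Delta}(\lambda)$, so $\Psi(z) \in \mathrm{End}_{H_c(W)}(\underline{\Delta}(\lambda))$; additivity and multiplicativity are immediate from the module structure.

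The key input, which is already supplied by the paper, is Theorem~\ref{thm5}(2): the map $\Psi$ is surjective. Once surjectivity is in hand, everything reduces to identifying the kernel. I would compute
\[
\ker \Psi = \{ z \in Z_c(W) \mid z \cdot m = 0 \text{ for all } m \in \underline{\Delta}(\lambda)\} = \mathrm{ann}_{Z_c(W)} \underline{\Delta}(\lambda),
\]
which is literally the definition of the annihilator. Applying the first isomorphism theorem then gives
\[
Z_c(W)/\mathrm{ann}_{Z_c(W)}\underline{\Delta}(\lambda) \;\cong\; \mathrm{End}_{H_c(W)}\underline{\Delta}(\lambda),
\]
as required.

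Since the statement is essentially a direct corollary of Theorem~\ref{thm5}(2), there is no real obstacle here; all the nontrivial work is contained in that theorem (the surjectivity, which is \cite[Theorem 1.2]{BellEnd}). The only small subtlety worth flagging in the write-up is that $\Psi(z)$ is an $H_c(W)$-endomorphism, which requires centrality of $z$, and that the quotient inherits a well-defined algebra structure because $\mathrm{ann}_{Z_c(W)}\underline{\Delta}(\lambda)$ is a two-sided ideal of the commutative ring $Z_c(W)$.
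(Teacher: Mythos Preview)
Your proof is correct and follows exactly the same approach as the paper: define the multiplication map from $Z_c(W)$ to the endomorphism ring, invoke Theorem~\ref{thm5}(2) for surjectivity, identify the kernel as the annihilator, and apply the first isomorphism theorem. You have simply spelled out a few details (well-definedness of $\Psi$, the annihilator being an ideal) that the paper leaves implicit.
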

\begin{proof}
By Theorem~\ref{thm5} the map given by multiplication by elements of $Z_c(W)$, $m:Z_c(W)\rightarrow \mathrm{End}_{H_c(W)}\underline{\Delta}(\lambda)$ is a surjection. The kernel of $m$ is $\mathrm{ann}_{Z_c(W)}\underline{\Delta}(\lambda)$. 
\end{proof}
\begin{thm}\label{thm27}
Let $\lambda\vdash n$ be a partition and $\lambda^T$ its transpose. Then 
\[
\mathrm{End}_{H_c(W)}(\underline{\Delta}(\lambda)) \cong \mathrm{End}_{H_c(W)}(\underline{\Delta}(\lambda^T)).
\]
\end{thm}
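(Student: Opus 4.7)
The plan is to chain together three of the results immediately preceding the theorem statement: Corollary~\ref{cor3}, Lemma~\ref{lem28}, and Lemma~\ref{lem29}. First I would observe that although $\sigma$ and $\eta$ are each isomorphisms $H_c(S_n) \to H_{-c}(S_n)$, their composition $\eta \circ \sigma$ is an \emph{automorphism} of $H_c(S_n)$: on generators one checks $x \mapsto -x$, $y \mapsto -y$, and $w \mapsto (-1)^{\mathrm{sgn}(w)} w$, so $\eta \circ \sigma$ is indeed a well-defined algebra automorphism. This is what lets Lemma~\ref{lem28} apply in the subsequent step.

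Next, apply Lemma~\ref{lem28} with $H = H_c(W)$, $Z = Z_c(W)$, $a = \eta \circ \sigma$, and $M = \underline{\Delta}(\lambda)$. This yields an isomorphism
\[
Z_c(W)/\mathrm{ann}_{Z_c(W)}\underline{\Delta}(\lambda) \;\cong\; Z_c(W)/\mathrm{ann}_{Z_c(W)}\underline{\Delta}(\lambda)^{\eta\circ\sigma}.
\]
By Corollary~\ref{cor3}, the right-hand annihilator equals $\mathrm{ann}_{Z_c(W)}\underline{\Delta}(\lambda^T)$, so the isomorphism may be rewritten as
\[
Z_c(W)/\mathrm{ann}_{Z_c(W)}\underline{\Delta}(\lambda) \;\cong\; Z_c(W)/\mathrm{ann}_{Z_c(W)}\underline{\Delta}(\lambda^T).
\]

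Finally, Lemma~\ref{lem29} identifies each of these quotients with the corresponding endomorphism ring of the Verma module, so
\[
\mathrm{End}_{H_c(W)}(\underline{\Delta}(\lambda)) \;\cong\; \mathrm{End}_{H_c(W)}(\underline{\Delta}(\lambda^T)),
\]
as required. The conceptual heart of the proof is really the transpose-via-sign twist argument already packaged into Theorem~\ref{thm26}, Lemma~\ref{lem26}, and Corollary~\ref{cor3}; given these, the only genuine obstacle is the mild bookkeeping check that $\eta \circ \sigma$ qualifies as an automorphism of $H_c(W)$ (rather than an isomorphism to $H_{-c}(W)$) so that Lemma~\ref{lem28} can be invoked directly without changing the parameter $c$.
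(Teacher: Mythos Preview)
Your proposal is correct and follows essentially the same approach as the paper: both use the automorphism $\eta\circ\sigma$ of $H_c(W)$ together with Corollary~\ref{cor3}, Lemma~\ref{lem28}, and Lemma~\ref{lem29} in the same logical chain. The only cosmetic difference is the order in which the lemmas are cited.
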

\begin{proof}
By Corollary~\ref{cor3} there is an isomorphism of $H_c(S_n)$-modules $\underline{\Delta}(\lambda^T)\cong \underline{\Delta}(\lambda)^{\eta\circ\sigma}$ and so we can rewrite the desired result as
\[
\mathrm{End}_{H_c(W)}(\underline{\Delta}(\lambda)) \cong \mathrm{End}_{H_c(W)}(\underline{\Delta}(\lambda)^{\eta\circ\sigma}).
\]
Consider the composition of automorphisms $a=\sigma\circ \eta:H_c(W)\rightarrow H_c(W) $. Since $a$ is an automorphism of $H_c(W)$ it is also an automorphism of the centre $Z_c(W)$. By Lemma~\ref{lem29},
\[
 \mathrm{End}_{H_c(W)}\underline{\Delta}(\lambda)\cong Z_c(W)/\mathrm{ann}_{Z_c(W)}\underline{\Delta}(\lambda)
\]
and
\[
 \mathrm{End}_{H_c(W)}\underline{\Delta}(\lambda)^{a}\cong Z_c(W)/\mathrm{ann}_{Z_c(W)}(\underline{\Delta}(\lambda)^a).
\]
By Lemma~\ref{lem28}, $a$ restricts to an isomorphism
\[
a:Z_c(W)/\mathrm{ann}_{Z_c(W)}\underline{\Delta}(\lambda)\rightarrow Z_c(W)/\mathrm{ann}_{Z_c(W)}\underline{\Delta}(\lambda)^a.
\]
Therefore
\[
 \mathrm{End}_{H_c(W)}\underline{\Delta}(\lambda)\cong Z_c(W)/\mathrm{ann}_{Z_c(W)}\underline{\Delta}(\lambda)\cong Z_c(W)/\mathrm{ann}_{Z_c(W)}(\underline{\Delta}(\lambda)^a)\cong \mathrm{End}_{H_c(W)}\underline{\Delta}(\lambda)^{a}.
\]
\end{proof}
This is all that is necessary to prove the following important isomorphism.
\begin{thm}\label{thm28}
Let $\lambda\vdash n$ be a partition and $\lambda^T$ its transpose. Then 
\[
A(\lambda)^+\cong A(\lambda^T)^+\quad and \quad A(\lambda)^-\cong A(\lambda^T)^-.
\]
\end{thm}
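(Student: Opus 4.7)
The plan is to promote the isomorphism of Theorem~\ref{thm27} from Verma modules to baby Verma modules. By Theorem~\ref{thm8} we have
\[
A(\lambda)^+\cong \mathrm{End}_{H_c(W)}(\underline{\Delta}(\lambda))/\mathbb{C}[\mathfrak{h}]^W_+\mathrm{End}_{H_c(W)}(\underline{\Delta}(\lambda)),
\]
and the analogous identification for $\lambda^T$. It therefore suffices to show that the isomorphism
\[
\psi:\mathrm{End}_{H_c(W)}(\underline{\Delta}(\lambda))\xrightarrow{\sim} \mathrm{End}_{H_c(W)}(\underline{\Delta}(\lambda^T))
\]
of Theorem~\ref{thm27} carries the ideal $\mathbb{C}[\mathfrak{h}]^W_+\mathrm{End}_{H_c(W)}(\underline{\Delta}(\lambda))$ onto $\mathbb{C}[\mathfrak{h}]^W_+\mathrm{End}_{H_c(W)}(\underline{\Delta}(\lambda^T))$, so that $\psi$ descends to the quotients.

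To do this I will unpack the construction of $\psi$ from the proof of Theorem~\ref{thm27}. There $\psi$ is induced by the algebra automorphism $a=\sigma\circ\eta$ of $H_c(W)$, and, under the identification of Lemma~\ref{lem29}, it is obtained by passing the restriction of $a$ to $Z_c(W)$ through Lemma~\ref{lem28}. The key observation is that, since $a(x)=-x$ for $x\in\mathfrak{h}$, the automorphism $a$ preserves the central subalgebra $\mathbb{C}[\mathfrak{h}]^W\subset Z_c(W)$ and acts on its degree-$d$ piece by the scalar $(-1)^d$. In particular $a(\mathbb{C}[\mathfrak{h}]^W_+)=\mathbb{C}[\mathfrak{h}]^W_+$ as a graded subspace, and therefore $a(\mathbb{C}[\mathfrak{h}]^W_+\cdot Z_c(W))=\mathbb{C}[\mathfrak{h}]^W_+\cdot Z_c(W)$.

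Combining this with Lemma~\ref{lem28}, which yields $a(\mathrm{ann}_{Z_c(W)}\underline{\Delta}(\lambda))=\mathrm{ann}_{Z_c(W)}\underline{\Delta}(\lambda)^a=\mathrm{ann}_{Z_c(W)}\underline{\Delta}(\lambda^T)$, I will conclude that $a$ sends the preimage in $Z_c(W)$ of $\mathbb{C}[\mathfrak{h}]^W_+\mathrm{End}_{H_c(W)}(\underline{\Delta}(\lambda))$ onto the preimage of $\mathbb{C}[\mathfrak{h}]^W_+\mathrm{End}_{H_c(W)}(\underline{\Delta}(\lambda^T))$. Hence $\psi$ identifies the two ideals and descends to the required isomorphism $A(\lambda)^+\cong A(\lambda^T)^+$.

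The only step requiring care is the bookkeeping identifying the image under the surjection $m:Z_c(W)\twoheadrightarrow \mathrm{End}_{H_c(W)}(\underline{\Delta}(\lambda))$ of $\mathbb{C}[\mathfrak{h}]^W_+\cdot Z_c(W)$ with the ideal $\mathbb{C}[\mathfrak{h}]^W_+\mathrm{End}_{H_c(W)}(\underline{\Delta}(\lambda))$; this is immediate since $m$ is a surjective ring map sending $\mathbb{C}[\mathfrak{h}]^W_+$ into the centre of the target. Beyond this, the proof is a formal graded-invariance argument layered on top of Theorem~\ref{thm27}, so I do not anticipate any significant obstacle.
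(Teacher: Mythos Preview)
Your proof is correct and follows the same route as the paper: reduce to the endomorphism rings of Verma modules via Theorem~\ref{thm8} and then invoke Theorem~\ref{thm27}. In fact you are more careful than the paper, which simply asserts that Theorem~\ref{thm27} implies the isomorphism on the quotients, whereas you explicitly verify that the automorphism $a=\sigma\circ\eta$ preserves $\mathbb{C}[\mathfrak{h}]^W_+$ (acting by $(-1)^d$ on degree~$d$) so that the isomorphism indeed descends.
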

\begin{proof}
Recall the definitions $A(\lambda)^+:=\mathrm{End}_{\overline{H}_c(W)}(\Delta(\lambda))$ and $A(\lambda^T)^+:=\mathrm{End}_{\overline{H}_c(W)}(\Delta(\lambda^T))$.
By Theorem~\ref{thm8} we have 
\[
\mathrm{End}_{\overline{H}_c(W)}(\Delta(\lambda))\cong \mathrm{End}_{H_c(W)}(\underline{\Delta}(\lambda))/\mathbb{C}[\mathfrak{h}]^W_+\mathrm{End}_{H_c(W)}(\underline{\Delta}(\lambda))
\]
and
\[
\mathrm{End}_{\overline{H}_c(W)}(\Delta(\lambda^T))\cong \mathrm{End}_{H_c(W)}(\underline{\Delta}(\lambda^T))/\mathbb{C}[\mathfrak{h}]^W_+\mathrm{End}_{H_c(W)}(\underline{\Delta}(\lambda^T)).
\]
Then Theorem~\ref{thm27} implies that
\[
\mathrm{End}_{H_c(W)}(\underline{\Delta}(\lambda))/\mathbb{C}[\mathfrak{h}]^W_+\mathrm{End}_{H_c(W)}\underline{\Delta}(\lambda))\cong\mathrm{End}_{H_c(W)}(\underline{\Delta}(\lambda^T))/\mathbb{C}[\mathfrak{h}]^W_+\mathrm{End}_{H_c(W)}(\underline{\Delta}(\lambda^T))
\]
and so 
\[
A(\lambda)^+= \mathrm{End}_{\overline{H}_c(W)}(\Delta(\lambda))\cong \mathrm{End}_{\overline{H}_c(W)}(\Delta(\lambda^T))= A(\lambda^T)^+.
\]
The second equality then follows by Theorem~\ref{thm4}.
\end{proof}
\begin{cor}\label{cor30}
In the symmetric case there is an antigraded isomorphism 
\[
A_c(\lambda)^-\cong A_c(\lambda)^+.
\]
\end{cor}
\begin{proof}
Follows from Theorem~\ref{thm4} applied to the symmetric case and Theorem~\ref{thm28}.
\end{proof}
\begin{cor}\label{cor30.5}
There is an isomorphism 
\[
A_c(\lambda)^-_{\mathbb{Z}/\ell\mathbb{Z}}\cong A_{\overline{c}}(\mathrm{quo}_\ell(\lambda)^\sharp)^-.
\]
\end{cor}
\begin{proof}
Follows from Theorem~\ref{thm28} and Theorem~\ref{thm18}.
\end{proof}
\begin{cor}\label{cor33}
There is an antigraded isomorphism
\[
A_{\overline{c}}(\mathrm{quo}_\ell(\lambda))^+\cong A_{\overline{c}}(\mathrm{quo}_\ell(\lambda)^\sharp)^-.
\]
This then gives another antigraded isomorphism
\[
A_{\overline{c}}(\mathrm{quo}_\ell(\lambda)^\star)^+\cong A_{\overline{c}}(\mathrm{quo}_\ell(\lambda))^-.
\]
\end{cor}
\begin{proof}
By Corollary~\ref{cor30} we have an antigraded isomorphism $A_c(\lambda)^+\cong A_c(\lambda)^-$ in the symmetric group case. The result then follows by Theorem~\ref{thm18} and Corollary~\ref{cor30.5}.
\end{proof}

\section{The centre of the blocks of $\overline{H}_c(S_n\wr\mathbb{Z}/\ell\mathbb{Z})$}
Let us now generalise the results of the last section, in particular Theorem~\ref{thm25} to the case of the wreath product groups. This will allow us to write the blocks of the centre of the restricted rational Cherednik algebra for the wreath product directly from a given $\ell$-multipartition. The key is Theorem~\ref{thm18}, that there is an isomorphism
\[
A_{\overline{c}}(\mathrm{quo}_\ell(\lambda))^+\cong A_c(\lambda)^+_{\mathbb{Z}/\ell\mathbb{Z}}.
\]
Lemma~\ref{lem30} proves that $A_c(\lambda)^+_{\mathbb{Z}/\ell\mathbb{Z}}$ is the quotient of $A_c(\lambda)^+$ given by killing all terms that have a degree not divisible by $\ell$. Then as Theorem~\ref{thm25} gave an explicit description of $A_c(\lambda)^+$ directly from the Young diagram this will give us a combinatorial description of $A_{\overline{c}}(\mathrm{quo}_\ell(\lambda))^+$. First, we will explain how to obtain $\lambda$ given $\mathrm{quo}_\ell(\lambda)$.

Recall Theorem~\ref{thm1} which states there is a bijection between the set of partitions of $n\ell$ with trivial $\ell$-core and the $\ell$-multipartitions of $n$. This bijection is given by taking a partition to its $\ell$-quotient. We now wish to do the opposite. Given an $\ell$-multipartition of $n$ we would like to recover the corresponding partition of $n\ell$ with trivial $\ell$-core. We include an example demonstrating how this is done. 
\begin{example}
Let us find the partition of $27$ with trivial $3$-core and $3$-quotient ($(3,2)$, $(1,1)$, $(2)$). We first need to construct the columns in the bead diagram. For a set of $\beta$-numbers we use the first column hook lengths of these partitions. The set of first column hook lengths for $(3,2)$, $(1,1)$ and $(2)$ are $\{4,2\}$, $\{2,1\}$ and $\{2\}$ respectively. Since the first column hook lengths are $\{4,2\}$ our first column should have beads in the second and fourth position. Doing the same for the other two columns we obtain the bead diagram below 
\begin{center}
\begin{tikzpicture}[thick, scale=\textwidth/4cm]
    \begin{scope}[xshift=0, yshift=0]
      \coordinate (a) at (0,0); 
      \coordinate (b) at (0.5,0);
      \coordinate (c) at (1,0);
      \coordinate (d) at (0,0.25);
      \coordinate (e) at (0.5,0.25); 
      \coordinate (f) at (1,0.25);
      \coordinate (g) at (0,0.5);
      \coordinate (h) at (0.5,0.5);
      \coordinate (i) at (1,0.5); 
      \coordinate (j) at (0,0.75);
      \coordinate (k) at (0.5,0.75);
      \coordinate (l) at (1,0.75);
      \coordinate (m) at (0,1); 
      \coordinate (n) at (0.5,1);
      \coordinate (o) at (1,1);
      \draw[fill] (a) circle (1pt) node [above=4.6pt] {};
      \draw[] (b) circle (1pt) node [right=2pt] {};
      \draw[] (c) circle (1pt) node [above=2pt] {};
      \draw[] (d) circle (1pt) node [above=2pt] {};
            \draw[] (e) circle (1pt) node [above=4.6pt] {};
      \draw[] (f) circle (1pt) node [right=2pt] {};
      \draw[fill] (g) circle (1pt) node [above=2pt] {};
      \draw[fill] (h) circle (1pt) node [above=2pt] {};
            \draw[fill] (i) circle (1pt) node [above=4.6pt] {};
      \draw[] (j) circle (1pt) node [right=2pt] {};
      \draw[fill] (k) circle (1pt) node [above=2pt] {};
      \draw[] (l) circle (1pt) node [above=2pt] {};
            \draw[] (m) circle (1pt) node [above=4.6pt] {};
      \draw[] (n) circle (1pt) node [right=2pt] {};
      \draw[] (o) circle (1pt) node [above=2pt] {};
    \end{scope}
  \end{tikzpicture}.
  \end{center}
Note that this diagram already has trivial $3$-core. We then read the first column hook lengths $\{4,6,7,8,12\}$ from the above diagram. Then using formula (\ref{equation1}) we get
\[
\lambda_i=h(i,1)+i-L,
\]
where $L=5$ because $|\{4,6,7,8,12\}|=5$. It is then easy to calculate $\lambda=(8,5,5,5,4)$.
\end{example}
Let us now prove the claim that $A_c(\lambda)^+_{\mathbb{Z}/\ell\mathbb{Z}}$ contains only the polynomials of degree divisible by $\ell$. Recall that 
\[
A_c(\lambda)^+_{\mathbb{Z}/\ell\mathbb{Z}}:=\mathbb{C}[\pi^{-1}_{n\ell}(0)]/\langle f-s\cdot f\,|\, s\in \mathbb{Z}/\ell\mathbb{Z},  f\in \mathbb{C}[\pi^{-1}_{n\ell}(0)] \rangle.
\]
\begin{lem}\label{lem30}
The ideal $\langle f-s\cdot f| s\in \mathbb{Z}/\ell\mathbb{Z},  f\in \mathbb{C}[\pi^{-1}_{n\ell}(0)] \rangle$ is generated by all homogeneous elements in $A_c(\lambda)^+$ with degree not divisible by $\ell$.
\end{lem}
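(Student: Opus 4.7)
The plan is to exploit the fact that the $\mathbb{Z}/\ell\mathbb{Z}$-action on $A(\lambda)^+$ is not arbitrary: it is the restriction of the $\mathbb{C}^\times$-action inducing the grading, under the identification $\mathbb{Z}/\ell\mathbb{Z}\hookrightarrow \mathbb{C}^\times$ as $\ell$-th roots of unity. Consequently, if $f\in A(\lambda)^+$ is homogeneous of degree $d$ and $s\in \mathbb{Z}/\ell\mathbb{Z}$ corresponds to $\zeta\in\mathbb{C}^\times$, then $s\cdot f=\zeta^{d}f$. Let me write $J_1$ for the ideal $\langle f-s\cdot f\mid s\in\mathbb{Z}/\ell\mathbb{Z},\, f\in\mathbb{C}[\pi^{-1}_{n\ell}(0)]\rangle$ and $J_2$ for the ideal generated by homogeneous elements of degree not divisible by $\ell$. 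The goal is then to show $J_1=J_2$, which will follow formally from the observation above; essentially this is Proposition~\ref{prop2} specialised to the case in which the stable generating subspace is the whole algebra.

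For the inclusion $J_2\subseteq J_1$, I would fix a primitive $\ell$-th root of unity $\zeta$ and let $s\in \mathbb{Z}/\ell\mathbb{Z}$ be the corresponding generator. Given a homogeneous $f$ of degree $d$ with $\ell\nmid d$, we have $\zeta^{d}\neq 1$, so
\[
f-s\cdot f=(1-\zeta^{d})f,
\]
which is a nonzero scalar multiple of $f$. Hence $f\in J_1$, and since $J_2$ is generated by such elements, $J_2\subseteq J_1$.

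For the reverse inclusion $J_1\subseteq J_2$, I would take an arbitrary generator $f-s\cdot f$ of $J_1$, decompose $f=\sum_d f_d$ into homogeneous components, and write
\[
f-s\cdot f=\sum_{d}(1-\zeta^{d})f_d,
\]
where $\zeta$ is the $\ell$-th root of unity corresponding to $s$. The only summands that contribute are those with $\zeta^{d}\neq 1$, and since $\zeta^{\ell}=1$ always, any such $d$ must satisfy $\ell\nmid d$. Thus each nonzero summand is a scalar multiple of a homogeneous element of degree not divisible by $\ell$, hence lies in $J_2$, giving $f-s\cdot f\in J_2$ and therefore $J_1\subseteq J_2$.

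There is no real obstacle here; the argument is short because the $\mathbb{Z}/\ell\mathbb{Z}$-action is built from the grading, so both ideals are controlled by the same graded-piece decomposition. The only point worth being careful about is that the claim must be verified for \emph{every} $s\in \mathbb{Z}/\ell\mathbb{Z}$, not just a generator: but if $s$ has order $m\mid \ell$ and $\zeta^{d}\neq 1$ then $m\nmid d$, and since $m\mid \ell$ this forces $\ell\nmid d$, so the argument above goes through uniformly.
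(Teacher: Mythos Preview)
Your proof is correct and follows essentially the same approach as the paper: both directions rest on the observation that $s\cdot f=\zeta^{d}f$ for $f$ homogeneous of degree $d$, so that $f-s\cdot f$ is a nonzero scalar multiple of $f$ precisely when $\ell\nmid d$, and vanishes otherwise. Your version is somewhat more careful than the paper's, in that you explicitly treat the case where $s$ is not a generator of $\mathbb{Z}/\ell\mathbb{Z}$; the paper's proof implicitly works only with a generator, which suffices but is not spelled out.
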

\begin{proof}
Let $f\in A_c(\lambda)^+$ be a homogeneous element with degree not divisible by $\ell$. Then 
\[
f-s\cdot f=(1-\alpha)f, \textnormal{ where } 1\neq \alpha\in \mathbb{C}.
\]
Therefore $(1-\alpha)^{-1}f-s\cdot (1-\alpha)^{-1}f=f$ and $f\in \langle h-s\cdot h| s\in \mathbb{Z}/\ell\mathbb{Z},  h\in \mathbb{C}[\pi^{-1}_{n\ell}(0)] \rangle$. If $f$ is of degree $\ell$ then $f-s\cdot f=0$. Any other polynomial is a sum of homogeneous polynomials and hence $f-s\cdot f$ is a sum of homogeneous elements of degree not divisible by $\ell$.
\end{proof}
This lemma allows us to present a version of Theorem~\ref{thm24} for the wreath product.
\begin{thm}\label{thm29}
Let $\lambda\vdash n\ell$ have trivial $\ell$-core and write $\mathrm{quo}_\ell(\lambda)$ for its $\ell$-quotient. Assume $\lambda$ has length $t$. The algebra $A_{\overline{c}}(\mathrm{quo}_\ell(\lambda))^+$ is the quotient
\[
A_{\overline{c}}(\mathrm{quo}_\ell(\lambda))^+\cong \mathbb{C}[f_{i_1,h(i_1,j_1)},\dots, f_{i_s,h(i_s,j_s)}]/I
\]
of the polynomial ring generated by all $f_{i_k,h(i_k,j_k)}$ for $1\leq k\leq t$ such that $h(i_k,j_k)$ is divisible by $\ell$ for all $1\leq k\leq t$. The ideal $I$ is generated by $n$ homogeneous elements $r_\ell,r_{2\ell}\dots,r_{n\ell}$. The $r_i$ are ordered so that $\mathrm{deg}(r_i)=i$. The monomials in $r_i$ are all products of the form $f_{i_k,h(i_k,j_k)}\dots f_{i_m,h(i_m,j_m)}$ such that $u\neq v$ and $w\neq x$ for any two factors $f_{u,h(u,w)}$ and $f_{v,h(v,x)}$. The coefficients of the monomials inside the $r_i$ are given by Proposition~\ref{prop8}. 
\end{thm}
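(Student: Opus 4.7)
The plan is to assemble Theorem~\ref{thm29} from three results already established: the isomorphism $A(\mathrm{quo}_\ell(\lambda))^+\cong A(\lambda)^+_{\mathbb{Z}/\ell\mathbb{Z}}$ of Theorem~\ref{thm18}, the explicit presentation of $A(\lambda)^+$ for the symmetric group case given in Theorem~\ref{thm24}, and the description of the $\mathbb{Z}/\ell\mathbb{Z}$-invariants provided by Lemma~\ref{lem30}. The strategy is essentially ``pull back Theorem~\ref{thm24} through Theorem~\ref{thm18}, then cut down using Lemma~\ref{lem30}''.

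First I would apply Theorem~\ref{thm18} to obtain $A(\mathrm{quo}_\ell(\lambda))^+\cong A(\lambda)^+_{\mathbb{Z}/\ell\mathbb{Z}}$. Since $\lambda\vdash n\ell$ is a partition of $n\ell$ in the symmetric group case, Theorem~\ref{thm24} then gives the explicit presentation
\[
A(\lambda)^+\cong \mathbb{C}[f_{i,h(i,j)} \mid (i,j)\in D_\lambda]/(r_1,r_2,\dots,r_{n\ell}),
\]
where $\deg(f_{i,h(i,j)})=h(i,j)$, each $r_s$ is homogeneous of degree $s$, and the monomials appearing in $r_s$ are products of generators associated to cells sharing neither a row nor a column. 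The coefficients are the ones computed in Proposition~\ref{prop8}.

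Next I would invoke Lemma~\ref{lem30}, which identifies the ideal defining $A(\lambda)^+_{\mathbb{Z}/\ell\mathbb{Z}}$ as the ideal generated by all homogeneous elements of degree not divisible by $\ell$. Applied to the presentation above, this has two effects: every generator $f_{i,h(i,j)}$ with $\ell\nmid h(i,j)$ is killed, leaving precisely the generators indexed by $D_\lambda^\ell$; and the relations $r_s$ with $\ell\nmid s$ become redundant (they already lie in the ideal killed by the quotient), so only the $n$ relations $r_\ell,r_{2\ell},\dots,r_{n\ell}$ survive. Monomials in each surviving $r_{s\ell}$ that contain a factor $f_{i,h(i,j)}$ with $\ell\nmid h(i,j)$ vanish under the quotient, so the induced relations involve only cells from $D_\lambda^\ell$. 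The ``non-sharing row or column'' condition on monomials and the coefficient formula from Proposition~\ref{prop8} are inherited directly from the $S_{n\ell}$ case.

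The main obstacle will be verifying that the quotient procedure collapses the presentation cleanly, rather than producing hidden extra relations or accidentally trivialising some $r_{s\ell}$. Concretely, I need to check that $A(\lambda)^+/\langle f_{i,h(i,j)}\mid \ell\nmid h(i,j)\rangle$ admits the stated presentation on the remaining generators with the images of $r_\ell,\dots,r_{n\ell}$ as a full set of relations; this reduces to showing that in the graded quotient the images of these $n$ elements still generate the defining ideal, which should follow from the fact that $A(\lambda)^+_{\mathbb{Z}/\ell\mathbb{Z}}$ is concentrated in degrees divisible by $\ell$ together with a dimension count via the hook-length formula of Theorem~\ref{thm21}. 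Once this bookkeeping is in place, the theorem follows by reading off Theorem~\ref{thm24} modulo Lemma~\ref{lem30}.
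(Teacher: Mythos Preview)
Your proposal is correct and follows exactly the paper's route: the paper's proof is the single sentence ``By Theorem~\ref{thm18} \ldots\ the theorem then follows from Theorem~\ref{thm24} and Lemma~\ref{lem30}'', which is precisely the three ingredients you assemble. Your final paragraph's worry is unnecessary: once Lemma~\ref{lem30} identifies the ideal with $\langle f_{i,j}:\ell\nmid j\rangle$ (any homogeneous polynomial of degree not divisible by $\ell$ must contain such a generator, since the remaining generators all have degree divisible by $\ell$), quotienting the presentation of Theorem~\ref{thm24} by a subset of its polynomial generators is the elementary operation $\mathbb{C}[x,y]/(r)\big/\langle y\rangle\cong\mathbb{C}[x]/(\bar r)$, so no dimension count is needed and no hidden relations can appear.
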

\begin{proof}
By Theorem~\ref{thm18}
\[
A_{\overline{c}}(\mathrm{quo}_\ell(\lambda))^+\cong A_c(\lambda)^+_{\mathbb{Z}/\ell\mathbb{Z}}.
\]
The theorem then follows from Theorem~\ref{thm24} and Lemma~\ref{lem30}. 
\end{proof}
Recall that we used the bijection assigning $f_{i,h(i,j)}$ to the cell $(i,j)\in D_\lambda$ to present a combinatorial version of Theorem~\ref{thm24}. We do the same here and find that Lemma~\ref{lem30} forces certain cells in the diagram to be omitted.
\begin{thm}\label{thm30}
Let $\lambda\vdash n\ell$ be a partition with trivial $\ell$-core and $\mathrm{quo}_\ell(\lambda)$ its $\ell$-quotient. The algebra $A_{\overline{c}}(\mathrm{quo}_\ell(\lambda))^+$ is the quotient
\[
A_{\overline{c}}(\mathrm{quo}_\ell(\lambda))^+\cong \mathbb{C}[D^\ell_\lambda]/I
\]
where $D_\lambda^\ell$ is the subdiagram of $D_\lambda$ (the younger diagram) excluding the cells $(i,j)$ such that $h(i,j)$ is not divisible by $\ell$. The ideal $I$ is generated by $n$ homogeneous elements $r_\ell, r_{2\ell},\dots,r_{n\ell}$. The $r_{s\ell}$ are ordered so that $\mathrm{deg}(r_{s\ell})=s\ell$. The monomials appearing in $r_{s\ell}$ are products of cells which share neither a row or column in $D_\lambda^\ell$. In other words if $\square_{i,j}\square_{k,m}$ is a factor of some monomial appearing in the $r_{s\ell}$, we must have that $i\neq k$ and $j\neq m$. The coefficients of the generators of $I$ are given by Proposition~\ref{prop8}. 
\end{thm}
\begin{proof}
Follows from Theorem~\ref{thm29} and the bijection between the terms $f_{i,h(i,j)}$ and the cell $(i,j)\in D_\lambda$.
\end{proof}
Let us show how to use Theorem~\ref{thm30} to directly calculate $A_{\overline{c}}(\mathrm{quo}_\ell(\lambda))^+$ from the $\ell$-multipartition $\mathrm{quo}_\ell(\lambda)$.
\begin{example}
We take the $3$-partition $((1,1),\emptyset,(1))$ and find the corresponding partition of $9$ with trivial $3$-core. The first column hook lengths, are respectively, $\{2,1\}$, $\{0\}$ and $\{1\}$ hence we have the bead diagram 
\begin{center}
\begin{tikzpicture}[thick, scale=\textwidth/4cm]
    \begin{scope}[xshift=0, yshift=0]
      \coordinate (a) at (0,0); 
      \coordinate (b) at (0.5,0);
      \coordinate (c) at (1,0);
      \coordinate (d) at (0,0.25);
      \coordinate (e) at (0.5,0.25); 
      \coordinate (f) at (1,0.25);
      \coordinate (g) at (0,0.5);
      \coordinate (h) at (0.5,0.5);
      \coordinate (i) at (1,0.5); 
      \draw[fill] (a) circle (1pt) node [above=4.6pt] {};
      \draw[] (b) circle (1pt) node [right=2pt] {};
      \draw[] (c) circle (1pt) node [above=2pt] {};
      \draw[fill] (d) circle (1pt) node [above=2pt] {};
            \draw[] (e) circle (1pt) node [above=4.6pt] {};
      \draw[fill] (f) circle (1pt) node [right=2pt] {};
      \draw[] (g) circle (1pt) node [above=2pt] {};
      \draw[] (h) circle (1pt) node [above=2pt] {};
            \draw[] (i) circle (1pt) node [above=4.6pt] {};
    \end{scope}
  \end{tikzpicture}.
  \end{center}
There is a problem however as this does not have trivial $3$-core. Recalling that we only begin counting position from the first empty bead we can rewrite our columns so that they still correspond to the hook lengths $\{1,2\}$, $\{0\}$ and $\{1\}$ while having trivial $3$-core. We simply add beads before the first empty position until this is achieved and so 
\begin{center}
\begin{tikzpicture}[thick, scale=\textwidth/4cm]
    \begin{scope}[xshift=0, yshift=0]
      \coordinate (a) at (0,0); 
      \coordinate (b) at (0.5,0);
      \coordinate (c) at (1,0);
      \coordinate (d) at (0,0.25);
      \coordinate (e) at (0.5,0.25); 
      \coordinate (f) at (1,0.25);
      \coordinate (g) at (0,0.5);
      \coordinate (h) at (0.5,0.5);
      \coordinate (i) at (1,0.5); 
      \draw[fill] (a) circle (1pt) node [above=4.6pt] {};
      \draw[] (b) circle (1pt) node [right=2pt] {};
      \draw[] (c) circle (1pt) node [above=2pt] {};
      \draw[fill] (d) circle (1pt) node [above=2pt] {};
            \draw[fill] (e) circle (1pt) node [above=4.6pt] {};
      \draw[fill] (f) circle (1pt) node [right=2pt] {};
      \draw[] (g) circle (1pt) node [above=2pt] {};
      \draw[fill] (h) circle (1pt) node [above=2pt] {};
            \draw[] (i) circle (1pt) node [above=4.6pt] {};
    \end{scope}
  \end{tikzpicture}.
  \end{center}
We now read the first column hook lengths for the partition of $9$ from this diagram, remembering to start counting at the first empty position. Therefore, the set of first column hook lengths are $\{1,3,4,5,6\}$, which is the partition $(2,2,2,2,1)$. Let us write the Young diagram with hook lengths inside their respective cells
\begin{center}
\begin{ytableau}
       6 & 4\\
       5 & 3\\
       4 & 2\\
       3 & 1\\
       1 
\end{ytableau}
\end{center}
Before we begin to write down the generators and relations we remark that by Lemma~\ref{lem30} we can ignore all generators that have a degree not divisible by $3$. Hence $A((1,1),\emptyset,(1))^+$ is a quotient of the algebra 
\[
\mathbb{C}[f_{16},f_{23},f_{43}].
\]
The relations are now found in the same way as before, noting that we can discard any that have a degree not divisible by $3$.
Recall that we find the relations by noting that they are homogeneous and the linear terms always appear, the non-linear terms are those that do not share either a row or column in the Young diagram. In this case we have the relations
\[
c_{23}f_{23}+c_{43}f_{43},\, c_{16}f_{16}+c_{23,43}f_{23}f_{43},\, c_{16,23}f_{16}f_{23}
\]
where the $c_i$ can be calculated using the formula in Proposition~\ref{prop7}. Simplifying we see that
\[
A((1,1),\emptyset,(1))^+\cong \mathbb{C}[f_{23}]/(f_{23}^3).
\]
\end{example}
\section{Explicit presentation of the centre}
Finally, we arrive at the main theorems of this paper. Here we present two theorems that give an explicit presentation of the entire centre of the restricted rational Cherednik algebra. The first one is in the case of the symmetric group and the second is a generalisation to the wreath product of the symmetric group with the cyclic group. After each theorem we will present an example demonstrating the combinatorial approach to finding the generators and relations of these algebras. 
\begin{thm}\label{thm31}
There is an isomorphism of the centre of $\overline{H}_c(S_n)$ for $c\neq 0$  
\[
Z(\overline{H}_c(S_n))\cong \bigoplus_{\lambda\in\mathrm{Irr} S_n} A_c(\lambda)^-\otimes A_c(\lambda)^+.
\]
A presentation of the algebra $A_c(\lambda)^+$ is given by Theorem~\ref{thm25} and $A_c(\lambda)^-$ is isomorphic to $A_c(\lambda)^+$ with the opposite grading.
\end{thm}
\begin{proof}
This follows from Proposition~\ref{prop1}, Corollary~\ref{cor1}, Theorem~\ref{thm3}, Theorem~\ref{thm4} and Theorem~\ref{thm25}. 
\end{proof}
Let us demonstrate how to find the centre of the restricted rational Cherednik algebra of $S_2$.
\begin{example}
Since $n=2$ there are two partitions $(2,0)$ and $(1,1)$. Therefore we must find $A(2,0)^+$ and $A(1,1)^+$, which can be done using Theorem~\ref{thm25}. This tells us that 
\[
A(2,0)^+\cong \frac{\mathbb{C}[f_{1,2},f_{1,1}]}{(f_{1,2},f_{1,1})}\cong \mathbb{C}.
\]
From Theorem~\ref{thm4} we know that $A(2,0)^-$ is isomorphic to $A(2,0)^+$ but with opposite grading. In this case however both are entirely in degree $0$ so $A(2,0)^-\cong A(2,0)^+\cong\mathbb{C}$. In a similar manner we find that $A(1,1)^+\cong A(1,1)^-\cong\mathbb{C}$. Hence by Theorem~\ref{thm31} we have
\[
Z(\overline{H}_c(S_2))\cong \mathbb{C}\otimes\mathbb{C}\oplus \mathbb{C}\otimes\mathbb{C}\cong \mathbb{C}\oplus\mathbb{C}
\]
\end{example}
Using Theorem~\ref{thm30} we can generalise Theorem~\ref{thm31} to present our final result. This will give a description of the centre for $S_n\wr\mathbb{Z}/\ell\mathbb{Z}$.
\begin{thm}\label{thm32}
There is an isomorphism of the centre of $\overline{H}_{\overline{c}}(S_n\wr\mathbb{Z}/\ell\mathbb{Z})$ for the parameters $\overline{c}$,  
\[
Z(\overline{H}_{\overline{c}}(S_n\wr\mathbb{Z}/\ell\mathbb{Z}))\cong \bigoplus_{\underline{\lambda}\in\mathrm{Irr} S_n\wr\mathbb{Z}/\ell\mathbb{Z}} A_{\overline{c}}(\underline{\lambda})^-\otimes A_{\overline{c}}(\underline{\lambda})^+.
\]
The algebra $A_{\overline{c}}(\underline{\lambda})^+$ is given by Theorem~\ref{thm30} and $A_{\overline{c}}(\underline{\lambda})^-$ is isomorphic to $A_{\overline{c}}(\underline{\lambda}^\star)^+$ with the opposite grading.
\end{thm}
\begin{proof}
This follows from Proposition~\ref{prop1}, Corollary~\ref{cor1}, Theorem~\ref{thm3}, Corollary~\ref{cor33} and Theorem~\ref{thm30}. 
\end{proof}
Let us now finish with the example of $Z(\overline{H}_c(S_3\wr\mathbb{Z}/2\mathbb{Z}))$.
\begin{example}
The irreducible representations of the group $S_3\wr\mathbb{Z}/2\mathbb{Z}$ can be labeled by the $2$-partitions
\[
((3),\emptyset), ((2,1),\emptyset), ((1,1,1),\emptyset), ((2),(1)), ((1,1),(1)), ((1),(1,1)), ((1),(2)),(\emptyset,(3)), (\emptyset,(2,1)), (\emptyset,(1,1,1)).
\]
Note that the partitions of $6$ all have trivial $2$-core except $(3,2,1)$. The remaining partitions are then 
\[
(6), (5,1), (4,2), (4,1,1), (3,3), (3,1,1,1), (2,2,2), (2,2,1,1), (2,1,1,1,1), (1,1,1,1,1,1).
\]
Following the method outlined in Section 2 we see that
\[
\mathrm{quo}_2(6)=((\emptyset,(3))),\, \mathrm{quo}_2(5,1)=((3),\emptyset),\, \mathrm{quo}_2(4,2)=((1),(2)),\, \mathrm{quo}_2(4,1,1)=(\emptyset,(2,1)),
\]
\[
\mathrm{quo}_2(3,3)=((2),(1)),\,\mathrm{quo}_2(3,1,1,1)=((2,1),\emptyset),\, \mathrm{quo}_2(2,2,2)=((1),(1,1)),
\]
\[
\mathrm{quo}_2(2,2,1,1)=((1,1),(1)),\, \mathrm{quo}_2(2,1,1,1,1) =(\emptyset,(1,1,1)),\, \mathrm{quo}_2(1,1,1,1,1,1)=((1,1,1),\emptyset).
\]
The process for calculating the $A(\lambda)^+$ is identical to the previous examples. We will calculate $A(\mathrm{quo}_2(4,2))^+$ in detail and simply state the rest. The young diagram for $(4,2)$ with hook lengths is
\begin{center}
\begin{ytableau}
       5 & 4 & 2 & 1\\
       2 & 1
\end{ytableau}.
\end{center}
Excluding the boxes whose hook length is not divisible by $2$ we see that there are three generators $f_{1,2}$, $f_{1,4}$, $f_{2,2}$. The relations are given by 
\[
c_1f_{1,2}+c_2f_{f_2,2}=0,\quad c_3f_{1,2}f_{2,2}+c_4f_{1,4},\quad c_5f_{2,2}f_{1,4}=0,
\]
where $c_1,c_2,c_3,c_4,c_5\in \C$ are coefficients that can be found using the formula in Proposition~\ref{prop8}. Therefore,
\[
A(\mathrm{quo}_2(4,2))^+\cong\frac{\C[f_{1,2},f_{1,4},f_{2,2}]}{(f_{1,2}+f_{2,2},f_{1,2}f_{2,2}+f_{1,4},f_{1,4}f_{2,2})}\cong \frac{\C[f_{1,2}]}{(f_{1,2}^3)}.
\]
Following the same process we find
\[
A(\mathrm{quo}_2(6))^+\cong \C,\, A(\mathrm{quo}_2(5,1))^+\cong \C,\, \, A(\mathrm{quo}_2(4,1,1))^+\cong \frac{\C[f_{1,2}]}{(f_{1,2}^2)},\, A(\mathrm{quo}_2(3,3))^+\cong \frac{\C[f_{1,2}]}{(f_{1,2}^3)},
\]
\[
A(\mathrm{quo}_2(3,1,1,1))^+\cong \frac{\C[f_{1,2}]}{(f_{1,2}^2)},\, A(\mathrm{quo}_2(2,2,2))^+\cong \frac{\C[f_{1,2}]}{(f_{1,2}^3)},\, A(\mathrm{quo}_2(2,2,1,1))^+\cong \frac{\C[f_{1,2}]}{(f_{1,2}^3)},
\]
\[
A(\mathrm{quo}_2(2,1,1,1,1))^+\cong \C,\, A(\mathrm{quo}_2(1,1,1,1,1,1))^+\cong  \C.
\]
Note that the generator $f_{1,2}$ in the above rings is not the same element in each ring, it is simply a generic degree two generator of the ring. To denote the negative degree elements we use $g$'s instead of $f$'s. Using the fact that $A_{\overline{c}}(\underline{\lambda})^-$ is isomorphic to $A_{\overline{c}}(\underline{\lambda}^\star)^+$ with the opposite grading we see that 
\[
A(\mathrm{quo}_2(6))^-\cong A(\mathrm{quo}_2(1,1,1,1,1,1))^+\cong  \C,\, A(\mathrm{quo}_2(5,1))^-\cong A(\mathrm{quo}_2(2,1,1,1,1))^+\cong \C,
\]
\[
A(\mathrm{quo}_2(4,2))^-\cong A(\mathrm{quo}_2(2,2,1,1))^+\cong \frac{\C[g_{1,2}]}{(g_{1,2}^3)},\, A(\mathrm{quo}_2(4,1,1))^-\cong  A(\mathrm{quo}_2(3,1,1,1))^+\cong \frac{\C[g_{1,2}]}{(g_{1,2}^2)},
\]
\[
A(\mathrm{quo}_2(3,3))^-\cong A(\mathrm{quo}_2(2,2,2))^+\cong \frac{\C[g_{1,2}]}{(g_{1,2}^3)},\, A(\mathrm{quo}_2(3,1,1,1))^-\cong A(\mathrm{quo}_2(4,1,1))^+\cong \frac{\C[g_{1,2}]}{(g_{1,2}^2)},
\]
\[
A(\mathrm{quo}_2(2,2,2))^-\cong A(\mathrm{quo}_2(3,3))^+ \cong\frac{\C[g_{1,2}]}{(g_{1,2}^3)},\, A(\mathrm{quo}_2(2,2,1,1))^-\cong A(\mathrm{quo}_2(4,2))^+\cong \frac{\C[g_{1,2}]}{(g_{1,2}^3)},
\]
\[
A(\mathrm{quo}_2(2,1,1,1,1))^-\cong A(\mathrm{quo}_2(5,2))^+\cong  \C,\, A(\mathrm{quo}_2(1,1,1,1,1,1))^-\cong  A(\mathrm{quo}_2(6))^+\cong \C.
\]
It remains to calculate the parameter $\overline{c}$. Using Theorem~\ref{thm15} we see that
\[
\overline{c}^B=(c,-c),
\]
swapping to our parametrisation we see that 
\[
\overline{c}^H=\frac{-1-1}{-2}\overline{c}^B=\overline{c}^B.
\]
Therefore,
\[
Z(\overline{H}_{(1,-1)}(S_3\wr\mathbb{Z}/2\mathbb{Z}))\cong \C^4\oplus \left(\frac{\C[f_{1,2}]}{(f_{1,2}^3)} \otimes \frac{\C[g_{1,2}]}{(g_{1,2}^3)}\right)^{\oplus 4}\oplus \left(\frac{\C[f_{1,2}]}{(f_{1,2}^2)} \otimes \frac{\C[g_{1,2}]}{(g_{1,2}^2)}\right)^{\oplus 2}.
\]
\end{example}
\small{

\printbibliography

@book{broue2010introduction,
  title={Introduction to complex reflection groups and their braid groups},
  author={Brou{\'e}, Michel},
  year={2010},
  publisher={Springer}
}

@phdthesis{hird2022representation,
  title={The representation theory of the rational Cherednik algebra},
  author={Hird, N.},
  year={2022},
  school={University of Glasgow}
}

@inproceedings{bonnafe2021fixed,
  title={Fixed points in smooth Calogero--Moser spaces},
  author={Bonnaf{\'e}, C{\'e}dric and Maksimau, Ruslan},
  booktitle={Annales de l'Institut Fourier},
  volume={71},
  number={2},
  pages={643--678},
  year={2021}
}

@book{atiyah2018introduction,
  title={Introduction to commutative algebra},
  author={Atiyah, M. and Macdonald, I.},
  year={2018},
  publisher={CRC Press}
}

@incollection{BellamySRAlecturenotes,
    AUTHOR = {Bellamy, G.},
     TITLE = {Symplectic reflection algebras},
 BOOKTITLE = {Noncommutative algebraic geometry},
    SERIES = {Math. Sci. Res. Inst. Publ.},
    VOLUME = {64},
     PAGES = {167--238},
 PUBLISHER = {Cambridge Univ. Press, New York},
      YEAR = {2016},
   MRCLASS = {20C08},
  MRNUMBER = {3618474},
MRREVIEWER = {Himmet Can},
}

@article{brown2001,
  title={The ramification of centres: Lie algebras in positive characteristic and quantised enveloping algebras},
  author={Brown, Kenneth A and Gordon, Iain},
  journal={Mathematische Zeitschrift},
  volume={238},
  number={4},
  pages={733--779},
  year={2001},
  publisher={Springer}
}

@preamble{
   "\def\cprime{$'$} "
}

@article{bellamy2018highest,
  title={Highest weight theory for finite-dimensional graded algebras with triangular decomposition},
  author={Bellamy, G. and Thiel, U.},
  journal={Advances in Mathematics},
  volume={330},
  pages={361--419},
  year={2018},
  publisher={Elsevier}
}

@article{BellEnd,
    AUTHOR = {Bellamy, G.},
     TITLE = {Endomorphisms of {V}erma modules for rational {C}herednik
              algebras},
   JOURNAL = {Transform. Groups},
  FJOURNAL = {Transformation Groups},
    VOLUME = {19},
      YEAR = {2014},
    NUMBER = {3},
     PAGES = {699--720},
      ISSN = {1083-4362},
   MRCLASS = {16G99 (17B10 20F55)},
  MRNUMBER = {3233522},
MRREVIEWER = {Vanessa Miemietz},
       DOI = {10.1007/s00031-014-9281-x},
       URL = {http://dx.doi.org/10.1007/s00031-014-9281-x},
}

@article{BellSchubert,
  title={Rational Cherednik algebras and Schubert cells},
  author={Bellamy, G.},
  journal={Algebras and Representation Theory},
  volume={22},
  number={6},
  pages={1533--1567},
  year={2019},
  publisher={Springer}
}

@book{kemper2010course,
  title={A course in commutative algebra},
  author={Kemper, G.},
  volume={256},
  year={2010},
  publisher={Springer Science \& Business Media}
}

@article{iversen1972fixed,
  title={A fixed point formula for action of tori on algebraic varieties},
  author={Iversen, B.},
  journal={Inventiones mathematicae},
  volume={16},
  number={3},
  pages={229--236},
  year={1972},
  publisher={Springer}
}

@article{BBFix,
  AUTHOR={Bia{\l}ynicki-Birula, A.},
  TITLE={Some theorems on actions of algebraic groups},
  JOURNAL={Ann. of Math. (2)},
  FJOURNAL={Annals of Mathematics. Second Series},
  VOLUME={98},
  YEAR={1973},
  PAGES={480--497},
  ISSN={0003-486X},
  MRCLASS={14M15 (14L99)},
  MRNUMBER={0366940 (51 \#3186)},
  MRREVIEWER={Birger Iversen}
}

@article{MoDegen,
    AUTHOR = {Martino, M.},
     TITLE = {Blocks of restricted rational {C}herednik algebras for
              {$G(m,d,n)$}},
   JOURNAL = {J. Algebra},
  FJOURNAL = {Journal of Algebra},
    VOLUME = {397},
      YEAR = {2014},
     PAGES = {209--224},
      ISSN = {0021-8693},
   MRCLASS = {20F55 (05E10 16G10 16S99)},
  MRNUMBER = {3119222},
MRREVIEWER = {Tom C. Braden},
       DOI = {10.1016/j.jalgebra.2013.07.020},
       URL = {http://dx.doi.org/10.1016/j.jalgebra.2013.07.020},
}

@book{CohMac,
  AUTHOR={Bruns, W. and Herzog, J.},
  TITLE={Cohen-{M}acaulay rings},
  SERIES={Cambridge Studies in Advanced Mathematics},
  VOLUME={39},
  PUBLISHER={Cambridge University Press},
  ADDRESS={Cambridge},
  YEAR={1993},
  PAGES={xii+403},
  ISBN={0-521-41068-1},
  MRCLASS={13H10 (13-02)},
  MRNUMBER={1251956 (95h:13020)},
  MRREVIEWER={Matthew Miller}
}

@article{SchubertGLN,
  AUTHOR={Mukhin, E. and Tarasov, V. and Varchenko, A.},
  TITLE={Schubert calculus and representations of the general linear group},
  JOURNAL={J. Amer. Math. Soc.},
  FJOURNAL={Journal of the American Mathematical Society},
  VOLUME={22},
  YEAR={2009},
  NUMBER={4},
  PAGES={909--940},
  ISSN={0894-0347},
  MRCLASS={17B67 (14P05)},
  MRNUMBER={MR2525775},
  DOI={10.1090/S0894-0347-09-00640-7},
  URL={http://dx.doi.org/10.1090/S0894-0347-09-00640-7}
}

@article{fogarty1973fixed,
  title={Fixed point schemes},
  author={Fogarty, J.},
  journal={American Journal of Mathematics},
  volume={95},
  number={1},
  pages={35--51},
  year={1973},
  publisher={JSTOR}
}

@book{Benson,
  AUTHOR={Benson, D. J.},
  TITLE={Polynomial {I}nvariants of {F}inite {G}roups},
  SERIES={London Mathematical Society Lecture Note Series},
  VOLUME={190},
  PUBLISHER={Cambridge University Press},
  ADDRESS={Cambridge},
  YEAR={1993},
  PAGES={x+118},
  ISBN={0-521-45886-2},
  MRCLASS={13A50 (13E15 20F29)},
  MRNUMBER={MR1249931 (94j:13003)},
  MRREVIEWER={Frank D. Grosshans}
}

@article{Cuspidal,
  AUTHOR={Bellamy, G.},
  TITLE={{C}uspidal representations of rational {C}herednik algebras at $t=0$},
  JOURNAL={Math. Z.},
  FJOURNAL={Mathematische Zeitschrift},
  VOLUME={269},
  YEAR={2008},
  NUMBER={3-4},
  PAGES={609--627},
  DOI={10.1007/s00209-010-0754-x},
  URL={http://dx.doi.org/10.1007/s00209-010-0754-x}
}

@article{Stembridge,
  AUTHOR={Stembridge, J. R.},
  TITLE={On the eigenvalues of representations of reflection groups and wreath
    products},
  JOURNAL={Pacific J. Math.},
  FJOURNAL={Pacific Journal of Mathematics},
  VOLUME={140},
  YEAR={1989},
  NUMBER={2},
  PAGES={353--396},
  ISSN={0030-8730},
  CODEN={PJMAAI},
  MRCLASS={20C30 (20C15)},
  MRNUMBER={MR1023791 (91a:20022)},
  MRREVIEWER={A. Kerber}
}

@book{JK,
  AUTHOR={James, G. and Kerber, A.},
  TITLE={The {R}epresentation {T}heory of the {S}ymmetric {G}roup},
  SERIES={Encyclopedia of Mathematics and its Applications},
  VOLUME={16},
  NOTE={With a foreword by P. M. Cohn, With an introduction by Gilbert de B. Robinson},
  PUBLISHER={Addison-Wesley Publishing Co., Reading, Mass.},
  YEAR={1981},
  PAGES={xxviii+510},
  ISBN={0-201-13515-9},
  MRCLASS={20-02 (20C30)},
  MRNUMBER={MR644144 (83k:20003)},
  MRREVIEWER={A. O. Morris}
}

@article{krusemeyer1988does,
  title={Why Does the Wronskian Work?},
  author={Krusemeyer, M.},
  journal={The American Mathematical Monthly},
  volume={95},
  number={1},
  pages={46--49},
  year={1988},
  publisher={Taylor \& Francis}
}

@book{Eisenbud,
  AUTHOR={Eisenbud, D.},
  TITLE={Commutative {A}lgebra},
  SERIES={Graduate Texts in Mathematics},
  VOLUME={150},
  NOTE={With a view toward algebraic geometry},
  PUBLISHER={Springer-Verlag},
  ADDRESS={New York},
  YEAR={1995},
  PAGES={xvi+785},
  ISBN={0-387-94268-8; 0-387-94269-6},
  MRCLASS={13-01 (14A05)},
  MRNUMBER={MR1322960 (97a:13001)},
  MRREVIEWER={Matthew Miller}
}

@article{EG,
  AUTHOR={Etingof, P. and Ginzburg, V.},
  TITLE={Symplectic reflection algebras, {C}alogero-{M}oser space, and deformed
    {H}arish-{C}handra homomorphism},
  JOURNAL={Invent. Math.},
  FJOURNAL={Inventiones Mathematicae},
  VOLUME={147},
  YEAR={2002},
  NUMBER={2},
  PAGES={243--348},
  ISSN={0020-9910},
  CODEN={INVMBH},
  MRCLASS={16S10 (14L30 17B66 17B80 20C08)},
  MRNUMBER={MR1881922 (2003b:16021)},
  MRREVIEWER={Alexander Braverman}
}

@article{PoissonOrders,
  AUTHOR={Brown, K. A. and Gordon, I.},
  TITLE={Poisson orders, symplectic reflection algebras and representation theory},
  JOURNAL={J. Reine Angew. Math.},
  FJOURNAL={Journal f\"ur die Reine und Angewandte Mathematik},
  VOLUME={559},
  YEAR={2003},
  PAGES={193--216},
  ISSN={0075-4102},
  CODEN={JRMAA8},
  MRCLASS={16G30 (16P40 17B63 20G42 53D17)},
  MRNUMBER={MR1989650 (2004i:16025)},
  MRREVIEWER={K. R. Goodearl}
}

@article{Baby,
  AUTHOR={Gordon, I. G.},
  TITLE={Baby {V}erma modules for rational {C}herednik algebras},
  JOURNAL={Bull. London Math. Soc.},
  FJOURNAL={The Bulletin of the London Mathematical Society},
  VOLUME={35},
  YEAR={2003},
  NUMBER={3},
  PAGES={321--336},
  ISSN={0024-6093},
  CODEN={LMSBBT},
  MRCLASS={16S80 (20F55)},
  MRNUMBER={MR1960942 (2004c:16050)},
  MRREVIEWER={Dmitriy A. Rumynin}
}

@article{Ramifications,
  AUTHOR={Brown, K. A. and Gordon, I. G.},
  TITLE={The ramification of centres: {L}ie algebras in positive characteristic
    and quantised enveloping algebras},
  JOURNAL={Math. Z.},
  FJOURNAL={Mathematische Zeitschrift},
  VOLUME={238},
  YEAR={2001},
  NUMBER={4},
  PAGES={733--779},
  ISSN={0025-5874},
  CODEN={MAZEAX},
  MRCLASS={17B50 (17B37)},
  MRNUMBER={MR1872572 (2002i:17027)},
  MRREVIEWER={Daniel K. Nakano}
}



}

\end{document}